\setlist[enumerate]{label*=\arabic*.} 
\newcommand{\nc}{\newcommand}
\newcommand{\rc}{\renewcommand}
\nc\on{\operatorname}
\nc\ol{\overline}
\nc{\ul}{\underline}
\nc{\wt}{\widetilde}
\nc{\us}{\underset}
\nc{\os}{\overset}
\nc{\wh}{\widehat}
\nc{\Spec}{\on{Spec}}
\nc{\Proj}{\on{Proj}}
\nc\Id{\on{Id}}
\nc{\Hom}{\on{Hom}}
\nc{\res}{\on{res}}
\nc{\Cone}{\on{Cone}}
\nc{\Conv}{\on{Conv}}
\nc{\tr}{\on{tr}}
\nc{\Lie}{\on{Lie}}
\nc{\Der}{\on{Der}}
\nc{\Aut}{\on{Aut}}
\nc{\End}{\on{End}}
\nc{\Ad}{\on{Ad}}
\nc{\ad}{\on{ad}}
\nc{\Ext}{\on{Ext}}
\nc{\Mor}{\on{Mor}}
\nc{\Tor}{\on{Tor}}
\nc{\tcap}{\bar\pitchfork}
\nc{\del}{\nabla}
\renewcommand{\d}{\text{d}}
\nc{\la}{\langle}
\nc{\ra}{\rangle}
\nc{\rt}{\sqrt}
\nc{\injto}{\hookrightarrow}
\nc{\surjto}{\twoheadrightarrow}
\nc{\mapsfrom}{\mathrel{\reflectbox{\ensuremath{\mapsto}}}}
\nc{\from}{\mathrel{\reflectbox{\ensuremath{\to}}}}
\nc{\iv}{^{-1}}
\nc{\acts}{\curvearrowright}
\nc{\dto}{\dashrightarrow}
\nc{\di}{\d_{x_i}}
\rc{\dj}{\d_{x_j}}
\rc{\b}{\bullet}
\mathchardef\hy="2D
\nc{\sm}{\setminus}
\nc{\dol}{\partial}
\nc{\pder}[1]{\frac{\dol}{\dol {#1}}}
\nc{\pderh}[2]{\frac{\dol^{#2}}{\dol {#1}^{#2}}}
\nc{\bs}{\bigskip}
\nc{\ms}{\medskip}
\nc{\noi}{\noindent}
\nc{\tn}{\textnormal}
\nc{\tb}{\textbf}
\nc{\mb}{\mathbb}
\nc{\mc}{\mathcal}
\nc{\dsp}{\displaystyle}
\nc{\tc}{\textcolor}
\nc{\CC}{\mathbb{C}}
\nc{\FF}{\mathbb{F}}
\nc{\NN}{\mathbb{N}}
\nc{\PP}{\mathbb{P}}
\nc{\QQ}{\mathbb{Q}}
\nc{\RR}{\mathbb{R}}
\nc{\ZZ}{\mathbb{Z}}
\nc{\CL}{\mathcal{L}}
\nc{\CE}{\mathcal{E}}
\nc{\CF}{\mathcal{F}}
\nc{\CG}{\mathcal{G}}
\nc{\CH}{\mathcal{H}}
\nc{\CO}{\mathcal{O}}
\nc{\mf}{\mathfrak}
\rc{\sf}{\mathsf}
\nc{\GL}{\on{GL}}
\nc{\vb}[1]{\vec{\bold{#1}}}
\nc{\vv}[2]{\begin{bmatrix} #1\\ #2 \end{bmatrix}}
\nc{\vvv}[2]{\begin{bmatrix} #1\\ #2 \end{bmatrix}}
\nc{\mat}[4]{\begin{bmatrix} #1 & #2 \\ #3 & #4 \end{bmatrix}}
\nc{\mmat}[9]{\begin{bmatrix} #1 & #2 & #3 \\ #4 & #5 & #6 \\ #7 & #8 & #9 \end{bmatrix}}
\nc{\ptl}[2]{\frac{\d #1}{\d #2}}
\theoremstyle{definition}
\rc{\theq}{\thesubsection.\Alph{q}}
\rc{\thedis}{\thesubsection.\Alph{dis}}
\newtheorem{theorem}{Theorem}[section]
\newtheorem{lemma}{Lemma}[section]
\newtheorem{definition}{Definition}[section]
\newtheorem{proposition}{Proposition}[section]
\newtheorem{corollary}{Corollary}[section]
\begin{document}

\title{Harmonic Forms, Hodge Theory and \\ the Kodaira Embedding Theorem}
\author[U. Lim]{Uzu Lim}
\address{Mathematical Institute, 
University of Oxford, Radcliffe Observatory, Andrew Wiles Building, Woodstock Rd, Oxford OX2 6GG}
\email{lims@maths.ox.ac.uk}

\begin{abstract}
    In this expository article, we outline the theory of harmonic differential forms and its consequences. We provide self-contained proofs of the following important results in differential geometry: (1) Hodge theorem, which states that for a compact complex manifold, the de Rham cohomology group is isomorphic to the group of harmonic forms, (2) Hodge decomposition theorem, which states that for a K\"ahler manifold, the de Rham cohomology group decomposes into the Dolbeault cohomology groups, and (3) The Kodaira Embedding theorem, which gives a criterion of when a compact complex manifold is in fact a smooth complex projective variety. The basic theory of vector bundles is also contained for completeness.
\end{abstract}

\maketitle

\section{Introduction}

A harmonic differential form is a differential form satisfying $\Delta \varphi = 0$ for the Laplacian $\Delta$. It turns out that there is a unique harmonic representative of each de Rham cohomology class. Harmonic representatives also form an abelian group that is isomorphic to the de Rham cohomology group:
$$\mc H^r(X) \cong H^r(X,\CC)$$
By using $\mc H^r(X)$, we simplify many discussions as we can just work with a single differential form rather than a whole ensemble of them. Identifying such differential form is viewed as a projection within a Banach space, and thus we will need some functional analysis to establish such identification. 

One application of this theory is the Kodaira embedding theorem, a powerful theorem that characterizes smooth complex projective varieties among complex manifolds. The theorem says that a complex manifold is isomorphic to a smooth complex projective variety iff the manifold possesses a \textit{positive line bundle}. Sections of the line bundle is used to give a map to a projective space, and the fact that the map is indeed an embedding can be reinterpreted as a surjectivity relation on global sections of some vector bundles. The surjectivity relation can in turn be interpreted as vanishing of certain first sheaf cohomology groups. This vanishing of sheaf cohomology groups is implied by the general result of Kodaira and Nakano, which uses harmonic differential forms.

This article is divided into three sections. In the first section, we introduce the geometry of vector bundles and sheaf theory pertaining to it. Major tools introduced include metric, connection, curvature, and Chern class on a vector bundle. Along the way, we also prove de Rham theorem and Dolbeault theorem using acyclic sheaf resolutions. In the second section, we discuss Hodge theory. First we discuss general differential operators, then elliptic operators and their parametrix. Then we prove the Hodge theorem ($\mc H^r(X) \cong H^r(X,\CC)$) and the Hodge decomposition theorem ($\bigoplus H^{p,q}(X) \cong H^r(X,\CC)$) for compact K\"ahler manifolds. In the third section, we discuss positive line bundles and Kodaira embedding theorem.

The primary references for this article are \cite{wells} and \cite{griffith harris}. For an introductory account of differential geometry, one might consult \cite{lee riemannian}.

\pagebreak
\tableofcontents

\pagebreak
\section{Vector Bundles}

We first introduce the language of vector bundles and related concepts such as divisors, connection, curvature, and Chern class.

\subsection{Basic Properties}

A \emph{real vector bundle} of rank $k$ is a topological space $E$ equipped the following data:
\begin{enumerate}
    \item A continuous surjection $\pi:E \rightarrow X$
    \item Real vector space structure on each fiber $\pi^{-1}(x) \cong \RR^k$
    \item (Local trivialization) For each $x \in X$, there is an open neighborhood $U$ and a homeomorphism $\varphi_U: U \times \RR^k \rightarrow \pi^{-1}(U)$ which respects fiber: $(\pi\circ \varphi_U)(x,v)=x$ and the induced map on fiber $\RR^k \rightarrow \pi^{-1}(x)$ given by $v \mapsto \varphi(x,v)$ is a linear isomorphism.
\end{enumerate}
In short, it's a topological space that is locally a product of the underlying space with a real vector space. We can analogously define complex vector bundles. 

A \emph{smooth vector bundle} is a real vector bundle where in the above definition, $E,X$ are smooth manifolds, $\pi$ is smooth, and local trivialization maps $\varphi_U$ are diffeomorphisms. We analogously define \emph{holomorphic vector bundle} to be a complex vector bundle with smooth / diffeomorphism replaced by complex/biholomorphic.

The data of a smooth/holomorphic vector bundle on a smooth/complex manifold $X$ is equivalent to giving an open cover $\mc U = \{ U_\alpha\}$ and smooth/holomorphic transition maps $g_{\alpha \beta} : U_{\alpha} \cap U_\beta \rightarrow \GL_k\RR$ such that $g_{\alpha \beta}(x) \circ g_{\beta \gamma}(x) \circ g_{\gamma \alpha}(x) = \on{id}_{\RR^k}$. It's easy to check that this condition is satisfied given a bundle. Conversely given such data, we can glue disjoint union of trivial bundles on $U_\alpha$ using transition functions.

A \emph{section} of a vector bundle $E\rightarrow X$ is a continuous map $s:X \rightarrow E$ such that $\pi \circ s = \on{id}_X$ (i.e. each point is mapped to a vector in the fiber of the point). A \emph{smooth} (resp. \emph{holomorphic}) section of a smooth bundle is a section that is a smooth (resp. holomorphic) map.

If $U \subset X$ is Euclidean ($U \cong V \subseteq \RR^n$) and $E$ is trivial over $U$ ($\pi^{-1}(U) \cong U \times \RR^k$), then a smooth section on $U$ can be seen as a smooth map $V \rightarrow V \times \RR^k$.

Given a smooth manifold $X$, define \emph{stalk} at each $x \in X$ by
$$\mathcal E_{X,x} := \lim_\rightarrow \mathcal E_X(U)$$
where direct limit is taken over neighborhoods $U$ of $x$ and the direct system is given by restriction maps of smooth functions. Here, $[f]=[g]$ iff $f$ and $g$ agree locally. Thus each element (called \emph{germ}) of $\mathcal E_{X,x}$ represents values of a smooth function at an infinitesimal neighborhood of $x$.

Given smooth vector bundles $E,F$, we can construct more bundles out of them, namely
\begin{align*}
    E \oplus F, E \otimes F, \wedge^p E, E^*
\end{align*}
They are defined by fiberwise operation (e.g. fiber of $E \oplus F$ at $x$ is $E_x \oplus F_x$) and transition functions. Suppose $E,F$ are vector bundles over $X$ of rank $k,l$ that trivialize over $\{U_\alpha\}$. Then over $U_{\alpha \beta}$, we can give transition function for $E \oplus F, E\otimes F, \wedge^p E, E^*$ in the obvious way.

In defining bundles like $E \oplus F$ from $E,F$, we can use the following lemma to topologize $E \oplus F$.
\begin{lemma}
    Suppose $X$ is a set and $\{A_\alpha \}_{\alpha \in I}$ is an open cover. Suppose $A_\alpha'$ are topological spaces with bijection $f_\alpha : A_\alpha' \rightarrow A_\alpha$. Then $X$ has a topology that makes every $f_\alpha$ a homeomorphism iff for each $\alpha, \beta \in I$, the topology induced by $f_\alpha$ and $f_\beta$ on $A_{\alpha} \cap A_{\beta}$ are identical.
\end{lemma}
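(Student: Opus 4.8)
The statement is an instance of the gluing lemma for topologies, and the plan is to prove the two directions of the ``iff'' separately, with the substance residing in the backward direction. For the forward direction (necessity), suppose a topology $\tau$ on $X$ already exists making every $f_\alpha \colon A_\alpha' \to A_\alpha$ a homeomorphism onto the open set $A_\alpha$. Then $A_\alpha \cap A_\beta$ is open in $X$, and the subspace topology it inherits from $X$ agrees, via the homeomorphism $f_\alpha$, with the topology induced from $A_\alpha'$, and equally, via $f_\beta$, with the topology induced from $A_\beta'$. Since both induced topologies coincide with the single subspace topology that $A_\alpha \cap A_\beta$ carries from $X$, they are identical, which is the claim. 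This direction is routine.

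For the backward direction (sufficiency) I would construct the topology explicitly by declaring
$$\tau := \{\, U \subseteq X : f_\alpha^{-1}(U \cap A_\alpha) \text{ is open in } A_\alpha' \text{ for every } \alpha \,\}.$$
That $\tau$ is a topology is immediate, because $f_\alpha^{-1}$ commutes with arbitrary unions and finite intersections, so the defining condition is preserved under these operations, while $\emptyset$ and $X$ obviously satisfy it. Each $f_\alpha$ is then continuous essentially by fiat: for $W \in \tau$ the preimage $f_\alpha^{-1}(W \cap A_\alpha)$ is open by the very definition of membership in $\tau$, and the same bookkeeping shows each $A_\alpha$ lies in $\tau$.

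The heart of the argument, and the only place the compatibility hypothesis is genuinely used, is showing that each $f_\alpha$ is an \emph{open} map onto $A_\alpha$, so that it becomes a homeomorphism. Given $O \subseteq A_\alpha'$ open, I must produce $W \in \tau$ with $W \cap A_\alpha = f_\alpha(O)$; the natural candidate is $W = f_\alpha(O)$ itself, and verifying $W \in \tau$ reduces to checking, for every $\beta$, that $f_\beta^{-1}(f_\alpha(O) \cap A_\beta)$ is open in $A_\beta'$. Since $f_\alpha(O) \cap A_\beta \subseteq A_\alpha \cap A_\beta$, the set $f_\alpha(O) \cap A_\alpha \cap A_\beta$ is open in the topology that $f_\alpha$ induces on the overlap $A_\alpha \cap A_\beta$; the compatibility hypothesis transports this to openness in the topology induced by $f_\beta$, which is precisely the required openness of $f_\beta^{-1}(f_\alpha(O) \cap A_\beta)$. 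I expect this step to be the main obstacle: matching the two induced topologies on every overlap and propagating openness from the single chart $\alpha$ to all charts $\beta$ is where the interplay between the subspace topologies and the fact that each overlap sits openly inside its charts must be handled with care, and it is exactly the content that fails without the hypothesis.

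Finally, the topology is forced to be unique, which also pins down that $\tau$ is the correct object: if any topology makes all $f_\alpha$ homeomorphisms onto the open members $A_\alpha$ of a cover, then a set $U$ is open precisely when each trace $U \cap A_\alpha$ is open, equivalently when each $f_\alpha^{-1}(U \cap A_\alpha)$ is open in $A_\alpha'$, which is the defining condition of $\tau$.
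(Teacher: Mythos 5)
Your proof is correct, but it constructs the topology by a genuinely different route than the paper. The paper builds the \emph{coarse} candidate: it declares the images $f_\alpha(V)$ of open sets $V \subseteq A_\alpha'$ to be a subbasis, and then the work lies in showing no unwanted open sets were created --- an arbitrary open $U \subseteq A_\alpha$ is written as a union of finite intersections of subbasis elements, and the compatibility hypothesis is used to rewrite each $f_{\beta}(V) \cap A_\alpha$ as an $f_\alpha$-image, so that $U$ itself is an $f_\alpha$-image of an open set. You instead build the \emph{fine} candidate (the final topology): $U \in \tau$ iff $f_\alpha^{-1}(U \cap A_\alpha)$ is open in $A_\alpha'$ for every $\alpha$. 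With your definition, continuity of each $f_\alpha$ is tautological, the hypothesis is spent exactly once (to show each $f_\alpha$ is an open map, by transporting relative openness of $f_\alpha(O) \cap A_\beta$ on the overlap from the $\alpha$-chart to the $\beta$-chart), and you get uniqueness of the topology essentially for free, which the paper does not address; the paper's subbasis bookkeeping is messier but makes visible that the topology is generated by chart images. One point deserves flagging, though it afflicts your argument and the paper's equally: transporting openness across the overlap gives that $f_\beta^{-1}(f_\alpha(O) \cap A_\beta)$ is open in the \emph{subspace} $f_\beta^{-1}(A_\alpha \cap A_\beta)$ of $A_\beta'$, and upgrading this to openness in $A_\beta'$ itself --- likewise your claim that each $A_\alpha$ lies in $\tau$ --- requires $f_\beta^{-1}(A_\alpha \cap A_\beta)$ to be open in $A_\beta'$. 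This is implicit in reading $\{A_\alpha\}$ as an ``open cover''; the paper silently uses the same fact when it asserts $f_\beta(V) \cap A_\alpha = f_\alpha(V')$ for some open $V' \subseteq A_\alpha'$, so it is not a gap relative to the paper's own standard, but it would be worth a sentence in either write-up.
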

\begin{proof}
    If such a topology on $X$ exists and the topology induced by $f_{\alpha}, f_\beta$ on $A_\alpha \cap A_\beta$ aren't the same, then $\exists U \subseteq A_\alpha \cap A_\beta$ so that $f_\alpha^{-1}(U)$ or $f_\beta^{-1}(U)$ aren't open, and both cases contradict $f_\alpha, f_\beta$ being homeomorphisms.
    
    Conversely if topology induced by $f_\alpha, f_\beta$ coincide on $A_\alpha \cap A_\beta$, then consider the topology on $X$ given by taking images of open sets under $f_\alpha$ as the subbasis. Suppose $U$ is an open set in $X$ and $U \subseteq A_\alpha$. Then we may write $U$ as:
    \begin{align*}
        U = \bigcup_j \bigcap_k f_{\beta_{j,k}}(U_{j,k}) = \bigcup_j \bigcap_k (f_{\beta_{j,k}}(U_{j,k}) \cap A_\alpha) = \bigcup_j \bigcap_k f_\alpha(V_{j,k}) = f_\alpha(V)
    \end{align*}
    Here the intersection is understood to be finite, and we used the fact that a set of the form $f_\beta(V)\cap A_\alpha$ can be rewritten as $f_\alpha(V')$ for some $V' \subseteq A_\alpha'$ by the hypothesis. Thus each open subset of $A_\alpha$ is an image of an open set under $f_\alpha$. Conversely every image of an open subset of $A_\alpha'$ is open by definition of the topology. Thus we're done.
\end{proof}

\medskip
\textbf{Example: Tangent Bundle.}

First we define tangent space. Given a smooth $n$-manifold $X$, the \emph{tangent space} $T_x X$ at $x \in X$ is defined as the set of all \emph{derivations}, which are maps $\mc E_{X,x} \rightarrow \RR$ that satisfy Leibniz rule $D(fg) = D(f)g(p) + D(g)f(p)$ (it admits an obvious real vector space structure). It can be shown that, given local coordinate $x$, the set $\{\frac{\partial}{\partial x_1}, \cdots \frac{\partial}{\partial x_n}\}$ is a basis of $T_x X$, and this gives a more vector-like intuition to a derivation. Note that definition of a tangent vector as a derivation is basis-free.

Given these tangent spaces, first let $TX = \bigsqcup_x T_xX$ set-theoretically. Then for each subset $U \subseteq X$ with $x: U \cong V \subseteq \RR^n$, let chart of $TX$ at $\pi^{-1}(U)$ be
\begin{align*}
    & \varphi_U : \pi^{-1}(U) \rightarrow U \times \RR^n \\
    & \varphi_U : v \mapsto (\pi(v), (a_1, \cdots a_n))
\end{align*}
where local expression of derivation $v$ is $v=\sum_{j=1}^n a_j \frac{\partial}{\partial x_j}|_{\pi(v)}$ under the local coordinate $x$. This makes $TX$ a manifold and the conditions that $TX \rightarrow X$ is a vector bundle structure is checked easily.

Let's compute the transition function. Suppose on $U \subseteq X$ and $x: U \rightarrow U' \subseteq \RR^n, y: U \rightarrow U'' \subseteq \RR^n$ be two local coordinates. Then for smooth function $f : U \rightarrow \RR$, 
\begin{align*}
    & \frac{\partial }{\partial x_j} f = \frac{\partial (f \circ x^{-1})}{\partial t_j} = \frac{\partial (f \circ y^{-1}) \circ (y \circ x^{-1})}{\partial t_j} = \sum_{k=1}^n \frac{\partial (y \circ x^{-1})_k}{\partial t_j} \frac{\partial (f \circ y^{-1})}{\partial t_k} = \left( \sum_{k=1}^n \frac{\partial (y \circ x^{-1})_k}{\partial t_j} \frac{\partial}{\partial y_k} \right) f
\end{align*}
which gives the relation:
\begin{align*}
    \sum_{j=1}^n a_j \frac{\partial}{\partial x_j} = \sum_{j=1}^n b_j \frac{\partial}{\partial y_j} \text{ where } \begin{bmatrix} \frac{\partial (y\circ x^{-1})_1}{\partial t_1} & \cdots & \frac{\partial (y\circ x^{-1})_1}{\partial t_n} \\
    \vdots & \ddots & \vdots \\ \frac{\partial (y\circ x^{-1})_n}{\partial t_1} & \cdots & \frac{\partial (y\circ x^{-1})_n}{\partial t_n}
    \end{bmatrix} \begin{bmatrix} a_1 \\ \vdots \\ a_n \end{bmatrix} = \begin{bmatrix} b_1 \\ \vdots \\ b_n \end{bmatrix}
\end{align*}

\medskip
\textbf{Example: Cotangent Bundle.}

Cotangent bundle is defined as the dual bundle to the tangent bundle: $T^*X := (TX)^*$. Each of the vector is thus a linear functional on the tangent space. We denote the dual basis to $\{\frac{\partial}{\partial x_1}, \cdots \frac{\partial}{\partial x_n}\}$ by $\{\d x_1, \cdots \d x_n\}$. To compute the transition function, we consider the general situation of map on dual vector space. Suppose $f \in \GL(V)$ for $V \cong \RR^n$ which gives $f^* \in \GL(V^*)$ given by $f^*(\varphi) = (\varphi \circ f)^{-1}$ (we take inverse because taking dual is contravariant). Suppose a basis for $V$ is given by $\{v_1, \cdots v_n\}$ and dual basis for $V^*$ is denoted $\{ v_1^*, \cdots v_n^*\}$. Suppose matrix of $f$ in this basis is $A=(a_{jk})_{jk}$ so that $f(v_j) = \sum_k a_{jk} v_k$.
\begin{align*}
    (f^*(v_j^*))^{-1}(v_k) = (v_j^* \circ f)(v_k)= v_j^* (\sum_{l=1}^n a_{kl} v_l) = a_{kj} \implies (f^*(v_j^*))^{-1} = \sum_{k=1}^n a_{kj} v_k^*
\end{align*}
Thus matrix for $f^*$ is thus given by $(A^T)^{-1}$. Due to this description, we get:
\begin{align*}
    \sum_{j=1}^n a_j \d x_j = \sum_{j=1}^n b_j \d y_j \text{ where }
    \begin{bmatrix} \frac{\partial (y\circ x^{-1})_1}{\partial t_1} & \cdots & \frac{\partial (y\circ x^{-1})_n}{\partial t_1} \\
    \vdots & \ddots & \vdots \\ \frac{\partial (y\circ x^{-1})_1}{\partial t_n} & \cdots & \frac{\partial (y\circ x^{-1})_n}{\partial t_n}
    \end{bmatrix}^{-1} \begin{bmatrix} a_1 \\ \vdots \\ a_n \end{bmatrix} = \begin{bmatrix} b_1 \\ \vdots \\ b_n \end{bmatrix}
\end{align*}

Smooth global sections of $\wedge^p T^*X$ are called differential $p$-forms. The set of differential $p$-forms is denoted by $\mc E^p(X)$. In fact, we will discuss mostly complex-coefficient smooth differential forms $\mc E^p(X)_\CC$, but often the $\CC$ subscript is dropped because we will be discussing the complex-coefficient forms most of the time.

\subsection{Almost Complex Structure and $\dol$-operator}

Suppose we are given a real vector space $V$ of dimension $n$ with an endomorphism $J$ such that $J^2 = -I$. $J$ is called a \emph{complex structure} on $V$. We may extend action of $J$ to the complexification $V_\CC := V \otimes_\RR \CC$ of $V$. This is to be thought as a generalization of multiplication by $i$. Eigenvalues of $J$ are $\pm i$, and using the condition $J^2 = -I$ with Jordan decomposition implies that there is an eigenspace decomposition $V_\CC \cong V^{1,0} \oplus V^{0,1}$ where $V^{1,0}$ is the $(+i)$-eigenspace and $V^{0,1}$ is the $(-i)$-eigensepace. Denote by $\wedge^{p,q}V$ the subspace of exterior algebra $\wedge V$ generated by $p$ elements from $V^{1,0}$ and $q$ elements from $V^{0,1}$. Then we get a bidegree decomposition:
$$\wedge V = \bigoplus_{r=0}^{2n}\bigoplus_{p+q=r} \wedge^{p,q} V$$

This construction has a differential-geometric version:
\begin{definition}
Suppose $X$ is a differentiable manifold of dimension $2n$. If there is a vector bundle isomorphism
$$J: T(X) \rightarrow T(X) $$
such that $J_x^2 = -I_x$ on each $x \in X$, then $J$ is said to be an \emph{almost complex structure} on $X$.
\end{definition}

A complex manifold has an almost complex structure; $J$ is given by multiplication by $i$. 

The decomposition above can be carried out here too, for $V = T^*_x X$. We then get a decomposition of complex-coefficient smooth differential forms into bidegree:
\begin{align*}
    \mc E^r(X)_\CC = \bigoplus_{p+q=r} \mc E^{p,q}(X)
\end{align*}
where $\mc E^{p,q}(X)$ are subbundles generated by $\wedge^{p,q} T^*_x(X)$.

Given a complex manifold with local coordinates $(z_1, \cdots z_n)$ and $z_j = x_j + i y_j$, we make the following definitions:
\begin{align*}
    & \frac{\dol}{\dol z_j} = \frac12 \left( \frac{\dol}{\dol x_j} - i\frac{\dol}{\dol y_j} \right), \frac{\dol}{\dol \bar z_j} = \frac12 \left( \frac{\dol}{\dol x_j} + i\frac{\dol}{\dol y_j} \right) \\
    & \d z_j := \d x_j + i \d y_j, \d \bar z_j := \d x_j - i \d y_j
\end{align*}
These definitions coincide with usual complex differentiation.

Note that $\{\d z_1, \cdots \d z_n\}$ forms a basis for $(T^*X)^{1,0}$ and $\{\d \bar{z}_1, \cdots \d \bar{z}_n\}$ forms a basis for $(T^*X)^{0,1}$.

Let $\pi_{p,q}: \mc E^r (X) \rightarrow \mc E^{p,q}(X)$

We define the operator $\dol, \bar \dol$ by
\begin{align*}
    \dol =& \pi_{p+1,q} \circ \d \\
    \bar\dol =& \pi_{p,q+1} \circ \d
\end{align*}
so that $\d = \dol + \bar \dol$.

If $(z_1, \cdots z_n)$ is a local coordinate, local expressions for $\dol, \bar\dol$ are given by
\begin{align*}
    \dol (\varphi_I \d z_I) &= \sum_{j=1}^n \frac{\dol \varphi_I}{\dol z_j} \d z_j \wedge \d z_I \\
    \bar\dol (\varphi_I \d z_I) &= \sum_{j=1}^n \frac{\dol \varphi_I}{\dol \bar{z}_j} \d \bar{z}_j \wedge \d z_I
\end{align*}

\subsection{Sheaf Theory}

\subsubsection{Basic Properties}

A sheaf is an assignment of data to each open subset of a topological space. The intuition is to think of the assigned data to be the set of all well-behaved functions assigned on that set. However, it can represent more general objects than functions, such as differential forms. In this article, we will be mostly concerned with the sheaf of sections of vector bundles. One advantage of using sheaf to describe vector bundles is that we can use the machinery of sheaf cohomology to derive nontrivial properties of sections of vector bundles.

Given a topological space $X$ and a category $\mc C$, a \emph{presheaf} on $X$ with values in $\mc C$ is a contravariant functor from the category of the open subsets of $X$ to $\mc C$. The category $\mc C$ is usually $\textbf{Set}, \textbf{Ab}, \textbf{Ring}$. A presheaf of (say) ring thus assigns, for each open set $U$, a ring $\mc F(U)$ and for each pair of open sets $U \subseteq V$, a map $\on{res}_{VU} : \mc F(V) \rightarrow \mc F(U)$ so that whenever $U \subseteq V \subseteq W$, $\on{res}_{VU} \circ \on{res}_{WV} = \on{res}_{WU}$. We also write $\on{res}_{VU}(s) = s|_U$, keeping with the intuition of `taking restriction'. An element of $\mc F(U)$ is called \emph{section} of $\mc F$ over $U$.

A \emph{sheaf} on $X$ is a presheaf that satisfies two additional conditions:
\begin{enumerate}
    \item (Identity) Given an open cover $\mc U = \{U_\alpha\}$ of $U \subseteq X$, and sections $s, s' \in \mc F(U)$, if $\forall \alpha, s|_{U_\alpha} = s'|_{U_\alpha}$, then $s=s'$.
    \item (Gluing) Given an open cover $\mc U = \{U_\alpha\}$ of $U \subseteq X$ and sections $s_\alpha \in \mc F(U_\alpha)$ such that $\forall \alpha, \beta, s_\alpha|_{U_{\alpha}\cap U_\beta} = s_\beta|_{U_{\alpha} \cap U_\beta}$, then there is an element $s \in \mc F(U)$ so that $\forall \alpha, s|_{U_\alpha} = s_\alpha$.
\end{enumerate}
For $\mc C = \textbf{Ab}, \textbf{Ring}$, these conditions can be written as exactness of the following chain complex:
\begin{align*}
    0 \rightarrow \mc F(U) \rightarrow \prod_\alpha \mc F(U_\alpha) \rightarrow \prod_{\alpha,\beta} \mc F(U_\alpha \cap U_\beta)
\end{align*}
where maps are given by restrictions.

The \emph{stalk} of a sheaf $\mc F$ at $x \in X$ is defined as:
$$\mc F_{x} := \lim_\rightarrow \mc F(U)$$
where the direct limit is taken over all open neighborhoods $U \supset x$ and the direct system is given by restriction maps. Here, two sections are identified iff they agree locally at $x$, so that this represents infinitesimal data of sections near $x$.

Given two presheaves $\mc F, \mc G$ on $X$ valued in $\mc C$, a \emph{morphism} (or map) between them $\varphi: \mc F \rightarrow \mc G$ is a natural transformation between functors $\mc F$ and $\mc G$. Explicitly, we assign map $\varphi_U: \mc F(U) \rightarrow \mc G(U)$ for each open $U \subseteq X$ which are compatible with restrictions: whenever $U\subseteq V, \varphi_U(s|_U) = (\varphi_V(s))|_U$. A map of presheaves induces map on stalks: $\varphi_x : \mc F_x \rightarrow \mc G_x$ given by $[f] \mapsto [\varphi_U(f)]$.

A map of presheaves $\varphi$ is said to be injective if for each open $U$, $\varphi_U$ is injective. $\varphi$ is said to be surjective if for each $x \in X$, the stalk-induced map $\varphi_x$ is surjective. $\varphi$ is isomorphism if it is invertible. Additionally, the following are true:
\begin{proposition}
    If $\varphi$ is a map of sheaves,
    \begin{enumerate}
        \item $\varphi$ is injective iff $\forall x, \varphi_x$ is injective.
        \item $\varphi$ is isomoprhism iff $\forall x, \varphi_x$ is isomorphism.
        \item $\varphi$ is isomorphism iff it is injective and surjective.
    \end{enumerate}
\end{proposition}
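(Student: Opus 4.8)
The plan is to prove the three claims in the order (1), (2), (3), since the later parts rest on the earlier ones. Throughout I will freely use that forming stalks is functorial: for a germ $[f]_x \in \mc F_x$ represented by $f \in \mc F(U)$ one has $\varphi_x([f]_x) = [\varphi_U(f)]_x$, and this is compatible with composition of sheaf maps. The only genuinely nontrivial input beyond bookkeeping will be the two sheaf axioms (identity and gluing).

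For claim (1), the forward implication uses only the presheaf structure. If every $\varphi_U$ is injective and $\varphi_x([f]_x) = 0$, then $\varphi_U(f)$ vanishes on some neighborhood $V$ of $x$; by compatibility with restriction $\varphi_V(f|_V) = \varphi_U(f)|_V = 0$, so injectivity of $\varphi_V$ forces $f|_V = 0$, that is $[f]_x = 0$. The converse is where the identity axiom enters: if every $\varphi_x$ is injective and $\varphi_U(s) = 0$, then $\varphi_x([s]_x) = [\varphi_U(s)]_x = 0$ gives $[s]_x = 0$ for every $x \in U$, so $s$ vanishes on a neighborhood of each point, and the identity axiom glues these vanishings to $s = 0$.

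For claim (2), the forward direction is immediate from functoriality: an inverse $\psi$ to $\varphi$ produces $\psi_x$ inverse to $\varphi_x$. The substantive direction is building a global inverse from pointwise isomorphisms, and this is where the main work lies. Since each $\varphi_x$ is injective, claim (1) already gives that every $\varphi_U$ is injective. To obtain surjectivity of $\varphi_U$, fix $t \in \mc G(U)$; for each $x \in U$, surjectivity of $\varphi_x$ yields a germ in $\mc F_x$ mapping to $[t]_x$, represented by some $s^{(x)} \in \mc F(V_x)$. By shrinking $V_x$ I can upgrade equality of germs to the honest equality $\varphi_{V_x}(s^{(x)}) = t|_{V_x}$ — this passage from germs to sections on a small enough open set is the key technical point. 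On an overlap $V_x \cap V_y$ both $s^{(x)}$ and $s^{(y)}$ map under $\varphi$ to $t$, so injectivity forces them to agree there; the gluing axiom then produces $s \in \mc F(U)$ with $s|_{V_x} = s^{(x)}$, and $\varphi_U(s)$ agrees with $t$ on each $V_x$, hence equals $t$ by the identity axiom. Thus each $\varphi_U$ is bijective, and one checks routinely that these inverses commute with restriction and assemble into a sheaf map $\psi$ inverse to $\varphi$.

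Finally, claim (3) is a formal consequence. If $\varphi$ is an isomorphism, each $\varphi_U$ is bijective hence injective, and by (2) each $\varphi_x$ is an isomorphism hence surjective, so $\varphi$ is injective and surjective in the stated senses. Conversely, if $\varphi$ is injective and surjective, then every $\varphi_U$ is injective, so by (1) every $\varphi_x$ is injective, while surjectivity gives every $\varphi_x$ surjective; hence each $\varphi_x$ is an isomorphism and (2) delivers that $\varphi$ is an isomorphism. I expect the only real obstacle in the whole proposition to be the germ-to-section refinement in the surjectivity half of (2); everything else is careful application of the two sheaf axioms.
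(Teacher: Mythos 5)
Your proposal is correct and follows essentially the same route as the paper's (much terser) proof: the identity axiom settles claim (1), claim (2)'s converse is proved by locally lifting germs to sections and gluing the local preimages (with injectivity from (1) ensuring agreement on overlaps), and claim (3) is formal from (1) and (2). You have simply supplied the details — including the germ-to-section shrinking step — that the paper leaves implicit.
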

\begin{proof}
    The first statement follows from the identity axiom of sheaves. For the second statement, the forward implication is straightforward and the converse is proven by gluing local preimages given by stalk surjectivity. The third statement follows from the first two.
\end{proof}

Given a presheaf $\mc F$, we can define its \emph{sheafification} as assignment of `compatible germs'. More precisely, we define:
\begin{align*}
    \mc F^\dagger(U) = \{\sigma: U \rightarrow \bigcup_x \mc F_x | \sigma(x) \in \mc F_x, \forall x \in U, \exists V, s \in \mc F(V): x \in V \subseteq U, \forall y \in V, s_y = \sigma(y) \}
\end{align*}
Note that an equivalent definition is to define a section as a collection of sections over an open cover that agree on overlaps, modulo having the same germs.

Sheafification can also be characterized using the following universal property: whenever there is a presheaf morphism $\mc F \rightarrow \mc G$ where $\mc G$ is a sheaf, this morphism factors through $\mc F \rightarrow \mc F^\dagger$. The fact that the sheafification satisfies this universal property is verified by gluing images of the compatible sections.

Given a morphism $\varphi: \mc F \rightarrow \mc G$ of sheaves, we define $\ker \varphi$ to be the assignment $U \mapsto \ker (\varphi(U))$ (with restriction well-defined due to $\varphi$ being a natural transformation) and $\on{im} \varphi$ to be the sheafification of the assignment $U \mapsto \on{im}(\varphi(U))$. We say that a sequence of sheaf morphisms $\mc F \xrightarrow{\alpha} \mc G \xrightarrow{\beta} H$ is exact at $\mc G$ if $\ker \beta = \on{im} \alpha$.

Given a sheaf of rings $\mc O_X$, we say that a sheaf of abelian groups $\mc F$ is a \emph{sheaf of $\mc O_X$-module} if $\forall U, \mc F(U)$ is a $\mc O_X(U)$-module and module action is compatible with restriction: $V \subseteq U, a \in \mc O_X(U), s \in \mc F(U) \implies (a \cdot s)|_V = a|_V \cdot s|_V$.

Given two sheaves of $\mc O_X$-modules $\mc F, \mc G$, let $U \mapsto \mc F(U) \otimes_{\mc O_X(U)} \mc G(U)$ define a presheaf denoted as $\mc F \otimes'_{\mc O_X} \mc G$. This is not generally a sheaf yet, so we define the \emph{tensor product} $\mc F \otimes_{\mc O_X} \mc G$ of sheaves $\mc F, \mc G$ to be the sheafification of $\mc F \otimes'_{\mc O_X} \mc G$.

Given a smooth vector bundle $E \rightarrow X$, we can define the sheaf of smooth sections of $E$, denoted $\mc E(E)$, to be the assignment 
$$\mc E: U \mapsto \{\text{smooth sections of $E$ over $U$}\}$$
with the literal restriction maps. Similarly we can define the sheaf of holomorphic sections of a holomoprhic vector bundle, which is denoted $\mc O(E)$.

A meromorphic function on a complex manifold is defined as a compatible collection of formal quotient of holomorphic functions: 
$$\mc M(U) := \{ \frac{f_\alpha}{g_\alpha} | f_\alpha, g_\alpha \in \mathcal O(U_\alpha), f_\alpha g_\beta = f_\beta g_\alpha \text{ on $U_\alpha \cap U_\beta$} \}_\alpha$$

\subsubsection{Sheaf cohomology}

In this section we assume that the topological space $X$ over which sheaves will be defined is paracompact.

We now define sheaf cohomology, a powerful tool that describes obstructions to lifting. When given a short exact sequence of sheaves, the corresopnding sequence of global sections loses right-exactness:
\begin{align*}
    & 0 \rightarrow \mc F \rightarrow \mc G \rightarrow \mc H \rightarrow 0 \text{ is exact} \\
    \implies & 0 \rightarrow \mc F(X) \rightarrow \mc G(X) \rightarrow \mc H(X) \text{ is exact}
\end{align*}
Indeed, as seen from the definition of surjective sheaf morphism, surjectivity of sheaf morphism $\mc G \rightarrow \mc H$ doesn't imply surjectivity of section-level morphisms like $\mc G(X) \rightarrow \mc H(X)$. Sheaf cohomology groups fill up this void:
\begin{align*}
    & 0 \rightarrow \mc F(X) \rightarrow \mc G(X) \rightarrow \mc H(X) \rightarrow H^1(X, \mc F) \rightarrow H^1(X, \mc G) \rightarrow H^1(X, \mc H) \rightarrow H^2(X, \mc F) \rightarrow \cdots \text{ is exact}
\end{align*}
and thus when we have $H^1(X, \mc F)=0$, the map $\mc G(X) \rightarrow \mc H(X)$ is always surjective.

We define sheaf cohomology using canonical resolution. Given a sheaf $\mc S$, define $\mc C^0(U)= \{f: U \rightarrow \bigcup_{x \in U} \mc S_x | \pi \circ f = \on{id}_U \}$. Then we get an injection $0 \rightarrow \mc S \rightarrow \mc C^0$, which extends to
$$0 \rightarrow \mc S \rightarrow \mc C^0 \rightarrow \mc F^1 \rightarrow 0$$
where $\mc F^1$ is defined as the quotient sheaf $\mc F^1 := \mc C^0 / \mc S$. Define also $\mc C^1 = \mc C^0(\mc F^1)$. Letting $\mc F^1$ take the place of $\mc S$, we continue the construction. Given $\mc F^i, \mc C^i$, define $\mc F^{i+1} = \mc C^i / \mc F^i, \mc C^{i+1} = \mc C^0(\mc C^i / \mc F^i)$. Then we get exact sequences:
\begin{align*}
    0 \rightarrow \mc S \rightarrow \mc C^0 \rightarrow \mc F^1 \rightarrow 0 \\
    0 \rightarrow \mc F^1 \rightarrow \mc C^1 \rightarrow \mc F^2 \rightarrow 0 \\
    0 \rightarrow \mc F^2 \rightarrow \mc C^2 \rightarrow \mc F^3 \rightarrow 0 \\
    \vdots
\end{align*}
and we can compose maps $\mc C^i \rightarrow \mc F^{i+1} \rightarrow \mc C^{i+1}$ to splice these short exact sequences into one long exact sequence:
\begin{align*}
    0 \rightarrow \mc S \rightarrow \mc C^0 \rightarrow \mc C^1 \rightarrow \mc C^2 \rightarrow \cdots
\end{align*}
and this is called the canonical resolution of sheaf $\mc S$. Cohomology of the complex $0 \rightarrow \mc C^*(X)$ is called the sheaf cohomology groups of $\mc S$. We denote the sheaf cohomology group at $\mc C^p$ by $H^p(X, \mc S)$.

This construction is rather abstract, but it turns out that we can use other resolutions to compute sheaf cohomology groups:

\begin{theorem}
    For a sheaf $\mc F$ over space $X$, suppose
    $$0 \rightarrow \mc F \rightarrow \mc A^0 \rightarrow \mc A^1 \rightarrow \cdots $$
    is an acyclic resolution; $\forall p>0, q\ge 0, H^p(X, \mc A^q)=0$. Then cohomology of the complex
    $$0 \rightarrow \mc A^0(X) \rightarrow \mc A^1(X) \rightarrow \cdots$$
    at $\mc A^p(X)$ is isomorphic to $H^p(X, \mc F)$.
\end{theorem}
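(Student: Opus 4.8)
The plan is to break the acyclic resolution into short exact sequences of sheaves and then run a dimension-shifting argument using the long exact sequence in sheaf cohomology quoted above. First I would introduce the kernel subsheaves $\mc Z^q := \ker(d : \mc A^q \to \mc A^{q+1})$. Exactness at $\mc A^0$ together with injectivity of $\mc F \to \mc A^0$ gives $\mc Z^0 \cong \mc F$. Because the resolution $0 \to \mc F \to \mc A^0 \to \mc A^1 \to \cdots$ is exact as a sequence of sheaves, we have $\on{im}(d : \mc A^q \to \mc A^{q+1}) = \ker(d : \mc A^{q+1} \to \mc A^{q+2}) = \mc Z^{q+1}$ (exactness is tested on stalks, so the sheaf image is literally the kernel subsheaf and no sheafification issue arises). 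This yields, for every $q \ge 0$, a short exact sequence of sheaves
$$0 \to \mc Z^q \to \mc A^q \xrightarrow{d} \mc Z^{q+1} \to 0.$$

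Next I would feed each of these into the long exact sequence of sheaf cohomology. Since the resolution is acyclic, $H^p(X, \mc A^q) = 0$ for all $p \ge 1$, so the connecting maps become isomorphisms: for every $p \ge 1$ and $q \ge 0$,
$$H^p(X, \mc Z^{q+1}) \cong H^{p+1}(X, \mc Z^q).$$
Iterating this dimension shift, starting from $\mc Z^0 \cong \mc F$, gives for each $p \ge 1$
$$H^p(X, \mc F) \cong H^{p-1}(X, \mc Z^1) \cong \cdots \cong H^1(X, \mc Z^{p-1}).$$

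It then remains to identify $H^1(X, \mc Z^{p-1})$ with the $p$-th cohomology of the global section complex. For this I would use the tail of the long exact sequence attached to $0 \to \mc Z^{p-1} \to \mc A^{p-1} \to \mc Z^p \to 0$, namely
$$\mc A^{p-1}(X) \to \mc Z^p(X) \to H^1(X, \mc Z^{p-1}) \to H^1(X, \mc A^{p-1}) = 0,$$
using $H^0(X, -) = (-)(X)$ and acyclicity of $\mc A^{p-1}$. Hence $H^1(X, \mc Z^{p-1}) \cong \on{coker}\bigl(\mc A^{p-1}(X) \to \mc Z^p(X)\bigr)$. Since taking global sections is left exact, $\mc Z^p(X) = \ker(d : \mc A^p(X) \to \mc A^{p+1}(X))$ is exactly the group of $p$-cocycles, and the image of $\mc A^{p-1}(X) \to \mc Z^p(X)$ is precisely the group of $p$-coboundaries $\on{im}(d : \mc A^{p-1}(X) \to \mc A^p(X))$; the cokernel is therefore the $p$-th cohomology of $0 \to \mc A^0(X) \to \mc A^1(X) \to \cdots$, as desired. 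The case $p = 0$ is handled separately and at once: left exactness of global sections gives $H^0(X, \mc F) = \mc F(X) = \ker(\mc A^0(X) \to \mc A^1(X))$, which is the degree-zero cohomology of the global complex.

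The argument is essentially formal once the long exact sequence is granted, so the main obstacle is bookkeeping rather than a deep idea: I must ensure the sheaf-level images genuinely coincide with the kernel subsheaves (so that the short exact sequences are exact on stalks), keep the index shifts straight through the iteration, and check that the edge and connecting maps in the final segment are really induced by $d$, so that the cokernel matches the global coboundaries. The one technical point to confirm carefully is that left exactness of the global-sections functor identifies $\mc Z^p(X)$ with the global cocycles; this is where paracompactness and the standard properties of the canonical resolution (that $H^0$ recovers global sections and that the long exact sequence exists) are silently invoked.
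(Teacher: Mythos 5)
Your proof is correct, and it is the standard dimension-shifting argument: setting $\mc Z^q = \ker(d:\mc A^q \to \mc A^{q+1})$, splicing the resolution into short exact sequences $0 \to \mc Z^q \to \mc A^q \to \mc Z^{q+1} \to 0$, using acyclicity of the $\mc A^q$ to turn the connecting maps into isomorphisms, and then reading off $H^1(X,\mc Z^{p-1})$ as the cokernel of $\mc A^{p-1}(X) \to \mc Z^p(X)$. The index bookkeeping is right ($H^p(X,\mc F) \cong H^{p-k}(X,\mc Z^k)$ while $p-k \ge 1$, terminating at $k = p-1$), the identification of $\mc Z^p(X)$ with global cocycles is justified because kernels of sheaf maps are computed section-wise, and the $p=0$ case is handled correctly by left exactness of global sections. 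One point of comparison: the paper states this theorem \emph{without proof} and moves directly to the soft/fine sheaf machinery, so your argument fills a genuine gap rather than duplicating one. It is also consistent in spirit with what the paper does elsewhere: the corollary on exactness of global sections of a sequence of soft sheaves uses exactly your decomposition ($\mc K_i = \ker(\mc F_i \to \mc F_{i+1})$ and splicing), while your proof additionally leans on two facts the paper only quotes, namely the existence of the long exact sequence in sheaf cohomology and the identification $H^0(X,-) = (-)(X)$ for the canonical resolution; flagging those as the inputs you take on faith, as you did in your closing paragraph, is exactly the right level of care.
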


The acyclicity requirement seems rather circular, but it turns out that there is a family of sheaves for which it is easy to show the sheaf cohomology groups to be vanishing.

If $\mc F$ is a sheaf over $X$, $\mc F$ is said to be \emph{soft} if for any closed set $K \subseteq X$, the map $\mc F(X) \rightarrow \mc F(K)$ is surjective, where $\mc F(K)$ is defined as the collection of germ-sections $K \rightarrow \bigcup_{x \in K} \mc F_x$ that are compatible at intersections. 

It can be shown that:
\begin{proposition}
Suppose $\mc F, \mc G, \mc H$ are sheaves over $X$ and $\mc F$ is soft. Then whenever
$$0 \rightarrow \mc F \rightarrow \mc G \rightarrow \mc H \rightarrow 0$$
is exact, the sequence
$$ 0 \rightarrow \mc F(X) \rightarrow \mc G(X) \rightarrow \mc H(X) \rightarrow 0 $$
is also exact.
\end{proposition}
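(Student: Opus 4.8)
The plan is to observe first that the sequence $0 \to \mc F(X) \to \mc G(X) \to \mc H(X)$ is already exact by the general remarks preceding the proposition: left-exactness of the global-section functor follows from the identity and gluing axioms together with the definitions of injective and surjective sheaf maps. Hence the entire content of the statement is the \emph{surjectivity} of the section-level map $\mc G(X) \to \mc H(X)$: given a global section $h \in \mc H(X)$, I must produce $g \in \mc G(X)$ with image $h$. Throughout I will identify $\mc F$ with its image in $\mc G$, so that $\mc F(U) = \ker(\mc G(U) \to \mc H(U))$ for every open $U$, and likewise for germ-sections over closed sets.

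Next I would produce local lifts. Since $\mc G \to \mc H$ is a surjection of sheaves, it is surjective on every stalk; so for each $x \in X$ there is an open neighborhood $U$ and a section $g \in \mc G(U)$ whose image is $h|_U$. Using paracompactness of $X$, I would refine the resulting open cover to a locally finite open cover $\{U_i\}_{i \in I}$ with local lifts $g_i \in \mc G(U_i)$ of $h|_{U_i}$, and shrink it to a locally finite closed cover $\{A_i\}_{i \in I}$ with $A_i \subseteq U_i$ and $\bigcup_i A_i = X$. The $g_i$ need not agree on overlaps, but the obstruction to agreement is exactly measured by $\mc F$: on $U_i \cap U_j$ the difference $g_i - g_j$ maps to $0$ in $\mc H$ and so lies in $\mc F(U_i \cap U_j)$.

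The heart of the argument is to assemble the $g_i$ into one global section, correcting them one piece at a time using softness. I would run a Zorn's-lemma (transfinite induction) argument on the poset of pairs $(B, g_B)$, where $B$ is a union of some of the $A_i$ and $g_B \in \mc G(B)$ is a germ-section lifting $h|_B$, ordered by extension; by local finiteness each such $B$ is closed, and chains have upper bounds because compatible germ-sections over an increasing union patch to a germ-section over the union. Given a partial lift over $B$ and an index $i$ with $A_i \not\subseteq B$, on the closed set $B \cap A_i$ the difference $g_B - g_i$ lies in $\mc F(B \cap A_i)$; by softness of $\mc F$ this germ-section extends to some $\tilde f \in \mc F(X)$. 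Replacing $g_i$ by $g_i + \tilde f$ (still a lift of $h$, since $\tilde f \in \ker(\mc G \to \mc H)$) gives a lift over $A_i$ agreeing with $g_B$ on $B \cap A_i$, and the definition of germ-sections over closed sets then lets us glue to a lift over $B \cup A_i$. A maximal element must have $B = X$ (else some $A_i$ could be adjoined), and since $X$ is closed in itself a germ-section over $X$ is an honest element of $\mc G(X)$, yielding the desired $g$.

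I expect the main obstacle to be the bookkeeping in the patching step rather than a single deep idea: verifying that germ-sections over closed sets restrict, extend, and glue as required, that local finiteness of $\{A_i\}$ keeps the transfinite limits well-defined sections (so the Zorn chain-bound hypothesis holds and unions stay closed), and that each softness correction leaves untouched the agreement already achieved on $B$. The one genuinely essential input is softness of $\mc F$, used precisely to extend $g_B - g_i \in \mc F(B \cap A_i)$ off the closed set $B \cap A_i$; without it the local lifts could not be reconciled into a global one.
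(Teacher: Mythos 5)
Your proof is correct, but there is nothing in the paper to compare it against: the paper states this proposition with only the phrase ``It can be shown that'' and supplies no argument, deferring implicitly to its references. Your route is in fact the standard one (essentially the proof in Wells, the paper's primary source): reduce to surjectivity of $\mc G(X) \rightarrow \mc H(X)$, choose local lifts over a locally finite open cover together with a closed shrinking $\{A_i\}$, and run a Zorn's lemma argument on partial lifts over closed unions of the $A_i$, invoking softness of $\mc F$ exactly once per extension step to correct the discrepancy $g_B - g_i \in \mc F(B \cap A_i)$. The verifications you flag as bookkeeping are indeed routine under the paper's standing assumption (made at the start of its sheaf cohomology subsection) that $X$ is paracompact: a union of any subfamily of a locally finite family of closed sets is closed, chains of compatible germ-sections glue to a germ-section over the union precisely because of local finiteness, and a germ-section over the closed set $X$ itself is an honest element of $\mc G(X)$ by the identity and gluing axioms. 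One step worth making explicit in a final write-up: a germ-section of $\mc G$ over a closed set whose pointwise values lie in the stalks of $\mc F$ is genuinely a germ-section of $\mc F$ (locally it is induced by a section of $\mc G$ whose image in $\mc H$ has vanishing germ at the given point, hence vanishes on a neighborhood, so the local representative lies in the kernel sheaf); this is what legitimizes applying softness to $g_B - g_i$.
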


Corollaries include:
\begin{corollary}
    Suppose that following sequence is exact:
    $$0 \rightarrow \mc F \rightarrow \mc G \rightarrow \mc H \rightarrow 0$$
    If $\mc F, \mc G$ are soft, then $\mc H$ is also soft.
\end{corollary}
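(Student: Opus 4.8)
The plan is to prove that for every closed set $K \subseteq X$ the restriction map $\mc H(X) \to \mc H(K)$ is surjective. Fix such a $K$ and a germ-section $s \in \mc H(K)$. I would split the argument into two moves: first lift $s$ to a section $g \in \mc G(K)$ with $\beta(g) = s$, where $\beta : \mc G \surjto \mc H$ denotes the given surjection; then extend $g$ to a global section of $\mc G$ and push it back down to $\mc H$.

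The second move is the routine one. Since $\mc G$ is soft, there is $\wt g \in \mc G(X)$ with $\wt g|_K = g$. Because $\beta$ is a morphism of sheaves it commutes with restriction, so the global section $\beta(\wt g) \in \mc H(X)$ satisfies $\beta(\wt g)|_K = \beta(\wt g|_K) = \beta(g) = s$. Thus $\beta(\wt g)$ is the desired extension of $s$, and softness of $\mc H$ follows once the first move is justified.

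The first move — surjectivity of $\mc G(K) \to \mc H(K)$ — is the crux, and it is precisely the content of the preceding Proposition but over the closed subspace $K$ rather than over $X$. I would deduce it by restricting the whole exact sequence to $K$: since $K$ is closed in the paracompact space $X$ it is itself paracompact, the stalks of $\mc F, \mc G, \mc H$ at points of $K$ are unchanged under restriction so that $0 \to \mc F|_K \to \mc G|_K \to \mc H|_K \to 0$ remains exact, and the sections over $K$ of the restricted sheaves agree with $\mc F(K), \mc G(K), \mc H(K)$ as defined. Applying the Proposition on $K$ then yields the surjection $\mc G(K) \surjto \mc H(K)$. Alternatively, one can re-run the patching argument of the Proposition verbatim with $X$ replaced by $K$: lift $s$ locally using surjectivity of $\beta$ on stalks, and correct the local lifts on overlaps by elements of $\mc F$, using softness of $\mc F$ together with a locally finite refinement to render the corrections globally consistent.

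The main obstacle is ensuring that the hypotheses of the Proposition genuinely transport to $K$. Concretely, I must check that the restriction $\mc F|_K$ is again soft: for a closed $K' \subseteq K$ (which is also closed in $X$), the surjection $\mc F(X) \surjto \mc F(K')$ guaranteed by softness of $\mc F$ factors through $\mc F(K)$, forcing $\mc F(K) \surjto \mc F(K')$. I would also confirm that it is softness of $\mc F$ that powers the surjectivity in the Proposition, so that $\mc G$ being soft is needed only for the extension step; in this way both hypotheses are used exactly once.
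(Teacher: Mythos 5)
Your proposal is correct and follows essentially the same route as the paper's proof: lift $s \in \mc H(K)$ to $\mc G(K)$ using exactness of the section sequence over $K$ (powered by softness of $\mc F$), extend to $\mc G(X)$ by softness of $\mc G$, and push forward to $\mc H(X)$. The only difference is that you carefully justify transporting the Proposition to the closed subspace $K$ (paracompactness of $K$, softness of $\mc F|_K$, preservation of exactness), details the paper compresses into the phrase ``is exact since $\mc F$ is soft.''
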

\begin{proof}
    For a closed set $K \subseteq X$, the sequence 
    $$0 \rightarrow \mc F(K) \rightarrow \mc G(K) \rightarrow \mc H(K) \rightarrow 0$$
    is exact since $\mc F$ is soft. Then a section $s \in \mc H(K)$ lifts to a section in $\mc G(K)$, which in turn lifts to a section in $\mc G(X)$ by softness of $\mc G$, and image of this section in $\mc H(X)$ is the desired lift of $s$.
\end{proof}

\begin{corollary}
    Given a sequence of soft sheaves
    $$0 \rightarrow \mc F_0 \rightarrow \mc F_1 \rightarrow \cdots $$
    the sequence of global sections is also exact:
    $$0 \rightarrow \mc F_0(X) \rightarrow \mc F_1(X) \rightarrow \cdots $$
\end{corollary}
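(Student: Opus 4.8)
The plan is to chop the long exact sequence into short exact sequences, apply the preceding Proposition to each, and then splice the resulting short exact sequences of global sections back together. Writing $d_i : \mc F_i \to \mc F_{i+1}$ for the maps, I would first introduce the kernel sheaves $\mc Z_i := \ker d_i \subseteq \mc F_i$. Exactness of the given sequence at $\mc F_{i+1}$ says precisely that $\on{im} d_i = \ker d_{i+1} = \mc Z_{i+1}$, so for each $i \ge 0$ the map $d_i$ factors as a surjection onto $\mc Z_{i+1}$ followed by the inclusion $\mc Z_{i+1} \hookrightarrow \mc F_{i+1}$, yielding a short exact sequence
$$0 \rightarrow \mc Z_i \rightarrow \mc F_i \xrightarrow{d_i} \mc Z_{i+1} \rightarrow 0. \qquad (\star_i)$$
Since the sequence starts with $0 \rightarrow \mc F_0$, exactness at $\mc F_0$ gives $\mc Z_0 = 0$, and hence $(\star_0)$ shows $\mc F_0 \cong \mc Z_1$.

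The crucial step is an induction showing that every $\mc Z_i$ is soft. The base case is immediate, as $\mc Z_0 = 0$ (equivalently $\mc Z_1 \cong \mc F_0$ is soft by hypothesis). For the inductive step I would apply the preceding Corollary to $(\star_i)$: both $\mc Z_i$ (inductive hypothesis) and $\mc F_i$ (hypothesis) are soft, so the quotient $\mc Z_{i+1}$ is soft as well. This propagates softness all the way up the sequence.

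With softness of every $\mc Z_i$ established, the Proposition applies to each $(\star_i)$ and produces a short exact sequence of global sections
$$0 \rightarrow \mc Z_i(X) \rightarrow \mc F_i(X) \xrightarrow{d_i} \mc Z_{i+1}(X) \rightarrow 0.$$
It then remains to splice these together. The map $\mc F_i(X) \rightarrow \mc F_{i+1}(X)$ factors as the surjection $\mc F_i(X) \rightarrow \mc Z_{i+1}(X)$ followed by the injection $\mc Z_{i+1}(X) \hookrightarrow \mc F_{i+1}(X)$; hence its kernel is the kernel of $\mc F_i(X) \rightarrow \mc Z_{i+1}(X)$, which by exactness of the global-section sequence of $(\star_i)$ equals the image of $\mc Z_i(X) \rightarrow \mc F_i(X)$. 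On the other hand, exactness of the global-section sequence of $(\star_{i-1})$ identifies this same image with the image of $\mc F_{i-1}(X) \rightarrow \mc F_i(X)$. These two identifications give exactness at $\mc F_i(X)$, and injectivity at the left end follows from $\mc F_0 \cong \mc Z_1$ together with $\mc Z_1(X) \hookrightarrow \mc F_1(X)$.

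The main obstacle is really bookkeeping rather than substance: one has to set up the kernel sheaves $\mc Z_i$ correctly and make sure that the sheaf-theoretic image $\on{im} d_i$ (a sheafification) genuinely coincides with $\mc Z_{i+1} = \ker d_{i+1}$, which is exactly the content of exactness of the original sequence. Once $(\star_i)$ is in hand the argument is driven entirely by the two previous results, with softness preserved by the Corollary and right-exactness on global sections supplied by the Proposition.
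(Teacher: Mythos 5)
Your proposal is correct and follows essentially the same route as the paper's proof: define the kernel sheaves, break the long exact sequence into short exact sequences $0 \to \mc Z_i \to \mc F_i \to \mc Z_{i+1} \to 0$, inductively propagate softness using the preceding Corollary, apply the Proposition to each short exact sequence, and splice the resulting global-section sequences. Your write-up is more careful about the bookkeeping (e.g.\ the identification $\mc F_0 \cong \mc Z_1$ and the two image identifications in the splicing step) than the paper's terse version, but the argument is the same.
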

\begin{proof}
    Let $\mc K_i = \on{ker}(\mc F_i \rightarrow \mc F_{i+1})$. Then we get short exact sequences
    $$0 \rightarrow \mc K_i \rightarrow \mc F_i \rightarrow \mc K_{i+1} \rightarrow 0$$
    Now we can induct using the previous corollary; $\mc K_1 = \mc F_0$ is soft and thus $\mc K_1$ is soft, and inductively we see that all $\mc K_i$ are soft, and thus for all $i$,
    $$0 \rightarrow \mc K_i(X) \rightarrow \mc F_i(X) \rightarrow \mc K_{i+1}(X) \rightarrow 0 $$
    is exact. Splicing these sequences, the claim is proven.
\end{proof}

\begin{corollary}
    A soft sheaf $\mc S$ is acyclic.
\end{corollary}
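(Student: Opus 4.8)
The plan is to push the canonical resolution of $\mc S$ through the corollary asserting that the global-sections functor is exact on an exact sequence of soft sheaves. By definition $H^p(X,\mc S)$ is the cohomology at $\mc C^p(X)$ of the complex $0 \to \mc C^0(X) \to \mc C^1(X) \to \cdots$ coming from the canonical resolution $0 \to \mc S \to \mc C^0 \to \mc C^1 \to \cdots$. If every term of this augmented resolution is soft, then that corollary makes $0 \to \mc S(X) \to \mc C^0(X) \to \mc C^1(X) \to \cdots$ exact; exactness at $\mc C^p(X)$ for $p \ge 1$ is precisely the statement $H^p(X,\mc S)=0$, whereas exactness at $\mc C^0(X)$ only re-proves the tautology $H^0(X,\mc S)=\mc S(X)$. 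Since $\mc S$ is soft by hypothesis, everything reduces to showing that the Godement sheaves $\mc C^i$ are soft.

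Thus the heart of the matter is a single lemma: for any sheaf $\mc G$ the Godement sheaf $\mc C^0(\mc G)$ is soft; as each term has the form $\mc C^i = \mc C^0(\mc F^i)$, this covers all of them at once. First I would observe that $\mc C^0(\mc G)$ is flasque: a section over an open set is a completely unconstrained assignment $x \mapsto \sigma(x) \in \mc G_x$, so the restriction $\mc C^0(\mc G)(X) \to \mc C^0(\mc G)(U)$ is surjective — one simply extends $\sigma$ by choosing arbitrary (say zero) germs at the remaining points. To pass from open to closed sets I would invoke paracompactness of $X$: a germ-section over a closed $K$ is, by the local-extendability built into the definition of $\mc C^0(\mc G)(K)$, induced near each point of $K$ by an honest section, and these local sections can be assembled into a single section over an open neighborhood $U \supseteq K$. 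Flasqueness then extends that neighborhood section to all of $X$, and restricting back to $K$ exhibits the required surjectivity $\mc C^0(\mc G)(X) \to \mc C^0(\mc G)(K)$.

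Granting the lemma, the augmented canonical resolution $0 \to \mc S \to \mc C^0 \to \mc C^1 \to \cdots$ is an exact sequence of soft sheaves, so the corollary on soft resolutions yields exactness of the global-sections complex and hence $H^p(X,\mc S)=0$ for all $p>0$, exactly as sketched in the first paragraph. I expect the only genuine friction to lie in the lemma, specifically in the passage from open to closed sets: one must reconcile the two descriptions of $\mc C^0(\mc G)(K)$ — as compatible germ-sections over $K$ versus as the restriction of a section defined on an open neighborhood of $K$ — and the gluing of the local extensions into one neighborhood section is where paracompactness (a shrinking or partition-of-unity argument) is actually used. Everything after the softness of the Godement sheaves is a direct citation of the corollaries already established above.
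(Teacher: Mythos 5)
Your proposal is correct and follows essentially the same route as the paper: both reduce the claim to showing every term of the canonical (Godement) resolution is soft and then invoke the corollary that an exact sequence of soft sheaves has exact global sections. The one difference is that the paper merely asserts each $\mc C^i$ is soft, whereas you actually justify it --- flasqueness of the discontinuous-sections sheaf plus the paracompactness argument extending a section over a closed set to an open neighborhood --- and your identification of that closed-to-open extension (a shrinking argument, not a partition of unity, since a general sheaf of abelian groups has no such thing) as the only real point of friction is exactly right.
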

\begin{proof}
    The canonical resolution is soft because each $\mc C^i$ is soft and we can inductively see that all quotients $\mc F^i = \mc C^{i-1}/\mc F^{i-1}$ are soft too. Thus we can use the previous corollary to see that all cohomology groups of the complex of global sections vanish.
\end{proof}

\begin{proposition}
    If $\mc M$ is a $\mc R$-module for soft sheaf of rings $\mc R$, then $\mc M$ is also soft.
\end{proposition}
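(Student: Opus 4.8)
The plan is to verify softness directly: fix a closed set $K \subseteq X$ and a germ-section $\sigma \in \mc M(K)$, and produce a global section $m \in \mc M(X)$ whose germ at each $x \in K$ equals $\sigma(x)$. The essential idea is that softness of the structure sheaf $\mc R$ lets me manufacture a global \emph{cutoff} element of $\mc R$ that equals the identity $1$ near $K$ and vanishes away from $K$; multiplying a local representative of $\sigma$ by this cutoff and then extending by zero will give the desired global section. Two preliminary reductions are needed before the cutoff can be applied.

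First I would extend $\sigma$ from $K$ to an honest section over an open neighborhood. By definition of $\mc M(K)$, each $x \in K$ has an open neighborhood $V_x$ and a section $s_x \in \mc M(V_x)$ with $(s_x)_y = \sigma(y)$ for all $y \in V_x \cap K$. For two such representatives the locus $\{y : (s_x)_y = (s_{x'})_y\}$ is open (this follows from the definition of germs together with the identity axiom) and contains $K \cap V_x \cap V_{x'}$. Using paracompactness of $X$ I would pass to a locally finite refinement and shrink the $V_x$ so that the shrunken representatives actually agree on overlaps, and then glue them by the sheaf axioms to a single section $s \in \mc M(U)$ over an open set $U \supseteq K$ with $s_x = \sigma(x)$ for $x \in K$.

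Next I would build the cutoff. Since $U$ is open, $X \sm U$ is closed and disjoint from $K$, so $K \cup (X \sm U)$ is closed, and the assignment $x \mapsto 1 \in \mc R_x$ for $x \in K$ and $x \mapsto 0 \in \mc R_x$ for $x \in X \sm U$ defines an element of $\mc R(K \cup (X\sm U))$ (compatibility at intersections is vacuous since the two closed pieces are disjoint, and locally each value is represented by the constant sections $1$ and $0$). Softness of $\mc R$ then yields $\tilde r \in \mc R(X)$ restricting to this germ-section, so that $\tilde r_x = 1$ for $x \in K$ and $\tilde r_x = 0$ for every $x \in X \sm U$. The latter forces the closed set $S := \{x : \tilde r_x \neq 0\}$ to be contained in $U$.

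Finally I would produce $m$ by a second gluing. On the open set $U$ the module action gives $\tilde r|_U \cdot s \in \mc M(U)$, and on the open set $X \sm S$ I take the zero section; on the overlap $U \sm S$ both have germ $0$ (because $\tilde r$ has germ $0$ there), so they agree and glue to a global $m \in \mc M(X)$. For $x \in K$ one computes $m_x = \tilde r_x \cdot s_x = 1 \cdot \sigma(x) = \sigma(x)$, so $m$ restricts to $\sigma$ on $K$, proving surjectivity of $\mc M(X) \rightarrow \mc M(K)$ and hence softness of $\mc M$. I expect the main obstacle to be the neighborhood-extension step: reconciling local representatives that agree only as germs along $K$ requires the paracompactness-based shrinking argument and some careful bookkeeping, whereas the cutoff construction and the final extension-by-zero are formal once softness of $\mc R$ is in hand.
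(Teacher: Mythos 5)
Your proof is correct and follows essentially the same route as the paper's: extend the germ-section to an honest section over an open neighborhood $U \supseteq K$, use softness of $\mc R$ to manufacture a cutoff section equal to $1$ on $K$ and $0$ on $X \setminus U$, and extend the product by zero. You have simply filled in the details (the paracompactness-based neighborhood extension and the final gluing with the zero section off the support of the cutoff) that the paper's terse proof leaves implicit.
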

\begin{proof}
    For a closed set $K$, consider $s \in \mc M(K)$. Its definition extends to an open neighborhood of $K$, say $U$. Now consider a section $\rho \in \mc R(K \cup U^c)$ which is defined to be 1 at $K$ and 0 at $U^c$. Due to softness of $\mc R$, $\rho$ extends to all of $X$ and $\rho \cdot s$ defines an extension of $s$ to $X$.
\end{proof}

Examples of soft sheaves include the sheaf of differentiable singular $p$-cochains $\mc S_\infty^p(X)$; $\mc S_\infty^0(X) = \mc C^0(X,\RR)$ is a soft sheaf and $\mc S_\infty^p(X)$ is a sheaf of $\mc S_\infty^0(X)$-modules.

We also define another type of sheaves called \emph{fine} sheaves which turn out to be soft too. These are sheaves with \emph{partition of unity}; for every locally finite open cover $\{U_i \}_{i \in I}$, we have $\{ \eta_i: \mc F \rightarrow \mc F \}_{i \in I}$ that satisfy:
\begin{enumerate}
    \item $\sum_i \eta_i \equiv 1$; sum well-defined due to local finiteness
    \item Stalk maps $(\eta_i)_x \equiv 0$ at a neighborhood of complement of $U_i$.
\end{enumerate}

\begin{proposition}
    Over a paracompact space $X$, fine sheaves are soft.
\end{proposition}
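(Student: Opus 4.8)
The plan is to use the partition-of-unity maps $\{\eta_i\}$ furnished by fineness as a sheaf-theoretic substitute for a smooth partition of unity: I will extend local representatives of a section over $K$ by zero and then recombine them. Fix a closed set $K \subseteq X$ and a section $s \in \mc F(K)$. By the definition of $\mc F(K)$ as a compatible family of germs, every point of $K$ has an open neighborhood on which some genuine section of $\mc F$ induces the same germs as $s$ at all nearby points of $K$. Adjoining $U_0 := X \setminus K$ (open since $K$ is closed, and carrying the zero section), these neighborhoods form an open cover of $X$. Paracompactness lets me pass to a locally finite open refinement, and restricting the chosen representatives along the refinement map I may assume I have a locally finite open cover $\{U_i\}_{i \in I}$ of $X$ together with sections $s_i \in \mc F(U_i)$ satisfying $(s_i)_y = s_y$ for every $y \in U_i \cap K$, with $s_i = 0$ whenever $U_i \cap K = \varnothing$.

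For this locally finite cover, fineness provides endomorphisms $\eta_i : \mc F \to \mc F$ with $\sum_i \eta_i = \mathrm{id}$ and with $(\eta_i)_x = 0$ on an open neighborhood $N_i$ of $X \setminus U_i$. The first step is to extend each $\eta_i(s_i) \in \mc F(U_i)$ by zero to a global section. Since $U_i \cup N_i = X$ and $\eta_i(s_i)$ has vanishing germ at every point of $U_i \cap N_i$ (because $\eta_i$ itself vanishes there on stalks), the identity axiom forces $\eta_i(s_i)|_{U_i \cap N_i} = 0$; hence $\eta_i(s_i)$ on $U_i$ and the zero section on $N_i$ agree on the overlap and glue, by the gluing axiom, to a section $\sigma_i \in \mc F(X)$ supported inside $U_i$.

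Next I set $\sigma := \sum_i \sigma_i$. Because the $\sigma_i$ are supported in the members of a locally finite cover, only finitely many are nonzero near any point, so the sum is a well-defined global section of $\mc F$. It remains to check that $\sigma$ restricts to $s$ on $K$, which I verify on germs: for $x \in K$, any index $i$ with $(\eta_i)_x \neq 0$ satisfies $x \in U_i$ (as $\eta_i$ vanishes off $U_i$), whence $(s_i)_x = s_x$; therefore
\begin{align*}
    \sigma_x = \sum_i (\eta_i)_x (s_i)_x = \Big(\sum_i (\eta_i)_x\Big) s_x = s_x,
\end{align*}
using $\sum_i \eta_i = \mathrm{id}$. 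Thus $\sigma \in \mc F(X)$ extends $s$, and $\mc F$ is soft.

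I expect the main obstacle to be the extend-by-zero construction of $\sigma_i$: one must use precisely the vanishing of $\eta_i$ on a neighborhood of the complement of $U_i$ to legitimately glue $\eta_i(s_i)$ with the zero section, and one must confirm that the resulting sum over $i$ is locally finite so that $\sigma$ is genuinely a section. The final germ computation, though short, hinges on the same support condition through the implication $(\eta_i)_x \neq 0 \Rightarrow x \in U_i$, which is exactly what makes the local representatives $s_i$ interchangeable with $s$ at points of $K$.
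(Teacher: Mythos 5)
Your proof is correct and follows essentially the same route as the paper's: cover $K$ by local representatives of the germ-section, adjoin $K^c$, refine to a locally finite cover by paracompactness, apply the fineness partition of unity $\{\eta_i\}$, extend each $\eta_i(s_i)$ by zero using the vanishing of $\eta_i$ near $X \setminus U_i$, and sum. Your version is in fact more complete than the paper's, since you verify on stalks that the resulting global section restricts to $s$ on $K$ (using $(\eta_i)_x \neq 0 \Rightarrow x \in U_i$ and $\sum_i \eta_i = \mathrm{id}$), a check the paper leaves implicit.
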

\begin{proof}
    Let $\mc F$ be a fine sheaf. Suppose $K \subseteq X$ is closed and take a section $s \in \mc F(U)$ which is defined as a germ-section. Then we have open cover $\{U_i\}_{i \in I}$ of $K$ with sections $s_i \in \mc F(U_i)$ such that $(s_i)_x = s(x)$. Let $U_0 = K^c$ and refine the cover $\{U_0 \} \cup \{U_i\}_{i \in I}$ of $X$ into a locally finite cover using paracompactness. We can now apply fineness axiom to find partition of unity $\{ \eta_i \}$ subject to $\{U_0 \} \cup \{U_i\}_{i \in I}$ and find $\eta(U_i)(s_i) \in \mc F(U_i)$, which is zero at a neighborhood of $U_i^c$ and thus we can glue it with zero section to find $\tilde s_i$. Now by adding all $\tilde s_i$, we get a lift of $s$.
\end{proof}

Examples of fine sheaves include $\mc E^r$ and $\mc E^{p,q}$ over (almost complex) manifold $X$ where scalar multiplication by the usual partition of unity defines the partition of unity associated with fine sheaves. These are therefore soft sheaves too.

From above discussions, we obtain following two important theorems:

\begin{theorem}[de Rham]
    For a differentiable manifold $X$, singular cohomology is isomorphic to the de Rham cohomology.
\end{theorem}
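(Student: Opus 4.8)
The plan is to realize both cohomology theories as the sheaf cohomology $H^p(X,\RR)$ of the constant sheaf $\RR$, using the acyclic resolution theorem proved above. In each case the bridge is a resolution of $\RR$ by acyclic sheaves whose complex of global sections recovers the relevant cohomology; once both cohomologies are identified with $H^p(X,\RR)$, they are identified with each other.

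First I would treat the de Rham side. The exterior derivative gives a complex of sheaves
$$0 \to \RR \to \mc E^0 \to \mc E^1 \to \mc E^2 \to \cdots,$$
where $\RR \to \mc E^0$ is the inclusion of locally constant functions. Exactness is checked on stalks, and there it is precisely the Poincar\'e lemma: on a star-shaped (e.g.\ small ball) neighborhood every closed $p$-form with $p \ge 1$ is exact, and the kernel of $\d$ on functions is the locally constant functions. Granting exactness, I recall that the sheaves $\mc E^p$ are fine, hence soft, hence acyclic. Thus the displayed sequence is an acyclic resolution of $\RR$, and the acyclic resolution theorem identifies the cohomology of
$$0 \to \mc E^0(X) \to \mc E^1(X) \to \cdots$$
at $\mc E^p(X)$ with $H^p(X, \RR)$. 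Since the cohomology of this complex of global smooth forms is by definition the de Rham cohomology $H^p_{\mathrm{dR}}(X)$, we obtain $H^p_{\mathrm{dR}}(X) \cong H^p(X, \RR)$.

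Next I would run the identical argument for singular cohomology. The sheaves $\mc S^p_\infty$ of differentiable singular $p$-cochains, together with the coboundary, form an augmented complex
$$0 \to \RR \to \mc S^0_\infty \to \mc S^1_\infty \to \cdots,$$
which is again exact on stalks because on a contractible neighborhood the smooth singular cohomology vanishes in positive degrees and reduces to the constants in degree zero. These sheaves were noted above to be soft, hence acyclic, so this is another acyclic resolution of $\RR$; the acyclic resolution theorem gives that the cohomology of $0 \to \mc S^0_\infty(X) \to \mc S^1_\infty(X) \to \cdots$ computes $H^p(X, \RR)$, and the cohomology of this global complex is the differentiable singular cohomology of $X$. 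To conclude I would reconcile differentiable singular cohomology with the usual continuous singular cohomology via the standard smooth-approximation comparison: every continuous singular chain is homotopic to a smooth one, and the inclusion of smooth chains induces an isomorphism on cohomology. Then both theories equal $H^p(X,\RR)$ and hence are isomorphic.

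I expect the main obstacle to be the exactness of the two resolutions at the stalk level. For the de Rham complex this is the Poincar\'e lemma, which requires constructing an explicit homotopy (integration) operator on a star-shaped domain; for the singular complex the local acyclicity is more delicate, resting on the vanishing of the singular cohomology of contractible neighborhoods and the passage to germs. The comparison between smooth and continuous singular cochains is a further technical point that must be addressed for the statement as phrased, though it does not use any sheaf-theoretic machinery.
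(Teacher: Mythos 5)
Your proof is correct and follows essentially the same route as the paper: both realize de Rham and (differentiable) singular cohomology as the sheaf cohomology of the constant sheaf $\RR$ via the two acyclic resolutions $0 \to \RR \to \mc E^*$ and $0 \to \RR \to \mc S_\infty^*$, using fineness/softness to establish acyclicity. The only differences are cosmetic: you fill in the stalk-exactness and smooth-versus-continuous comparison points that the paper glosses over, while the paper additionally exhibits the integration map $\omega \mapsto (\sigma \mapsto \int_\sigma \omega)$ as an explicit chain map realizing the isomorphism, which your abstract identification does not need.
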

\begin{proof}
    We have two resolutions:
    \begin{align*}
        & 0 \rightarrow \RR \rightarrow \mc E^* \\
        & 0 \rightarrow \RR \rightarrow \mc S_\infty^*
    \end{align*}
    which are associated by a map of chain complexes:
    \begin{align*}
        & \mc E^*(U) \rightarrow \mc S_\infty^p(U) \\
        & \omega \mapsto (\sigma \mapsto \int_\sigma \omega)
    \end{align*}
    whose commutativity as chain map is ensured by Stokes' theorem. The sheaves $\mc E^*$ are fine, thus soft, and thus acyclic, and sheaves $\mc S_\infty^*$ are soft, and thus acyclic. Thus the cohomology defined by both of these resolutions are isomorphic to the sheaf cohomology of the constant sheaf $\RR$.
\end{proof}

\begin{theorem}[Dolbeault]
    For a complex manifold $X$, cohomology of the sheaves of differential forms can be computed as follows:
    \begin{align*}
        H^q(X,\Omega^p) \cong H^q( \mc E^{p,*}(X) )
    \end{align*}
\end{theorem}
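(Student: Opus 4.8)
The plan is to realise $H^q(X,\Omega^p)$ through an acyclic resolution of the sheaf $\Omega^p$ of holomorphic $p$-forms, so that the acyclic resolution theorem proved above identifies it with the cohomology of the complex of global smooth forms under $\bar\dol$. The candidate resolution is
\begin{align*}
    0 \rightarrow \Omega^p \rightarrow \mc E^{p,0} \xrightarrow{\bar\dol} \mc E^{p,1} \xrightarrow{\bar\dol} \mc E^{p,2} \rightarrow \cdots
\end{align*}
where $\mc E^{p,q}$ is the sheaf of smooth $(p,q)$-forms and the first map is the inclusion of holomorphic $p$-forms into smooth $(p,0)$-forms. That this is a complex follows from $\bar\dol^2 = 0$, which is a consequence of $\d^2 = 0$ together with the bidegree decomposition $\d = \dol + \bar\dol$.

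First I would verify exactness at $\mc E^{p,0}$. Using the local expression for $\bar\dol$ recorded above, a smooth form $\varphi = \sum_{|I|=p} \varphi_I\, \d z_I$ satisfies $\bar\dol \varphi = \sum_{j,I} \frac{\dol \varphi_I}{\dol \bar z_j}\, \d\bar z_j \wedge \d z_I = 0$ precisely when every coefficient $\varphi_I$ satisfies the Cauchy--Riemann equations, i.e.\ is holomorphic. Hence the kernel of $\bar\dol$ on $(p,0)$-forms is exactly $\Omega^p$, as required for exactness at the bottom of the resolution.

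The crux is exactness at $\mc E^{p,q}$ for $q \geq 1$: this is the $\bar\dol$-Poincar\'e (Dolbeault--Grothendieck) lemma, asserting that a $\bar\dol$-closed $(p,q)$-form is locally $\bar\dol$-exact. As this is a statement about stalks, it suffices to solve $\bar\dol \eta = \psi$ on a small polydisc whenever $\bar\dol \psi = 0$. I would first treat the single-variable inhomogeneous equation $\dol f / \dol \bar z = g$ on a disc, where the generalized Cauchy integral (Cauchy--Pompeiu formula)
\begin{align*}
    f(z) = \frac{1}{2\pi i}\int \frac{g(\zeta)}{\zeta - z}\, \d\zeta \wedge \d\bar\zeta
\end{align*}
furnishes a smooth solution. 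One then inducts on the largest index $k$ such that $\d\bar z_k$ occurs in $\psi$: grouping $\psi$ by its $\d\bar z_k$-components, the condition $\bar\dol \psi = 0$ forces the coefficients to be holomorphic in $z_{k+1},\dots,z_n$, and applying the one-variable solution operator in the $\bar z_k$-direction (with a cutoff to keep the domain under control) removes all $\d\bar z_k$ terms modulo a $\bar\dol$-exact form, lowering $k$. This inductive solvability, together with the attendant domain-shrinking and smoothness bookkeeping, is the main obstacle; everything else is formal.

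Finally, the sheaves $\mc E^{p,q}$ are fine, as noted above (scalar multiplication by a smooth partition of unity supplies the required endomorphisms), hence soft, hence acyclic by the corollaries established earlier. The displayed sequence is therefore an acyclic resolution of $\Omega^p$, so the acyclic resolution theorem shows that the cohomology of
\begin{align*}
    0 \rightarrow \mc E^{p,0}(X) \xrightarrow{\bar\dol} \mc E^{p,1}(X) \xrightarrow{\bar\dol} \cdots
\end{align*}
at the $q$-th spot is isomorphic to $H^q(X,\Omega^p)$. Since that cohomology is exactly $H^q(\mc E^{p,*}(X))$ by definition, the proof is complete.
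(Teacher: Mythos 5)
Your proposal is correct and follows essentially the same route as the paper: the paper's proof consists of the single observation that $0 \rightarrow \Omega^p \rightarrow \mc E^{p,*}$ is a soft (fine) resolution, hence acyclic, and then invokes the acyclic resolution theorem. The only difference is that you also verify what the paper leaves implicit — exactness of the resolution, namely that $\ker \bar\dol = \Omega^p$ in degree zero and the $\bar\dol$-Poincar\'e lemma in positive degrees — which is a filling-in of detail rather than a divergence of method.
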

\begin{proof}
    The resolution
    $$0 \rightarrow \Omega^p \rightarrow \mc E^{p,*}$$
    is soft and thus acyclic.
\end{proof}

Using the fact that tensoring by locally free sheaf is an exact functor, we also see the following generalization:
\begin{theorem}
    For a complex manifold $X$ and a holomorphic vector bundle $E$ over it, cohomology of the sheaves of differential forms with coefficients in $E$ can be computed as follows:
    \begin{align*}
        H^q(X,\Omega^p(E)) \cong H^q( \mc E^{p,*}(X,E) )
    \end{align*}
\end{theorem}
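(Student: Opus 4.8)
The plan is to carry over the proof of the Dolbeault theorem essentially verbatim, tensoring the Dolbeault resolution by the sheaf $\mc O(E)$ of holomorphic sections of $E$. I would begin with the resolution $0 \to \Omega^p \to \mc E^{p,*}$ used above, whose differentials are $\bar\dol$, and first record the crucial point that $\bar\dol$ is $\mc O_X$-linear: for a holomorphic function $f$ and a smooth $(p,q)$-form $\omega$ we have $\bar\dol(f\omega) = \bar\dol f \wedge \omega + f\,\bar\dol\omega = f\,\bar\dol\omega$, since $\bar\dol f = 0$. Hence the Dolbeault resolution is an exact complex of sheaves of $\mc O_X$-modules with $\mc O_X$-linear maps, which is exactly what one needs in order to tensor it over $\mc O_X$.

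Since $E$ is holomorphic it is locally trivial, so $\mc O(E)$ is a locally free $\mc O_X$-module of finite rank $r$; in particular each stalk $\mc O(E)_x$ is free, hence flat, over $\mc O_{X,x}$. I would then apply the functor $(-)\otimes_{\mc O_X}\mc O(E)$ to the resolution. Exactness is verified on stalks, using $(\mc F \otimes_{\mc O_X} \mc O(E))_x \cong \mc F_x \otimes_{\mc O_{X,x}} \mc O(E)_x$ together with flatness of $\mc O(E)_x$. This produces a resolution
$$0 \to \Omega^p(E) \to \mc E^{p,*}(E),$$
where $\Omega^p(E) := \Omega^p \otimes_{\mc O_X}\mc O(E)$, and where the identification $\mc E^{p,q}\otimes_{\mc O_X}\mc O(E) \cong \mc E^{p,q}(E)$ is checked locally: over a trivializing open set $\mc O(E)\cong \mc O_X^{\oplus r}$, so the tensor product is $(\mc E^{p,q})^{\oplus r} = \mc E^{p,q}(E)$, and these local isomorphisms glue via the holomorphic transition functions of $E$. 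The induced differential $\bar\dol \otimes \on{id}$ is precisely the $\bar\dol$-operator on $E$-valued forms.

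To finish I would show that each $\mc E^{p,q}(E)$ is acyclic. As a sheaf of modules over the soft (indeed fine) sheaf of rings $\mc E_X$ of smooth functions, it is soft by the earlier proposition that modules over a soft sheaf of rings are soft, and soft sheaves are acyclic. The acyclic resolution theorem then identifies $H^q(X, \Omega^p(E))$ with the $q$-th cohomology of the global-sections complex $\mc E^{p,*}(X,E)$, which is the assertion. I expect the only step needing genuine care to be the exactness of the tensored sequence: this rests on flatness of $\mc O(E)$ together with the stalkwise description of the sheaf tensor product, and one must remember that $\otimes_{\mc O_X}$ involves a sheafification whose effect is only transparent at the level of stalks.
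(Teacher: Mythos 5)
Your proposal is correct and follows essentially the same route as the paper, which proves this theorem in one line by tensoring the Dolbeault resolution with the locally free sheaf $\mc O(E)$ and invoking exactness of that operation. Your write-up simply supplies the details the paper leaves implicit: $\mc O_X$-linearity of $\bar\dol$, stalkwise flatness, the local identification $\mc E^{p,q}\otimes_{\mc O_X}\mc O(E) \cong \mc E^{p,q}(E)$, and softness (hence acyclicity) of the resulting sheaves.
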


\subsubsection{\v Cech cohomology}

\emph{\v Cech cohomology} groups can also be defined, and they are equal to the sheaf cohomology groups when the underlying space is paracompact.

Given an open cover $\mc U = \{ U_\alpha \}$, let a \emph{$k$-simplex} be an intersection $U_{\alpha_1} \cap \cdots cap U_{\alpha_k}$. This terminology makes sense if we `draw' a simplicial complex over a topological space where we draw a vertex for each open set, a line for each nonempty intersection, a triangle for each nonempty triple-intersection, and so on. Such a simplicial complex is called the \emph{nerve} of the cover, and the nerve theorem tells us that when all intersections of the cover are contractible and the underlying space is paracompact, homotopy type of the nerve is the same as the original space. 

We let the \emph{$k$-cochain group} $C^k(\mc U, \mc F)$ to be a map that assigns to each $k$-cochain $\sigma = U_{\alpha_1} \cap \cdots \cap U_{\alpha_k}$ an element of $\mc F(\sigma)$. 

Define \emph{boundary} of a $k$-simplex $\sigma = U_{\alpha_1} \cap \cdots \cap U_{\alpha_k}$ by:
\begin{align*}
    \partial \sigma := \sum_{j=1}^k (-1)^j \sigma_j \text{ where } \sigma_j := \bigcap_{l \neq j} U_{\alpha_l}
\end{align*}
and also define the \emph{coboundary} of a $k$-cochain to be the following $(k+1)$-cochain:
\begin{align*}
    \delta \varphi : \sigma \mapsto \sum_{j=1}^k (-1)^j \varphi(\sigma_j)
\end{align*}
The \emph{\v Cech cohomology} of $\mc F$ for $\mc U$ is defined as the cohomology of the chain complex $C^\bullet(\mc U, \mc F)$, and is denoted $H^\bullet(\mc U, \mc F)$. The \v Cech cohomology of $\mc F$ is defined as the direct limit of the cohomology over open covers:
$$H^k(X, \mc F) := \lim_\rightarrow H^k(\mc U, \mc F)$$
where the direct system is given by refinement-induced maps on cohomology.

\subsection{Metric, Connection, Curvature and Chern Class}

In this section, we will be concerned with various constructions on a complex vector bundle.

\begin{definition}
    A \emph{frame} on a complex vector bundle $E\rightarrow X$ at $x \in X$ is a finite set of sections $\{ e_1, \cdots e_r\} \subset \mc E(U,E)$ such that $\forall y \in U, \{ e_1(y), \cdots e_r(y)\}$ is a basis for $E_y$.
\end{definition}

If $A : U \rightarrow \GL(\CC^k)$ is smooth, we can define change of frame of $f$ induced by $A$:
\begin{align*}
    f \mapsto fA, \begin{bmatrix} e_1 & \cdots & e_r \end{bmatrix} \mapsto \begin{bmatrix} e_1 & \cdots & e_r \end{bmatrix} \begin{bmatrix} g_{11} & \cdots & g_{1r} \\ \vdots & \ddots & \vdots \\ g_{r1} & \cdots & g_{rr} \end{bmatrix}
\end{align*}

Existence of frame over an open set is equivalent to the vector bundle being trivial there; a local trivialization immediately gives a frame by pullback and whenever we have a frame $\{e_j\}$ over $U$, we can define $U \times \CC^k \rightarrow \pi^{-1}(U)$ by $(x,v) \mapsto (e_1(x) v_1 + \cdots e_k(x) v_k)$. For a section $\xi \in \mc E(U,E)$ and frame $f$ of $E$ over $U$, we can write
\begin{align*}
    \xi(f) := \begin{bmatrix} \xi^1(f) \\ \vdots \\ \xi^r(f) \end{bmatrix}, f \cdot \xi(f) = \begin{bmatrix} e_1 & \cdots & e_r \end{bmatrix} \begin{bmatrix} \xi^1(f) \\ \vdots \\ \xi^r(f) \end{bmatrix} = \xi
\end{align*}

As above, we will frequently think of $f$ as a row vector $f = \begin{bmatrix} e_1 & \cdots & e_r \end{bmatrix}$.

We have the following change of frame formula:
\begin{proposition}
For $\xi \in \mc E(V,E)$, a frame $f$ of $E$ over $V \subseteq U$ and a change of frame $A: U \rightarrow \GL(\CC^k)$,
$$\xi(fA) = A^{-1} \xi(f)$$
\end{proposition}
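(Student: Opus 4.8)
The plan is to deduce the formula directly from the defining property of the coordinate column vector, namely that $\xi(g)$ is the unique column satisfying $g\cdot\xi(g)=\xi$ for a frame $g$, together with the pointwise invertibility of $A$. No cohomology or analysis is needed; the whole argument is linear algebra carried out fiber by fiber over $V$.

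First I would isolate the key uniqueness principle. For any frame $g=\begin{bmatrix} e_1 & \cdots & e_r\end{bmatrix}$ of $E$ over an open set and any section $\xi$, the column $\xi(g)$ is characterized by the identity $g\cdot\xi(g)=\xi$, and this characterization is \emph{unique}: evaluating at a point $y$, the vectors $e_1(y),\dots,e_r(y)$ form a basis of the fiber $E_y$ by the definition of a frame, so the coefficients expressing $\xi(y)$ in this basis are uniquely determined. Hence any two columns that both represent $\xi$ with respect to the same frame must agree at every point of $V$, i.e. agree as sections.

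Next I would apply this to the two frames $f$ and $fA$ over $V$. By definition of the coordinate vectors we have $f\cdot\xi(f)=\xi$ and $(fA)\cdot\xi(fA)=\xi$. Treating $f$ as a row of sections, $A$ as a matrix-valued function, and the coordinate vectors as columns, the product $(fA)\cdot\xi(fA)$ reassociates as $f\cdot\bigl(A\,\xi(fA)\bigr)$; this is a purely entrywise manipulation of smooth functions, valid at each $y\in V$. Comparing with $f\cdot\xi(f)=\xi$ gives $f\cdot\bigl(A\,\xi(fA)\bigr)=f\cdot\xi(f)$, and the uniqueness principle above forces $A\,\xi(fA)=\xi(f)$.

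Finally, since $A(y)\in\GL(\CC^k)$ is invertible for every $y$, the matrix $A^{-1}$ is a well-defined smooth change of frame, and multiplying the identity $A\,\xi(fA)=\xi(f)$ on the left by $A^{-1}$ yields $\xi(fA)=A^{-1}\xi(f)$, as claimed. I expect no genuine obstacle here: the only point requiring any care is the uniqueness of coordinates with respect to a frame, which rests entirely on the pointwise linear independence built into the definition of a frame, together with the observation that all the matrix manipulations are legitimate because they hold fiber by fiber over $V$.
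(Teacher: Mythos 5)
Your proof is correct and takes essentially the same route as the paper's one-line argument: both rest on the identity $fA\cdot\xi(fA)=\xi=f\cdot\xi(f)$, reassociation, and invertibility of $A$. You simply make explicit the uniqueness of coordinates with respect to a frame, which the paper leaves implicit.
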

\begin{proof}
    \begin{align*}
        fA \cdot \xi(fA) = f \cdot \xi(f) \iff \xi(fA) = A^{-1} \xi(f)
    \end{align*}
\end{proof}

We can develop analogous concepts of holomorphic vector bundles and holomorphic change of frames.

Tensor product of bundles and sheaves correspond to each other as following:
\begin{proposition}
    Suppose $E,F$ are smooth vector bundles. Then we have a sheaf isomorphism:
    \begin{align*}
        \mc E(E) \otimes_{\mc E} \mc E(F) \cong \mc E(E \otimes F)
    \end{align*}
\end{proposition}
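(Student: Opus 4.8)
The plan is to build a natural morphism between the two sheaves and then verify that it is an isomorphism by checking the induced maps on stalks, using part (2) of the Proposition on sheaf morphisms. First I would construct, for each open $U \subseteq X$, the $\mc E(U)$-bilinear map $\mc E(U,E) \times \mc E(U,F) \to \mc E(U, E\otimes F)$ sending a pair of sections $(s,t)$ to the pointwise tensor $x \mapsto s(x) \otimes t(x)$, which is visibly smooth and $\mc E(U)$-bilinear. By the universal property of the module tensor product this factors through $\mc E(U,E)\otimes_{\mc E(U)}\mc E(U,F)$, and these maps are compatible with restriction, so they assemble into a morphism of presheaves $\mc E(E)\otimes'_{\mc E}\mc E(F) \to \mc E(E\otimes F)$. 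Since $\mc E(E\otimes F)$ is already a sheaf, the universal property of sheafification promotes this to a morphism of sheaves $\Phi: \mc E(E)\otimes_{\mc E}\mc E(F) \to \mc E(E\otimes F)$.

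Next, by the Proposition on sheaf morphisms it suffices to show that $\Phi_x$ is an isomorphism for every $x \in X$. I would compute the stalk of the source using two standard facts: sheafification does not change stalks, so the stalk of $\mc E(E)\otimes_{\mc E}\mc E(F)$ agrees with the stalk of the presheaf tensor product $\mc E(E)\otimes'_{\mc E}\mc E(F)$; and the stalk of a presheaf tensor product is the tensor product of the stalks, i.e.
$$\left(\mc E(E)\otimes'_{\mc E}\mc E(F)\right)_x \cong \mc E(E)_x \otimes_{\mc E_x}\mc E(F)_x.$$
The second fact holds because the stalk is a filtered direct limit over neighborhoods of $x$ and tensor products commute with filtered colimits, along with the accompanying base change of rings $\mc E(U)\to\mc E_x$.

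To finish, I would pass to a trivializing neighborhood. Choosing a frame $\{e_1,\dots,e_r\}$ of $E$ and a frame $\{f_1,\dots,f_s\}$ of $F$ near $x$ identifies $\mc E(E)_x \cong \mc E_x^{\oplus r}$ and $\mc E(F)_x \cong \mc E_x^{\oplus s}$ as free $\mc E_x$-modules, whence $\mc E(E)_x \otimes_{\mc E_x}\mc E(F)_x \cong \mc E_x^{\oplus rs}$ with basis the germs $e_i \otimes f_j$. On the other hand $\{e_i \otimes f_j\}$ is a frame for $E\otimes F$ near $x$, so $\mc E(E\otimes F)_x \cong \mc E_x^{\oplus rs}$ with basis the germs of the sections $x\mapsto e_i(x)\otimes f_j(x)$; and $\Phi_x$ sends one basis to the other by construction, hence is an isomorphism.

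The main obstacle is the identification of the stalk of the presheaf tensor product with the tensor product of stalks. If one prefers to avoid invoking the interchange of tensor products with filtered colimits abstractly, the same conclusion can be reached concretely on the trivializing neighborhood: surjectivity of $\Phi_x$ follows because every germ of a section of $E\otimes F$ is an $\mc E_x$-combination of the germs $e_i\otimes f_j$, and injectivity follows because the fiberwise linear independence of the frame $\{e_i\otimes f_j\}$ forces any relation among them to have vanishing coefficients.
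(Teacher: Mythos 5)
Your proposal is correct and follows essentially the same route as the paper: construct the natural pairing $(s,t) \mapsto (x \mapsto s(x)\otimes t(x))$, factor it through the presheaf tensor product and then through sheafification via its universal property, and verify the isomorphism locally using frames of $E$ and $F$. Your stalk-by-stalk verification (including the filtered-colimit identification and the concrete fallback argument in your last paragraph) simply fills in the details that the paper compresses into ``surjectivity was just shown, and injectivity too is easy to verify.''
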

\begin{proof}
    Using frames of $E,F$, we can give a local expression of $\mc E(E \otimes F)$ as the image of the natural map from $\mc E(U,E) \otimes_{\mc E(U)} \mc E(U,F)$. This gives a presheaf morphism from the pre-sheafified tensor product presheaf to $\mc E(E \otimes F)$, and it factors through $\mc E(E) \otimes_{\mc E} \mc E(F)$ by the universal property of sheafification. Surjectivity was just shown, and injectivity too is easy to verify.
\end{proof}

Given a complex vector bundle $E \rightarrow X$, we define the sheaf $\mc E^p(E) := \mc E((\wedge^p T^*X)_{\CC} \otimes E)$. A section of $\mc E^p(E)$ over $U$ can locally be written in the form
$$\xi = \sum_{k=1}^r \xi^k(f) \otimes e_k$$
where $f=\{ e_j \}$ is a frame and $\xi^k(f)$ is a $p$-form. This representation comes from locally writing $\xi$ as
$$\sum_{l,k} \varphi_{l,k} \cdot \omega_l(x) \otimes e_k(x) = \sum_k \left( \sum_l \varphi_{l,k} \omega_l(x) \right) \otimes e_k(x) = \sum_k \xi^k(f) \otimes e_k(x)$$
where $\{ \omega_j\}$ is a frame of differential $p$-forms near $x$. Note that $\xi^k(f)$ is well-defined any frame $f$ of $E$ over $U$, regardless local trivialization for differential forms.

Writing $\xi(f) = \begin{bmatrix} \xi^1(f) & \cdots & \xi^r(f) \end{bmatrix}^T$, we also see that the change of frame formula from before carries over:
$$\xi(fA) = A^{-1} \cdot \xi(f)$$

\begin{definition}
    For a complex vector bundle $E \rightarrow X$, define a \emph{Hermitian metric} $h$ on it to be assignment of Hermitian inner product on each fiber $E_x$ such that for each smooth section $\xi, \eta \in \mc E(U,E)$, the map $x \mapsto \langle \xi(x), \eta(x) \rangle$ is smooth.
\end{definition}

Equivalently, we can require that for any frame $\{e_j\}$ over $U$, the map $h(f)_{j,k} = x \mapsto \langle e_k(x),e_j(x) \rangle$ is smooth. Note that $\langle \xi, \eta \rangle = \sum_{j,k} \overline{\eta^j(f)} \xi^k h_{jk}(f) = \eta(f)^* h(f) \xi(f)$ (where for $M$, $M^*$ is its conjugate transpose). We also get a change of frame formula:
\begin{proposition}
    For a Hermitian metric $h$ of complex line bundle $E$ over $X$, frame $f$ of $E$ over some $U \subseteq X$ and a change of frame $A: U \rightarrow \GL(\CC^k)$,
    \begin{align*}
         h(fA) = A^* h(f) A
    \end{align*}
\end{proposition}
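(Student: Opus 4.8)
The plan is to reduce the identity to a direct computation in a fixed frame, using the sesquilinearity of the fiberwise inner product together with the explicit description of how the individual frame vectors transform under $A$.

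First I would record how the members of the frame change. Writing $f = \begin{bmatrix} e_1 & \cdots & e_r\end{bmatrix}$ and $A = (A_{lk})$, the convention $f \mapsto fA$ means that the $k$-th member of the new frame is the combination $(fA)_k = \sum_{l} A_{lk}\, e_l$. I would then substitute this into the defining formula $h(fA)_{jk} = \langle (fA)_k, (fA)_j\rangle$. Since the Hermitian inner product is linear in the first slot and conjugate-linear in the second — as fixed by the paper's convention $\langle \xi,\eta\rangle = \sum_{j,k}\overline{\eta^j(f)}\,\xi^k\, h_{jk}(f)$ — I can pull $A_{lk}$ out of the first argument and $\overline{A_{mj}}$ out of the second, obtaining
$$h(fA)_{jk} = \sum_{l,m} \overline{A_{mj}}\, A_{lk}\, \langle e_l, e_m\rangle = \sum_{l,m}\overline{A_{mj}}\, h(f)_{ml}\, A_{lk}.$$
The final step is pure bookkeeping: recognizing $\overline{A_{mj}} = (A^*)_{jm}$ and reading the double sum as the $(j,k)$-entry of the matrix product $A^* h(f) A$, which yields the claim.

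I expect the only obstacle to be keeping the index conventions straight, namely tracking which argument of $\langle\cdot,\cdot\rangle$ is conjugate-linear and matching the conjugation against the transpose that defines $A^*$; there is no genuine difficulty beyond this.

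Alternatively, I would note a cleaner frame-independent route that avoids indices altogether. Since $\langle \xi,\eta\rangle = \eta(g)^* h(g)\, \xi(g)$ holds for \emph{any} frame $g$, I can evaluate the same inner product in the frames $f$ and $fA$ and equate. Using the already-established section change-of-frame formula $\xi(fA) = A^{-1}\xi(f)$ (and likewise for $\eta$), the computation $\langle \xi,\eta\rangle = \eta(fA)^* h(fA)\,\xi(fA) = \eta(f)^* (A^{-1})^* h(fA) A^{-1}\,\xi(f)$ must agree with $\eta(f)^* h(f)\,\xi(f)$ for all $\xi,\eta$. This forces $(A^{-1})^* h(fA) A^{-1} = h(f)$, and solving (using $(A^{-1})^* = (A^*)^{-1}$) gives $h(fA) = A^* h(f) A$.
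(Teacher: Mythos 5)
Your proposal is correct; indeed it contains two complete proofs. Your primary route, computing the entries $h(fA)_{jk} = \langle (fA)_k, (fA)_j\rangle$ by expanding $(fA)_k = \sum_l A_{lk}e_l$ and using sesquilinearity, is genuinely different from the paper's argument: it works at the level of the individual frame vectors and needs nothing beyond the definition of $h$ on a frame and the Hermitian property of the fiberwise inner product (and you handled the paper's index convention $h(f)_{jk} = \langle e_k, e_j\rangle$ correctly). The paper's own proof is precisely your alternative route: it evaluates $\langle \xi,\eta\rangle$ via $\eta(g)^* h(g)\,\xi(g)$ in both frames, substitutes the previously established section change-of-frame formula $\xi(fA) = A^{-1}\xi(f)$, and equates the two expressions for all $\xi,\eta$. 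Comparing the two: your direct computation buys self-containedness, since it does not lean on the earlier proposition about how section coordinates transform, and it makes the conjugate-linear bookkeeping fully explicit; the paper's route buys brevity and an index-free statement, but it silently uses a nondegeneracy step --- from $\eta(f)^*(A^*)^{-1} h(fA) A^{-1}\xi(f) = \eta(f)^* h(f)\,\xi(f)$ holding for all $\xi,\eta$ one concludes equality of the matrices --- which you, to your credit, flag explicitly when you say the identity \emph{forces} $(A^{-1})^* h(fA) A^{-1} = h(f)$.
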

\begin{proof}
    \begin{align*}
        & \forall \xi, \eta, \eta(f)^* h(f) \xi(f) = \langle \xi, \eta \rangle = \eta(Af)^* h(Af) \xi(Af) = \eta(f) (A^*)^{-1} h(fA) A^{-1} \xi(f) \\
        \implies & h(fA) = A^* h(f) A
    \end{align*}
\end{proof}

\begin{definition}
For a complex vector bundle $E \rightarrow X$, define a \emph{connection} on it to be a $\CC$-linear map:
    $$D : \mc E(X, E) \rightarrow \mc E^1(X, E)$$
    such that
    $$D(\varphi \xi) = \d \varphi \cdot \xi + \varphi \cdot (D \xi)$$
    where $\varphi \in \mc E(X)$ and $\xi \in \mc E(X, E)$.
\end{definition}

Local description of connection can be given using a matrix of 1-forms. Suppose $f=\{e_j\}$ is a frame of $E$ over $U$. Let
$$D(e_k) = \sum_{j=1}^r \theta_{j,k}(D,f) e_j$$
Then we may write $\theta(D,f) = (\theta_{j,k}(D,f))_{j,k}$, a matrix of 1-forms. 
\begin{proposition}
    For a connection $D$ and frame $f$, the following holds:
    $(D \xi)(f) = (\d + \theta)\xi(f)$
\end{proposition}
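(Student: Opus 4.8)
The plan is to write $\xi$ in terms of the frame $f = \{e_1, \dots, e_r\}$ and then apply the two defining properties of a connection — $\CC$-linearity and the Leibniz rule — directly. Since any $\xi \in \mc E(V, E)$ for $V \subseteq U$ decomposes as $\xi = \sum_{k=1}^r \xi^k(f)\, e_k$ with each component $\xi^k(f)$ a smooth function, the computation reduces to evaluating $D$ on each summand $\xi^k(f)\, e_k$ and then reassembling.

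First I would use $\CC$-linearity of $D$ to split $D\xi = \sum_k D(\xi^k(f)\, e_k)$, and then apply the Leibniz rule to each term, giving $D(\xi^k(f)\, e_k) = \d \xi^k(f) \cdot e_k + \xi^k(f) \cdot D(e_k)$. Substituting the definition $D(e_k) = \sum_{j} \theta_{j,k}(D,f)\, e_j$ and re-indexing the resulting double sum to collect the coefficient of each basis vector $e_j$, I would obtain that the $j$-th component of $D\xi$ in the frame $f$ equals $\d \xi^j(f) + \sum_k \theta_{j,k}(D,f)\, \xi^k(f)$. Reading this off as a column vector is precisely the asserted matrix identity $(D\xi)(f) = \d \xi(f) + \theta(D,f)\, \xi(f) = (\d + \theta)\xi(f)$, where $\d$ is understood to act componentwise on $\xi(f)$ and $\theta \xi(f)$ is ordinary matrix-times-vector multiplication of the $1$-form matrix $\theta$ against the vector of $0$-forms $\xi(f)$.

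The computation is short and essentially forced by the axioms, so there is no substantial obstacle; the only point demanding care is the index bookkeeping when interchanging the order of summation — one must check that the summation index $k$ on $\theta_{j,k}(D,f)$ matches the component index of $\xi^k(f)$ while $j$ correctly labels the output basis vector $e_j$, so that the double sum genuinely collapses into the matrix product $\theta(D,f)\,\xi(f)$. For rigor I would also note that the identity is local, valid wherever the frame $f$ is defined, and that the decomposition $\xi = \sum_k \xi^k(f)\, e_k$ together with well-definedness of the components $\xi^k(f)$ was established in the preceding discussion, so no choice is left ambiguous.
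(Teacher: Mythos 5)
Your proposal is correct and follows essentially the same route as the paper's proof: expand $\xi = \sum_k \xi^k(f)\, e_k$ in the frame, apply linearity and the Leibniz rule, substitute $D(e_k) = \sum_j \theta_{j,k}(D,f)\, e_j$, and re-index the double sum to read off the matrix identity $(D\xi)(f) = (\d + \theta)\xi(f)$. The index bookkeeping you flag is indeed the only delicate point, and your handling of it matches the paper's computation.
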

\begin{proof}
    \begin{align*}
        D\xi &= \sum_{k=1}^r D(\xi^k(f) e_k) \\
        &= \sum_{k=1}^r \d \xi^k(f) e_k + \xi^k(f) D(e_k) \\
        &= \sum_{k=1}^r \left( \d \xi^k(f) e_k + \xi^k(f) \sum_{j=1}^r \theta_{jk}e_j \right) \\
        &= \sum_{k=1}^r \left( \d \xi^k(f) + \sum_{j=1}^r \xi^j(f) \theta_{kj} \right) e_k
    \end{align*}
\end{proof}
Here and onwards, we will often drop the wedge product for multiplication of forms. 

We have the following change of frame formula for local expression of connection:
\begin{proposition}
    For a connection $D$, a frame $f$ and a change of frame $A$, we have
    $$\theta(fA) = A^{-1} \theta(f) A + A^{-1} \d A$$
\end{proposition}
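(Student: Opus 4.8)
The plan is to deduce the formula from the two change-of-frame relations already in hand, rather than from scratch: the section-level rule $\xi(fA) = A\iv \xi(f)$, which was proved for ordinary sections and holds verbatim for $E$-valued forms, and the local expression $(D\xi)(f) = (\d + \theta(f))\,\xi(f)$ from the preceding proposition. The idea is that $D\xi$ is itself an $E$-valued $1$-form, so the first rule applies to it and gives $(D\xi)(fA) = A\iv (D\xi)(f)$. Expanding both sides of this single identity with the local expression will pin down $\theta(fA)$ without my having to unwind the definition $D(e_k)=\sum_j \theta_{jk}e_j$ directly.

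Concretely, I would compute $(D\xi)(fA)$ in two ways. Applying the local expression in the new frame and then the Leibniz rule gives
\[
(D\xi)(fA) = (\d + \theta(fA))\,A\iv\xi(f) = (\d A\iv)\,\xi(f) + A\iv\,\d\xi(f) + \theta(fA)\,A\iv\,\xi(f),
\]
whereas pulling the form-level rule through the old frame gives
\[
(D\xi)(fA) = A\iv(D\xi)(f) = A\iv\,\d\xi(f) + A\iv\,\theta(f)\,\xi(f).
\]
Equating these and cancelling the common term $A\iv\,\d\xi(f)$ leaves $(\d A\iv)\,\xi(f) + \theta(fA)\,A\iv\,\xi(f) = A\iv\,\theta(f)\,\xi(f)$. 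Since $\xi(f)$ ranges over all coordinate vectors, I can read off the matrix identity $(\d A\iv) + \theta(fA)\,A\iv = A\iv\,\theta(f)$, hence $\theta(fA) = A\iv\,\theta(f)\,A - (\d A\iv)\,A$.

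The final step, rewriting $-(\d A\iv)A$ as $A\iv\,\d A$, is where the small amount of care is needed: differentiating the identity $A\iv A = I$ yields $(\d A\iv)A + A\iv\,\d A = 0$, so $(\d A\iv)A = -A\iv\,\d A$ and the two contributions combine into the claimed $\theta(fA) = A\iv\,\theta(f)\,A + A\iv\,\d A$. The only genuine obstacle, such as it is, lies in bookkeeping with matrix-valued $1$-forms — respecting the non-commutativity of the products and tracking the sign coming from $\d A\iv$ — rather than in any conceptual difficulty. If one preferred a self-contained route avoiding the form-level version of $\xi(fA)=A\iv\xi(f)$, the alternative is to apply $D$ directly to the new frame vectors $(fA)_k = \sum_j e_j A_{jk}$, expand by Leibniz, substitute $D(e_j) = \sum_i \theta_{ij}(f)\,e_i$, and re-express each $e_j$ in the new frame via $e_j = \sum_l (A\iv)_{lj}(fA)_l$; matching the coefficients of $(fA)_l$ reproduces the same formula.
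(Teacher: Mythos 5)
Your proposal is correct and is essentially the paper's own proof: the paper likewise expands $(D\xi)(fA)=(\d+\theta(fA))\xi(fA)$ with $\xi(fA)=A\iv\xi(f)$, equates it against the frame-$f$ expression of the same invariant object $D\xi$ (written there as $(fA)\bigl((D\xi)(fA)\bigr)=f\bigl((D\xi)(f)\bigr)$, which is exactly your rule $(D\xi)(fA)=A\iv(D\xi)(f)$), and finishes with $\d(A\iv A)=0$ to convert $-(\d A\iv)A$ into $A\iv\d A$. The only difference is notational bookkeeping, not substance.
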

\begin{proof}
    \begin{align*}
        (D\xi)(fA) &= (\d + \theta(fA))\xi(fA) \\
        &= \d A^{-1} \xi(f) + A^{-1} \d \xi(f) + \theta(fA) A^{-1} \xi(f)
    \end{align*}
    So that
    \begin{align*}
        (fA)((D\xi)(fA)) &= fA\d A^{-1} \xi(f) + f \d \xi(f) + f A \theta(fA) A^{-1} \xi (f) \\
        = f (D\xi)(f) &= f \d \xi(f) + f \theta(f) \xi(f) \\
        \implies & A d A^{-1} + A \theta(fA) A^{-1} = \theta (f) \\
        \implies & \theta(fA) = A^{-1} \theta(f) A - (\d A^{-1}) A = A^{-1} \theta(f) A + A^{-1} \d A
    \end{align*}
    where we used $0 = \d I = \d (A^{-1} A) = A^{-1} \d A + (\d A^{-1}) A$ at the end.
\end{proof}

Associated to each connection $D$ is \emph{curvature} $\Theta$. We will define it using compatible local expressions, as an element of $\mc E^2(X, \on{End}(E))$. To do this, let's consider how an element $\chi \in \mc E^2(X, \on{End}(E))$ is described, and how it behaves under a change of coordinate. Suppose $f$ is a frame of $E$ over $U$. Then as we saw before in discussing differential forms with coefficients in vector bundle, the following map is an isomorphism:
$$\mc E^2(U) \otimes_{\mc E(U)} \mc E(U, \on{End}(E)) \rightarrow \mc E^2(U, \on{End}(E))$$
Due to frame $f$, an element of $\mc E(U, \on{End}(E))$ can be thought of as a matrix of smooth functions. Thus, tensoring with $\mc E^2(U)$, we can think of an element of $\mc E^2(U, \on{End}(E))$ as a matrix of 2-forms. This induces a map $\mc E(X,E) \rightarrow \mc E^2(X,E)$. Then we get the following change of frame formula:
\begin{align*}
    & (\chi \xi)(f) = \chi(f) \xi(f) \\
    & (\chi \xi)(fA) = \chi(fA) \xi(fA) = \chi (fA) A^{-1} \xi (f) \\
    \implies & f \chi(f) \xi(f) = f (\chi \xi)(f) = (fA)(\chi \xi)(fA) = fA \chi (fA) A^{-1} \xi (f) \\
    \implies & \chi(fA) = A^{-1} \chi(f) A
\end{align*}
Conversely a collection of matrices of 2-forms associated to frames satisfying this change of frame formula gives a well-defined element of $\mc E^2(X,\on{End}(E))$.

Now we go back to $\Theta$; let $\Theta(D,f) = \d \theta(f) + \theta(f)^2$ be a matrix of 2-forms. Then
\begin{align*}
    \Theta(D,fA) =& \d \theta(fA) + \theta(fA)^2 \\
    =& \d (A^{-1} \theta A + A^{-1} \d A) + (A^{-1} \theta A + A^{-1} \d A)^2 \\
    =& (\d A^{-1}) \theta A + A^{-1} (\d \theta) A - A^{-1} \theta (\d A) + (\d A^{-1})(\d A) \\
    & + A^{-1} \theta^2 A + A^{-1} (\d A) A^{-1} (\d A) + A^{-1} \theta (\d A) + A^{-1} (\d A) A^{-1} \theta A \\
    =& A^{-1} \theta^2 A + A^{-1} (\d \theta) A \\
    =& A^{-1} \Theta(D, f) A
\end{align*}
by using $\d A^{-1} A = - A^{-1} \d A$ and $\d(\omega \wedge \eta) = \d \omega \wedge \eta + (-1)^p \omega \wedge \d \eta$ when $\omega$ is a $p$-form.

Thus $\Theta$ transforms correctly under a change of frame and it defines a an element of $\mc E^2(X,\on{End}(E))$. We thus can define:

\begin{definition}
    Given a connection $D$, curvature $\Theta$ is defined as an element of $\mc E^2(X,\on{End}(E))$ which defines a mapping $\mc E(X,E) \rightarrow \mc E^2(X, E)$ locally given by
    $$\Theta(f) = \d \theta(f) + \theta(f)^2$$
\end{definition}

We have some interesting calculations regarding $\Theta$.
\begin{proposition}
    Viewing $\Theta$ as a mapping $\mc E(X,E) \rightarrow \mc E^2(X,E)$, we locally have:
    $$\Theta = (\d + \theta)^2$$
\end{proposition}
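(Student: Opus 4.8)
The plan is to unwind the extended operator $(\d+\theta)$ acting on frame representations and square it, checking that the middle terms cancel so that only $\d\theta+\theta^2$ survives. Recall from the proposition above that for $\xi\in\mc E(X,E)$ and a frame $f$ one has $(D\xi)(f)=(\d+\theta)\xi(f)$, where $\xi(f)$ is the column vector of functions representing $\xi$ and $\theta=\theta(f)$ is the connection matrix of $1$-forms. We extend $(\d+\theta)$ to vector-valued forms of all degrees by letting $\d$ act componentwise and $\theta$ act by (left) wedge multiplication; the content of the claim is that iterating this operator recovers the local curvature action $(\Theta\xi)(f)=\Theta(f)\xi(f)$ recorded just earlier.

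First I would apply $(\d+\theta)$ to $(\d+\theta)\xi(f)=\d\xi(f)+\theta\xi(f)$ and expand into four terms:
$$(\d+\theta)^2\xi(f)=\d\d\xi(f)+\d(\theta\xi(f))+\theta\,\d\xi(f)+\theta^2\xi(f).$$
The first term vanishes by $\d^2=0$. For the second I would apply the graded Leibniz rule $\d(\omega\wedge\eta)=\d\omega\wedge\eta+(-1)^{p}\omega\wedge\d\eta$ entrywise, with $\theta$ of degree $p=1$, obtaining $\d(\theta\xi(f))=(\d\theta)\xi(f)-\theta\,\d\xi(f)$.

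Substituting this back, the terms $-\theta\,\d\xi(f)$ and $+\theta\,\d\xi(f)$ cancel, leaving
$$(\d+\theta)^2\xi(f)=(\d\theta)\xi(f)+\theta^2\xi(f)=(\d\theta+\theta^2)\xi(f)=\Theta(f)\xi(f),$$
which is precisely the frame expression for the action of $\Theta\in\mc E^2(X,\on{End}(E))$ on $\xi$. This identifies $(\d+\theta)^2$ with $\Theta$ as maps $\mc E(X,E)\to\mc E^2(X,E)$. The step I would watch most carefully is the sign bookkeeping in the Leibniz rule: the cancellation rests on the factor $(-1)^1$ coming from $\theta$ having odd degree, and since the wedge is noncommutative one must keep $\theta$ consistently on the left in every term. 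Finally, I would note that $(\d+\theta)$ itself depends on the chosen frame, so the computation is framewise; its frame-independence as a global object is already secured by the change-of-frame law $\Theta(fA)=A^{-1}\Theta(f)A$ established above.
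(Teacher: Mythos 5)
Your proof is correct and follows essentially the same route as the paper: expand $(\d+\theta)^2\xi(f)$ into four terms, kill $\d^2\xi(f)$, apply the graded Leibniz rule to $\d(\theta\xi(f))$ so the $\theta\,\d\xi(f)$ terms cancel, and identify the remainder $(\d\theta+\theta^2)\xi(f)$ with $\Theta(f)\xi(f)$. Your added remarks on the sign bookkeeping and frame-dependence are sound but not a departure from the paper's argument.
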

\begin{proof}
    \begin{align*}
        & (\d + \theta)(\d + \theta)\xi(f) \\
        =& \d^2\xi(f) + \theta \d \xi(f) + \d (\theta \xi(f)) + \theta^2 \xi(f) \\
        =& \theta \d \xi(f) + (\d \theta)\xi(f) - \theta \d \xi(f) + \theta^2 \xi(f) \\
        =& (\d \theta + \theta^2) \xi(f) \\
        =& \Theta(f) \xi(f)
    \end{align*}
\end{proof}

\begin{proposition}[Bianchi]
    Viewing $\Theta$ as a mapping $\mc E(X,E) \rightarrow \mc E^2(X,E)$, we locally have:
    $$\d \Theta = [\Theta, \theta]$$
\end{proposition}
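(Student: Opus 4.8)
The plan is to verify the identity directly in a fixed local frame $f$, using the local definition $\Theta(f) = \d\theta(f) + \theta(f)^2$ from the preceding proposition (suppressing the frame $f$ and the wedge symbol from the notation, as the paper does). Since the statement is asserted only locally, no change-of-frame argument is needed; everything reduces to a short computation with matrices of forms, where matrix multiplication is combined entrywise with the wedge product.

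First I would expand $\d\Theta = \d(\d\theta + \theta^2)$. The term $\d^2\theta$ vanishes, since $\d^2 = 0$ holds entrywise on the matrix of $1$-forms $\theta$. For the remaining term $\d(\theta^2) = \d(\theta\wedge\theta)$, I would apply the graded Leibniz rule $\d(\omega\wedge\eta) = \d\omega\wedge\eta + (-1)^{\deg\omega}\,\omega\wedge\d\eta$ entrywise. Because each entry of $\theta$ is a $1$-form, the sign is $-1$, and summing over the matrix index yields $\d(\theta^2) = \d\theta\,\theta - \theta\,\d\theta$. Hence $\d\Theta = \d\theta\,\theta - \theta\,\d\theta$.

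Next I would expand the right-hand side $[\Theta,\theta] = \Theta\theta - \theta\Theta = (\d\theta + \theta^2)\theta - \theta(\d\theta + \theta^2)$. The two cubic terms $\theta^2\,\theta$ and $\theta\,\theta^2$ are both equal to $\theta^3$ by associativity of the matrix-and-wedge product, so they cancel, leaving exactly $\d\theta\,\theta - \theta\,\d\theta$, which matches the expression computed for $\d\Theta$.

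The computation is brief, and the only points requiring care are the sign bookkeeping in the graded Leibniz rule as $\d$ passes through a $1$-form, and the cancellation of the cubic terms. The latter deserves a remark: since $\Theta$ has even degree, its graded commutator with $\theta$ coincides with the ordinary matrix commutator $\Theta\theta - \theta\Theta$, which is precisely what makes the two $\theta^3$ terms cancel rather than combine. This is the single place where one should be attentive, and it is where I would expect a careless sign to creep in.
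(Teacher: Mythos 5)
Your proof is correct and follows essentially the same route as the paper: both expand $\d\Theta = \d(\d\theta + \theta^2)$ via $\d^2 = 0$ and the graded Leibniz rule to get $(\d\theta)\theta - \theta(\d\theta)$, then expand $[\Theta,\theta]$ and cancel the $\theta^3$ terms to reach the same expression. Your remark that the even degree of $\Theta$ is what makes the ordinary matrix commutator the right object here is a worthwhile clarification that the paper leaves implicit.
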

\begin{proof}
    \begin{align*}
         \d \Theta &= \d (\d \theta + \theta^2 ) = \d^2 \theta + (\d\theta) \theta - \theta (\d \theta)  + \theta^2 \d = (\d \theta) \theta - \theta (\d \theta) \\
        [\Theta, \theta] &= (\d \theta + \theta^2) \theta - \theta (\d \theta + \theta^2) = (\d \theta) \theta - \theta (\d \theta)
    \end{align*}
\end{proof}

Using change of frame formula, we may extend the definition of $D$ as a map $\mc E^p(X,E) \rightarrow \mc E^{p+1}(X,E)$.
\begin{proposition}
    By letting $D = \d + \theta$ act on forms, we may extend the definition of $D$ and $\Theta$:
    \begin{align*}
        D : \mc E^p (X,E) \rightarrow \mc E^{p+1} (X,E) \\
        \Theta : \mc E^p (X,E) \rightarrow \mc E^{p+2} (X,E)
    \end{align*}
\end{proposition}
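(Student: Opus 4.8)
The plan is to \emph{define} both operators by their local expressions relative to a frame and then check that these expressions glue, i.e.\ that they transform under a change of frame exactly like a section of $\mc E^{p+1}(X,E)$ (resp.\ $\mc E^{p+2}(X,E)$). By the converse remark established above --- that a collection of matrices of forms obeying the correct change of frame formula determines a well-defined global object --- this is all that is required. The degree count is immediate: $\d$ and wedging with the $1$-form matrix $\theta(f)$ each raise form-degree by one, giving target degree $p+1$ for $D$, while $\Theta(f)$ is a matrix of $2$-forms, giving $p+2$ for $\Theta$.

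For the connection, fix a frame $f$ of $E$ over $U$ and write $\xi(f)$ as a column vector of $p$-forms. Set $(D\xi)(f) := (\d + \theta(f))\xi(f)$, where $\d$ acts entrywise and $\theta(f)$ acts by matrix multiplication using the wedge product. I would then verify the change of frame formula $(D\xi)(fA) = A\iv (D\xi)(f)$, which is the defining transformation law for a section of $\mc E^{p+1}(X,E)$. The computation is formally identical to the $p=0$ case proven earlier: substituting $\xi(fA) = A\iv\xi(f)$ and $\theta(fA) = A\iv \theta(f) A + A\iv \d A$ gives
\begin{align*}
    (D\xi)(fA) &= \d\!\left(A\iv \xi(f)\right) + \left(A\iv \theta(f) A + A\iv \d A\right) A\iv \xi(f) \\
    &= (\d A\iv)\xi(f) + A\iv \d \xi(f) + A\iv \theta(f)\xi(f) + A\iv(\d A) A\iv \xi(f),
\end{align*}
and the first and last terms cancel because $\d A\iv = -A\iv (\d A) A\iv$, leaving $A\iv(\d + \theta(f))\xi(f)$.

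The one point that genuinely needs checking is that the graded signs of the wedge product do not spoil this computation now that $\xi(f)$ carries form-degree $p$. Here the key observation is that $A$ and $A\iv$ are matrices of $0$-forms, so the graded Leibniz rule $\d(A\iv \xi(f)) = (\d A\iv)\xi(f) + A\iv \d\xi(f)$ introduces no sign, and likewise $A\iv$ and $A$ commute freely past any form. Thus the $p=0$ argument transcribes verbatim, and I expect this sign-bookkeeping to be the only (mild) obstacle.

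For the curvature the verification is even shorter. Defining $(\Theta\xi)(f) := \Theta(f)\xi(f)$ with $\Theta(f)$ the matrix of $2$-forms, the change of frame formula $\Theta(fA) = A\iv \Theta(f) A$ yields
\begin{align*}
    (\Theta\xi)(fA) = \Theta(fA)\xi(fA) = A\iv \Theta(f) A \cdot A\iv \xi(f) = A\iv (\Theta\xi)(f),
\end{align*}
where again $A A\iv$ collapses to the identity without sign because $A$ consists of $0$-forms. This is exactly the transformation law for a section of $\mc E^{p+2}(X,E)$, so $\Theta\xi$ is well-defined globally, completing the extension.
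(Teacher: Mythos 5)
Your proposal is correct and takes essentially the same approach as the paper: the identical change-of-frame computation $(D\xi)(fA) = A^{-1}(D\xi)(f)$ using $\theta(fA) = A^{-1}\theta(f)A + A^{-1}\d A$ and the cancellation $\d A^{-1} = -A^{-1}(\d A)A^{-1}$, with the (correct) observation that no graded signs arise because $A$ is a matrix of $0$-forms. The only minor divergence is in the curvature part, where the paper simply notes that $\Theta$ extends because it is locally $D^2$, while you verify the transformation law $\Theta(fA) = A^{-1}\Theta(f)A$ directly; both are valid and essentially interchangeable given the paper's earlier discussion of how such matrix collections glue.
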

\begin{proof}
    It suffices to show that given a vector of $p$-forms $\xi(f)$ transforms correctly under change of frame formula:
    \begin{align*}
        \d \xi(fA) + \theta(fA) \xi(fA) =& \d (A^{-1} \xi(f)) + A^{-1} \theta(f) \xi(f) + A^{-1} \d A \xi (fA) \\
        =& (\d A^{-1}) \xi(f) + A^{-1} (\d \xi(f)) + A^{-1} \theta \xi(f) + A^{-1} (\d A) A^{-1} \xi(f) \\
        =& A^{-1} \d \xi(f) + A^{-1} \theta \xi(f) \\
        =& A^{-1} (\d + \theta) \xi(f)
    \end{align*}
    and therefore $D \xi$ is well-defined. Also, we can extend $\Theta$ since it is locally given as $D^2$.
\end{proof}

We now describe connections that are compatible with metric.
\begin{definition}
    Given a complex vector bundle $E$ with Hermitian metric $h$, a connection $D$ is said to be \emph{compatible} with $h$ if
    $$\d \langle \xi, \eta \rangle = \langle D \xi, \eta \rangle + \langle \xi, D \eta \rangle$$
    for every section $\xi, \eta$.
\end{definition}

Metric compatibility has a simple criterion:
\begin{proposition}
    A connection $D$ is compatible with metric $h$ iff for every frame $f$,
    $$\d h = h\theta + \theta^* h$$
    where $h=h(f), \theta=\theta(f)$ here.
\end{proposition}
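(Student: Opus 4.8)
The plan is to verify the identity by a direct computation in an arbitrary frame $f = \{e_j\}$, reducing the coordinate-free compatibility condition to a matrix identity and then observing that the two are equivalent because the local vectors $\xi(f)$ and $\eta(f)$ range over all vectors of functions. Throughout I will use the local expression $(D\xi)(f) = (\d + \theta)\xi(f)$ from the earlier proposition together with the frame expression of the metric $\langle \xi, \eta\rangle = \eta(f)^* h(f)\xi(f)$, writing $h = h(f)$ and $\theta = \theta(f)$ and suppressing the frame from the notation.

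First I would expand the left-hand side $\d\langle\xi,\eta\rangle = \d(\eta^* h\,\xi)$. Since $\xi(f)$ and $\eta(f)$ are vectors of functions and $h$ is a matrix of functions, the ordinary Leibniz rule gives the three terms $\d(\eta^*)\,h\,\xi + \eta^*(\d h)\,\xi + \eta^* h\,\d\xi$, where $\d$ acts entrywise and commutes with complex conjugation, so that $\d(\eta^*) = (\d\eta)^*$.

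Next I would expand the right-hand side. Using $(D\xi)(f) = \d\xi + \theta\xi$, the term $\langle D\xi,\eta\rangle$ becomes $\eta^* h(\d\xi + \theta\xi)$. For $\langle\xi,D\eta\rangle$ the form-valued section sits in the conjugate-linear slot, so $(D\eta)(f)$ gets conjugate-transposed: $(D\eta)^* = (\d\eta + \theta\eta)^* = \d(\eta^*) + \eta^*\theta^*$, giving $\langle\xi,D\eta\rangle = (\d(\eta^*) + \eta^*\theta^*)\,h\,\xi$. Adding the two contributions and cancelling the common terms $\eta^* h\,\d\xi$ and $\d(\eta^*)\,h\,\xi$ against the left-hand side, the compatibility condition collapses to $\eta^*(\d h)\,\xi = \eta^*(h\theta + \theta^* h)\,\xi$.

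Finally I would note that this must hold for all sections $\xi,\eta$; since at any point the vectors $\xi(f)$ and $\eta(f)$ may be prescribed arbitrarily, the scalar identity forces the matrix identity $\d h = h\theta + \theta^* h$, and conversely this matrix identity makes every step reversible and recovers compatibility. The one place demanding care is the bookkeeping of the Hermitian conventions in the form-valued pairing — specifically that placing $D\eta$ in the antilinear slot produces $\theta^*$ rather than $\theta$, and that $\d$ passes through complex conjugation — so I expect this conjugation-and-transpose bookkeeping, rather than any conceptual difficulty, to be the main obstacle.
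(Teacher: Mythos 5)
Your proof is correct and follows essentially the same route as the paper: a direct computation in a frame using $\langle\xi,\eta\rangle=\eta(f)^*h(f)\xi(f)$ and $(D\xi)(f)=(\d+\theta)\xi(f)$, with the Leibniz rule reducing compatibility to the matrix identity. The only cosmetic difference is that the paper gets the forward direction by plugging in the frame sections $e_j$ themselves, while you expand for general sections and then invoke arbitrariness of the pointwise values $\xi(f)$, $\eta(f)$; these are interchangeable specialization arguments.
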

\begin{proof}
    Let $f=\{e_j\}$. Then
    \begin{align*}
        \d h_{jk} =& \d \langle e_j, e_k \rangle \\
        =& \langle D e_j , e_k \rangle + \langle e_j, D e_k \rangle \\
        =& \langle \sum_{\alpha=1}^r \theta_{\alpha j} e_\alpha, e_k \rangle + \langle e_j, \sum_{\alpha=1}^r \theta_{\alpha k} e_\alpha \rangle \\
        =& \sum_{\alpha=1}^r \bar{\theta}_{\alpha j} h_{\alpha k} + \sum_{\alpha=1}^r \theta_{\alpha k} h_{j\alpha} \\
        =& (h \theta + \theta^* h)_{jk}
    \end{align*}
    and conversely if this holds,
    \begin{align*}
        \d \langle \xi, \eta \rangle =& \d \langle \sum_{j=1}^r \xi^j e_j, \sum_{k=1}^r \eta^k e_k \rangle\\
        =& \d \sum_{j,k=1}^r \xi^j \eta^k h_{jk} \\
        =& \sum_{j,k=1}^r ((\d \xi^j) \eta^k + \xi^j (\d \eta^k)) h_{jk} + \xi^j \eta^k \d h_{jk} \\
        =& (\d \eta)^* h \xi + \eta^* (\d h) \xi + \eta^* h (\d \xi) \\
        =& (\d \eta)^* h \xi + \eta^* h \theta \xi + \eta^* \theta^* h \xi + \eta^* h (\d \xi) \\
        =& (\d \eta + \theta \eta)^* h \xi + \eta^* h (\d \xi + \theta \xi \\
        =& \langle \xi, D \eta \rangle + \langle D \xi, \eta \rangle
    \end{align*}
\end{proof}

For a connection on a complex manifold, we may use the complex structure to give a decomposition:
\begin{align*}
    & D = D' + D'' \\
    & D: \mc E(E) \rightarrow \mc E^1(E) \cong \mc E^{1,0}(E) \oplus \mc E^{0,1}(E) \\
    & D' : \mc E(E) \rightarrow \mc E^{1,0}(E) \\
    & D'' : \mc E(E) \rightarrow \mc E^{0,1}(E)
\end{align*}
Note that due to type consideration, we get
\begin{align*}
    D' =& \dol + \theta^{1,0} \\
    D'' =& \bar\dol + \theta^{0,1}
\end{align*}

\begin{proposition}
    For a holomorphic vector bundle $E \rightarrow X$ with Hermitian metric $h$, $h$ induces a \emph{canonical connection} $D$ such that $D$ is compatible with metric and for each holomorphic section $\xi \in \mc O(U, E)$, $D'' \xi = 0$.
\end{proposition}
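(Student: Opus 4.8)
The plan is to prove both uniqueness and existence by working in holomorphic frames, where the requirement that $D''$ annihilate holomorphic sections pins down the connection matrix completely. First I would fix a holomorphic frame $f = \{e_k\}$ over $U$ and recall from the type decomposition that $D'' = \bar\dol + \theta^{0,1}(f)$. Since a holomorphic section $\xi$ has holomorphic component functions in a holomorphic frame, $\bar\dol\, \xi(f) = 0$, and hence $(D''\xi)(f) = \theta^{0,1}(f)\,\xi(f)$. Applying this to the frame sections $e_k$ themselves, which are holomorphic, forces every column of $\theta^{0,1}(f)$ to vanish, so $\theta(f)$ must be of type $(1,0)$.

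Next I would combine this with metric compatibility. By the earlier proposition, compatibility is equivalent to $\d h = h\theta + \theta^* h$ in the frame $f$, writing $h = h(f)$ and $\theta = \theta(f)$. Since $\theta$ is of type $(1,0)$, its conjugate transpose $\theta^*$ is of type $(0,1)$, so comparing bidegrees splits this equation into $\dol h = h\theta$ and $\bar\dol h = \theta^* h$. The first gives $\theta = h^{-1}\dol h$, which establishes uniqueness; the second is then automatic, because $h$ is Hermitian gives $\theta^* = (h^{-1}\dol h)^* = \bar\dol h \cdot h^{-1}$, whence $\theta^* h = \bar\dol h$.

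For existence, I would define $\theta(f) := h(f)^{-1}\dol h(f)$ in each holomorphic frame and check that these local matrices glue to a genuine connection by verifying the change-of-frame formula $\theta(fA) = A^{-1}\theta(f)A + A^{-1}\d A$ for a holomorphic change of frame $A$. The key inputs are that $A$ holomorphic forces $\dol A = \d A$ and $\dol(A^*) = 0$, since the entries of $A^*$ are antiholomorphic, together with $h(fA) = A^* h(f) A$ from the metric change-of-frame proposition. Expanding $\dol h(fA) = A^*(\dol h)A + A^* h\,\d A$ and multiplying by $h(fA)^{-1} = A^{-1}h^{-1}(A^*)^{-1}$ then yields exactly the required formula. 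Setting $D = \d + \theta$ gives a well-defined connection, metric compatibility holds by construction since $\dol h = h\theta$ and $\bar\dol h = \theta^* h$ reassemble to $\d h = h\theta + \theta^* h$, and $D''\xi = 0$ for holomorphic $\xi$ because $\theta$ is of type $(1,0)$ while $\bar\dol$ kills the holomorphic component functions.

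The main obstacle I expect is the change-of-frame verification: it is the only place where holomorphicity of the transition functions is genuinely used, and it hinges on the vanishing $\dol(A^*) = 0$, which is precisely what makes the naive $(1,0)$-part expression $h^{-1}\dol h$ transform as a connection matrix rather than merely as a tensor.
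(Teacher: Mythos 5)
Your proposal is correct and follows essentially the same route as the paper: the condition $D''\xi=0$ in a holomorphic frame forces $\theta$ to have type $(1,0)$, metric compatibility then splits by bidegree to give $\theta = h^{-1}\dol h$, and existence is verified by checking the connection change-of-frame formula under a holomorphic change of frame using $\dol A^* = 0$ and $\dol A = \d A$. If anything, your write-up is slightly more careful than the paper's, since you make explicit that the frames must be holomorphic and you verify that the $(0,1)$-equation $\bar\dol h = \theta^* h$ follows automatically from $h$ being Hermitian.
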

\begin{proof}
    For holomorphic section $\xi$ and a frame $f$ for $E$, $D''\xi(f) = \bar\dol \xi(f) + \theta^{(0,1)} \xi(f) = \theta^{(0,1)} \xi(f)$ and thus the condition for $D'' \xi = 0$ is equivalent to $\theta$ having the type $(1,0)$.
    
    Suppose there is a connection $D$ satisfying the conditions. Then metric compatibility gives $\d h = h \theta + \theta^* h$, and by decomposing into type $(1,0)$ and $(0,1)$ we get
    \begin{align*}
        & \dol h = h \theta \\
        & \bar\dol h = \theta^* h
    \end{align*}
    By conjugation, the second equation is equivalent to the first and thus it suffices to consider the first. Now we prove that the following definition:
    $$\theta := h^{-1} \dol h$$
    gives a well-defined connection. For a holomorphic change of frame $A$,
    \begin{align*}
        \theta(fA) &= h^{-1}(fA) \dol h(fA) \\
        &= A^{-1} h^{-1} (A^*)^{-1} \dol(A^* h A) \\
        &= A^{-1} h^{-1} (A^*)^{-1} ((\dol A^*) h A + A^* (\dol h) A + A^* h (\dol A)) \\
        &= A^{-1} h^{-1} (A^*)^{-1} (0 + A^* (\dol h) A + A^* h (\d A)) \\
        &= A^{-1} h^{-1} (\dol h )A + A^{-1} \d A \\
        &= A^{-1}\theta A + A^{-1} \d A
    \end{align*}
    where $h=h(f)$ in the calculation to avoid clutter. Thus $\theta$ as defined transforms correctly under change of frame and gives a well-defined connection.
\end{proof}

\begin{proposition}
    For a holomorphic line bundle $E \rightarrow X$ with Hermitian metric $h$ and canonical connection $D$, the following hold:
    \begin{align*}
        & \dol \theta = - \theta^2 \\
        & \Theta = \bar\dol \theta \text{ (locally)} \\
        & \dol \Theta = 0, \dol \Theta = [\Theta, \theta]
    \end{align*}
\end{proposition}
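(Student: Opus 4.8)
The plan is to exploit the fact that $E$ has rank one, so that every local matrix-valued quantity---$h(f)$, $\theta(f)$, $\Theta(f)$---is in fact a scalar ($1 \times 1$) object. The single most useful consequence is that $\theta$ is a scalar $(1,0)$-form, and any scalar $1$-form wedged with itself vanishes; hence $\theta^2 = \theta \wedge \theta = 0$ identically. Together with the formula $\theta = h^{-1}\dol h$ for the canonical connection established in the previous proposition, and the identities $\dol^2 = \bar\dol^2 = 0$ and $\dol\bar\dol + \bar\dol\dol = 0$ (all immediate from $\d = \dol + \bar\dol$ and $\d^2 = 0$ by comparing bidegrees), this collapses every claim to a short computation.

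First I would prove $\dol\theta = -\theta^2$. Substituting $\theta = h^{-1}\dol h$ and applying the Leibniz rule for $\dol$ gives $\dol\theta = \dol(h^{-1}) \wedge \dol h + h^{-1}\,\dol\dol h$. The second term vanishes since $\dol^2 = 0$, and writing $\dol(h^{-1}) = -h^{-2}\,\dol h$ the first term becomes $-h^{-2}\,\dol h \wedge \dol h$, which is zero because $\dol h$ is a scalar $(1,0)$-form. The same substitution shows $\theta^2 = h^{-2}\,\dol h \wedge \dol h$, so both sides equal $-h^{-2}\,\dol h \wedge \dol h = 0$ and the identity $\dol\theta = -\theta^2$ holds (both being zero).

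Next I would obtain $\Theta = \bar\dol\theta$ locally. By definition $\Theta(f) = \d\theta + \theta^2$, and since $\theta^2 = 0$ this is just $\d\theta = \dol\theta + \bar\dol\theta$; the first summand vanishes by the previous step, leaving $\Theta = \bar\dol\theta$. For $\dol\Theta = 0$ I would apply $\dol$ to this and use $\dol\bar\dol = -\bar\dol\dol$ together with $\dol\theta = 0$, so that $\dol\Theta = \dol\bar\dol\theta = -\bar\dol\dol\theta = 0$; equivalently, $\d\Theta = \d\d\theta = 0$, and separating bidegrees forces the $(2,1)$-part $\dol\Theta$ to vanish on its own.

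Finally, for the commutator relation I would note that on a line bundle $\Theta$ and $\theta$ are scalar forms of even and odd degree respectively, so $\Theta \wedge \theta = \theta \wedge \Theta$ and hence $[\Theta,\theta] = 0$; this is consistent with $\dol\Theta = 0$ above and is exactly what the Bianchi identity $\d\Theta = [\Theta,\theta]$ degenerates to in the rank-one case. I expect no genuine obstacle here: the one idea that makes everything work is the rank-one collapse $\theta^2 = 0$, after which the only care required is bidegree bookkeeping and the correct signs in $\dol^2 = \bar\dol^2 = 0$ and $\dol\bar\dol + \bar\dol\dol = 0$.
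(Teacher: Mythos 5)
Your proof is correct, but it takes a genuinely different route from the paper's. The paper proves $\dol\theta = -\theta^2$ by a matrix computation that never invokes commutativity: from $\dol(h^{-1}h)=0$ one gets $\dol h^{-1} = -h^{-1}(\dol h)\,h^{-1}$, hence $\dol\theta = \dol(h^{-1}\dol h) = (\dol h^{-1})(\dol h) = -h^{-1}(\dol h)\,h^{-1}(\dol h) = -\theta^2$, then $\Theta = \d\theta + \theta^2 = \bar\dol\theta$, and the remaining assertions are read off from the Bianchi identity $\d\Theta = [\Theta,\theta]$. That argument is rank-agnostic: it establishes $\dol\theta = -\theta^2$ and $\Theta = \bar\dol\theta$ as genuine identities (with both sides generally nonzero) for the canonical connection on a holomorphic vector bundle of any rank, and only the final step $\dol\Theta = 0$ uses $[\Theta,\theta]=0$, i.e.\ rank one. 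You instead exploit the rank-one collapse from the start: $h$, $\theta$, $\Theta$ are scalar-valued, so $\theta^2 = 0$, $\dol\theta = -h^{-2}\,\dol h \wedge \dol h = 0$, and $[\Theta,\theta]=0$, so each asserted identity becomes an equality of two vanishing quantities. What your route buys is transparency: it makes plain that for line bundles the proposition degenerates to $\dol\theta = 0$, $\Theta = \bar\dol\theta = \d\theta$, $\d\Theta = 0$, which is all that is used later (e.g.\ in $\Theta = \bar\dol\dol\log h$ and the curvature-additivity lemma). What it gives up is generality: in higher rank $\dol h \wedge \dol h$ is a nonzero matrix of $2$-forms, so your argument does not carry over, whereas the paper's computation does verbatim. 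Your bidegree bookkeeping ($\dol^2 = \bar\dol^2 = \dol\bar\dol + \bar\dol\dol = 0$) correctly justifies every step you use, so the proof is complete as a proof of the stated (line-bundle) proposition.
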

\begin{proof}
    \begin{align*}
        \dol \theta = \dol(h^{-1} \dol \theta) = - (\dol h^{-1})(\dol h) = - h^{-1} (\dol h) h^{-1} (\dol h) = - \theta^2
    \end{align*}
    where we used $\dol(h^{-1} h) = (\dol h^{-1})h + h^{-1}(\dol h) = 0$. This directly gives
    \begin{align*}
        \Theta = \d \theta + \theta^2 = \dol \theta + \bar \dol \theta + \theta^2 = \bar \dol \theta
    \end{align*}
    The rest of the assertions follow by $\d \Theta = [\Theta, \theta]$.
\end{proof}

\begin{corollary}
    For a holomorphic line bundle $E \rightarrow X$ with Hermitian metric and its canonical connection, we have
    \begin{align*}
        \Theta = \bar\dol \dol \log h
    \end{align*}
\end{corollary}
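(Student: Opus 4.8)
The plan is to exploit that $E$ is a line bundle, so that its Hermitian metric in a frame $f$ is a single positive smooth function $h = h(f)$ and the connection form $\theta = \theta(f)$ is an ordinary $(1,0)$-form rather than a matrix. The preceding proposition already supplies the two ingredients I need: the canonical connection satisfies $\theta = h^{-1}\dol h$, and its curvature is locally $\Theta = \bar\dol\theta$. So the entire task reduces to rewriting $\theta$ in the asserted form and then applying $\bar\dol$.

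First I would recall $\theta = h^{-1}\dol h$ and $\Theta = \bar\dol\theta$ from the previous results. Second, since $h$ is now a scalar-valued positive function, the chain rule for the operator $\dol$ gives $h^{-1}\dol h = \dol\log h$; this is exactly where the line-bundle hypothesis is essential, because in higher rank $h^{-1}$ and $\dol h$ are noncommuting matrices and no such logarithm is available. Third, I would substitute to obtain $\Theta = \bar\dol\theta = \bar\dol\dol\log h$, which is the claim.

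There is essentially no computational obstacle; the one point warranting care is that the expression $\bar\dol\dol\log h$ is \emph{a priori} frame-dependent, since it uses $h = h(f)$, whereas $\Theta$ is a genuinely global $2$-form. I would therefore check that the frame ambiguity is harmless. Under a holomorphic change of frame $A$ (a nonvanishing holomorphic function in the line-bundle case) the metric change-of-frame formula gives $h(fA) = |A|^2 h(f)$, hence $\log h(fA) = \log h(f) + \log A + \log\bar A$. Because $A$ is holomorphic and $\bar A$ antiholomorphic, the extra terms satisfy $\bar\dol\dol\log A = 0$ and $\bar\dol\dol\log\bar A = 0$. Thus $\bar\dol\dol\log h$ is independent of the chosen frame and consistently defines the global curvature $\Theta$, confirming the asserted identity.
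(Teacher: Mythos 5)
Your proof is correct and follows essentially the same route as the paper: combine $\theta = h^{-1}\dol h$ and the local identity $\Theta = \bar\dol\theta$ from the preceding proposition, then use that $h$ is a positive scalar in the line-bundle case to write $h^{-1}\dol h = \dol\log h$. Your additional verification of frame-independence (using $h(fA) = |A|^2 h(f)$ and $\bar\dol\dol\log A = \bar\dol\dol\log\bar A = 0$) is a worthwhile extra check that the paper omits here, but it does not change the substance of the argument.
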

\begin{proof}
    For line bundle, $h^{-1} = \frac1h$ and we may write
    \begin{align*}
        \Theta = \bar\dol (h^{-1} \dol h) = \bar\dol \dol(\log h)
    \end{align*}
\end{proof}

\begin{lemma} \label{linebundle curvature addition}
    For Hermitian line bundles $L_1, L_2$ endowed with canonical connection and curvature,
    $$\Theta_{L_1 \otimes L_2} = \Theta_{L_1} + \Theta_{L_2}$$
\end{lemma}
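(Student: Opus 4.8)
The plan is to reduce the statement entirely to the local curvature formula $\Theta = \bar\dol\dol \log h$ established in the Corollary immediately above, exploiting two facts special to line bundles: that a tensor product of line bundles trivializes wherever both factors do, and that the induced Hermitian metric on a tensor product is the \emph{product} of the component metrics in a product frame. First I would fix a point of $X$ and choose local holomorphic frames $f_1 = \{e_1\}$ of $L_1$ and $f_2 = \{e_2\}$ of $L_2$ over a common open set $U$ (shrinking the two trivializing neighborhoods to their intersection). Then $f = \{e_1 \otimes e_2\}$ is a holomorphic frame of $L_1 \otimes L_2$ over $U$, so the Corollary applies to all three bundles on $U$.

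Next I would record the local expression of the tensor-product metric. Since each $L_i$ is a line bundle, $h_{L_i}(f_i)$ is a single positive smooth function on $U$, and the Hermitian metric induced on $L_1 \otimes L_2$ satisfies $\langle e_1 \otimes e_2,\, e_1 \otimes e_2 \rangle = \langle e_1, e_1 \rangle \cdot \langle e_2, e_2 \rangle$. Hence as scalar functions on $U$ we have $h_{L_1 \otimes L_2}(f) = h_{L_1}(f_1)\, h_{L_2}(f_2)$. Applying the Corollary to each bundle and using that $\log$ carries products to sums while $\bar\dol\dol$ is $\CC$-linear, I obtain the chain
\[
\Theta_{L_1 \otimes L_2} = \bar\dol\dol \log\!\big(h_{L_1} h_{L_2}\big) = \bar\dol\dol \log h_{L_1} + \bar\dol\dol \log h_{L_2} = \Theta_{L_1} + \Theta_{L_2}
\]
as an identity of $2$-forms on $U$. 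Because for a line bundle $\on{End}(E)$ is canonically trivial and the change-of-frame factor $A$ is a nonvanishing scalar, the formula $\Theta(fA) = A^{-1}\Theta(f)A = \Theta(f)$ shows each $\Theta_{L_i}$ is a genuine \emph{global} $2$-form, frame-independent; the local identities on the members of an open cover therefore patch to a single global identity on $X$.

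The only step that is not purely formal is the multiplicativity $h_{L_1 \otimes L_2}(f) = h_{L_1}(f_1)\, h_{L_2}(f_2)$, which encodes the convention that $L_1 \otimes L_2$ is equipped with the tensor-product metric; once this is granted, invoking the Corollary makes the rest automatic. I expect the main (minor) obstacle to be making this convention explicit and checking that the canonical connection of $L_1 \otimes L_2$ is indeed the one attached to the product metric — but this is immediate from the uniqueness of the canonical connection, since the product metric determines it and the Corollary computes its curvature directly. No further calculation with connection forms $\theta$ is needed, as the $\bar\dol\dol \log h$ expression already absorbs them.
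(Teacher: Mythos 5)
Your proof is correct and follows essentially the same route as the paper: both arguments reduce the statement to the multiplicativity of the induced metric, $h_{L_1 \otimes L_2} = h_{L_1} h_{L_2}$ in a product frame, and then apply the formula $\Theta = \bar\dol\dol \log h$ together with $\log(h_1 h_2) = \log h_1 + \log h_2$. The only cosmetic difference is that the paper verifies the product metric is well defined by checking its transformation law against the transition functions $g_{1,\alpha\beta}\, g_{2,\alpha\beta}$, whereas you obtain it fiberwise and handle globalization via the frame-independence of line-bundle curvature — both are fine.
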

\begin{proof}
    For frames $f_\alpha, f_\beta$, let metric $h_j$ ($j=1,2$) have local forms $h_{j}(f_\alpha) = |g_{j, \beta \alpha}|^2 h_j (f_\beta)$ where $g_{j, \beta \alpha}$ is transition function. ($f_\alpha = g_{j, \alpha \beta}f_\beta$ and $h_{j}(f_\alpha) = \langle f_\alpha, f_\alpha \rangle^2$). As transition function for $L_1 \otimes L_2$ is given by $\{ g_{1,\alpha \beta} g_{2, \alpha \beta} \}$, $h_1 h_2$ is a metric on $L_1 \otimes L_2$ and thus we have
    \begin{align*}
        \Theta_{L_1} &= \bar\dol\dol \log h_1 \\
        \Theta_{L_2} &= \bar\dol\dol \log h_2 \\
        \implies \Theta_{L_1 \otimes L_2} &= \bar\dol\dol \log h_1 h_2 = \Theta_{L_1} + \Theta_{L_2}
    \end{align*}
\end{proof}

\textbf{Example. Fubini-Study Metric}

Fubini-Study metric on projective space is used, for example, later to prove positivity property of blow-ups, which is in turn used to prove Kodaira embedding theorem.

For open $U\subset \PP^n$, consider a lift $Z:U \rightarrow \CC^{n+1}-\{0\}$ at each fiber ($\pi \circ Z=\text{id}_{\PP^n}$ where $\pi:\CC^{n+1}-\{0\}\rightarrow \PP^n$). Since $Z$ is a holomorphic function, define
\begin{align*}
\omega = \frac{i}{2\pi} \dol \bar\dol \log ||Z||^2
\end{align*}
Alternative choice of $Z$ doesn't change this form, since for alternative lift $Z'= Z\cdot f$, 
\begin{align*}
	&\dol\bar\dol \log ||Z f||^2 \\
=	&\dol\bar\dol \log||Z||^2 +\dol\bar\dol\log f + \dol\bar\dol\log\bar f\\
=	&\dol\bar\dol \log||Z||^2 -\bar\dol\dol\log f + \dol\bar\dol\log\bar f\\
=	&\dol\bar\dol \log||Z||^2
\end{align*}
Thus we get a well-defined $(1,1)$-form, and thus from this we get a metric:
\begin{align*}
\sum h_{jk} \d z_j \wedge \d z_k \mapsto \sum h_{jk} \d z_j \otimes \d z_k
\end{align*}

Now we define Chern class using curvature. It is initially locally defined using the following formula:

\begin{definition}
Given a vector bundle $E$ with connection $D$, consider the following expression:
\begin{align*}
\left|I+\frac{\sqrt{-1}}{2\pi}\Theta(f)\right|
\end{align*}
which is defined per frame $f$ at local trivial set $U$ via curvature matrix $\Theta(f)$, and is computed by considering wedge $\wedge$ as product. Then the resulting $2k$-forms are called $k$-th \emph{Chern form}, and their de Rham cohomology classes are each called $k$-th \emph{Chern class} $c_{k}$. The formal sum of Chern classes (as element of de Rham cohomology ring) is called the \emph{total Chern class} $c$.
\end{definition}

We claim that this expression gives rise to well-defined (global) differential forms. We will also show that Chern forms, as de Rham cohomology classes, are independent of the connection used. This allows one to speak of the Chern class of a vector bundle.

\begin{proposition}
    Chern class is well-defnied, closed, and invariant of connection used.
\end{proposition}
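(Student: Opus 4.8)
The plan is to verify the three assertions in the order stated: global well-definedness (independence of the frame), closedness, and independence of the connection. For the first, recall the change-of-frame formula $\Theta(fA) = A^{-1}\Theta(f)A$ established earlier; since $I = A^{-1}IA$ as well, we obtain
$$I + \tfrac{\sqrt{-1}}{2\pi}\Theta(fA) = A^{-1}\l(I + \tfrac{\sqrt{-1}}{2\pi}\Theta(f)\r)A.$$
The entries of $\Theta(f)$ are $2$-forms, which are of even degree and therefore commute with one another and with the $0$-form entries of $A$ in the exterior algebra. Hence the determinant, computed with $\wedge$ as the product, is multiplicative over this graded-commutative ring and $|A^{-1}MA| = |M|$. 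The local expressions thus patch to a global form, and extracting its homogeneous degree-$2k$ part yields the well-defined $k$-th Chern form.

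For closedness, I would first rewrite the determinant via power sums. Expanding $|I + \tfrac{\sqrt{-1}}{2\pi}\Theta(f)|$ produces the elementary symmetric functions of the formal eigenvalues of $\tfrac{\sqrt{-1}}{2\pi}\Theta(f)$, and by Newton's identities each is a polynomial in the power sums $p_k := \tr(\Theta(f)^k)$. As $\d$ is a derivation, it then suffices to prove $\d\,\tr(\Theta^k)=0$ for each $k$. Using that $\Theta$ has even degree, $\d(\Theta^k) = \sum_j \Theta^j (\d\Theta)\Theta^{k-1-j}$, and graded cyclicity of the trace collapses this to $k\,\tr(\Theta^{k-1}\d\Theta)$. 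Substituting the Bianchi identity $\d\Theta = [\Theta,\theta] = \Theta\theta - \theta\Theta$ and applying graded cyclicity once more gives $\tr(\Theta^{k-1}\d\Theta) = \tr(\Theta^k\theta) - \tr(\Theta^k\theta) = 0$. Hence every power sum, and therefore every Chern form, is closed.

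For independence of the connection, given two connections $D_0, D_1$ I would join them by the affine path $D_t = D_0 + t\eta$ with $\eta = D_1 - D_0$. The difference of connections is tensorial: locally $\theta_1(f) - \theta_0(f)$ transforms by $A^{-1}(\cdot)A$ because the inhomogeneous $A^{-1}\d A$ terms cancel, so $\eta \in \mc E^1(X,\on{End}(E))$ is global and $D_t$ is a genuine connection with $\theta_t = \theta_0 + t\eta$ and $\Theta_t = \d\theta_t + \theta_t^2$. A direct computation gives $\tfrac{\d}{\d t}\Theta_t = \d\eta + \theta_t\eta + \eta\theta_t = D_t\eta$. Differentiating the power sum and using $\d\Theta_t = [\Theta_t,\theta_t]$ together with graded cyclicity, one shows
$$\tfrac{\d}{\d t}\tr(\Theta_t^k) = k\,\tr\l((D_t\eta)\,\Theta_t^{k-1}\r) = \d\l(k\,\tr(\eta\,\Theta_t^{k-1})\r).$$
Integrating over $t\in[0,1]$ exhibits $\tr(\Theta_1^k) - \tr(\Theta_0^k)$ as exact, so the Chern forms of $D_0$ and $D_1$, being polynomials in the power sums, differ by an exact form and represent the same de Rham class.

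The frame-independence is routine. The substantive step is the transgression argument: constructing the primitive $\int_0^1 k\,\tr(\eta\,\Theta_t^{k-1})\,\d t$ and verifying, with careful bookkeeping of the graded signs incurred by cyclically permuting forms under the trace, that $\tfrac{\d}{\d t}\tr(\Theta_t^k)$ is exactly $\d$ of $k\,\tr(\eta\,\Theta_t^{k-1})$. This same graded-cyclicity accounting is also the crux of the closedness computation, so I expect the sign bookkeeping in the trace identities to be the main obstacle.
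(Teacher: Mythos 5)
Your proof is correct and takes essentially the same route as the paper: frame-invariance via conjugation-invariance of the determinant $|A^{-1}MA|=|M|$, closedness by reducing the Chern forms to the power sums $\tr(\Theta^k)$ through Newton's identities and then killing $\d\,\tr(\Theta^k)$ with the Bianchi identity $\d\Theta=[\Theta,\theta]$ and graded cyclicity of the trace, and connection-invariance by linear interpolation. The only differences are in what is taken as known: you cite Newton's identities where the paper proves them (via density of diagonalizable matrices), and conversely you carry out in full the transgression step --- the identity $\tfrac{\d}{\d t}\tr(\Theta_t^k)=\d\l(k\,\tr(\eta\,\Theta_t^{k-1})\r)$ and its integration in $t$ --- which the paper leaves as a one-sentence sketch; your computation of that step, including the telescoping of the Bianchi terms and the sign $\tr(\eta\Theta^{k-1}\theta)=-\tr(\theta\eta\Theta^{k-1})$, is sound.
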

\begin{proof}
    Using change of frame $A$,
    \begin{align*}
    \left|I+\frac{\sqrt{-1}}{2\pi}A^{-1}\Theta(f)A\right| &= \left|A^{-1}\left(I+\frac{\sqrt{-1}}{2\pi}\Theta(f)\right)A\right|
    \\&= |A|^{-1}\left|I+\frac{\sqrt{-1}}{2\pi}\Theta(f)\right||A|
    \\&= \left|I+\frac{\sqrt{-1}}{2\pi}\Theta(f)\right|
    \end{align*}
    and thus Chern class is independent of frame. Thus we also see that over two different overlapping frames, Chern class is well-defined on the intersection, implying that Chern forms are well-defined global differential form.
    
    To prove that each $2k$-form is closed, we will write each $2k$-form as a polynomial of \emph{traces of $\Theta^k$} and prove that the traces are closed instead. Such identity is proven first for diagonal matrices and is extended to general matrices by denseness of diagonalizable matrices in $\mathbb C^{n^2}$. Closedness of traces is an easy exercise of identity manipulation.
    
    We first prove the following lemma:
    \begin{lemma}
    Suppose characteristic polynomial of a matrix $A$ is given by $c_0 \lambda^n + c_1 \lambda^{n-1} + \cdots + c_n$. Then, defining $b_k=tr(A^k)$, we have
    \begin{align*}
    1b_0c_1 + b_1 c_0 &= 0
    \\2b_0c_2 + b_1 c_1 + b_2 c_0 &= 0
    \\3b_0c_3 + b_1 c_2 + b_2 c_1 + b_3c_0 &= 0
    \\ &\vdots
    \\ nb_0 c_n + b_1 c_{n-1} + \cdots + b_n c_0 &= 0
    \end{align*}
    Thus we can recursively write $c_k$ in terms of polynomial of $b_l$ and vice versa.
    \end{lemma}
    \begin{proof}
    
    This identity is firstly proven relatively easily for diagonal $A$. Then, we can use the fact that diagonalizable matrices form a dense open subset of the set of all matrices to show that it holds for any $A$. 
    
    To show that the identity is true for diagonal matrices, let $A$ have diagonal entries $a_1, \cdots a_n$. Then $b_k = \sum a_i^k$ and $c_k = (-1)^k \sum a_{i_1} \cdots a_{i_k}$, and the identity
    \begin{align*}
    kc_k + b_1 c_{k-1} + \cdots + b_{k-1} c_1 + b_k = 0
    \end{align*}
    translates to:
    \begin{align*}
    &(-1)^k k \left[ \sum a_{i_1} \cdots a_{i_k} \right] 
    \\ +& (-1)^{k-1} \left[ (\sum a_i^1) (\sum a_{i_1} \cdots a_{i_{k-1}})\right]
    \\ +& (-1)^{k-2} \left[ (\sum a_i^2) (\sum a_{i_1} \cdots a_{i_{k-2}})\right]
    \\ +& \cdots
    \\ +& (-1)^{1} \left[ (\sum a_i^{k-1}) (\sum a_{i_1})\right]
    \\ +& (-1)^{0} \left[ (\sum a_i^{k})\right] = 0
    \end{align*}
    This identity looks somewhat daunting at first, but one can consider its summands individually: once the expression is fully expanded, it can only have monomials of the form $a_{i_1}^m a_{i_2} \cdots a_{i_l}$, which can be grouped as either $a_{i_1}^m (a_{i_2} \cdots a_{i_l})$ or $a_{i_1}^{m-1} (a_{i_1} a_{i_2} \cdots a_{i_l})$. Two of them each come from exactly one square bracket of the expression, and due to the alternating signs, their single appearances cancel out exactly to produce 0 in the end. The exceptional $k$ tagged to $c_k$ results from different symmetry for that expression alone.
    
    If the identity is true for matrix $A$, then it is also true for $M^{-1}AM$ since 
    \begin{align*}
    |\lambda I - M^{-1}AM| &= |M^{-1}||\lambda I - A | |M| = |\lambda I - A|
    \\tr((M^{-1}A M)^k) &= tr(M^{-1}A^k M) = tr(A^k)
    \end{align*}
    imply that $b_k, c_k$ are identical for similar matrices. Therefore, the identity is proven for all diagonalizable matrices.
    
    Now we prove that diagonalizable matrices form a dense subset of $\mathcal M_{n,n}$, seen as $\mathbb C^{n^2}$. Observe that a polynomial $(x-a_1) \cdots (x-a_n) = x^n + c_{n-1}x^{n-1} + \cdots + c_0$ has multiple roots iff $\Delta = \prod_{i<j} (a_i - a_j)^2=0$. Since $\Delta$ is a symmetric polynomial in $\{a_i\}$, it is a polynomial in elementary symmetric polynomials, i.e. $\{c_k\}= \{(-1)^k \sum a_{i_1} \cdots a_{i_k}\}$. Since $\{c_i\}$ are polynomials in matrix entries themselves, we see that $\Delta$ is a polynomial in matrix entries. In other words, if we regard $A$ as an element of $\mathbb C^{n^2}$, the locus of $A$ such that $|\lambda I - A|$ has multiple roots is a zero set of a polynomial $\Delta$. Now it's well known that complement of a zero set of a polynomial is dense; if not, there is a ball contained in $V(\Delta)$, for which we can fix all but one variable of $A$ to move a $\epsilon$-interval and still have $A$ lie on the ball. But that's impossible since a monovariate polynomial can't be constantly zero along an interval! (unless the polynomial doesn't contain that variable, but we can a priori choose an entry of $A$ that appears at least once in $\Delta$) Thus by contradiction we see that $V(\Delta)$'s complement is indeed dense in $\mathbb C^{n^2}$ as desired.
    
    Let's show that denseness of diagonalizable matrices imply that the identities hold for all matrices. Suppose $A$ is any matrix and $(A_1, A_2, \cdots )$ is a sequence of matrices converging to $A$ with respect to some norm (say, $L^\infty$ norm of maximum of difference of entries converging to zero). If $(b_k(M),c_k(M))$ are $(b_k,c_k)$ calculated for $M$ (considered as functions of entries of $M$), then we can consider also
    \begin{align*}
    \xi_k(M) = k c_k(M) b_0(M) + \cdots + c_0(M) b_k(M)
    \end{align*}
    Since $(b_k(M),c_k(M))$ are degree-$k$ homogeneous polynomials in the entries of $M$, $\xi_k(M)$ is a degree-$k$ polynomial in entries of $M$. Since polynomials are continuous functions, the convergence of $\{A_i\}$ to $A$ implies the convergence of $\{\xi_k(A_i)\}=\{0\}$ to $\xi_k(A)$, and thus $\xi_k(A)=0$ as desired.
    \end{proof}
    
    $b_k:= tr(\Theta^k)$ are closed because 
    \begin{align*}
    &d\Theta = d(d\theta + \theta\wedge\theta) = d^2\theta + d\theta\wedge \theta - \theta \wedge d\theta = 0+\Theta\wedge \theta - \theta\wedge\Theta
    \\\implies & d(\Theta^k) = [d\Theta \wedge \Theta \wedge \cdots \wedge \Theta] + [\Theta \wedge d\Theta \wedge \cdots \wedge \Theta] + \cdots + [\Theta \wedge \cdots \wedge \Theta \wedge d\Theta]
    \\&= k(d\Theta \wedge \Theta \wedge \cdots \wedge \Theta) = k(\Theta \wedge \theta \wedge \Theta^{k-1} - \theta \wedge \Theta \wedge \Theta^{k-1})
    \\\implies & d b_k = d(tr(\Theta^k)) =  tr(d(\Theta^k)) = k [tr(\Theta \wedge \theta \wedge \Theta^{k-1}) - tr(\theta \wedge \Theta \wedge \Theta^{k-1})] = 0
    \end{align*}
    (where we used $tr(AB)=tr(BA)$ at the end.) Thus by the lemma,
    \begin{align*}
    d(c_k) &= d(\text{polynomial in }b_l)
    \\&=\text{sum of monomials that each contain at least one }d(b_l)
    \\&= 0
    \end{align*}
    as desired.
    
    Lastly we prove that the cohomology classes represented by Chern classes are independent of the connection used. With change of connection, the Chern forms turn out to change precisely by a coboundary, thus preserving their de Rham cohomology classes. This will be proven via interpolating two given connections linearly and expressing the difference in Chern forms via a integral.
\end{proof}

\subsection{Line Bundles}

\subsubsection{Line Bundles and Chern Class}

Chern class is more than a litmus test for non-isomorphic vector bundles. Sometimes, it classifies the bundle completely. Consider following results:

\begin{theorem}
Smooth line bundles on a smooth manifold are uniquely and always determined by their Chern classes in $H^2(X,\mathbb Z)$.
\end{theorem}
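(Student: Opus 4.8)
The plan is to identify the group of isomorphism classes of smooth complex line bundles under tensor product with a sheaf cohomology group, and then to compute that group using the acyclicity of fine sheaves established above. First I would observe that a line bundle is precisely the data of its transition functions $\{g_{\alpha\beta}\}$, which form a \v Cech $1$-cocycle valued in the sheaf $\mc E^*$ of nowhere-vanishing smooth $\CC$-valued functions (under multiplication): the cocycle condition is exactly the relation $g_{\alpha\beta}g_{\beta\gamma}g_{\gamma\alpha}=1$ from the definition of a bundle. Two bundles are isomorphic precisely when their cocycles differ by a coboundary, namely the change-of-frame data, and tensor product of bundles corresponds to multiplication of cocycles, hence to addition in cohomology. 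Thus the group of isomorphism classes of line bundles is naturally identified with $H^1(X,\mc E^*)$, which by the paracompactness assumption agrees with sheaf cohomology.

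Next I would invoke the exponential short exact sequence of sheaves
\begin{align*}
    0 \rightarrow \ZZ \rightarrow \mc E \xrightarrow{\exp(2\pi i\,\cdot\,)} \mc E^* \rightarrow 0,
\end{align*}
where $\mc E$ is the sheaf of smooth $\CC$-valued functions, and pass to its long exact sequence in sheaf cohomology. The crucial input is that $\mc E$ is a \emph{fine} sheaf, since scalar multiplication by an ordinary partition of unity supplies the maps $\eta_i$ required in the definition; hence $\mc E$ is soft, hence acyclic by the corollaries proved earlier. Therefore $H^1(X,\mc E)=H^2(X,\mc E)=0$, and the relevant segment
\begin{align*}
    H^1(X,\mc E) \rightarrow H^1(X,\mc E^*) \xrightarrow{\;\delta\;} H^2(X,\ZZ) \rightarrow H^2(X,\mc E)
\end{align*}
becomes $0 \rightarrow H^1(X,\mc E^*) \xrightarrow{\delta} H^2(X,\ZZ) \rightarrow 0$, so the connecting homomorphism $\delta$ is an isomorphism. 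This already shows that a smooth line bundle is \emph{uniquely} determined (injectivity of $\delta$) and that \emph{every} class arises (surjectivity of $\delta$) by the class $\delta([L])\in H^2(X,\ZZ)$.

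The remaining, and hardest, step is to identify this class $\delta([L])$ with the first Chern class $c_1(L)$ defined earlier via curvature. To do this I would endow $L$ with a Hermitian metric $h$ and its canonical connection, so that $c_1(L)=\tfrac{i}{2\pi}[\Theta]$ in de Rham cohomology with $\Theta=\bar\dol\dol\log h$ locally. On the other hand, $\delta([L])$ is computed by choosing local branches $\tfrac{1}{2\pi i}\log g_{\alpha\beta}$ of the logarithms of the transition functions and applying the \v Cech coboundary to produce an integer $2$-cocycle. I would then compare these two descriptions through the \v Cech--de Rham double complex, expressing the class of $\Theta$ as a \v Cech class by successively solving $\dol\bar\dol$-type equations on overlaps and matching the result, up to the normalizing factor $\tfrac{i}{2\pi}$, with the integer cocycle defining $\delta([L])$. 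This reconciliation of the purely algebraic connecting homomorphism with the analytic curvature integral is the genuine obstacle of the proof; by contrast, the bijection between line bundles and $H^2(X,\ZZ)$ is a formal consequence of the vanishing $H^\bullet(X,\mc E)=0$ for the fine sheaf $\mc E$.
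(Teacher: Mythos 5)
Your proposal follows essentially the same route as the paper: identify smooth line bundles with $H^1(X,\mc E^\times)$ via transition-function cocycles, then use the exponential sheaf sequence together with the fineness (hence softness and acyclicity) of $\mc E$ to conclude that the connecting (Bockstein) homomorphism $H^1(X,\mc E^\times)\to H^2(X,\ZZ)$ is an isomorphism. Your final paragraph, which reconciles the Bockstein class with the curvature-defined Chern form through the \v Cech--de Rham comparison, is in fact more careful than the paper, which simply asserts that the Bockstein image ``turns out to be'' the Chern class without carrying out that identification.
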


\begin{theorem}
Holomorphic line bundles on a complex manifold are not uniquely determined by Chern class, but for each cohomology class reprsented by a $(1,1)$-form, there is at least one line bundle whose Chern class is same as the given cohomology class.
\end{theorem}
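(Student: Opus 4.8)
The plan is to deduce both halves of the statement from the long exact cohomology sequence attached to the \emph{holomorphic exponential sequence}
\begin{align*}
0 \to \ZZ \to \mc O \xrightarrow{\exp(2\pi i\,\cdot)} \mc O^* \to 0,
\end{align*}
where $\mc O^*$ denotes the sheaf of nowhere-vanishing holomorphic functions. First I would record that isomorphism classes of holomorphic line bundles are classified by $H^1(X,\mc O^*)$: a line bundle is precisely a \v Cech $1$-cocycle of transition functions $g_{\alpha\beta}:U_\alpha\cap U_\beta \to \CC^*=\GL_1\CC$ obeying the cocycle condition, isomorphic bundles differ by a coboundary, so $\on{Pic}(X)\cong H^1(X,\mc O^*)$. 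The associated long exact sequence reads
\begin{align*}
\cdots \to H^1(X,\ZZ) \to H^1(X,\mc O) \to H^1(X,\mc O^*) \xrightarrow{\delta} H^2(X,\ZZ) \to H^2(X,\mc O) \to \cdots,
\end{align*}
and the connecting homomorphism $\delta$ is, up to sign, the first Chern class $c_1$. Reconciling this $\delta$ with the curvature definition $c_1(L)=\left[\frac{i}{2\pi}\Theta\right]$ from the previous section is the one genuinely technical point; I would do it by choosing a metric $h$, writing $\Theta=\bar\dol\dol\log h$ locally, and tracing a \v Cech representative of $\delta(L)=\{\tfrac{1}{2\pi i}\log g_{\alpha\beta}\}$ through the partition-of-unity comparison between \v Cech and de Rham cohomology, verifying the two $2$-forms agree in $H^2(X,\CC)$.

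Granting this identification, non-uniqueness is immediate: by exactness $\ker c_1=\on{im}\bigl(H^1(X,\mc O)\to H^1(X,\mc O^*)\bigr)\cong H^1(X,\mc O)/\on{im}\,H^1(X,\ZZ)$, which is in general nonzero. For instance, on a complex torus or a curve of positive genus $H^1(X,\mc O)\neq 0$ while the image of $H^1(X,\ZZ)$ is only a lattice, so distinct line bundles can carry the same Chern class.

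For existence, the key observation is that, again by exactness, $\on{im}\,c_1=\ker\bigl(j:H^2(X,\ZZ)\to H^2(X,\mc O)\bigr)$, where $j$ is induced by $\ZZ\injto\mc O$. Since $\ZZ\injto\mc O$ factors through the constant sheaf $\CC$, the map $j$ factors as $H^2(X,\ZZ)\to H^2(X,\CC)\to H^2(X,\mc O)$. Under the de Rham isomorphism and the Dolbeault isomorphism $H^2(X,\mc O)\cong H^{0,2}(X)$, I claim the second arrow sends a de Rham class $[\eta]$ to the $\bar\dol$-class of its $(0,2)$-component $[\eta^{0,2}]$; this holds because the projection $\mc E^\b\to\mc E^{0,\b}$ is a morphism of resolutions covering $\CC\injto\mc O$, as one checks from the identity $(\d\eta)^{0,k+1}=\bar\dol(\eta^{0,k})$. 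Hence if the prescribed integral class is represented by a closed $(1,1)$-form $\omega$, then $\omega^{0,2}=0$, so $j$ annihilates it, placing it in $\on{im}\,c_1$; thus some holomorphic line bundle realizes the given Chern class.

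The main obstacle is the compatibility of the two definitions of $c_1$ flagged in the first paragraph: once the connecting map $\delta$ is matched with the curvature Chern class, both halves of the theorem are formal consequences of the exponential long exact sequence combined with the Dolbeault theorem. A secondary point worth stating carefully is that the existence assertion should be read for \emph{integral} classes admitting a $(1,1)$-representative, since any honest Chern class must a priori lie in $H^2(X,\ZZ)$.
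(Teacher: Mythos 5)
Your proposal is correct, and its skeleton --- $\on{Pic}(X)\cong H^1(X,\mc O^\times)$, the exponential sheaf sequence, and its long exact cohomology sequence --- is exactly the framework the paper sets up around this theorem. The substantive difference is that the paper stops short of an actual proof: after deriving the long exact sequence it only remarks that, unlike the smooth case, $\mc O$ is not acyclic, which morally accounts for non-uniqueness but leaves the existence half (the Lefschetz-type statement about $(1,1)$-classes) unproven, and the identification of the connecting homomorphism with the curvature Chern class is asserted rather than proven. You supply precisely what is missing: the computation $\ker c_1 \cong H^1(X,\mc O)/\on{im}\, H^1(X,\ZZ)$ with concrete nonvanishing examples (tori, positive-genus curves), and, for existence, the identification $\on{im}\, c_1=\ker\bigl(j:H^2(X,\ZZ)\to H^2(X,\mc O)\bigr)$ together with the key observation that $j$ factors as $H^2(X,\ZZ)\to H^2(X,\CC)\to H^{0,2}(X)$ with the second map given by $[\eta]\mapsto[\eta^{0,2}]$, because the type projection $\mc E^\bullet\to\mc E^{0,\bullet}$ is a morphism of fine resolutions covering $\CC\injto\mc O$; this last step is legitimately available from the paper's acyclic-resolution theorem and Dolbeault theorem, and it needs neither compactness nor a K\"ahler hypothesis. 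Your closing caveat that the statement must be read for \emph{integral} classes also repairs a genuine imprecision in the theorem as stated. The one point you defer --- reconciling the Bockstein map with the curvature definition of $c_1$ --- is deferred by the paper as well, so your argument is, if anything, more complete than the paper's own treatment.
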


\begin{theorem}
Holomorphic line bundles on $\mathbb{CP}^n$ are precisely (isomorphism classes of) $\{H^n\}_{n\in \mathbb Z}$, where $H$ is the hyperplane section bundle and $H^{-n}=(H^*)^n$.
\end{theorem}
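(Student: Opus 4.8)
The plan is to compute the group of isomorphism classes of holomorphic line bundles on $X=\mathbb{CP}^n$ under tensor product, which by the gluing description of bundles via transition functions is exactly $H^1(X,\mc O^*)$, the \v Cech cohomology of the sheaf $\mc O^*$ of nowhere-vanishing holomorphic functions. I would analyze this group through the exponential sheaf sequence
$$0 \to \ZZ \to \mc O \xrightarrow{\exp(2\pi i\,\cdot\,)} \mc O^* \to 0,$$
whose long exact sequence in sheaf cohomology contains the segment
$$H^1(X,\mc O) \to H^1(X,\mc O^*) \xrightarrow{c_1} H^2(X,\ZZ) \to H^2(X,\mc O),$$
the connecting homomorphism $c_1$ being the first Chern class. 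The theorem then reduces to two assertions: first, that $c_1$ is injective, for which it suffices that $H^1(X,\mc O)=0$, as this makes the incoming map into $H^1(X,\mc O^*)$ zero and hence $\ker c_1=0$; and second, that the image of $c_1$ is all of $H^2(X,\ZZ)\cong\ZZ$, for which it suffices to exhibit one line bundle, namely $H$, whose Chern class generates.

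The essential --- and hardest --- step is the vanishing $H^1(X,\mc O)=0$. By the Dolbeault theorem established above, $H^1(X,\mc O)=H^1(X,\Omega^0)\cong H^1(\mc E^{0,*}(X))$, the $(0,1)$ Dolbeault cohomology of $\mathbb{CP}^n$. I would compute this by \v Cech cohomology on the standard affine cover $\{U_i=\{Z_i\neq 0\}\}_{i=0}^n$: each $U_i\cong\CC^n$, and every finite intersection is biholomorphic to a product of copies of $\CC$ and $\CC^*$, all of which are Stein and hence carry no higher $\mc O$-cohomology (the contractible model case being the $\bar\dol$-Poincaré lemma). Thus this cover is Leray, its \v Cech complex computes $H^\bullet(X,\mc O)$, and I would finish by expanding a $1$-cocycle in Laurent series in the affine ratios $Z_j/Z_i$ and solving the coboundary equation monomial by monomial to realize it as a coboundary. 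Had the Hodge decomposition of the later sections been available, this vanishing would follow at once from the Hodge numbers $h^{0,q}(\mathbb{CP}^n)=0$ for $q>0$; I keep the present argument self-contained.

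The remaining inputs are elementary. The topological fact $H^2(\mathbb{CP}^n,\ZZ)\cong\ZZ$ follows from the CW structure with one cell in each even real dimension $0,2,\dots,2n$ and none in odd dimension, so that cellular cohomology is $\ZZ$ in every even degree through $2n$. To see that $c_1(H)$ generates, I would restrict $H$ to a projective line $\mathbb{CP}^1\hookrightarrow\mathbb{CP}^n$, obtaining $\mc O_{\mathbb{CP}^1}(1)$, whose Chern class integrates to $1$ over the fundamental class; since such a line generates $H_2(\mathbb{CP}^n,\ZZ)$, the class $c_1(H)$ pairs to $1$ and is therefore a generator of $H^2(X,\ZZ)$. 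Combining these, $c_1$ is an isomorphism $H^1(X,\mc O^*)\xrightarrow{\sim}\ZZ$ sending $H$ to a generator, hence sending $H^{\otimes m}$ to $m$ by additivity of the first Chern class under tensor product (Lemma \ref{linebundle curvature addition}). Thus every holomorphic line bundle is isomorphic to a unique power $H^m$, and distinct powers, having distinct Chern classes, are non-isomorphic, which is exactly the claimed classification. The one genuine obstacle is the cohomology vanishing of the second paragraph; everything else is formal bookkeeping in the exponential long exact sequence together with standard topology of projective space.
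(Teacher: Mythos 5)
Your proposal is correct, and its skeleton (line bundles $=H^1(X,\mc O^\times)$, exponential sheaf sequence, Chern class as the connecting map) is the same as the paper's; but you diverge from the paper at both places where real content is needed. The paper gets \emph{both} vanishings $H^1(\PP^n,\mc O)=H^2(\PP^n,\mc O)=0$ in one stroke from the Hodge decomposition proven much later ($H^{p,q}(\PP^n)=0$ for $p\neq q$, since the de Rham cohomology of $\PP^n$ sits in degrees $(p,p)$), so the exponential sequence collapses to $0\to H^1(\PP^n,\mc O^\times)\to H^2(\PP^n,\ZZ)\to 0$; the statement is proved in the line-bundle section only modulo that forward reference. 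You instead (1) prove the single vanishing $H^1(\PP^n,\mc O)=0$ self-containedly, via Dolbeault plus a \v Cech computation on the standard affine cover, and (2) dispense with $H^2(\PP^n,\mc O)=0$ entirely by exhibiting $c_1(H)$ as a generator of $H^2(\PP^n,\ZZ)$ through restriction to a line, so that the image of $c_1$ is already everything. Your route buys two things: no dependence on Hodge theory, and an explicit identification of the generator --- note that the paper's isomorphism $H^1(\PP^n,\mc O^\times)\cong\ZZ$ by itself does not yet say the bundles are exactly the powers of $H$; one still needs that $H$ maps to a generator, which you supply and the paper leaves implicit. The costs are that you import Cartan's Theorem B and Leray's theorem on acyclic covers (neither is proven in the paper, which only asserts \v Cech $=$ sheaf cohomology as a limit over covers), and that the monomial-by-monomial Laurent argument, which is purely formal in the algebraic category, needs a convergence remark holomorphically: one should observe that on intersections biholomorphic to $\CC^{n-k}\times(\CC^*)^k$ the regular and principal parts of a convergent Laurent series converge separately on the larger open sets, so the coboundary you build is genuinely holomorphic. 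With that remark added, your argument is complete, and arguably more self-contained at this point of the paper than the paper's own.
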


These theorems are proven using Chern class of line bundles (which is actually just curvature form, up to constant). The classification of holomorphic line bundles on projective space is powered by the fact that  of holomorphic function sheaf must be trivial for $\mathbb{CP}^n$. This fact can be proven with help of Hodge Decomposition and Serre Duality, which will be proven much later. We'll prove the result here too, assuming the vanishing of cohomology.

Before we go any further, let's discuss a neat computational tool to deal with line bundles:

\begin{proposition}
Holomorphic line bundles are completely classified by the set
\begin{align*}
H^1(X,\mathcal O^\times)
\end{align*}
\end{proposition}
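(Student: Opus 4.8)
The plan is to realize the classical correspondence between holomorphic line bundles and their transition cocycles, and then to recognize the cocycle and coboundary relations as exactly the defining data of $H^1(X,\mc O^\times)$, where $\mc O^\times$ denotes the sheaf of nonvanishing holomorphic functions regarded as a sheaf of abelian groups under multiplication. Recall from the basic theory of bundles that a holomorphic line bundle $L$ trivializing over a cover $\mc U=\{U_\alpha\}$ is the same datum as a family of holomorphic transition functions $g_{\alpha\beta}:U_\alpha\cap U_\beta\to\GL_1\CC=\CC^\times$ satisfying $g_{\alpha\beta}g_{\beta\gamma}g_{\gamma\alpha}=1$. Since each $g_{\alpha\beta}$ is nonvanishing and holomorphic, it is a section of $\mc O^\times$ over $U_\alpha\cap U_\beta$, and the triple-overlap relation is precisely the condition for $(g_{\alpha\beta})$ to be a \v Cech $1$-cocycle for $\mc O^\times$ (written multiplicatively). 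Thus every line bundle, together with a trivializing cover, determines a class in $H^1(\mc U,\mc O^\times)$, and passing to the direct limit $H^1(X,\mc O^\times)=\lim_\rightarrow H^1(\mc U,\mc O^\times)$ over refinements yields a class $[L]$ that I will argue is independent of the chosen cover.

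First I would verify that this assignment descends to isomorphism classes and is injective. Given two line bundles $L,L'$ trivializing over a common cover (which may be arranged by passing to a common refinement), an isomorphism $L\cong L'$ is, in each chart, multiplication by a nonvanishing holomorphic function $f_\alpha\in\mc O^\times(U_\alpha)$; compatibility on overlaps translates exactly into $g'_{\alpha\beta}=f_\alpha\,g_{\alpha\beta}\,f_\beta\iv$, which says that $(g_{\alpha\beta})$ and $(g'_{\alpha\beta})$ differ by the coboundary of the $0$-cochain $(f_\alpha)$. Conversely, any such family $(f_\alpha)$ glues the chartwise multiplications into a global bundle isomorphism. Hence isomorphic bundles have equal classes and, reading the argument backwards, equal classes force an isomorphism, so the induced map on isomorphism classes is well defined and injective.

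For surjectivity I would invoke the gluing construction recorded in the basic properties: any $1$-cocycle $(g_{\alpha\beta})\in Z^1(\mc U,\mc O^\times)$ consists of nonvanishing holomorphic functions valued in $\GL_1\CC$ satisfying the triple-overlap identity, which is exactly the data needed to glue the trivial bundles $U_\alpha\times\CC$ into a holomorphic line bundle $L$ realizing the given class. Since every element of $H^1(X,\mc O^\times)$ is represented by such a cocycle over some cover, the map is onto; moreover, because the tensor product of line bundles multiplies transition cocycles, the bijection is in fact an isomorphism of abelian groups (the Picard group), compatibly with the additive behaviour of curvature in Lemma \ref{linebundle curvature addition}. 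The one genuinely delicate point is the direct-limit bookkeeping: I must check that the cocycle attached to a bundle is natural under refinement of covers, and that comparing two bundles, or comparing a bundle with a cocycle, can always be performed on a common refinement, so that every construction is well defined on $\lim_\rightarrow H^1(\mc U,\mc O^\times)$ rather than on a single fixed cover. Everything else is a routine translation between the bundle picture and the cocycle picture.
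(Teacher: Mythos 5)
Your proposal is correct and follows essentially the same route as the paper: transition functions of a trivialized line bundle form a multiplicative \v Cech $1$-cocycle for $\mc O^\times$, and the gluing construction of bundles from cocycle data supplies the converse. In fact your write-up is more complete than the paper's own proof, which only sketches the cocycle correspondence and omits the points you handle explicitly --- that bundle isomorphisms correspond exactly to coboundaries $g'_{\alpha\beta}=f_\alpha g_{\alpha\beta}f_\beta\iv$, and that the classification is well defined on the direct limit over refinements of covers.
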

\begin{proposition}
Smooth line bundles are completely classified by the set
\begin{align*}
H^1(X,\mathcal E^\times)
\end{align*}
\end{proposition}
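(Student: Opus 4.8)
The plan is to realize smooth complex line bundles as \v Cech $1$-cocycles valued in $\mc E^\times$, the sheaf of nowhere-vanishing smooth functions (written multiplicatively). Recall from the discussion of transition functions that the data of a smooth line bundle trivialized over an open cover $\mc U = \{U_\alpha\}$ is exactly a collection $g_{\alpha\beta}: U_{\alpha\beta} \to \GL_1\CC = \CC^\times$ of smooth nowhere-vanishing functions subject to $g_{\alpha\beta} g_{\beta\gamma} g_{\gamma\alpha} = 1$. Viewing each $g_{\alpha\beta}$ as a section of $\mc E^\times$ over $U_{\alpha\beta}$, the family $\{g_{\alpha\beta}\}$ is a $1$-cochain in $C^1(\mc U, \mc E^\times)$, and I would first check that the cocycle identity above is precisely the closedness condition $\delta(\{g_{\alpha\beta}\}) = 1$ for the \v Cech coboundary (read multiplicatively, since the group law of $\mc E^\times$ is multiplication). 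Thus every line bundle trivialized over $\mc U$ determines a class in $H^1(\mc U, \mc E^\times)$, and conversely any cocycle glues to a bundle, as already noted when transition data were introduced.

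Next I would match the coboundary relation with bundle isomorphism. Given two line bundles trivialized over the same $\mc U$ with transition data $\{g_{\alpha\beta}\}$ and $\{g'_{\alpha\beta}\}$, I claim they are isomorphic iff there exist $h_\alpha \in \mc E^\times(U_\alpha)$ with $g'_{\alpha\beta} = h_\alpha\, g_{\alpha\beta}\, h_\beta^{-1}$. For the forward direction, an isomorphism restricts over each $U_\alpha$, via the local frames, to multiplication by a nowhere-vanishing smooth function $h_\alpha$, and comparing the two trivializations on $U_{\alpha\beta}$ yields the displayed relation; for the converse, the $h_\alpha$ assemble into a global bundle isomorphism because the relation guarantees compatibility on overlaps. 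Since this relation says exactly that $\{g_{\alpha\beta}\}$ and $\{g'_{\alpha\beta}\}$ differ by the coboundary $\delta(\{h_\alpha\})$ of a $0$-cochain, isomorphism classes of line bundles trivialized over $\mc U$ are in bijection with $H^1(\mc U, \mc E^\times)$.

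Finally I would pass to the direct limit to remove the dependence on $\mc U$. Every line bundle is locally trivial, hence trivializes over some cover, so the construction assigns a class to each bundle; to compare bundles trivialized over different covers I would refine to a common cover $\mc V$, using that a refinement induces restriction maps on cochains commuting with $\delta$, and hence maps $H^1(\mc U, \mc E^\times) \to H^1(\mc V, \mc E^\times)$. Two bundles are isomorphic precisely when their cocycles become cohomologous on a common refinement, so taking $\lim_\rightarrow$ over covers produces a well-defined bijection between isomorphism classes of smooth complex line bundles and $H^1(X, \mc E^\times)$.

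The step demanding the most care is this last compatibility with refinement: one must verify that the bijection over a fixed cover is compatible with the refinement-induced maps so that it descends to the colimit, and that an arbitrary bundle isomorphism, which need not respect any single fixed cover, is always witnessed on a suitable common refinement. The underlying geometric inputs, namely local triviality and the gluing of cocycles, are already available from the earlier treatment of transition functions. I would also note that the argument is formally identical to the preceding proposition classifying holomorphic line bundles by $H^1(X, \mc O^\times)$, the only change being that smooth nowhere-vanishing functions replace holomorphic ones throughout.
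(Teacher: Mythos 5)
Your proposal is correct and follows essentially the same route as the paper: transition functions of a smooth line bundle give a \v Cech $1$-cocycle valued in $\mc E^\times$, and conversely cocycles glue to bundles. The paper's own proof is in fact terser than yours, leaving implicit the two points you spell out (that bundle isomorphism corresponds exactly to modifying the cocycle by a coboundary $\delta(\{h_\alpha\})$, and that the identification is compatible with refinement so it descends to the direct limit defining $H^1(X,\mc E^\times)$), so your write-up is a more complete version of the same argument.
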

\begin{proof}
Consider that each bundle is completely determined by an open cover and transition functions on intersections, and that transition functions are required to be nonsingular and thus nonvanishing as a scalar. Therefore, each bundle's transition functions determine a collection of nonvanishing holomorphic functions at intersections of an open cover, determining a cocycle of Cech cohomology group. For smooth line bundles, simply replace ``holomorphic'' with ``smooth'' in the proof.
\end{proof}

With this result in mind, we just need to work with $H^1(X,\mathcal O^\times)$ and $H^1(X,\mathcal E^\times)$ to classify holomorphic line bundles. Given that we are to care about sheaf cohomology of $\mathcal O^\times$, the ``exponential sheaf sequence" becomes extremely useful. We are considering the exact sequence of sheaves
\begin{align*}
&0\rightarrow \underline{\mathbb Z} \xrightarrow{\subset} \mathcal O \xrightarrow{e^{2\pi i \cdot}} \mathcal O^\times \rightarrow 0\\
&0\rightarrow \underline{\mathbb Z} \xrightarrow{\subset} \mathcal E \xrightarrow{e^{2\pi i \cdot}} \mathcal E^\times \rightarrow 0
\end{align*}
and the homology long exact sequence produced by it:
\begin{align*}
& 0\rightarrow \underline{\mathbb Z}(X) \rightarrow \mathcal O(X) \rightarrow \mathcal O^\times(X) \\
\rightarrow & H^1(X,\underline{\mathbb Z}) \rightarrow H^1(X,\mathcal O) \rightarrow H^1(X,\mathcal O^\times) \\
\rightarrow &H^2(X,\underline{\mathbb Z}) \rightarrow H^2(X,\mathcal O) \rightarrow H^2(X,\mathcal O^\times) \\ \rightarrow & \cdots
\end{align*}
and
\begin{align*}
& 0\rightarrow \underline{\mathbb Z}(X) \rightarrow \mathcal E(X) \rightarrow \mathcal E^\times(X) \\
\rightarrow & H^1(X,\underline{\mathbb Z}) \rightarrow H^1(X,\mathcal E) \rightarrow H^1(X,\mathcal E^\times) \\
\rightarrow &H^2(X,\underline{\mathbb Z}) \rightarrow H^2(X,\mathcal E) \rightarrow H^2(X,\mathcal E^\times) \\ \rightarrow & \cdots
\end{align*}
here, the latter is easier to deal with, and directly yields the classification of smooth line bundles, since $\mathcal E$ is a fine sheaf, and thus an acyclic sheaf, meaning that $H^r(X,\mathcal E)=0$ identically, making the long exact sequence simply
\begin{align*}
& 0\rightarrow \underline{\mathbb Z}(X) \rightarrow \mathcal E(X) \rightarrow \mathcal E^\times(X) \rightarrow H^1(X,\underline{\mathbb Z}) \rightarrow 0 \\
&0 \rightarrow H^1(X,\mathcal E^\times)
\rightarrow H^2(X,\underline{\mathbb Z}) \rightarrow 0 \\
&0 \rightarrow H^2(X,\mathcal E^\times) \rightarrow H^3(X,\underline{\mathbb Z}) \rightarrow 0 \\
& \cdots
\end{align*}
therefore having equality:
\begin{align*}
H^r(X,\mathcal E^\times) \cong H^{r+1}(X,\underline{\mathbb Z})
\end{align*}
and especially that the Bockstein homomorphism of the long exact sequence completely classifies line bundle through $H^2(X,\underline{\mathbb Z})\cong H^2(X,\mathbb Z)$.

Meanwhile, we can't do the same thing with holomorphic line bundle since $\mathcal O$ is not in general an acyclic sheaf.

\subsubsection{Line Bundles and Divisors}

Given a complex manifold $X$, an \emph{analytic hypersurface} on it is a subset that is locally given as a zero locus of a complex analytic function. A \emph{divisor} is a formal finite linear combination of analytic hypersurfaces. This generalizes the notion of divisor on a Riemann surface, a finite linear combination of points.

Divisors turn out to be closely related to \emph{line bundles}, which are defined as vector bundles of rank 1. To give the relationships concisely, we first describe divisors and line bundles as global sections of some sheaves.

\begin{proposition}
    Divisors are characterized by $H^0(X,\mathcal M^\times / \mathcal O^\times)$, line bundles are characterized by $H^1(X,\mathcal O^\times)$, and they are connected by Bockstein homomorphism resulting from cohomology long exact sequence 
    \begin{align*}
    & 0 \rightarrow \mathcal O^\times \rightarrow \mathcal M^\times \rightarrow \mathcal M^\times / \mathcal O^\times \rightarrow 0 \\
    \implies & 0 \rightarrow H^0(X,\mathcal O^\times) \rightarrow H^0(X, \mathcal M^\times) \rightarrow H^0(X, \mathcal M^\times / \mathcal O^\times) \rightarrow H^1(X,\mathcal O^\times) \rightarrow \cdots
    \end{align*}
\end{proposition}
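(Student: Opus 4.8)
The plan is to treat the three assertions in turn, reducing the first and third to an explicit understanding of the quotient sheaf $\mathcal M^\times/\mathcal O^\times$ and its stalks, while the second is already available from the earlier classification of holomorphic line bundles by $H^1(X,\mathcal O^\times)$.

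First I would unwind the meaning of a global section of $\mathcal M^\times/\mathcal O^\times$. Since this quotient is defined as a sheafification, a global section is by definition a collection of local meromorphic functions $f_\alpha \in \mathcal M^\times(U_\alpha)$ over an open cover $\{U_\alpha\}$ whose images in the quotient agree on overlaps; concretely, $f_\alpha/f_\beta \in \mathcal O^\times(U_\alpha \cap U_\beta)$. To match this with the definition of a divisor as a formal finite combination of analytic hypersurfaces, I would pass to stalks: the stalk $(\mathcal M^\times/\mathcal O^\times)_x$ is $\mathcal M_x^\times / \mathcal O_x^\times$, and since $\mathcal O_x$ is a unique factorization domain (a consequence of the Weierstrass preparation theorem), every germ of a nonzero meromorphic function factors uniquely up to units into irreducible germs. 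Thus a germ modulo units is precisely a formal $\ZZ$-combination of irreducible hypersurface germs through $x$, i.e. the germ of a divisor. Gluing these local data via the overlap condition $f_\alpha/f_\beta \in \mathcal O^\times$ shows that a global section records exactly the same information as a divisor, establishing the first characterization.

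Next, the short exact sequence $0 \to \mathcal O^\times \to \mathcal M^\times \to \mathcal M^\times/\mathcal O^\times \to 0$ needs to be checked: injectivity of $\mathcal O^\times \hookrightarrow \mathcal M^\times$ is immediate, and exactness on the right holds by construction of the quotient sheaf. The associated long exact sequence in sheaf cohomology then supplies the connecting (Bockstein) map $\delta: H^0(X, \mathcal M^\times/\mathcal O^\times) \to H^1(X, \mathcal O^\times)$. I would compute $\delta$ via the standard \v Cech recipe: lift a global section to local meromorphic representatives $f_\alpha$, and the resulting cocycle is $g_{\alpha\beta} = f_\alpha/f_\beta \in \mathcal O^\times(U_\alpha \cap U_\beta)$. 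This is exactly a transition cocycle of a holomorphic line bundle, namely the line bundle $[D]$ associated to the divisor $D$. Under the identification from the first part and the earlier classification of line bundles by $H^1(X,\mathcal O^\times)$, the map $\delta$ is thus the assignment $D \mapsto [D]$, which is the asserted connection.

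The main obstacle is the first step: the honest identification of $H^0(X, \mathcal M^\times/\mathcal O^\times)$ with divisors rests on the factoriality of the local rings $\mathcal O_x$, so that the notion of order of vanishing along an irreducible hypersurface is well defined and a germ modulo units carries exactly the data of its divisor. Everything else — exactness of the sequence and the \v Cech computation of $\delta$ — is formal once this local structure is in hand.
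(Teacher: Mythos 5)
Your proposal is correct and follows essentially the same route as the paper's proof: divisors correspond to compatible local meromorphic data modulo units, line bundles to transition cocycles in $H^1(X,\mathcal O^\times)$, and the connecting homomorphism realizes $D \mapsto [D]$ via the cocycle $g_{\alpha\beta}=f_\alpha/f_\beta$. The only difference is one of precision: where the paper appeals informally to vanishing orders $\on{ord}_V(f)$, you ground the stalk identification in factoriality of $\mathcal O_x$ (Weierstrass preparation) and compute the Bockstein explicitly by the \v Cech recipe, which is exactly the justification the paper leaves implicit.
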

\begin{proof}
    \begin{itemize}
    \item \textbf{Divisor = Element of $H^0(X, \mathcal M^\times / \mathcal O^\times)$}: A divisor can be thought as a zero locus of locally defined locally defined functions with appropriate multiplicity. Conversely, when given locally defined meromorphic functions with quotients in $\mathcal O^\times$ (the patchwork definition arises from sheafification of quotient sheaf $\mathcal M^\times / \mathcal O^\times$) we can consider their zero loci and define that to be the associated divisor. The ``zero locus'' notion is made precise through considering vanishing order $\text{ord}_V(f)$.
    \item \textbf{Line Bundle = Element of $H^1(X, \mathcal \mathcal O^\times)$}: A line bundle gives rise to transition functions at overlaps of local trivializations, which are nonzero scalar ($\text{GL}_1(\mathbb{C}) = \mathbb {C}^\times$). This gives an element of (\v Cech) cohomology group $H^1(X,\mathcal O^\times)$. Conversely, such transition functions give rise to a line bundle by gluing $U_\alpha \times \mathbb {C}$ using transition functions.
    \item \textbf{Divisor induces a Line Bundle}: For local defining functions of a divisor, multiply them out with appropriate multiplicities in the divisor. This gives a scalar (meromorphic) function per a patch. Now take quotient of the scalar functions over patch overlaps. The ``poles'' of meromorphic function will cancel out since the functions are defined from same hypersurfaces.
    \end{itemize}
\end{proof}

\textbf{Example: Hyperplane Bundle.} A \emph{hyperplane bundle} of projective space $\mathbb{P}^n$ is $[H]$ where $H$ is any hyperplane. This turns out to be the same regardless the choice of the hyperplane. For example, let $H$ be given by
\begin{align*}
f = a_0 x_0 + \cdots + a_n x_n = 0
\end{align*}
Let $U_j=\{[x_0 : \cdots x_n] | x_j \neq 0\}\subset\mathbb{P}^n$. In $U_j$, the function $f/x_j$ is well-defined and smooth. Further, $f/x_j = 0$ defines the hyperplane $H$ in $U_j$. Therefore, the transition function associated to line bundle $[H]$ is 
\begin{align*}
g_{\alpha \beta} = \frac{f/x_\alpha}{f/x_\beta} = \frac{x_\beta}{x_\alpha}
\end{align*}
From this transition function data we can construct the bundle $[H]$. Note that $a_j$ are missing from the transition function data, and thus that the choice of $H$ doesn't matter at all.

\textbf{Example: Universal Bundle.} The \emph{universal bundle} or \emph{tautological bundle} is dual to the hyperplane bundle. It is named ``universal'' because any vector bundle on any manifold is pullback of the universal bundle on a Grassmannian, with $\mathbb{P}^n$ being a special case of Grassmannians. It is named ``tautological'' because an explicit construction follows by attaching to each point $\{[\lambda x_0 : \cdots : \lambda x_n]\} \in \mathbb{P}^n$ the line $\{(\lambda x_0, \cdots ,\lambda x_n)\} \subset \mathbb{C}^{n+1}$, which is the point itself written differently. Local trivializations are given over each $U_j$: $(x_0, \cdots x_n) \mapsto x_j$. Then, transition function is given by 
\begin{align*}
g_{\alpha\beta} = \frac{x_\alpha}{x_\beta}
\end{align*}
This is precisely reciprocal of $\frac{x_\beta}{x_\alpha}$, which is the transition function for the hyperplane bundle. Therefore, the universal bundle is also $[H]^*=[-H]$.

Given a line bundle $L$, let's compute transition functions associated to $L^{\otimes k} := L \otimes \cdots \otimes L$ and $L^*$. 

\begin{proposition}
    Given complex line bundles $L_1, L_2$ on $X$ with transition functions $g_{1,\alpha \beta}, g_{2, \alpha \beta} : U_{\alpha} \cap U_\beta \rightarrow \CC^\times$, the transition functions associated to $L_1 \otimes L_2$ and $L_1^*$ are $g_{1,\alpha \beta} \cdot g_{2, \alpha \beta}$ and $g_{1, \alpha \beta}^{-1}$ respectively.
\end{proposition}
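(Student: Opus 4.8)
The plan is to build explicit local frames for $L_1 \otimes L_2$ and $L_1^*$ out of frames for $L_1$ and $L_2$, and then read off the transition functions directly from the way these induced frames transform on overlaps. Throughout, a line bundle has rank $1$, so a frame over $U_\alpha$ is a single nowhere-vanishing section $e_\alpha$, and the transition function $g_{\alpha\beta} \in \CC^\times$ records the relation between frames on $U_\alpha \cap U_\beta$, say $e_\beta = g_{\alpha\beta} e_\alpha$, so that the cocycle condition $g_{\alpha\beta} g_{\beta\gamma} g_{\gamma\alpha} = 1$ holds. This is exactly the rank-$1$ specialization of the fiberwise-plus-transition-function construction of $E \otimes F$ and $E^*$ described earlier, so the only content is to check that the ``obvious'' transition functions come out as claimed.

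For the tensor product I would take frames $e^1_\alpha$ of $L_1$ and $e^2_\alpha$ of $L_2$ over $U_\alpha$, and use $e^1_\alpha \otimes e^2_\alpha$ as a frame of $L_1 \otimes L_2$; this is a frame because the fiber is $(L_1)_x \otimes (L_2)_x \cong \CC \otimes \CC \cong \CC$ and the tensor of nonzero vectors is nonzero. On an overlap, substituting $e^1_\beta = g_{1,\alpha\beta} e^1_\alpha$ and $e^2_\beta = g_{2,\alpha\beta} e^2_\alpha$ and using bilinearity of $\otimes$ gives $e^1_\beta \otimes e^2_\beta = (g_{1,\alpha\beta}\, g_{2,\alpha\beta})\,(e^1_\alpha \otimes e^2_\alpha)$, so the transition function of $L_1 \otimes L_2$ is the product $g_{1,\alpha\beta}\, g_{2,\alpha\beta}$.

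For the dual I would take the dual frame $e^{1,*}_\alpha$ characterized by $\la e^{1,*}_\alpha, e^1_\alpha \ra = 1$, i.e. the functional sending $e^1_\alpha$ to $1$. Using $e^1_\beta = g_{1,\alpha\beta} e^1_\alpha$ and the contravariance of dualization, a one-line evaluation (writing $e^{1,*}_\beta = c\, e^{1,*}_\alpha$ and pairing with $e^1_\beta$) forces $c = g_{1,\alpha\beta}^{-1}$, so $e^{1,*}_\beta = g_{1,\alpha\beta}^{-1} e^{1,*}_\alpha$. This is precisely the scalar case of the general dual transition law $(A^T)^{-1}$ computed earlier for the cotangent bundle, where for a $1 \times 1$ matrix $(g^T)^{-1} = g^{-1}$. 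Hence the transition function of $L_1^*$ is $g_{1,\alpha\beta}^{-1}$.

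These computations are entirely routine; the only thing demanding care is bookkeeping of conventions — which frame equals $g_{\alpha\beta}$ times which, and the corresponding inverse that appears when one passes from frames to the components of a section via $\xi(fA) = A^{-1}\xi(f)$. I would fix this convention at the outset and check that the resulting products and inverses still satisfy the cocycle identity, which they do automatically since pointwise products and inverses of $\CC^\times$-valued cocycles are again cocycles. So the ``hard part'' here is merely notational consistency rather than any genuine mathematical difficulty.
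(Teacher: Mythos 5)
Your proposal is correct and matches the paper's argument in substance: both proofs reduce the tensor-product claim to bilinearity of $\otimes$ (the paper phrases this via the identification $\CC \otimes_\CC \CC \cong \CC$, $v_1 \otimes v_2 \mapsto v_1 v_2$, under which $g_1 \otimes g_2$ becomes multiplication by $g_1 g_2$, while you transform the induced frame $e^1_\alpha \otimes e^2_\alpha$ — the same one-line computation), and both obtain the dual claim as the rank-$1$ case of the $(A^T)^{-1}$ rule already established in the cotangent-bundle discussion. No gaps.
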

\begin{proof}
    We already demonstrated in the computation of transition function for cotangent bundle that transition function for $L_1^*$ is $g_{1, \alpha \beta}^{-1}$. 
    
    Let's prove the tensor product relation. Abbreviate $g_1 = g_{1,\alpha \beta}(x), g_2 = g_{2, \alpha \beta}(x)$ below (where $x \in U_{\alpha} \cap U_\beta$).
    \begin{align*}
        & (g_1 \otimes g_2)(v_1 \otimes v_2) = g_1(v_1) \otimes g_2(v_2) \\
        \implies & (g_1 \otimes g_2)' (v) = (g_1(1) \cdot g_2(1)) \cdot v = (g_1g_2)(v)
    \end{align*}
    where we associate to each $f \in \GL(\CC \otimes_\CC \CC)$ the map $f' \in \GL(\CC)$ given by identification $\CC \otimes_\CC \CC \xrightarrow{\sim} \CC; v_1 \otimes v_2 \mapsto v_1v_2$. This proves the desired relation.
\end{proof}

\textbf{Example: Global Sections of $[mH]$ on $\PP^n$.} Let's calculate global sections of tensor power of hyperplane bundle. Interestingly, it turns out that
\begin{align*}
\mathcal O(\PP^n , [mH]) \cong \{\text{Homogeneous polynomials of degree $m$ in $n$ variables}\}
\end{align*}
Firstly, let's check what functions we need to assign. $\PP^n$ trivializes over $U_0 \cdots U_n$ where $U_j = \{(z_0 , \cdots z_n) | z_j \neq 0\} \subset \PP^n$. Each holomorphic section $s$ yields a holomorphic function $s_j \in \mathcal O(U_j)$ such that $s_k=g_{jk}s_j = (\frac{z_j}{z_k})^m s_j$. 

Now suppose $s_0 \in \mathcal O(U_0)$. Suppose by local coordinates of $U_0$, $s_0$ translates to $f \in \CO (\tilde U_0)$, where $\tilde U_0 \cong \CC^n$ is the local coordinate for $U_0$. Write the local coordinates as $(w_{01}, \cdots w_{0n})$. Now when we translate this to local trivialization over local coordinates of $U_1$, we get 
\begin{align*}
w_{01}^{-m} f(w_{01}, \cdots w_{0n})
\end{align*}
If we write local coordinates of $U_1$ to be $(w_{10}, w_{12}, \cdots w_{1n})$, then since the point $(w_{01}, \cdots w_{0n})$ corresponds to $(w_{10}, \cdots w_{1n}) = (\frac1{w_{01}}, \frac{w_{02}}{w_{01}}, \cdots \frac{w_{0n}}{w_{01}})$, we see that $f$ translates to 
\begin{align*}
w_{10}^m f(\frac1{w_{10}}, \frac{w_{12}}{w_{10}}, \cdots \frac{w_{1n}}{w_{10}})
\end{align*}
Now observe that if power series expansion of $f$ (at some point) contains a monomial of degree greater than $m$, then this translated function will surely have a pole in $w_{10}$ and not be holomorphic. This can't happen, and thus $f$ has to be a polynomial of degree $\leq m$.

Write $w_{0j} = \frac{z_j}{z_0}$. Then our $f$ can be viewed as $z_0^{-m}F$ where $F$ is a homogeneous polynomial of degree $m$ in $z_0, \cdots z_n$. In our above notation, monomial $f = w_{01}^{j_1} \cdots w_{0n}^{j_n}$ translates into $w_{10}^{m-\sum j_l} w_{12}^{j_2} \cdots w_{1n}^{j_n}$. This translation corresponds exactly by also writing $w_{1j} = \frac{z_j}{z_1}$. Therefore, we see that each holomorphic section of $\mathcal O(mH)$ corresponds to a homogeneous polynomial of degree $m$ in $z_0, \cdots z_n$. The inverse correspondence is established by associating $z_j^{-m}F(z_0, \cdots z_n)$ to each $U_j$.

\textbf{Example: Canonical Bundle.}

Canonical bundle of a complex manifold $X$ is $K_X := \Omega_X^n$, the bundle of holomorphic $n$-forms. We compute its transition function:
\begin{align*}
    \d x_j &= \sum_{k=1}^n \frac{\partial (y \circ x^{-1})_j}{\partial t_k} \d y_k \\
    \implies \d x_1 \wedge \cdots \wedge \d x_n &= \bigwedge_{j=1}^n \left(\sum_{k=1}^n \frac{\partial (y \circ x^{-1})_j}{\partial t_k} \d y_k \right) \\
    &= \left(\sum_{\sigma} (-1)^{\on{sign}(\sigma)} \prod_{j=1}^n \frac{\partial (y \circ x^{-1})_j}{\partial t_{\sigma(j)}}\right) (\d y_1 \wedge \cdots \wedge \d y_n) \\
    &= |J_{y \circ x^{-1}}| (\d y_1 \wedge \cdots \wedge \d y_n)
\end{align*}
where $J_{y \circ x^{-1}}$ is the Jacobian for $y \circ x^{-1}$.

The following give more concrete information about line bundles associated to divisors.

\begin{proposition}\label{globalmero}
For divisor $D$, there is a global meromorphic section $s_D \in \mathcal M (M,[D])$ such that 
\begin{align*}
(s_D)=D
\end{align*}
\end{proposition}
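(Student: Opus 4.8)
The plan is to exhibit the local defining data of $D$ itself as the desired section. Recall that, as established just above, a divisor $D \in H^0(X, \mc M^\times/\mc O^\times)$ is represented by an open cover $\{U_\alpha\}$ together with local meromorphic functions $f_\alpha \in \mc M^\times(U_\alpha)$ whose ratios $f_\alpha/f_\beta$ lie in $\mc O^\times(U_\alpha \cap U_\beta)$, and that the associated line bundle $[D]$ arises as the image of $D$ under the Bockstein homomorphism. Concretely this means $[D]$ is the line bundle with transition functions $g_{\alpha\beta} = f_\alpha/f_\beta$. So the first thing I would do is pin down this encoding, since the whole proof is really a matter of noticing that $D$ and $[D]$ are cut out by the same local data.

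Next I would observe that the collection $\{f_\alpha\}$ itself satisfies the transformation law for a meromorphic section of $[D]$. A meromorphic section of a line bundle with transition functions $g_{\alpha\beta}$ is precisely a collection $s_\alpha \in \mc M(U_\alpha)$ with $s_\alpha = g_{\alpha\beta} s_\beta$ on overlaps; setting $s_\alpha := f_\alpha$ gives $g_{\alpha\beta} s_\beta = (f_\alpha/f_\beta)\, f_\beta = f_\alpha = s_\alpha$, so by the gluing axiom the $\{f_\alpha\}$ assemble into a global meromorphic section $s_D \in \mc M(X, [D])$. In effect $[D]$ is built exactly so as to package the poles and zeros of all the $f_\alpha$ into a single global object.

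It then remains to check that the divisor $(s_D)$ is well defined and equals $D$. For an irreducible analytic hypersurface $V$, the order $\on{ord}_V(s_\alpha)$ is computed in the local ring of $X$ at a generic (smooth) point of $V$, which by Weierstrass preparation is factorial, so $\on{ord}_V$ is a genuine valuation extended to $\mc M^\times$ by $\on{ord}_V(p/q) = \on{ord}_V(p) - \on{ord}_V(q)$. Since $g_{\alpha\beta} \in \mc O^\times$ is holomorphic and nonvanishing, $\on{ord}_V(g_{\alpha\beta}) = 0$, whence $\on{ord}_V(s_\alpha) = \on{ord}_V(g_{\alpha\beta} s_\beta) = \on{ord}_V(s_\beta)$ wherever both are defined. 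Thus $(s_D) := \sum_V \on{ord}_V(s_D)\, V$ is independent of the chart used, and since $s_D$ is represented near each $V$ by the very function $f_\alpha$ defining $D$, we conclude $(s_D) = \sum_V \on{ord}_V(f_\alpha)\, V = D$.

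The only genuine obstacle is the well-definedness and chart-invariance of $\on{ord}_V$; once one knows that multiplication by the unit $g_{\alpha\beta}$ leaves the vanishing order along $V$ unchanged, the rest is formal bookkeeping about the shared local data of $D$ and $[D]$.
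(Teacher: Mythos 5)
Your proof is correct and follows essentially the same route as the paper: take the local defining functions $f_\alpha$ of $D$ as the local data of $s_D$, note that the compatibility $s_\alpha = g_{\alpha\beta} s_\beta$ holds tautologically because $g_{\alpha\beta} = f_\alpha/f_\beta$, and conclude $(s_D) = D$. The paper compresses the final step into the word ``tautology,'' whereas you spell out the chart-invariance of $\on{ord}_V$ under multiplication by units, which is a welcome but not substantively different elaboration.
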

\begin{proof}
We use local defining functions of $D$ to be local meromorphic functions that define $s_D$. We need that quotients of such functions on overlaps equal quotients of local defining functions of $D$, which is tautology.
\end{proof}

\begin{proposition}\label{bundleglobalmero}
A line bundle is of the form $[D]$ iff there is a global meromorphic section.
\end{proposition}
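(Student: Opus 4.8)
The plan is to prove the two implications separately: one direction is essentially immediate from the previous proposition, while the converse requires building a divisor out of the given section and then verifying that its associated line bundle is the one we started with. Throughout I read ``global meromorphic section'' as a section that is \emph{not} identically zero, since otherwise the zero section would make the statement vacuous — and in fact false, as a general complex manifold carries line bundles with no nonzero meromorphic sections.

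For the forward direction, suppose $L \cong [D]$ for some divisor $D$. Then Proposition \ref{globalmero} already furnishes a global meromorphic section $s_D$ of $[D]$ with $(s_D) = D$, and transporting $s_D$ along the isomorphism $[D] \cong L$ produces the required nonzero global meromorphic section of $L$.

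For the converse, suppose $L$ admits a nonzero global meromorphic section $s$. I would fix a trivializing cover $\{U_\alpha\}$ with transition functions $g_{\alpha\beta} \in \mc O^\times(U_\alpha \cap U_\beta)$, so that $s$ is recorded by local meromorphic functions $s_\alpha \in \mc M(U_\alpha)$ satisfying the cocycle relation $s_\alpha = g_{\alpha\beta}\, s_\beta$ on overlaps. I then define $D := (s)$ to be the divisor whose order along any irreducible analytic hypersurface $V$ is $\on{ord}_V(s_\alpha)$ for any $\alpha$ with $V \cap U_\alpha \neq \emptyset$. The crucial point is that this is well defined precisely because the $g_{\alpha\beta}$ are holomorphic and nowhere vanishing: on an overlap $s_\alpha$ and $s_\beta$ differ by the unit $g_{\alpha\beta}$, and multiplication by a unit leaves vanishing and pole orders unchanged, so $\on{ord}_V(s_\alpha) = \on{ord}_V(s_\beta)$. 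Hence the $\{s_\alpha\}$ are consistent local defining functions of a genuine divisor $D$.

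It then remains to identify $[D]$ with $L$, and this is where the argument pays off cheaply. By the ``divisor induces a line bundle'' construction in the earlier proposition, the transition functions of $[D]$ with respect to this same cover are the quotients $s_\alpha / s_\beta$ of local defining functions; but the cocycle relation gives $s_\alpha / s_\beta = g_{\alpha\beta}$, so $[D]$ and $L$ carry literally the same transition cocycle on $\{U_\alpha\}$ and are therefore isomorphic. The main thing to be careful about is the bookkeeping around $\on{ord}_V$ and the well-definedness of $D$ — in particular checking that each $s_\alpha$ is a genuine nonzero meromorphic function so that the orders make sense — together with pinning down the index and inverse conventions in the definition of $[D]$ so that the computed cocycle comes out as $g_{\alpha\beta}$ rather than its inverse. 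Once the conventions are fixed consistently, the final identification is a one-line comparison of transition functions.
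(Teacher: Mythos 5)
Your proof is correct and takes essentially the same approach as the paper: the forward direction invokes Proposition~\ref{globalmero}, and the converse identifies $L$ with $[(s)]$ by observing that the transition functions of $[(s)]$ are the quotients $s_\alpha/s_\beta = g_{\alpha\beta}$, i.e.\ the same cocycle as $L$. The paper compresses this comparison into a one-line ``tautology''; your version simply makes explicit the well-definedness of $\on{ord}_V$ under multiplication by units and the standing assumption that $s$ is not identically zero, both of which are sound.
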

\begin{proof}
We proved already in Proposition~\ref{globalmero} that $[D]$ has a global meromorphic section. Now we establish the converse. If $L$ has a global meromorphic section $s$, then we claim that $L=[(s)]$. Under scrutiny, however, this is tautology since 
\begin{align*}
	&\text{ Transition function of $[(s)]$} \\
=	&\text{ Quotient of local defining function of $(s)$} \\
=	&\text{ Transition function of $L$}
\end{align*}
In the last step, we used the description of a meromorphic section by local meromorphic functions with quotients being transition functions $s_\beta = g_{\alpha \beta} s_\alpha$.
\end{proof}

\begin{proposition}\label{merofromquotient}
Given two meromorphic sections $\xi,\eta \in \mathcal M(M, L)$, $\xi/\eta$ given by quotient of local meromorphic functions defining $\xi,\eta$ is a well-defined meromorphic function in $\mathcal M(M)$.
\end{proposition}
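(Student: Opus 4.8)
The plan is to represent both sections by their local data with respect to a common trivializing cover, form the quotient functions patch by patch, and then show that the transition functions cancel so that the local quotients agree on overlaps; the gluing axiom for the sheaf $\mc M$ will then assemble them into a single global meromorphic function.

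First I would fix an open cover $\{U_\alpha\}$ over which $L$ is trivial, with transition functions $g_{\alpha\beta} \in \mc O^\times(U_\alpha \cap U_\beta)$. Choosing local frames, the sections $\xi,\eta \in \mc M(M,L)$ correspond to collections of local meromorphic functions $\xi_\alpha,\eta_\alpha \in \mc M(U_\alpha)$ satisfying the compatibility relations $\xi_\beta = g_{\alpha\beta}\,\xi_\alpha$ and $\eta_\beta = g_{\alpha\beta}\,\eta_\alpha$ on $U_\alpha \cap U_\beta$, exactly as in the description used in Proposition~\ref{bundleglobalmero}. Assuming $\eta$ is not the zero section (so that the quotient is meaningful), each $\eta_\alpha$ is a nonzero meromorphic function, and I set $f_\alpha := \xi_\alpha/\eta_\alpha$. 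Writing $\xi_\alpha,\eta_\alpha$ as quotients of holomorphic functions in accordance with the definition of $\mc M(U_\alpha)$, the ratio $f_\alpha$ is again such a quotient, hence $f_\alpha \in \mc M(U_\alpha)$.

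The key step is the cancellation on overlaps: on $U_\alpha \cap U_\beta$ one computes
\begin{align*}
    f_\beta = \frac{\xi_\beta}{\eta_\beta} = \frac{g_{\alpha\beta}\,\xi_\alpha}{g_{\alpha\beta}\,\eta_\alpha} = \frac{\xi_\alpha}{\eta_\alpha} = f_\alpha,
\end{align*}
where the transition function $g_{\alpha\beta}$ may legitimately be divided out precisely because it is nowhere vanishing, taking values in $\CC^\times$. Thus the collection $\{f_\alpha\}$ agrees on all overlaps, and the gluing axiom for $\mc M$ produces a single global section $f \in \mc M(M)$, which is the desired function $\xi/\eta$.

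There is essentially no analytic content here, and the main (indeed only genuine) obstacle is simply to invoke the nonvanishing of the transition functions; once that is noted, the cancellation and gluing are immediate. The remaining points require only bookkeeping care. One should observe that the construction does not depend on the chosen frames: a change of frame multiplies $\xi_\alpha$ and $\eta_\alpha$ by the same nowhere-vanishing factor, which cancels in the ratio exactly as $g_{\alpha\beta}$ did. One should also confirm that $\eta_\alpha \not\equiv 0$ on each patch, which follows from connectedness together with the relations $\eta_\beta = g_{\alpha\beta}\,\eta_\alpha$, so that dividing by $\eta_\alpha$ is valid everywhere.
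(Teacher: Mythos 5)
Your proof is correct, and it is exactly the argument the paper has in mind: in fact the paper states Proposition~\ref{merofromquotient} with no proof at all, treating it as a tautology in the same spirit as the preceding Propositions~\ref{globalmero} and~\ref{bundleglobalmero}, where meromorphic sections are described by local functions satisfying $s_\beta = g_{\alpha\beta} s_\alpha$. Your write-up supplies precisely the missing details — cancellation of the nowhere-vanishing transition functions on overlaps, the gluing axiom for $\mc M$, and the frame-independence and $\eta_\alpha \not\equiv 0$ bookkeeping — so it completes the paper's implicit argument rather than deviating from it.
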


\begin{proposition}~\label{restrictioncohomology}
For compact manifold $M$ and a smooth hypersurface $V\subset M$, the following sheaf sequence is exact:
\begin{align*}
0 \rightarrow \CO (E \otimes [-V]) \rightarrow \CO(E) \rightarrow \CO(E|_V) \rightarrow 0
\end{align*}
In other words, $\CO(E\otimes [-V])$ encodes sections of $E$ that vanish at $V$.
\end{proposition}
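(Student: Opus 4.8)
The plan is to verify exactness stalk by stalk, since a sequence of sheaves is exact exactly when it is exact on every stalk. First I would pin down the two maps concretely. By Proposition~\ref{globalmero} the divisor $V$ carries a global meromorphic section $s_V \in \mc M(M,[V])$ with $(s_V)=V$; since $V$ is effective this section is in fact holomorphic, $s_V \in \CO(M,[V])$, with zero locus exactly $V$. Tensoring a local section of $E \otimes [-V]$ with $s_V$ and applying the canonical identification $E \otimes [-V] \otimes [V] \cong E$ produces the left arrow $\sigma \mapsto s_V \otimes \sigma$. The right arrow is restriction of a holomorphic section of $E$ to the submanifold $V$, so that $\CO(E|_V)$ is understood as the pushforward $i_* \CO_V(E|_V)$ along $i:V \injto M$, whose stalk at $x$ is $0$ when $x \notin V$ and the ring of germs of holomorphic sections of $E|_V$ along $V$ when $x \in V$.

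Next I would fix a point $x$ and, using that $V$ is a \emph{smooth} hypersurface, choose local holomorphic coordinates $(z_1,\dots,z_n)$ near $x$ in which $V = \{z_1 = 0\}$. Trivialising both $E$ and $[V]$ near $x$, the section $s_V$ becomes the coordinate function $z_1$ up to a nowhere-vanishing holomorphic factor, and a germ in $\CO(E)_x$ becomes an $r$-tuple of germs in the local ring $\CO_{M,x}$. This reduces all three exactness checks to elementary statements about $\CO_{M,x}$ and the element $z_1$.

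With this local picture in hand I would run the three checks as follows. For injectivity at the left, $\CO_{M,x}$ is an integral domain and $z_1 \ne 0$, so multiplication by $z_1$ has trivial kernel. For exactness in the middle, a germ $\tau \in \CO(E)_x$ restricts to $0$ on $V$ iff each component vanishes on $\{z_1 = 0\}$; expanding each component in a Taylor series in $z_1$ shows this is equivalent to divisibility of each component by $z_1$, i.e. to $\tau = z_1 \cdot \sigma = s_V \otimes \sigma$ for a unique germ $\sigma \in \CO(E \otimes [-V])_x$, so $\ker(\text{restriction}) = \on{im}(s_V \otimes -)$. For surjectivity at the right, when $x \notin V$ the target stalk is $0$; when $x \in V$, a germ of holomorphic section of $E|_V$ is an $r$-tuple of holomorphic functions of $(z_2,\dots,z_n)$, and declaring them independent of $z_1$ extends them to a germ of section of $E$ mapping onto the given one.

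The step I expect to carry the real content is the middle exactness, since it rests on the algebraic fact that the germs vanishing along $V$ form precisely the principal ideal generated by $z_1$ (equivalently by $s_V$). This is exactly where smoothness of $V$ is indispensable: it supplies a defining function with nonvanishing differential, hence a coordinate system in which $V$ is a coordinate hyperplane, so that the ideal sheaf of $V$ is locally principal and cut out by $s_V$. I would finally remark that compactness of $M$ plays no role in this argument, as every assertion is checked on stalks; it is simply inherited from the setting in which the proposition is later applied.
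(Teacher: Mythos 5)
Your proposal is correct, and it starts from the same key construction as the paper's proof: the inclusion $\CO(E \otimes [-V]) \rightarrow \CO(E)$ is multiplication by the canonical section $s_V$ of $[V]$ furnished by Proposition~\ref{globalmero} (the paper phrases this as sending $\xi \otimes s$ to $\xi \cdot s/s_{-D}$, which is the same map under the identification $[V] \cong [-V]^*$). The difference is one of completeness: the paper's proof stops after defining this map and never verifies exactness at any spot in the sequence, whereas you carry out what is really the content of the proposition, namely the stalk-level checks. Your middle-exactness argument --- that germs vanishing along $V$ form precisely the principal ideal generated by $z_1$, using the coordinate system that smoothness of $V$ provides --- and your stalk surjectivity on the right (extending a germ of a section of $E|_V$ to be independent of $z_1$) are exactly the verifications the paper omits; your insistence that surjectivity holds only on stalks and not on global sections is also the correct reading of why this must be stated as a sheaf sequence. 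Two small points: for $x \notin V$ your coordinate picture does not apply, so you should note (as you do for the right-hand term) that $s_V$ is a unit near such $x$, making multiplication by it a stalk isomorphism onto $\CO(E)_x$ while the third stalk vanishes, so exactness there is immediate; and your closing remark that compactness of $M$ plays no role is accurate, since exactness of a sequence of sheaves is a purely local matter.
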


\begin{proof}
The idea is that we take $\xi \otimes s \in \CO(M,E) \otimes \CO(M,[-V])$ and get $\xi \cdot \frac{s}{s_{-D}} \in \CO(E)$. This map factors through sheafification, and thus defines a map $\CO(E \otimes [-V]) \rightarrow \CO(E)$.
\end{proof}

\pagebreak
\section{Hodge Theory}

In this section, we prove that each de Rham cohomology class contains a unique harmonic differential form, and that we can replace the whole discussion of de Rham cohomolgy by the discussion of harmonic forms:
\begin{align*}
H^r(M,\mathbb{C}) \cong \mathcal H^r(M)
\end{align*}
As explained in the Introduction, this is the most central theorem in the whole text that allows proof of many other theorems like Hodge decomposition and Kodaira embedding theorem. 

The proof benefits from introducing Sobolev spaces on vector bundles. The primary motivation is that inner product / norm allows us to use Hilbert space / Banach space theory in this context. In the end all the relevant details become hidden, however.

\subsection{Sobolev Space and Differential Operators on Vector Bundles}

First we introduce some preliminary notions.

Roughly, Sobolev norm is a measure of smoothness. The firsthand definition doesn't seem to say much about smoothness, but we will soon prove that
\begin{align*}
||f||_s^2 = \sum_{|\alpha|\le s} ||D^\alpha f||_{L^2}^2
\end{align*}
which tells us that the squared norm is sum of squared $L^2$-norms of derivatives up to a certain order.
\begin{definition}
Sobolev norm of a compactly supported smooth complex function $f:\mathbb R^n \rightarrow \mathbb C$ is defined as
\begin{align*}
||f||_s^2 = \int |\hat f|^2 (1+|y|^2)^s dy
\end{align*}
Sobolev norm is extended to $\mathbb{C}^m$-valued functions by taking scalar Sobolev norm of Euclidean norm of the vectors.

Sobolev norm is also defined for vector bundle $E$ over a compact complex manifold $M$, by adding vector-valued Sobolev norms over trivializing cover of the vector bundle with partition of unity:
\begin{align*}
||\xi||_{s,E}^2 = \sum_\alpha ||\varphi^*_\alpha (\rho_\alpha \cdot \xi)||_{s,\mathbb{R}^n}
\end{align*}
Here, we are assuming that $E$ is trivial and $M$ is Euclidean over finite open cover $\{U_\alpha\}$. $\varphi_\alpha$ is such trivialization of $E$: $\varphi_\alpha : E|_{U_\alpha} \rightarrow \tilde{U}_\alpha \times \mathbb{C}^m$ and $\varphi_\alpha^*$ is the pullback $\varphi_\alpha^* : \mathcal E(U_\alpha,E) \rightarrow (\mathcal E(\tilde{U}_\alpha))^m$. $\{\rho_\alpha\}$ are partition of unity.
\end{definition}

Note that Sobolev $0$-norm $||\cdot ||_0$ is $L^2$-norm.

For multi-index $\alpha=(\alpha_1, \cdots \alpha_n)$, let $y^\alpha = y_1^{\alpha_1} \cdots y_n^{\alpha_n}$ and $D^\alpha = {i^{-|\alpha|}} (\frac{\partial}{\partial x_1})^{\alpha_1} \cdots (\frac{\partial}{\partial x_n})^{\alpha_n}$. 

\begin{proposition}
For $f:\mathbb{R}^n \rightarrow \mathbb C$ and $s\in\mathbb Z$, 
\begin{align*}
|| f ||_s^2 = \sum_{|\alpha|\le s} ||D^\alpha f||_0^2
\end{align*}
\end{proposition}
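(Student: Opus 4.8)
The plan is to prove the identity by transferring everything to the Fourier side, where the constant-coefficient operators $D^\alpha$ become multiplication by monomials and the $L^2$-norm is preserved. Throughout I take $s \in \ZZ_{\ge 0}$ and $f$ a compactly supported smooth function, so its Fourier transform $\hat f$ is smooth and rapidly decreasing; hence every integral below converges absolutely and the manipulations (Plancherel, integration by parts with vanishing boundary terms) are all legitimate. I fix the symmetric normalization of the Fourier transform, under which $\|g\|_0^2 = \int |g|^2\,dx = \int |\hat g|^2\,dy$ for every such $g$; this is exactly what makes $\|\cdot\|_0$ the $L^2$-norm, consistent with the definition of $\|\cdot\|_s$ at $s=0$.

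The first and key step is the computation $\widehat{D^\alpha f}(y) = y^\alpha \hat f(y)$. I would prove this by induction on $|\alpha|$, the one-variable step being $\widehat{\partial_{x_j} g}(y) = i y_j \hat g(y)$, obtained by integrating by parts (the boundary terms vanish because $f$, hence each of its derivatives, is compactly supported). Applying this $|\alpha|$ times gives $\widehat{\partial^\alpha f} = i^{|\alpha|} y^\alpha \hat f$, so that the normalizing factor $i^{-|\alpha|}$ built into $D^\alpha$ cancels the $i^{|\alpha|}$ and leaves precisely $\widehat{D^\alpha f} = y^\alpha \hat f$. Combining this with Plancherel yields, for each multi-index $\alpha$,
\[
\|D^\alpha f\|_0^2 = \int |y^\alpha|^2\,|\hat f(y)|^2\,dy = \int y^{2\alpha}\,|\hat f(y)|^2\,dy,
\]
where $y^{2\alpha} := y_1^{2\alpha_1}\cdots y_n^{2\alpha_n}$ and I used that $y$ is real.

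Next I would sum this over the finite index set $\{\alpha : |\alpha| \le s\}$ and pull the (finite) sum inside the integral, which is valid by linearity alone, to obtain
\[
\sum_{|\alpha|\le s} \|D^\alpha f\|_0^2 = \int \Bigl(\sum_{|\alpha|\le s} y^{2\alpha}\Bigr)\,|\hat f(y)|^2\,dy.
\]
Comparing with $\|f\|_s^2 = \int (1+|y|^2)^s\,|\hat f(y)|^2\,dy$, the whole proposition reduces to the single pointwise polynomial identity $\sum_{|\alpha|\le s} y^{2\alpha} = (1+|y|^2)^s$, to be verified for all $y \in \RR^n$. I would establish this last identity by expanding $(1+|y|^2)^s = (1 + y_1^2 + \cdots + y_n^2)^s$ with the multinomial theorem (or by induction on $s$) and matching it monomial by monomial against $\sum_{|\alpha|\le s} y^{2\alpha}$. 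I expect this combinatorial matching to be the main obstacle: it is precisely here that one must check that the multinomial expansion reproduces each monomial $y^{2\alpha}$, $|\alpha|\le s$, with exactly the coefficient demanded by the left-hand sum, which is what would upgrade the comparison from a mere norm equivalence to the claimed exact equality, and it is where the precise normalizations chosen in the definitions of $\|\cdot\|_s$ and of $D^\alpha$ must conspire correctly.
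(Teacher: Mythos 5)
Your proposal follows the same route as the paper's proof: compute $\widehat{D^\alpha f}(y) = y^\alpha \hat f(y)$ by integration by parts (the $i^{-|\alpha|}$ normalization cancelling the factors of $i$), apply Plancherel, interchange the finite sum with the integral, and reduce everything to the pointwise polynomial identity $\sum_{|\alpha|\le s} y^{2\alpha} = (1+|y|^2)^s$. All of that is fine, and you were right to single out the combinatorial matching as the main obstacle — but the obstacle is fatal: the identity is \emph{false} for $s \ge 2$. By the multinomial theorem, $(1+y_1^2+\cdots+y_n^2)^s = \sum_{\beta_0+\beta_1+\cdots+\beta_n = s} \binom{s}{\beta_0,\ldots,\beta_n}\, y_1^{2\beta_1}\cdots y_n^{2\beta_n}$, and the multinomial coefficients are in general strictly greater than $1$, whereas every coefficient on your left-hand side equals $1$. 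Already for $n=1$, $s=2$: $(1+y^2)^2 = 1 + 2y^2 + y^4 \neq 1 + y^2 + y^4$. So the monomial-by-monomial matching you deferred cannot be carried out, and indeed the proposition as stated (exact equality of norms) is false for $s\ge 2$; a smooth bump with $\hat f$ concentrated near $|y|=1$ gives a counterexample.

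You should know that the paper's own proof contains exactly the same error: it asserts $(1+|y|^2)^s = \sum_{|\alpha|\le s}|y^\alpha|^2$ as a ``simple observation'' without checking it. What is actually true, and what the rest of the Sobolev theory in the paper needs, is the two-sided comparison: each monomial $y^{2\alpha}$ with $|\alpha|\le s$ appears in the multinomial expansion with coefficient at least $1$ (take $\beta_0 = s-|\alpha|$ and $\beta_j=\alpha_j$), and every coefficient is bounded by $C(n,s) := \max \binom{s}{\beta_0,\ldots,\beta_n}$, so $\sum_{|\alpha|\le s} y^{2\alpha} \le (1+|y|^2)^s \le C(n,s)\sum_{|\alpha|\le s} y^{2\alpha}$. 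Feeding this through your (correct) Fourier-side reduction yields $\sum_{|\alpha|\le s}\|D^\alpha f\|_0^2 \le \|f\|_s^2 \le C(n,s)\sum_{|\alpha|\le s}\|D^\alpha f\|_0^2$, i.e.\ equivalence of the two norms rather than equality — which is all that is used later (completions, Sobolev embedding, Rellich). If you repair your write-up by replacing the claimed identity with this two-sided estimate and weakening the conclusion to norm equivalence, the argument becomes correct and, modulo that restatement, coincides with the intended proof.
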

\begin{proof}
We use these two simple observations:
\begin{align*}
&\widehat{D^\alpha f}(y) = y^\alpha \hat{f}(y) \\
&(1+|y|^2)^s = (1+y_1^2 + \cdots + y_n^2)^s = \sum_{|\alpha|\le s} |y^\alpha|^2
\end{align*}
Now,
\begin{align*}
||f||_s^2 &= \int |\hat f|^2 (1+|y|^2)^s dy
=\int |\hat f|^2 \sum_{|\alpha|\le s} |y^\alpha|^2 dy
=\sum_{|\alpha|\le s} \int |y^\alpha \hat f|^2 dy
=\sum_{|\alpha|\le s} ||\widehat{D^\alpha f}||_0^2
=\sum_{|\alpha|\le s} ||{D^\alpha f}||_0^2
\end{align*}
\end{proof}

Let $W^s(E)$ be the completion of $\mathcal E(M,E)$ with respect to Sobolev $s$-norm. The following lemma gives basic smoothness relation for Sobolev spaces.

\begin{lemma}[Sobolev]
Elements of $W^s(E)$ are at least $s-\lfloor \frac{n}{2} \rfloor - 2$ times continuously differentiable.
\end{lemma}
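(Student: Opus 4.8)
The plan is to reduce the statement to a local estimate for compactly supported functions on $\RR^n$ and then invoke Fourier inversion. Since continuous differentiability is a purely local property, and the norm $\|\cdot\|_{s,E}$ is by definition assembled from the Euclidean Sobolev norms of the components $\varphi_\alpha^*(\rho_\alpha\cdot\xi)$, an element $\xi\in W^s(E)$ restricts, in each trivializing chart, to a compactly supported $\CC^m$-valued function each of whose components lies in the scalar Sobolev space $W^s(\RR^n)$. Once the scalar result is known, writing $\xi=\sum_\alpha \rho_\alpha\xi$ (a finite sum near any point) and using that the trivializations are diffeomorphisms recovers the global claim. Thus it suffices to show that a compactly supported $f:\RR^n\to\CC$ with $\|f\|_s<\infty$ is of class $C^k$ for $k=s-\lfloor n/2\rfloor-2$.

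First I would express the derivatives of $f$ through the Fourier inversion formula. Using the relation $\widehat{D^\alpha f}(y)=y^\alpha\hat f(y)$ recorded in the previous proposition, for any multi-index $\alpha$ we have, at least formally,
\[
D^\alpha f(x)=c_n\int y^\alpha\,\hat f(y)\,e^{ix\cdot y}\,dy,
\]
so the strategy is to show that for every $\alpha$ with $|\alpha|\le k$ this integral converges absolutely, with a bound independent of $x$; dominated convergence then makes it a continuous function of $x$, which yields $f\in C^k$. The absolute convergence I would establish by Cauchy--Schwarz, splitting the Sobolev weight:
\[
\int |y^\alpha\hat f(y)|\,dy=\int |y^\alpha|\,|\hat f(y)|\,(1+|y|^2)^{s/2}(1+|y|^2)^{-s/2}\,dy\le \|f\|_s\left(\int \frac{|y^\alpha|^2}{(1+|y|^2)^s}\,dy\right)^{1/2}.
\]
Since $|y^\alpha|^2\le (1+|y|^2)^{|\alpha|}$, the remaining integral is dominated by $\int(1+|y|^2)^{|\alpha|-s}\,dy$, which converges precisely when $2(s-|\alpha|)>n$. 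For $|\alpha|\le k=s-\lfloor n/2\rfloor-2$ one has $s-|\alpha|\ge \lfloor n/2\rfloor+2>n/2$, so the condition holds with room to spare and the bound is finite.

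The main subtlety --- and what I expect to be the real work --- is that an element of $W^s(E)$ is a priori only an $L^2$-equivalence class obtained by completion, so one must justify that $f$ genuinely possesses classical continuous derivatives rather than merely distributional ones. I would handle this by density: choose $f_j\in\mc E$ with $f_j\to f$ in $\|\cdot\|_s$, and apply the Cauchy--Schwarz estimate above to the differences $f_j-f_l$. This shows that $\{D^\alpha f_j\}$ is uniformly Cauchy for each $|\alpha|\le k$, so the $f_j$ converge in $C^k$; by the standard theorem on differentiating uniform limits, the class of $f$ has a representative of class $C^k$ with $D^\alpha f=\lim_j D^\alpha f_j$. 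Combined with the local-to-global patching of the first paragraph, this completes the argument.
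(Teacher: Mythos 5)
Your proposal is correct. Note that the paper itself states this lemma without proof (it is quoted as a standard fact, with \cite{wells} as the background reference), so there is no internal argument to compare against; what you have written is precisely the standard proof one would insert. The three ingredients are all present and correctly executed: (i) the reduction to compactly supported $\CC^m$-valued functions on $\RR^n$ via the finite trivializing cover and partition of unity used in the paper's definition of $\|\cdot\|_{s,E}$, which is legitimate because the trivializations are diffeomorphisms and a finite sum of $C^k$ functions is $C^k$; (ii) the Cauchy--Schwarz estimate $\int |y^\alpha \hat f|\,dy \le \|f\|_s \bigl(\int (1+|y|^2)^{|\alpha|-s}\,dy\bigr)^{1/2}$, whose right-hand factor is finite exactly when $2(s-|\alpha|)>n$, and your numerology is right: for $|\alpha| \le s - \lfloor n/2\rfloor - 2$ one has $2(s-|\alpha|) \ge 2\lfloor n/2\rfloor + 4 \ge n+3 > n$ (this slack is why the paper can afford the non-sharp exponent $s-\lfloor n/2\rfloor-2$ rather than the optimal $s > k + n/2$); and (iii) the density step, which is the genuinely necessary point since $W^s(E)$ is defined as an abstract completion --- applying the sup-norm estimate to differences $f_j - f_l$ of smooth approximants makes the Fourier inversion rigorous (it is applied only to compactly supported smooth functions) and produces a $C^k$ representative as a uniform $C^k$-limit. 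The only cosmetic remark is that the normalization constant $c_n$ in your inversion formula should be matched to the paper's convention $D^\alpha f(x) = \int y^\alpha \hat f(y) e^{i\langle x,y\rangle}\,dy$, which is harmless for the estimate.
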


We also develop notions of differential operators and pseudodifferential operators on vector bundles. A differential operator is a map of global sections of vector bundles which restricts to differential operator in the ordinary sense. The chief examples of differential operators are $\d,\partial, \Delta$. Meanwhile, construction of pseudodifferential operators is more involved. Ultimately this is necessary since we will be finding pseudo-inverses to $\Delta$, and this cannot be done if we only have differential operators.

More precisely, we will be assigning ``symbol'' to each pseudodifferential operator, and we will be looking for operators that yield a given symbol. But this doesn't work if we only have differential operators. In other words,
\begin{align*}
\text{PDiff}_m \rightarrow \text{Symb}_m
\end{align*}
is surjective, while
\begin{align*}
\text{Diff}_m \rightarrow \text{Symb}_m
\end{align*}
is not.

Let's first briefly discuss what we mean by differential operator in the ordinary sense. The most reasonable model to refer to as ``differential operator'' is certainly expressions like the following:
\begin{align*}
a_1 \frac{\partial}{\partial x}+a_2 \frac{\partial^2}{\partial y\partial z}
\end{align*}
For more general considerations, let's write $D^\alpha = (-i)^{|\alpha|}D_1^{\alpha_1} \cdots D_n^{\alpha_n}$ where $\alpha=(\alpha_1, \cdots \alpha_n)$ is a multi-index, $|\alpha|=\sum \alpha_j$ is its modulus and $D_n = \frac{\partial}{\partial x_n}$. (Here $(-i)^{|\alpha|}$ is normalization to make sure $\widehat{D^\alpha f}(y) = y^\alpha \hat{f}(y)$) Now we can define a linear differential operator (here onwards abbreviated as LDO) as an operator of the following form:
\begin{align*}
\sum_\alpha a_\alpha D^\alpha
\end{align*}
If a LDO only sums over $|\alpha|\le r$, then we say that it has rank $r$.

So far, we only discussed map from $\mathcal E(\mathbb{R}^n)$ to $\mathcal E(\mathbb R^n)$. We can actually go a little farther and define differential operator as a map from $\mathcal E(\mathbb{R}^n)^p$ to $\mathcal E(\mathbb{R}^n)^q$ as
\begin{align*}
& L : \begin{bmatrix} f_1 \\ \vdots \\ f_m \end{bmatrix} \mapsto \begin{bmatrix} L_{11} & \cdots & L_{1m} \\ \vdots & \ddots & \vdots \\ L_{n1} & \cdots & L_{nm} \end{bmatrix} \begin{bmatrix} f_1 \\ \vdots \\ f_m \end{bmatrix} \\
& L_{jk} = \sum_\alpha a_\alpha^{jk} D^\alpha
\end{align*}
In other words, we take linear combinations of LDO and assign them to each coordinate.

Differential operator on vector bundles is simply defined as a linear map on global sections that restricts to LDO on each trivialization.

\begin{definition}
Given vector bundles $E$ and $F$ of rank $m,n$ on smooth $N$-manifold $M$, a differential operator from $E$ to $F$ is a map
\begin{align*}
L: \CE (E,M) \rightarrow \CE (F,M)
\end{align*}
such that for each $U\subset M$ over which $E$,$F$ trivializes and $M$ is Euclidean, $\widetilde L_U$ is a linear differential operator. Here, $\widetilde L_U$ is the effect of $L|_U$ on trivializations: the identification $\CE (E, U) \cong \CE (\RR ^N)^m, \CE (F, U) \cong \CE (\RR ^N)^n$ gives
\[
\begin{array}{ccc}
\CE (E,U) & \xrightarrow{L|_U} & \CE (F,U) \\
\downarrow &  & \downarrow \\
\CE (\RR ^N)^m & \xrightarrow{\widetilde{L}_U} & \CE (\RR ^N)^n
\end{array}
\]
A differential operator is said to have order $r$ if each local LDO has order at most $r$.
\end{definition}

\medskip

\textbf{Example: Exterior Derivative.} Chief examples of differential operator between vector bundles are exterior derivatives $\d,\partial,\bar\partial,$ and Laplacian $\Delta$. Let's look at $\d$ for example. Over local coordinate of a manifold, $\d$ acts as:
\begin{align*}
\d (f \d x \wedge \d y + g \d x \wedge \d z + h \d y \wedge \d z) = (\frac{\dol f}{\dol z} - \frac{\dol g}{\dol y} + \frac{\dol h}{\dol x}) \d x \wedge \d y \wedge \d z
\end{align*}
Thus we see that in this case $\d$ acted as a matrix
\begin{align*}
\d : \begin{bmatrix} f \\ g \\ h \end{bmatrix} \mapsto \begin{bmatrix} \frac{\dol}{\dol z} & - \frac{\dol}{\dol y} &  \frac{\dol}{\dol x} \end{bmatrix} \begin{bmatrix} f \\ g \\ h \end{bmatrix}
\end{align*}
More generally, matrix of $\d$ is $\binom nk$-by-$\binom n{k+1}$.

Local description of $\dol,\bar \dol$ are almost the same. For Laplacian $\Delta$, it locally acts as second-order LDO.

Now we proceed to define symbol of differential operator. A symbol is simply substitution of partial derivatives into formal variables:
\begin{align*}
a_1 \frac{\partial}{\partial x}+a_2 \frac{\partial^2}{\partial y\partial z} \xrightarrow{\text{Symbol}} a_1 X + a_2 YZ
\end{align*}
For formal discussion on vector bundles, however, we will go on a little detour to define it globally.

This effort is necessary since later, we will define symbol for pseudodifferential operators. This discussion is necessary since ultimately the we will prove exactness of the sequence
\begin{align*}
\text{PDiff}_m \rightarrow \text{Symb}_m \rightarrow 0
\end{align*}
and that the kernel of the symbol map is an operator of order $m-1$. This will be the pivotal construction for everything else to follow in the proof of Hodge theorem. 

Action of a linear differential operator can be written as a Fourier transform in the following sense:
\begin{align*}
\widehat{D^\alpha f}(y) = y^\alpha \hat{f}(x) & \implies D^\alpha f(x) = \int y^\alpha \hat f(y) e^{i\langle x,y\rangle} dy \\ 
& \implies (\sum a_\alpha D^\alpha)f (x) = \int (\sum a_\alpha y^\alpha) \hat f(y) e^{i\langle x,y\rangle} dy \\
& \implies p(x,D) f (x) = \int p(x,y) \hat f(y) e^{i\langle x,y\rangle} dy
\end{align*}
Here, $p(x,y)$ is a smooth function in $x$ and polynomial in $y$.
Pseudodifferential operator attempts to generalize this situation by replacing $p$ with a smooth function in $x,y$, but with appropriate growth conditions.

Symbol of a pseudodifferential operator can also be defined. It turns out that every symbol has a PDO giving that symbol. Giving a well-defined symbol map turns out to be a messy task, the proof of which we will omit here.

The following is the most important theorem for pseudodifferential operators in this text:

\begin{proposition}\label{pdiffsymb}
The following map is surjective:
\begin{align*}
\text{PDiff}_m(E,F) \rightarrow \text{Symb}_m(E,F)
\end{align*}
and the kernel of this map are operators of order $m-1$.
\end{proposition}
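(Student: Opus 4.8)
The plan is twofold: to prove surjectivity by explicitly manufacturing, out of a given symbol, a pseudodifferential operator realizing it, and to identify the kernel directly from the definition of the symbol map. Since an element of $\text{PDiff}_m(E,F)$ is by definition local data glued over a trivializing cover, and $\text{Symb}_m(E,F)$ is likewise assembled from functions homogeneous of degree $m$ in the fiber variable, I would first fix a finite cover $\{U_\alpha\}$ on which $E$, $F$ are trivial and $M$ is Euclidean, together with a subordinate partition of unity $\{\rho_\alpha\}$, and reduce everything to the Euclidean model.

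For surjectivity, let $\sigma \in \text{Symb}_m(E,F)$ restrict over $U_\alpha$ to a $\Hom(\CC^m,\CC^n)$-valued function $\sigma_\alpha(x,y)$, homogeneous of degree $m$ in $y$. Because a homogeneous function is singular at $y=0$, I would first multiply by a smooth cutoff $\chi(y)$ that vanishes near the origin and equals $1$ for $|y|$ large, obtaining a bona fide order-$m$ symbol $p_\alpha(x,y) = \chi(y)\sigma_\alpha(x,y)$, and then set
\begin{align*}
P_\alpha f(x) = \int p_\alpha(x,y)\,\hat f(y)\,e^{i\langle x,y\rangle}\,dy.
\end{align*}
This is a genuine PDO of order $m$ whose leading symbol is $\sigma_\alpha$; changing $\chi$ alters $p_\alpha$ only where $y$ is bounded, contributing an operator of order strictly below $m$, so the symbol is unaffected. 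Transporting each $P_\alpha$ back through the trivializations and summing against the partition of unity then produces a global operator $P$ whose leading symbol is $\sum_\alpha \rho_\alpha \sigma_\alpha = \sigma$, the patching and commutator corrections all being of lower order.

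The one genuinely delicate point — and the step I expect to be the main obstacle — is that the leading symbol must be a well-defined global object, i.e. invariant under the coordinate changes relating the charts $U_\alpha$. Under such a change the full symbol transforms by an asymptotic expansion, and one must check that its top-order term obeys exactly the transformation law of a homogeneous degree-$m$ section of $\Hom(\pi^*E,\pi^*F)$ over the cotangent bundle, so that the local $\sigma_\alpha$ genuinely glue to $\sigma$. This invariance, rather than any single local integral estimate, is the substance of the symbol calculus whose construction the text has elected to omit, and it is what one must invoke to make both the statement and the above gluing meaningful.

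Finally, the kernel is read off directly from the definition. The symbol map sends a PDO with full symbol $p(x,y)$ to the equivalence class of its homogeneous degree-$m$ part, and this class vanishes precisely when $p$ satisfies the growth estimates of an order-$(m-1)$ symbol. Hence an operator lies in the kernel if and only if it has order $m-1$, which is the second assertion.
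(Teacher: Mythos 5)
The paper never actually proves this proposition: it is stated as a black box, with the construction of the symbol map explicitly declared ``a messy task'' and omitted (deferring, in effect, to Wells, Ch.~IV), so there is no proof of record to compare yours against. For surjectivity your sketch reconstructs precisely the standard argument: cut the homogeneous symbol off near $y=0$, quantize locally by the Fourier integral formula, transport through the trivializations, and glue with a partition of unity, with all patching errors of lower order. That is the right route, and you are also right that the substantive content there is the invariance statement --- that the top-order term transforms as a homogeneous degree-$m$ section of $\Hom(\pi^*E,\pi^*F)$ over the punctured cotangent bundle --- which is exactly what the paper declines to verify.

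The kernel half of your argument, however, has a genuine gap. In this paper (as in Wells), ``operator of order $m-1$'' is a statement about Sobolev boundedness: $T \in \text{OP}_{m-1}$ means $T$ extends to a continuous map $W^{s} \rightarrow W^{s-(m-1)}$ for every $s$ --- this is the sense already used in the definition of a parametrix and in the finiteness theorem. It is not the same thing, by definition, as ``the full symbol satisfies the order-$(m-1)$ growth estimates,'' which is where your chain of equivalences stops. Two substantive steps are hidden. First, the implication (full symbol obeys order-$(m-1)$ estimates) $\Rightarrow$ (operator bounded $W^{s} \rightarrow W^{s-m+1}$) is the Sobolev continuity theorem for pseudodifferential operators, which must be invoked or proved; your ``hence'' elides it. Second, and entirely absent, is the converse inclusion: if $L \in \text{PDiff}_m(E,F)$ happens to lie in $\text{OP}_{m-1}$, why must its leading symbol vanish? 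The standard argument tests $L$ against oscillatory sections $e_\lambda = e^{i\lambda\langle x,\xi_0\rangle}\varphi$: if $\sigma_m(L)(\cdot,\xi_0) \neq 0$ then $\|L e_\lambda\|_{0}$ grows like $\lambda^{m}$ while $\|e_\lambda\|_{m-1}$ grows like $\lambda^{m-1}$, contradicting boundedness. Without both directions, the asserted identification of the kernel with the operators of order $m-1$ is not established.
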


\subsection{Elliptic Differential Operator and Hodge Theorem}

Here we prove Hodge theorem for elliptic differential operators. 
\begin{definition}
    A pseudodifferential operator $\Delta \in \on{PDiff}_k(E,F)$ is said to be \emph{elliptic} if the assocaiated symbol map $\sigma_k(\Delta)$ is such that $\forall (x, \xi) \in T'(X), \sigma_k(\Delta)(x,\xi): E_x \rightarrow F_x$ is an isomorphism.
\end{definition}

In order to prove Hodge theorem, we establish three key facts about an elliptic differential operator $\Delta$:

\medskip

\textbf{Finiteness.} $\dim \CH^r(X) < \infty$.

\medskip

\textbf{Regularity.} For extension $\Delta_s : W^s \rightarrow W^{s-2}$, if $\Delta_s \xi$ is smooth, then $\xi$ is smooth.

\medskip

\textbf{Existence.} For each differential form $\varphi$ orthogonal to $\CH^r$, there is a unique $\psi$ orthogonal to $\CH^r$ such that $\Delta \psi = \varphi$.

\medskip

Finiteness is necessary to define a projection in Hilbert space $W^0(E) = L^2(E)$. Regularity is used in various situations to show that an element of Sobolev space obtained as a ``weak solution" is actually smooth. (Thanks to this, we don't speak of Sobolev spaces in the statement of Hodge theorem) Existence is used to find Green's operator $G$; $G$ is obtained by applying Banach open mapping theorem, and for conditions of this theorem to be met, we need bijection and existence theorem.

Proofs of all three of these theorems rely on construction of parametrix of $\Delta$, which is a sort of a pseudoinverse. This is precisely where all the previous discussions of pseudodifferential operators become useful. We will first discuss parametrix, then the proof of three theorems, and finally the proof of Hodge theorem.

\begin{definition}
    A \emph{parametrix} of $L\in \text{PDiff}_m(E,F)$ is $P \in \text{PDiff}_m$ such that $L \circ P = P \circ L = I - S$ where $S\in \text{OP}_{-1}(E,F)$. In other words, it is inverse of $L$ up to an operator of degree $-1$.
\end{definition}

The key theorem is that elliptic operators have a parametrix, and this is a direct consequence of Proposition~\ref{pdiffsymb}.

\begin{proposition}
    Elliptic pseudodifferential operator $\Delta$ has a parametrix.
\end{proposition}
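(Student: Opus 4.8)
The plan is to invert $\Delta$ at the level of symbols and then lift that inverse back to an operator using the surjectivity in Proposition~\ref{pdiffsymb}. Write $\sigma_k(\Delta)(x,\xi)\colon E_x \to F_x$ for the order-$k$ symbol. Ellipticity says precisely that this is a linear isomorphism for every $(x,\xi) \in T'(X)$, so I can form the pointwise inverse
\[
\tau(x,\xi) := \sigma_k(\Delta)(x,\xi)^{-1} \colon F_x \to E_x.
\]
Because matrix inversion is smooth on the locus of invertible matrices and $\sigma_k(\Delta)$ is homogeneous of degree $k$ in $\xi$, the assignment $\tau$ is smooth off the zero section and homogeneous of degree $-k$; after multiplying by a cutoff equal to $0$ near $\xi=0$ and $1$ for large $|\xi|$ (which alters $\tau$ only by a term supported near the zero section, hence of lower order), $\tau$ becomes a bona fide element of $\text{Symb}_{-k}(F,E)$.

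First I would apply the surjectivity in Proposition~\ref{pdiffsymb} to $\tau$, obtaining a pseudodifferential operator $P \in \text{PDiff}_{-k}(F,E)$ with $\sigma_{-k}(P) = \tau$; this $P$ is the candidate parametrix. Next I would compute the leading symbols of the two composites. Using that the symbol map is multiplicative modulo lower order, i.e. $\sigma_0(P \circ \Delta) = \sigma_{-k}(P)\,\sigma_k(\Delta)$, I get
\[
\sigma_0(P \circ \Delta)(x,\xi) = \tau(x,\xi)\,\sigma_k(\Delta)(x,\xi) = \Id_{E_x},
\]
and symmetrically $\sigma_0(\Delta \circ P) = \Id_{F_x}$. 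Hence $P \circ \Delta$ and the identity $I$ carry the same order-$0$ symbol, so $P \circ \Delta - I$ lies in the kernel of the symbol map $\text{PDiff}_0 \to \text{Symb}_0$.

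Finally I would invoke the second assertion of Proposition~\ref{pdiffsymb}, that this kernel consists of operators of order $-1$: writing $P \circ \Delta - I = -S$ gives $S \in \text{OP}_{-1}(E,F)$, so $P \circ \Delta = I - S$, and the identical argument on the other side yields $\Delta \circ P = I - S'$ with $S' \in \text{OP}_{-1}$. This is exactly the defining property of a parametrix. The hard part is not this bookkeeping but the one ingredient used on faith in the middle step, namely the composition formula asserting that the symbol of a composite equals the product of the symbols up to an operator of lower order; proving this (together with checking that the cutoff modification of $\tau$ really is harmless) is the genuine content, after which the existence of the parametrix falls out as an immediate corollary of Proposition~\ref{pdiffsymb}.
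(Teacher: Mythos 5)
Your proof is correct and follows essentially the same route as the paper: invert the symbol using ellipticity, lift the inverse to a pseudodifferential operator $P$ via the surjectivity in Proposition~\ref{pdiffsymb}, use multiplicativity of the symbol map to see that $\sigma(P \circ \Delta - I) = 0$, and conclude from the kernel statement that the error is an operator of order $-1$. You are more careful than the paper on two points it passes over silently --- the cutoff needed to make the inverted symbol a genuine element of $\text{Symb}_{-k}(F,E)$, and the fact that the composition formula $\sigma(P\circ\Delta)=\sigma(P)\circ\sigma(\Delta)$ is the real analytic content --- and you also verify both composites $P\circ\Delta$ and $\Delta\circ P$, as the definition of parametrix requires.
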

\begin{proof}
    Symbol of $\Delta$ is invertible, and to this inverted symbol corresponds a pseudodifferential operator $P$, by Proposition~\ref{pdiffsymb}. Thus
    \begin{align*}
    			& \sigma(P \circ \Delta) = \sigma(P) \circ \sigma(\Delta) = \text{id} \\
    \implies 	& \sigma(P \circ \Delta - I) = 0 \\
    \implies	& P \circ \Delta - I \in \text{OP}_{-1}(E,F)
    \end{align*}
\end{proof}

With this we can easily prove regularity theorem: $\Delta \xi$ smooth $\implies$ $\xi$ smooth. 
\begin{theorem}[Regularity for Elliptic Operator]
For elliptic differential operator $\Delta \in \text{Diff}_m(E,F)$, if for $\xi \in W^s(E)$, $\Delta_s\xi$ is smooth, then $\xi$ is also smooth.
\end{theorem}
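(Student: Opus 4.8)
The plan is to exploit the parametrix $P$ of $\Delta$ just constructed, together with the Sobolev smoothness lemma. The key identity is $P \circ \Delta = I - S$ where $S \in \operatorname{OP}_{-1}(E,F)$, an operator of order $-1$. Rearranging gives $\xi = P(\Delta \xi) + S\xi$, and the strategy is to feed this equation into a bootstrapping argument: each application will trade hypotheses about the Sobolev regularity of $\Delta\xi$ for increased regularity of $\xi$, and since $\Delta\xi$ is assumed smooth, we can push the regularity of $\xi$ arbitrarily high, then invoke the Sobolev lemma to conclude smoothness.

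First I would record the two mapping properties that make the bootstrap run. A pseudodifferential operator of order $m$ extends to bounded maps $W^s \to W^{s-m}$; in particular $P \in \operatorname{PDiff}_m$ maps $W^{s} \to W^{s-m}$, while the crucial gain comes from $S \in \operatorname{OP}_{-1}$, which maps $W^s \to W^{s+1}$, \emph{raising} the Sobolev index by one. I would state these as the standard continuity properties of pseudodifferential operators on Sobolev spaces (available from the order formalism underlying Proposition~\ref{pdiffsymb}).

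Next I would run the induction. Suppose $\xi \in W^s(E)$ and $\Delta\xi$ is smooth, hence lies in every $W^t$. From $\xi = P(\Delta\xi) + S\xi$: the term $P(\Delta\xi)$ is smooth because $\Delta\xi$ is smooth and $P$ only lowers regularity by a finite amount, so $P(\Delta\xi) \in W^t$ for all $t$; the term $S\xi$ lies in $W^{s+1}$ since $\xi \in W^s$ and $S$ raises the index by one. Therefore $\xi \in W^{s+1}$. Iterating, $\xi \in W^{s+k}$ for every $k \ge 0$, so $\xi \in \bigcap_k W^k(E)$. Applying the Sobolev lemma (elements of $W^s(E)$ are at least $s - \lfloor n/2 \rfloor - 2$ times continuously differentiable) across all $k$ shows $\xi$ is $C^\infty$, i.e.\ smooth.

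The main obstacle is not the bootstrap itself, which is a clean induction, but justifying the continuity of $S$ as a map $W^s \to W^{s+1}$ and of $P$ as a map $W^s \to W^{s-m}$ on the manifold $M$ rather than on $\RR^n$. One must patch the local $\RR^n$ estimates together using the trivializing cover and partition of unity from the definition of $W^s(E)$, and verify that conjugating by transition functions and cutoffs preserves the order of a pseudodifferential operator. Since the paper develops pseudodifferential operators precisely through their symbols and orders and asserts the order formalism in Proposition~\ref{pdiffsymb}, I would invoke these mapping properties as established consequences of that formalism rather than reprove them, and devote the written argument to the bootstrap and the final appeal to the Sobolev lemma.
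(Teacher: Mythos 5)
Your proposal is correct and takes essentially the same route as the paper: both rearrange the parametrix identity $P \circ \Delta = I - S$ (with $S \in \operatorname{OP}_{-1}$) into $\xi = P(\Delta\xi) + S\xi$, bootstrap from $W^s$ to $W^{s+1}$ and onward since $P(\Delta\xi)$ is smooth and $S$ raises the Sobolev index, and conclude via the Sobolev lemma. Your version is slightly more careful in spelling out the mapping properties $P : W^s \to W^{s-m}$ and $S : W^s \to W^{s+1}$ on the manifold (and it also fixes a sign slip in the paper's displayed equation, which writes $P \circ \Delta - S$ where $P \circ \Delta + S$ is meant).
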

\begin{proof}
Essentially, $P\circ \Delta - I$ being operator of degree $-1$ allows one to show that $\xi$ is in $W^s$, then $W^{s+1}$, then $W^{s+2}$, and so on, ultimately showing that by Sobolev lemma, $\xi$ is (infinitely) smooth.

Since $P\circ \Delta = I-S$
\begin{align*}
\xi = (P \circ \Delta - S)\xi = \text{(Smooth Element) } + \text{ (Element of $W^{s+1}$)} \in W^{s+1}(E)
\end{align*}
where $P\Delta \xi$ is smooth since $\Delta \xi$ is smooth. Repeating this, we see that $\xi \in W^k(E)$ for arbitrarily large $k$, and by Sobolev lemma, $\xi$ is smooth.
\end{proof}

Now we proceed to prove finiteness theorem. It directly benefits from existence of parametrix; parametrix allows us to translate the result into ``Hilbert space version'', and the Hilbert space version is actually very simple. It uses the following lemmas from functional analysis:

\begin{lemma}[Rellich]
The inclusion $W^{n+k}(E) \subset W^{n}(E)$ is a compact operator.
\end{lemma}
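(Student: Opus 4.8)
The plan is to reduce the statement to a local estimate on $\mathbb{R}^N$ (where $N=\dim_{\mathbb R} M$) and then prove compactness there by Fourier analysis. Because $M$ is compact, the norm $\|\cdot\|_{s,E}$ is built from a \emph{finite} trivializing cover $\{U_\alpha\}$ and a subordinate partition of unity $\{\rho_\alpha\}$ via the pushforwards $\varphi_\alpha^*(\rho_\alpha\cdot\xi)$. A sequence $\{\xi_j\}$ bounded in $W^{n+k}(E)$ therefore yields, for each of the finitely many $\alpha$, a sequence $\{\varphi_\alpha^*(\rho_\alpha\xi_j)\}_j$ of compactly supported $\mathbb{C}^m$-valued functions on $\mathbb{R}^N$ that is bounded in the Euclidean $(n+k)$-norm and supported in one fixed compact set (the image of $\on{supp}\rho_\alpha$). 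The inclusion is bounded since $\|f\|_n\le\|f\|_{n+k}$, so compactness amounts to extracting, from every such $\{\xi_j\}$, a subsequence that is Cauchy in $W^n(E)$. Granting the local statement, I would obtain this by extracting a $\|\cdot\|_n$-convergent subsequence for $\alpha=1$, then a further one for $\alpha=2$, and so on, and diagonalizing over the finitely many charts.

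So the heart is the following: if $\{f_j\}\subset\mc E(\mathbb{R}^N)$ all have support in a fixed compact set $K$ and $\|f_j\|_{n+k}\le C$, then some subsequence is Cauchy in $\|\cdot\|_n$. First I would split the norm of a difference using the Fourier description $\|g\|_s^2=\int|\hat g|^2(1+|y|^2)^s\,dy$:
\[
\|f_j-f_{j'}\|_n^2=\int_{|y|\le R}|\widehat{f_j-f_{j'}}|^2(1+|y|^2)^n\,dy+\int_{|y|>R}|\widehat{f_j-f_{j'}}|^2(1+|y|^2)^n\,dy.
\]
The tail is cheap: on $\{|y|>R\}$ one has $(1+|y|^2)^n\le(1+R^2)^{-k}(1+|y|^2)^{n+k}$, so the second integral is at most $(1+R^2)^{-k}\|f_j-f_{j'}\|_{n+k}^2\le(1+R^2)^{-k}(2C)^2$, which I can force below any prescribed $\varepsilon/2$ by taking $R$ large, uniformly in $j,j'$.

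For the low-frequency part I would exploit the compact support. Writing $\hat f_j(y)=\int_K f_j(x)e^{-i\langle x,y\rangle}\,dx$, Cauchy–Schwarz gives the uniform bound $|\hat f_j(y)|\le|K|^{1/2}\|f_j\|_0$, and differentiating under the integral sign yields $\partial_{y_l}\hat f_j(y)=\int_K(-ix_l)f_j(x)e^{-i\langle x,y\rangle}\,dx$, again uniformly bounded since $K$ is bounded. Hence $\{\hat f_j\}$ is uniformly bounded and equi-Lipschitz on $\{|y|\le R\}$, so Arzelà–Ascoli extracts a subsequence converging uniformly there; the first integral is then small for large indices. Combining the two estimates shows the subsequence is Cauchy in $W^n$, which is the local claim.

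The main obstacle is essentially bookkeeping rather than a deep difficulty: the substantive step is the low-frequency Arzelà–Ascoli argument, which is exactly where both the compact support and the finite dimensionality of $\mathbb{R}^N$ are used, while the tail estimate and the partition-of-unity globalization are routine. The one point to check carefully is that the local hypothesis is genuinely met—namely that each $\varphi_\alpha^*(\rho_\alpha\xi_j)$ is supported in a \emph{single} compact set independent of $j$, which holds because $\on{supp}\rho_\alpha\subseteq U_\alpha$ is fixed—and that the vector-valued case reduces to the scalar one by applying the estimate componentwise.
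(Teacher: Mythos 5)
The paper never proves this lemma: Rellich's lemma is stated as an imported functional-analytic fact (its proof lives in Wells, the paper's primary reference) and is then used as a black box in the Finiteness theorem, so there is no in-paper argument to compare yours against. Your proposal is correct and is essentially the standard proof, the same one found in the reference: reduce to compactly supported functions on $\mathbb{R}^N$ via the finite trivializing cover and partition of unity already built into the definition of $\|\cdot\|_{s,E}$, kill the high-frequency part of $\|f_j-f_{j'}\|_n^2$ with the weight inequality $(1+|y|^2)^n\le(1+R^2)^{-k}(1+|y|^2)^{n+k}$, and handle the low-frequency part by Arzel\`a--Ascoli applied to the Fourier transforms, which are uniformly bounded and equi-Lipschitz precisely because all the $f_j$ are supported in one fixed compact set.

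Two bookkeeping points should be tightened, though neither threatens the structure of the argument. First, the subsequence produced by Arzel\`a--Ascoli depends on the radius $R$, while $R$ was chosen from the target $\varepsilon$; as written you get, for each $\varepsilon$, a subsequence that is eventually $\varepsilon$-close, which is not yet Cauchy. The fix is the same diagonal trick you already invoke over the charts: run Arzel\`a--Ascoli over $R_m=m$ to extract a single subsequence whose Fourier transforms converge uniformly on every ball, and only then quantify over $\varepsilon$. Second, elements of $W^{n+k}(E)$ are completion elements rather than smooth sections, so your sequence $\{f_j\}\subset\mc E(\mathbb{R}^N)$ is not quite the general case; either note that your pointwise bounds on $\hat f_j$ and $\partial_{y_l}\hat f_j$ use only $f_j\in L^2$ with support in $K$, or approximate each $\xi_j$ within $1/j$ in $\|\cdot\|_{n+k}$ by a smooth section and extract along the approximants. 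With these routine repairs the proof is complete.
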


\begin{lemma}\label{hilbertkernelfinite}
Suppose $S:H\rightarrow H$ is a compact operator of Hilbert spaces. Then $\dim \ker (I-S) < \infty$.
\end{lemma}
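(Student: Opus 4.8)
The plan is to recognize that $\ker(I-S)$ is exactly the set of fixed points of $S$, so that $S$ restricts to the identity operator on this subspace; compactness of $S$ will then force the subspace to have a compact closed unit ball, which by the Riesz theorem can only happen in finite dimension.

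First I would set $K := \ker(I-S)$ and observe that $K$ is a closed linear subspace of $H$. Indeed, a compact operator is bounded, hence $I-S$ is continuous, and $K$ is the preimage of $\{0\}$ under a continuous map. As a closed subspace of a Hilbert space, $K$ is itself a Hilbert space. By definition of $K$, every $x \in K$ satisfies $Sx = x$, so the restriction $S|_K$ coincides with the identity operator $I_K$ on $K$.

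Next I would transfer compactness to $K$. Let $B \subseteq K$ denote the closed unit ball of $K$. Since $S$ is compact, $S(B)$ is relatively compact in $H$; but $S|_K = I_K$ gives $S(B) = B$, and $B$ is already closed, so $B$ is compact. Finally I would invoke the Riesz theorem: a normed space whose closed unit ball is compact must be finite-dimensional. Applied to $K$, this yields $\dim \ker(I-S) = \dim K < \infty$.

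The only genuine input here is the Riesz theorem (equivalently, Riesz's lemma that the closed unit ball of an infinite-dimensional normed space is never compact); everything else is formal. The one step that must be handled carefully is the claim $S(B) = B$, which is precisely what lets the compactness of $S$ descend to the unit ball of $K$, and this hinges on $S$ acting as the identity on $K$.
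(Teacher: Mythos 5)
Your proof is correct and follows essentially the same route as the paper's: both arguments observe that $S$ fixes $\ker(I-S)$ pointwise, so the closed unit ball $B$ of the kernel satisfies $S(B)=B$ and is therefore compact, and then conclude finite-dimensionality. The only difference is that you cite the Riesz theorem for this last step, while the paper proves it inline for Hilbert spaces via the observation that an infinite orthonormal set has pairwise distances $\sqrt{2}$, precluding a finite subcover by balls of radius $1/\sqrt{2}$.
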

\begin{proof}
We use definition of compact operator to show that unit ball is compact, and from this we prove by contradiction that our Hilbert space is finite-dimensional (infinite-dimensional space gives an open cover with no finite subcover).

Closed unit ball $B_1$ is bounded, so for $B_1' = \ker(I-S)\cap B_1 = \{x \in B_1 | S(x) = x\}$, $S(B_1') = B_1'$ is closed and thus compact. This is closed unit ball of $\ker(I-S)$.

Now assume $H$ has an infinite orthonormal basis $\{e_\mu\}$. Cover the unit ball with balls of radius $\frac1{\sqrt 2}$, centered at each point of the sphere. Then there is no finite subcover of this, since each ball can't cover two $e_\mu$ at the same time ($||e_\mu - e_\kappa||^2 = ||e_\mu||^2 + ||e_\kappa||^2 - 2\langle e_\mu,e_\kappa \rangle = 1+1-0=2$). Thus infinite-dimensionality of Hilbert space implies non-compactness of the unit ball, and contrapositively we see that our unit ball is compact.
\end{proof}
\textbf{Remark.} One also sees that when $S$ is compact, $I-S$ is compact and thus $\ker S = \ker I-(I-S)$ is finite-dimensional. 

\textbf{Remark.} This lemma holds if we replace $H$ by Banach space. Also, the cokernel is finite-dimensional.

\begin{theorem}[Finiteness for Elliptic Operator]
Given elliptic differential operator $\Delta \in \text{Diff}_k(E,F)$, its kernel is finite-dimensional.
\end{theorem}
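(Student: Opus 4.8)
The plan is to reduce this finiteness statement to the purely functional-analytic Lemma~\ref{hilbertkernelfinite} by using the parametrix of $\Delta$ to trade $\Delta$ for an operator of the form $I - S$ with $S$ compact. The ellipticity hypothesis will enter only through the existence of the parametrix, which is exactly what all the preceding pseudodifferential-operator machinery was built to supply.

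First I would invoke the parametrix. Since $\Delta$ is elliptic, it admits a parametrix $P \in \text{PDiff}_k$ with $P \circ \Delta = I - S$, where $S$ is an operator of order $-1$. Applying $P$ to any smooth section $\xi \in \CE(E,M)$ with $\Delta \xi = 0$ gives $0 = P\Delta \xi = (I - S)\xi$, so that $\xi = S\xi$. Hence $\ker \Delta \subseteq \ker(I - S)$, and it suffices to bound the dimension of $\ker(I - S)$.

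Next I would realize $S$ as a compact endomorphism of a Sobolev space. Fix any index $s$. Because $S$ has order $-1$, it extends to a bounded map $S : W^s(E) \to W^{s+1}(E)$. Composing with the inclusion $W^{s+1}(E) \subset W^s(E)$, which is a compact operator by the Rellich lemma, exhibits $S : W^s(E) \to W^s(E)$ as a compact operator. Lemma~\ref{hilbertkernelfinite} then yields $\dim \ker(I - S) < \infty$ inside the Hilbert space $W^s(E)$. Since smooth sections embed into $W^s(E)$, the smooth kernel $\ker \Delta$ sits inside the $W^s$-kernel of $I - S$, and the finiteness of the latter forces $\dim \ker \Delta < \infty$.

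The main obstacle is not any single hard estimate but the careful bookkeeping of Sobolev indices: one must verify that an order $-1$ operator genuinely gains one degree of regularity, so that Rellich applies, and keep straight that compactness comes from the composition of $S$ with the compact inclusion rather than from $S$ acting on a single fixed Sobolev space. Note that regularity is not needed for this argument — only the inclusion $\ker\Delta \subseteq \ker(I-S)$ is used, not the reverse.
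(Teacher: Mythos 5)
Your proof is correct and takes essentially the same route as the paper's: parametrix, Rellich compactness of the Sobolev inclusion, and the compact-operator kernel lemma (Lemma~\ref{hilbertkernelfinite}). Your closing observation is also a legitimate small streamlining: the paper invokes the Regularity theorem to identify $\ker \Delta_s = \ker \Delta$, but as you note, finiteness of the smooth kernel already follows from the one inclusion $\ker \Delta \subseteq \ker(I-S)$ inside $W^s(E)$, so regularity is not strictly needed for this particular statement.
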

\begin{proof}
Find parametrix of $\Delta$, say $P\in \text{PDiff}_k(F,E)$. Then $\Delta \circ P=P \circ \Delta$, extended to Sobolev space, is a compact operator, since it's operator of degree $-1$, and by Rellich lemma inclusion is a compact operator. By Lemma~\ref{hilbertkernelfinite}, $\ker (P \circ \Delta)_s$ is finite-dimensional, and so is $\ker \Delta_s$. Finally, by Regularity, $\ker \Delta_s = \ker \Delta$.
\end{proof}

We finally establish existence theorem; it provides a bijection where the inverse function will be the Green's operator $G$ in the Hodge theorem. In its proof, we will establish surjectivity via ``solving'' equation in weak sense (in Sobolev space) and then show that it's actually smooth with Regularity. The proof also depends on several results in functional analysis.

\begin{theorem}[Existence for Elliptic Operator]
For elliptic differential operator $\Delta \in \text{Diff}_m(E,F)$, there is bijection
\begin{align*}
L : \CE(X,E) \cap \CH_\Delta^\perp \longleftrightarrow \CE(X,F) \cap \CH_{\Delta^*}^\perp
\end{align*}
where $\Delta^*$ is adjoint to $\Delta$.
\end{theorem}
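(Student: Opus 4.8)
The bijection in question is $\Delta$ itself, restricted to $\CH_\Delta^\perp$; the plan is to show it carries $\CE(X,E)\cap\CH_\Delta^\perp$ bijectively onto $\CE(X,F)\cap\CH_{\Delta^*}^\perp$ by exhibiting $\Delta$ as a Fredholm operator with closed range and then returning to the smooth category via the regularity theorem. Throughout I would use that $\Delta^*$ is again elliptic (its symbol is the pointwise adjoint of $\sigma_m(\Delta)$, hence still an isomorphism), so that finiteness and regularity apply to $\Delta^*$ as well and $\CH_{\Delta^*}=\ker\Delta^*$ is finite-dimensional and consists of smooth sections.

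The two easy halves come first. Injectivity is immediate: if $\psi\in\CH_\Delta^\perp$ satisfies $\Delta\psi=0$ then $\psi\in\ker\Delta=\CH_\Delta$, whence $\psi\in\CH_\Delta\cap\CH_\Delta^\perp=0$. That the image lands in the target is the formal adjoint computation: for any $h\in\CH_{\Delta^*}=\ker\Delta^*$ we have $\langle\Delta\psi,h\rangle=\langle\psi,\Delta^*h\rangle=0$, so $\Delta\psi\perp\CH_{\Delta^*}$.

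The substance is surjectivity, which I would establish at the level of the Sobolev completions. From the parametrix $P$ with $P\Delta=I-S$ and $S\in\operatorname{OP}_{-1}$, the boundedness of $P$ (order $-m$, so $W^{s-m}\to W^s$) and of $S$ (order $-1$, so $W^{s-1}\to W^s$) yields the basic elliptic estimate $\|\psi\|_s\le C(\|\Delta\psi\|_{s-m}+\|\psi\|_{s-1})$. The crucial upgrade is to absorb the lower-order term on $\CH_\Delta^\perp$: if no uniform bound $\|\psi\|_s\le C\|\Delta\psi\|_{s-m}$ held there, a normalized sequence $\psi_j\in\CH_\Delta^\perp$ with $\|\psi_j\|_s=1$ and $\|\Delta\psi_j\|_{s-m}\to0$ would, by Rellich compactness of $W^s\hookrightarrow W^{s-1}$ together with the basic estimate applied to differences, be Cauchy in $W^s$, and its limit would be a nonzero element of $\ker\Delta\cap\CH_\Delta^\perp$, a contradiction. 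This sharp estimate forces $\Delta_s$ to have closed range. Separately I would identify, in $L^2(F)$, the orthogonal complement of the range: $\varphi\perp\operatorname{Im}\Delta$ exactly when $\Delta^*\varphi=0$ weakly, and regularity for the elliptic operator $\Delta^*$ then makes such $\varphi$ smooth and harmonic, so $(\operatorname{Im}\Delta)^\perp=\CH_{\Delta^*}$ and hence $\operatorname{Im}\Delta=\CH_{\Delta^*}^\perp$.

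Finally, given smooth $\varphi\perp\CH_{\Delta^*}$, closedness produces $\psi\in W^s(E)$ with $\Delta_s\psi=\varphi$; regularity for $\Delta$ upgrades $\psi$ to a smooth section, and subtracting its component in the finite-dimensional space $\CH_\Delta$ of smooth harmonic forms gives the desired $\psi_0\in\CE(X,E)\cap\CH_\Delta^\perp$ with $\Delta\psi_0=\varphi$. I expect the main obstacle to be the closed-range step: promoting the crude parametrix estimate to the sharp estimate on $\CH_\Delta^\perp$ genuinely requires both Rellich compactness and the finiteness theorem, and one must be careful that the weak solutions produced actually land in the correct Sobolev space before regularity can be invoked to return to the smooth category.
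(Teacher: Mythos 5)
Your proposal is correct, and its skeleton is the paper's: pass to the Sobolev completions, show that the range of $\Delta_m : W^m(E) \to W^0(F)$ is closed and equals $W^0(F)\cap\CH_{\Delta^*}^\perp$, then recover smoothness by the regularity theorem and remove the harmonic component using finiteness. The genuine divergence is in how closedness of the range is obtained. The paper takes the soft route: by the remark following Lemma~\ref{hilbertkernelfinite}, $I-S$ has finite-dimensional cokernel in the Banach setting, so $\Delta_m$ has finite-dimensional cokernel, and the image of a bounded operator with finite codimension is automatically closed. You instead prove the sharp coercive estimate $\|\psi\|_s\le C\|\Delta\psi\|_{s-m}$ on $\CH_\Delta^\perp$ by contradiction, combining the parametrix estimate, Rellich compactness, and the finiteness theorem, and deduce closedness from a Cauchy-sequence argument. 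Your route is longer but more self-contained: the paper's cokernel claim and the ``finite codimension implies closed'' fact are both left unproved, while every ingredient of your estimate is established in the text. You also spell out a step the paper compresses into ``closure of range is perpendicular to kernel of transpose'': identifying the $L^2$-annihilator of the range with $\CH_{\Delta^*}$ requires that weak solutions of $\Delta^*\varphi=0$ be smooth, i.e.\ ellipticity and regularity for the adjoint operator, which you correctly invoke. Both arguments are sound; yours trades brevity for completeness.
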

\begin{proof}
For each $\eta \in \CE(X,F)$ orthogonal to $\CH_{\Delta^*}$, we will find $\xi \in W^s(E)$ such that $L_s(\xi) = \tau$. Regularity then immediately shows that $\xi$ is smooth.

In fact, we will solve the above relation more generally for $\eta \in W^0(F)$. Now, generally closure of range is perpendicular to kernel of transpose, and thus closure of $L_m(W^m(E))$ is $W^0(F) \cap \CH_{\Delta^*}^\perp$. Furthermore, we don't even need to take closure since finite-dimensionality of $\text{coker} L_m$ implies that $L_m(W^m(E))$ is closed. Thus, $L_m(W^m(E)) = W^0(F) \cap \CH_{\Delta^*}^\perp$.

Now we establish surjectivity and injectivity by finding unique solution in $W^m(E) \cap \CH_{\Delta}^\perp$ by projecting from $W^m$ to $\ker L_m$, which is finite-dimensional. 
\end{proof}

We now restate and prove the Hodge theorem presented in the introduction.

\begin{theorem}[Hodge]
There is an isomorphism
\begin{align*}
H^r(M,\mathbb C) \cong \mathcal H^r(M)
\end{align*}
induced from orthogonal projection
\begin{align*}
\mathcal H: \mc E^r(X) \rightarrow \mathcal H^r (X)
\end{align*}
satisfying $I = \mathcal H + \Delta G = \mathcal H + G\Delta$. 

Also, $G(\mathcal H^r)=0$, $\{G,\d,\Delta\}$ commute, and any of these operators composed with $\CH$ gives $0$.
\end{theorem}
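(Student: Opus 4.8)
The plan is to realize all the stated operators through the orthogonal decomposition of $\mc E^r(X)$ furnished by the three theorems above, applied to the Laplacian $\Delta = \d\d^* + \d^*\d$, where $\d^*$ is the formal adjoint of $\d$. This $\Delta$ is an elliptic operator on $\mc E^r(X)$ that is formally self-adjoint, so $\Delta^* = \Delta$ and $\CH_\Delta = \CH_{\Delta^*} = \mathcal{H}^r(X)$. First I would use Finiteness to guarantee that $\mathcal{H}^r(X) = \ker\Delta$ is finite-dimensional, so the $L^2$-orthogonal projection $\mathcal{H}\colon \mc E^r(X) \to \mathcal{H}^r(X)$ is well-defined and lands in smooth forms. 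Then I apply the Existence theorem with $E = F = (\wedge^r T^*X)_\CC$; since $\Delta$ is self-adjoint it produces a bijection $\Delta\colon \mc E^r(X)\cap(\mathcal{H}^r)^\perp \to \mc E^r(X)\cap(\mathcal{H}^r)^\perp$, and I define the Green's operator $G$ to be the inverse of this bijection on the complement and $0$ on $\mathcal{H}^r$. Regularity ensures $G$ sends smooth forms to smooth forms. This yields the orthogonal splitting $\mc E^r(X) = \mathcal{H}^r(X) \oplus \Delta\big(\mc E^r(X)\big)$.

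From this splitting the operator identities are immediate. Writing $\varphi = \mathcal{H}\varphi + \varphi'$ with $\varphi' \perp \mathcal{H}^r$, one has $\Delta G\varphi = \varphi' = \varphi - \mathcal{H}\varphi$ and likewise $G\Delta\varphi = \varphi'$, giving $I = \mathcal{H} + \Delta G = \mathcal{H} + G\Delta$. The relations $G\mathcal{H} = \mathcal{H}G = 0$ follow because $G$ kills $\mathcal{H}^r$ and has image in $(\mathcal{H}^r)^\perp$, while $\Delta\mathcal{H} = \mathcal{H}\Delta = 0$ and $\d\mathcal{H} = \mathcal{H}\d = 0$ follow from the facts that harmonic forms are closed and coclosed and that $\on{im}\Delta$ and $\on{im}\d$ are orthogonal to $\mathcal{H}^r$. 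For the commutation of $G$ with $\Delta$, note that on $\mathcal{H}^r$ both vanish and on the complement $G = \Delta^{-1}$, so they commute.

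The one genuinely structural point is the commutation $\d G = G\d$. The plan is to combine the purely algebraic identity $\d\Delta = \Delta\d$ (read off from $\Delta = \d\d^* + \d^*\d$) with the observation that $\d$ preserves both summands of the decomposition: if $h$ is harmonic then $\d h$ is harmonic, and $\d$ carries $(\mathcal{H}^r)^\perp$ into $(\mathcal{H}^{r+1})^\perp$ because $\d^*$ also preserves harmonic forms, so that $\langle \d\varphi, h\rangle = \langle\varphi, \d^* h\rangle = 0$ whenever $\varphi\perp\mathcal{H}^r$ and $h$ is harmonic. Since $G$ is $\Delta^{-1}$ on the complement and $0$ on the harmonic part, and $\d$ respects this block structure while commuting with $\Delta$, applying $\Delta$ to $\d G\varphi - G\d\varphi$ and using injectivity of $\Delta$ on $(\mathcal{H}^{r+1})^\perp$ forces this difference to vanish. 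I expect this invariance-of-the-splitting step to be the main obstacle, as it is the only place where the interaction between $\d$ and the Hodge decomposition must be checked by hand rather than read off formally.

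Finally I would deduce the cohomology isomorphism. Green's formula on the compact manifold $M$ gives $\langle\Delta\varphi,\varphi\rangle = \|\d\varphi\|^2 + \|\d^*\varphi\|^2$, so $\varphi$ is harmonic iff $\d\varphi = 0$ and $\d^*\varphi = 0$; in particular harmonic forms are closed, defining a map $\mathcal{H}^r(X) \to H^r(M,\CC)$. For surjectivity, given a closed $\varphi$ I write $\varphi = \mathcal{H}\varphi + \Delta G\varphi = \mathcal{H}\varphi + \d(\d^* G\varphi) + \d^*(\d G\varphi)$ and use $\d G\varphi = G\d\varphi = 0$ to obtain $\varphi = \mathcal{H}\varphi + \d(\d^* G\varphi)$, so $\mathcal{H}\varphi$ represents $[\varphi]$. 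For injectivity, a harmonic exact form $\varphi = \d\eta$ satisfies $\|\varphi\|^2 = \langle\d\eta,\varphi\rangle = \langle\eta, \d^*\varphi\rangle = 0$ since $\d^*\varphi = 0$, whence $\varphi = 0$. Together these show that $\mathcal{H}$ descends to the desired isomorphism $H^r(M,\CC) \cong \mathcal{H}^r(X)$.
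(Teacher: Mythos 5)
Your proposal is correct and takes essentially the same approach as the paper: Finiteness yields the projection $\mathcal H$, Existence (with Regularity) yields the Green's operator $G$ satisfying $I = \mathcal H + \Delta G = \mathcal H + G\Delta$, the commutation $[G,\d]=0$ is extracted from the orthogonal splitting, and the isomorphism follows from $\varphi = \mathcal H\varphi + \d(\d^* G\varphi)$ for closed $\varphi$. Your way of proving $[G,\d]=0$ (applying $\Delta$ to the difference and using injectivity on $(\mathcal H^{r+1})^\perp$) and your choice to run the cohomology map in the direction $\mathcal H^r(X) \to H^r(M,\CC)$ are only cosmetic variants of the paper's argument.
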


\begin{proof}
The isomorphism follows from $I = \CH + \Delta G$ and commutation relations. Thus we will show those first.

Projection $\CH$ is well-defined by finite-dimensionality of the kernel. Now Existence theorem gives a bijection:
\begin{align*}
\Delta_m : W^m(E) \cap \CH_\Delta^\perp \rightarrow W^0(E) \cap \CH_\Delta^\perp
\end{align*}
Banach open mapping theorem tells us that the inverse is continuous too. We simply call that inverse $G$, the Green's operator. 

For now, composing $\Delta$ and $G$ gives identity. By extending $G$ to the rest of $W^0(E)$ by giving value $0$ to elements of $\CH_\Delta$, we get
\begin{align*}
G \circ \Delta = \Delta \circ G = I_{\CH^\perp} = I-\CH
\end{align*}

Now let's prove the identities.

$[\d,\Delta] = [\d^*, \Delta]=0$: for example, $\d \Delta = \d \d^* \d = \Delta \d$ by $\d^2 = 0$. 

$G \CH = \CH G = 0$: this is because $G$ is $0$ on $\CH_\Delta$, and $G$ maps to space orthogonal to $\CH_\Delta$.

$\d \CH = \d^* \CH = 0$: this follows from $\ker \Delta = (\ker \d ) \cap (\ker \d^*)$.

$\CH \d = \CH \d^* = 0$: Note that $(\CH \d \xi,\eta) = (\xi, \d^* \CH \eta) = 0$ by adjointness. This holds for all $\xi,\eta$, so $\CH \d = 0$ and similarly $\CH \d^* = 0$.

$[G,\d]=0$: $G,\d$ vanishes on $\CH_\Delta$ and thus we only need to prove the theorem for $\CH_\Delta^\perp$. Due to orthogonality of decomposition $I = \CH + \Delta G$, any element in $\CH_\Delta^\perp$ is of the form $\Delta \varphi$. Thus we demand that $G \d \Delta \varphi - \d G \Delta \varphi = 0$. Now, $\d \varphi = (H+G\Delta) \d \varphi = 0 + G \d \Delta \varphi$ and $\d \varphi = \d (H+G\Delta) \varphi = 0 + \d G \Delta \varphi $. Comparing these two expressions give the result.

Finally, isomorphism $H^r \cong \CH^r$ is simply given by $[\varphi] \mapsto \CH(\varphi)$. To show that $\CH(\varphi)=0 \implies \varphi$ is exact form, we use the decomposition $I = \CH + (\d \d^* + \d^* \d) \circ G$ and commutation relation $[\d,G]=0$: If $\CH \xi = 0$, then $\xi = \CH \xi + \d \d^* G \xi + \d^* \d G \xi = 0 + \d \d^* G \xi + \d^* G \d \xi  = 0 + \d (\d^* G \xi) + 0$ and thus $\xi$ is exact.
\end{proof}

This result allows us to ``solve'' Laplace equation on compact complex manifold:
\begin{corollary}
The equation
\begin{align*}
\eta = \Delta \varphi
\end{align*}
has a solution $\varphi$ iff $\mathcal H \eta =0$. The unique solution satisfying $\mathcal H \varphi = 0$ is then given by
\begin{align*}
\varphi = G\eta 
\end{align*}
\end{corollary}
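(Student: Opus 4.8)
The plan is to deduce the corollary purely formally from the operator identities furnished by the Hodge theorem, since all of the analytic content (finiteness, regularity, existence, and the construction of $G$) has already been absorbed there. The identities I will use are the decompositions $I = \mathcal H + \Delta G = \mathcal H + G\Delta$, together with the annihilation relations $\mathcal H \Delta = \Delta \mathcal H = 0$ and $\mathcal H G = G\mathcal H = 0$; here $\mathcal H\Delta = 0$ follows from $\mathcal H \d = \mathcal H \d^* = 0$ via $\Delta = \d\d^* + \d^*\d$.

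For the necessity of the condition, I would apply $\mathcal H$ to a hypothetical solution: if $\eta = \Delta\varphi$, then $\mathcal H\eta = \mathcal H\Delta\varphi = 0$ because $\mathcal H\Delta = 0$. Conversely, assuming $\mathcal H\eta = 0$, I would verify that $\varphi = G\eta$ solves the equation by computing $\Delta(G\eta) = (\Delta G)\eta = (I - \mathcal H)\eta = \eta - \mathcal H\eta = \eta$, and that it meets the stated normalization since $\mathcal H(G\eta) = (\mathcal H G)\eta = 0$.

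For uniqueness I would apply $G$ to an arbitrary normalized solution. If $\Delta\varphi = \eta$ and $\mathcal H\varphi = 0$, then using $G\Delta = I - \mathcal H$ gives $G\eta = G\Delta\varphi = (I - \mathcal H)\varphi = \varphi - \mathcal H\varphi = \varphi$, so $\varphi = G\eta$ is forced; this simultaneously identifies the solution and proves it is the only one with $\mathcal H\varphi = 0$. There is no genuine obstacle here beyond invoking the correct identity at each step; the only subtlety worth flagging is that the normalization $\mathcal H\varphi = 0$ is exactly what pins down a single representative, since $\Delta$ annihilates all of $\mathcal H^r$ and hence any two solutions differ by a harmonic form.
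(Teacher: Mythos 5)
Your proposal is correct and follows essentially the same route as the paper: both arguments are purely formal manipulations of the Hodge-theorem identities $I = \mathcal H + \Delta G = \mathcal H + G\Delta$ together with the annihilation relations $\mathcal H \Delta = \mathcal H G = 0$. If anything, your version is slightly more careful than the paper's, since you explicitly verify the existence direction ($\mathcal H\eta = 0 \implies \Delta(G\eta) = \eta$), which the paper's proof leaves implicit.
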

\begin{proof}
If $\eta = \Delta \varphi$, then $\mathcal H \eta = \mathcal H \Delta \varphi = (\Delta - \Delta G \Delta)\varphi = (\Delta - \Delta \Delta G) = \Delta (\mathcal H \varphi) = 0$ since $\mathcal H \varphi \in \mathcal H^r$. Thus $\mathcal H \eta = 0$. Now $G\eta = G \Delta \varphi = (I - \mathcal H)\varphi = \varphi$ as desired, and $\mathcal H \varphi = \mathcal H G \eta = (I - \Delta G)G \eta $.
\end{proof}

\subsection{The Hodge decomposition theorem}

The Hodge decomposition refers to the following isomorphism that is true for a well behaved a class of manifolds: the compact K\"ahler manifolds:
\begin{align*}
& H^r(X,\CC) \rightarrow \bigoplus_{p+q=r}H^{p,q}(X) \\
& [\varphi] \mapsto ([\varphi^{r,0}], [\varphi^{r-1,1}], \cdots, [\varphi^{0,r}])
\end{align*}
i.e. we simply take the $(p,q)$-components. 

Hodge decomposition's proof is basically summarized into $H\cong \mathcal H$ and $\Delta = 2\square$. The former identity, proven in the previous section, relies on functional analysis. The latter identity uses K\"ahler condition and some representation theory of $\mathfrak{sl}(2,\mathbb C)$. Representation theory makes an appearance because commutator (Lie bracket) relations between the operators $L^*,L, B$ (where $B=[L^*,L]$) are exactly the same as the three generators of $\mathfrak{sl}(2,\mathbb C)$

In the proof, adjoints and inner products are heavily used, and they are defined using integration on the manifold. Thus the compactness assumption plays an important role.

\subsubsection{Inner Product on Differential Forms}

We first introduce the notion of Hodge dual (or Hodge star), an operator that takes a differential form to its `complement'. For now, we work in a purely algebraic setting. Let $V$ be a real vector space of dimension $d$ with an inner product. Suppose $\{e_1, \cdots e_d\}$ is a basis for $V$. In the exterior algebra $\wedge V$, endow each $\wedge^p V$ with inner product by declaring $\{e_{i_1} \wedge \cdots \wedge e_{i_p}\}$ to be an orthonormal basis. An orientation on $V$ is a choice of volume form $\on{vol} = e_1 \wedge \cdots \wedge e_d \in \wedge^d V$ for a basis $\{ e_1, \cdots e_d\}$. Now we define the \emph{Hodge dual} operator to be the operator:
\begin{align*}
    *: &\wedge^p V \rightarrow \wedge^{n-p}V \\
    & e_{i_1} \wedge \cdots \wedge e_{i_p} \mapsto \on{sign}(\sigma) e_{j_1} \wedge \cdots \wedge e_{j_{d-p}} \\
    & \text{where } \sigma = (i_1, \cdots i_p, j_1, \cdots j_{d-p}) \in S_d
\end{align*}
Note that we have
$$\alpha \wedge *\beta = \langle \alpha, \beta \rangle \on{vol}$$
for $p$-forms $\alpha, \beta$.

With a fixed inner product, definition of the Hodge star can be shown to be independent of the choice of an orthonormal basis (but flips sign with rearrangement of the orthonormal basis).

Let $F$ be a Hermitian vector space of dimension $n$ and let $F_\RR$ be its underlying real vector space of dimension $2n$. Let $E = (F_\RR)^* \otimes \CC$ be the complex-valued functionals (Later, tangent space takes the role of $F$). Then each functional in $E$ can be decomposed into complex-linear and conjugate-linear parts: $F \cong F^{(1,0)} \oplus F^{(0,1)}$. Conjugation is defined in $F$ by $\bar\omega : (v_1, \cdots v_n) \mapsto \overline{\omega(v_1, \cdots v_n)}$. Given a basis $\{ z_1, \cdots z_n \}$ of $F^{(1,0)}$, we can get a basis for $F^{(0,1}$ by conjugation: $\{\bar z_1, \cdots \bar z_n \}$. Declaring $\wedge^{p,q}F$ to be the subspace of $\wedge F$ generated by $p$ elements of $\wedge^{(1,0)}F$ and $q$ elements of $\wedge^{(0,1)}$, we obtain a bigrading decomposition:
\begin{align*}
    \wedge F = \sum_{r=0}^{2n} \sum_{p+q=r} \wedge^{p,q}F
\end{align*}

Let $\Pi_r: \wedge F \rightarrow \wedge^r F, \Pi_{p,q}: \wedge F \rightarrow \wedge^{p,q} F$ be the projections to $r$-component and $(p,q)$-components. We also define the following sign modification operators:
\begin{align*}
    & w, J: \wedge F \rightarrow \wedge F \\
    & w = \sum_{r=1}^{2n} (-1)^r \Pi_r \\
    & J = \sum_{p,q=1}^{2n} i^{p-q} \Pi_{p,q}
\end{align*}

Using Hermitian metric on $E$, we get the fundamental form $\Omega$:
\begin{align*}
    h = \sum_{\mu,\nu} h_{\mu \nu} z_\mu \otimes \bar z_\nu \implies \Omega = \frac i2 \sum_{\mu,\nu} h_{\mu \nu} z_\mu \wedge \bar z_\nu
\end{align*}
which gives rise to the operator 
\begin{align*}
    & L : \wedge^{p,q} E \rightarrow \wedge^{p+1,q+1} E \\
    & L=\Omega \wedge
\end{align*}
With respect to the inner product of $F$ extended to $\wedge E$, we can define adjoint of $L$.

\begin{proposition}
    The following are true:
    \begin{align*}
        & L^* = w * L * \\
        & * \Pi_{p,q} = \Pi_{n-q,n-p} * \\
        & [L,w]=[L,J]=[L^*,w]=[L^*,J]=0
    \end{align*}
\end{proposition}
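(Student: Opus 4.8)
The three groups of identities have rather different characters, so I would treat them in turn, relying throughout on the fact that $L=\Omega\wedge$ shifts bidegree by $(1,1)$ (since $\Omega$ is of type $(1,1)$) and on the defining relation $\alpha\wedge*\beta=\langle\alpha,\beta\rangle\on{vol}$. First I would record the elementary behaviour of $*$ on the $2n$-dimensional space $V$: computing directly from the definition on an orthonormal basis of $\wedge^pV$ gives $**=(-1)^{p(2n-p)}=(-1)^p$ on $\wedge^pV$, so that $**=w$. Since $*$ carries $\wedge^p$ to $\wedge^{2n-p}$, on which $w$ acts by the same scalar $(-1)^p=(-1)^{2n-p}$, the operators $*$ and $w$ commute; combined with $w^2=\on{Id}$ this yields $*^{-1}=w*=*w$.

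To obtain $L^*=w*L*$ I would use the characterization of $L^*$ as the adjoint of $L$, together with the degree count that $L^*$ lowers total degree by $2$. Because $\Omega$ has even degree it commutes with every form, so $\langle\Omega\wedge\alpha,\beta\rangle\on{vol}=(\Omega\wedge\alpha)\wedge*\beta=\alpha\wedge(\Omega\wedge*\beta)$, whereas $\langle\alpha,L^*\beta\rangle\on{vol}=\alpha\wedge*(L^*\beta)$. Matching these for all $\alpha$ of the appropriate degree and invoking nondegeneracy of the wedge pairing into $\wedge^{2n}V$ forces $*(L^*\beta)=\Omega\wedge*\beta=L(*\beta)$, hence $L^*=*^{-1}L*=w*L*$ by the first paragraph. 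The delicate point here — and the step I expect to be the main obstacle — is reconciling the Hermitian adjoint $L^*$ with the bilinear relation $\alpha\wedge*\beta=\langle\alpha,\beta\rangle\on{vol}$; this is harmless because $\Omega$ (hence $L$), $*$ and $w$ are all \emph{real} operators (one checks $\bar\Omega=\Omega$ directly from $h_{\mu\nu}=\overline{h_{\nu\mu}}$), so the bilinear transpose and the Hermitian adjoint coincide, and careful sign bookkeeping is exactly what makes the factor $w$ appear.

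For $*\Pi_{p,q}=\Pi_{n-q,n-p}*$ I would fix a unitary coframe $z_\mu=x_\mu+iy_\mu$ diagonalizing $h$, so that $\{x_\mu,y_\mu\}$ is, up to a uniform scaling, a real orthonormal basis, and then compute $*$ on the monomials $z_I\wedge\bar z_J$ with $|I|=p$ and $|J|=q$. The computation shows $*(z_I\wedge\bar z_J)$ is a nonzero multiple of $z_{J^c}\wedge\bar z_{I^c}$, whose bidegree is $(n-q,n-p)$; since such monomials span $\wedge^{p,q}E$, this is precisely the asserted intertwining. The only genuine work is tracking the permutation sign, which I would pin down first in the case $n=1$ (where $*\,\d z=-i\,\d z$ and $*\,\d\bar z=i\,\d\bar z$, both of the correct type) and then extend multiplicatively.

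Finally, the four commutators are immediate once the bidegree shifts are in hand. Since $L:\wedge^{p,q}\to\wedge^{p+1,q+1}$ preserves both the total-degree parity $(-1)^{p+q}$ and the value $i^{p-q}$, it commutes with the bidegree-diagonal operators $w$ and $J$, giving $[L,w]=[L,J]=0$. By orthogonality of the bidegree decomposition the adjoint $L^*$ sends $\wedge^{p,q}$ to $\wedge^{p-1,q-1}$, which again preserves $(-1)^{p+q}$ and $i^{p-q}$, so $[L^*,w]=[L^*,J]=0$; alternatively one simply takes adjoints of the first two relations, using that $w$ is self-adjoint and $J$ is unitary with $J^*=J^{-1}$.
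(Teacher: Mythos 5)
Your proof is correct; note that the paper states this proposition \emph{without} proof, so your argument fills a genuine gap rather than paralleling an existing one. The delicate point you flagged is indeed the right one: the paper's relation $\alpha\wedge *\beta=\langle\alpha,\beta\rangle\on{vol}$ is stated for the real inner product, while $L^*$ is the adjoint for the Hermitian extension, and your observation that $\Omega$, $*$, $w$ are all real operators (via $\overline{h_{\mu\nu}}=h_{\nu\mu}$) is exactly what makes the bilinear transpose computation $*(L^*\beta)=L(*\beta)$, hence $L^*=*^{-1}L*=w\,{*}\,L\,{*}$, valid for the Hermitian adjoint. For the second identity, your type computation is in substance the type-level content of the lemma the paper proves immediately afterwards, $*(z_A\wedge\bar z_B\wedge w_M)=\gamma\cdot(z_A\wedge\bar z_B\wedge w_{N-(A\cup B\cup M)})$: writing $A=I\setminus K$, $B=K\setminus I$, $C=(I\cup K)^c$ and noting $w_C$ is a multiple of $z_C\wedge\bar z_C$ with $A\cup C=K^c$, $B\cup C=I^c$, that lemma says precisely that $*(z_I\wedge\bar z_K)$ is proportional to $z_{K^c}\wedge\bar z_{I^c}$, of type $(n-q,n-p)$. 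So you could simply cite it and skip all sign bookkeeping (only the type matters here); if you instead keep your ``extend multiplicatively'' route, you should explicitly invoke the product formula for $*$ on the orthogonal direct sum $\bigoplus_\mu\on{span}(x_\mu,y_\mu)$, since the Hodge star is not multiplicative in general. The commutator identities are fine both ways you present them: $L$, and by orthogonality of the bidegree decomposition also $L^*$, shift bidegree by $(\pm1,\pm1)$, so they commute with $w$ and $J$; the adjoint route also works since $w^*=w$ and $J^*=J^{-1}$.
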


We now claim that $[L^*, L] = \sum_{p=0}^{2n} (n-p)\Pi_p$. This interesting identity requires some work to establish, although the work is mostly about rearranging things.

Since $\{x_1, y_1, \cdots, x_n, y_n\}$ is our orthonormal basis, it is not necessarily straightforward to compute the Hodge dual of a $r$-form written in terms of another (not orthogonal) basis $\{z_\mu, \bar z_\nu\}$. However, with right notation, the computation is easy enough. 

Note first that each $z_I\wedge \bar z_K$ can also be written as $z_A \wedge \bar z_B \wedge w_M$ where $A,B,M$ are mutually disjoint increasing multi-indices, and $w_M = (z_{\mu_1}\wedge \bar z_{\mu_1}) \wedge \cdots \wedge (z_{\mu_m}\wedge \bar z_{\mu_m})$. Here, we can obtain $A,B,M$ by $M=I\cap K$, $A=I-I\cap K$, $B=K-I\cap K$. The two notations translate from one to another by the following:
\begin{lemma}
\begin{align*}
z_I\wedge \bar z_K = (-1)^{m(p+q)+\lfloor \frac m 2\rfloor + \tau_{I,K}} (z_{A}\wedge z_{B} \wedge w_{M})
\end{align*}
with $A=I-I\cap K, B=K-I\cap K, M=I\cap K$, and $\tau_{I,K}=\sum_{\mu \in I\cap K} \tau_{\mu,I}+\tau_{\mu,K}$ and $\tau_{\mu,I}$=smallness ranking of $\mu$ in $I$.
\end{lemma}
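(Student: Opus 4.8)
The plan is to treat the identity as a pure permutation-sign computation: both $z_I\wedge\bar z_K$ and $z_A\wedge\bar z_B\wedge w_M$ are wedge products of the same $p+q$ one-forms (the $z_i$ for $i\in I$ together with the $\bar z_k$ for $k\in K$), so they can differ only by the sign of the permutation carrying one ordering into the other. I would compute that sign in stages, choosing convenient intermediate orderings, and reduce everything modulo $2$ at the end.

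First I would separate the repeated indices inside each block. Sorting the holomorphic factors $z_I$ into $z_A\wedge z_M$ (pushing the indices of $M=I\cap K$ to the right while keeping both $A$ and $M$ increasing) is a shuffle whose sign is $(-1)^{s_1}$ with $s_1=\#\{(a,\mu)\in A\times M : a>\mu\}$; likewise $\bar z_K=(-1)^{s_2}\,\bar z_B\wedge\bar z_M$ with $s_2=\#\{(b,\mu)\in B\times M : b>\mu\}$. Expressing $s_1$ through the rankings $\tau_{\mu,I}$ (and $s_2$ through $\tau_{\mu,K}$) is the step that produces the $\tau_{I,K}$ appearing in the exponent: for each $\mu\in M$ one counts how many elements of $A$ (resp.\ $B$) exceed it, which rewrites $s_1+s_2$ as $\sum_{\mu\in M}(\tau_{\mu,I}+\tau_{\mu,K})$ plus correction terms that I expect to be absorbed into the $m$-dependent part of the exponent.

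Next I would put the blocks into the target order: commuting the length-$m$ block $z_M$ past the length-$(q-m)$ block $\bar z_B$ contributes $(-1)^{m(q-m)}$, leaving $z_A\wedge\bar z_B\wedge(z_M\wedge\bar z_M)$. The final stage is to interleave $z_{\mu_1}\cdots z_{\mu_m}\,\bar z_{\mu_1}\cdots\bar z_{\mu_m}$ into $w_M=(z_{\mu_1}\wedge\bar z_{\mu_1})\wedge\cdots\wedge(z_{\mu_m}\wedge\bar z_{\mu_m})$; this is a standard riffle requiring $\binom m2$ transpositions, and here I would invoke the parity identity $\binom m2\equiv\lfloor m/2\rfloor\pmod 2$ to match the stated form of the exponent. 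Collecting the four contributions $s_1,s_2,m(q-m),\binom m2$ yields the total sign.

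The main obstacle is entirely the bookkeeping: forcing all of $s_1+s_2$, the block-commutation factor $m(q-m)$, and $\binom m2$ to collapse modulo $2$ into exactly $m(p+q)+\lfloor m/2\rfloor+\tau_{I,K}$. In particular I expect the delicate point to be tracking the coefficient of $m$ coming from the block commutations against the $\binom m2$ correction that is produced when the shuffle counts $s_1,s_2$ are rewritten in terms of the rankings $\tau_{\mu,I},\tau_{\mu,K}$; the $\pmod 2$ reductions (using $m^2\equiv m$ and the parity of $\binom m2$) are precisely what should make these reshuffling artifacts cancel, so that only the clean exponent survives.
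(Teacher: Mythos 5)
Your overall strategy is the right one --- and, for what it is worth, the paper states this lemma with no proof at all, so there is nothing to compare against except the computation itself. Your four sign contributions are correctly identified: the two shuffle counts $s_1,s_2$, the block commutation $(-1)^{m(q-m)}$, the riffle $(-1)^{\tbinom{m}{2}}$, and the parity identity $\tbinom{m}{2}\equiv\lfloor m/2\rfloor\pmod 2$. The gap is the final, unverified assertion that these ``collapse modulo $2$ into exactly $m(p+q)+\lfloor m/2\rfloor+\tau_{I,K}$''. They do not. Writing $a=|A|=p-m$, $b=|B|=q-m$, and using that the $j$-th smallest element $\mu_j$ of $M$ has exactly $\tau_{\mu_j,I}-j$ elements of $A$ below it, one gets
\begin{align*}
s_1=ma+\tfrac{m(m+1)}2-\sum_{\mu\in M}\tau_{\mu,I},\qquad
s_2=mb+\tfrac{m(m+1)}2-\sum_{\mu\in M}\tau_{\mu,K},
\end{align*}
so $s_1+s_2\equiv m(p+q)+\tau_{I,K}\pmod 2$. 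Adding your remaining two contributions gives the total exponent
\begin{align*}
s_1+s_2+m(q-m)+\tbinom{m}{2}\;\equiv\;m(p+q)+\lfloor m/2\rfloor+\tau_{I,K}+m(q+1)\;\equiv\;m(p+1)+\lfloor m/2\rfloor+\tau_{I,K}\pmod 2,
\end{align*}
which differs from the stated exponent by $m(q+1)$, i.e.\ precisely when $m$ is odd and $q$ is even. The block-commutation factor you flagged as ``delicate'' is exactly the term that survives and is \emph{not} absorbed.

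This is not something more careful bookkeeping can fix: under the conventions you and the paper both use (increasing multi-indices, $w_M=\bigwedge_j(z_{\mu_j}\wedge\bar z_{\mu_j})$, rankings counted from the smallest element), the printed identity is itself false in that parity case. Take $I=\{1\}$, $K=\{1,2\}$, so $p=1$, $q=2$, $m=1$, $A=\emptyset$, $B=\{2\}$: then $z_1\wedge\bar z_1\wedge\bar z_2=+\,\bar z_2\wedge z_1\wedge\bar z_1$ (a cyclic permutation of three letters is even), while the stated exponent is $3+0+2=5$, predicting $-1$. The correct output of your own computation is the exponent $m(p+1)+\lfloor m/2\rfloor+\tau_{I,K}$, equivalently $mp+\lceil m/2\rceil+\tau_{I,K}$; the printed formula becomes correct only under the unstated convention that $\tau_{\mu,K}$ is the ranking of $\mu$ in $K$ counted from the largest element. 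So you should carry the computation out honestly and either correct the exponent or flag that convention, rather than steering the algebra toward the printed answer. (Nothing downstream in the paper is affected: the subsequent Hodge-star lemmas work directly with forms already in the normal form $z_A\wedge\bar z_B\wedge w_M$, so this sign never enters.)
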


We now have the following identity:

\begin{lemma}
\begin{align*}
*(z_A\wedge \bar z_B \wedge z_M) = \gamma \cdot (z_A \wedge \bar z_B \wedge z_{N-(A\cup B \cup M)})
\end{align*}
where $\gamma=i^{a-b}(-1)^{p(p+1)/2+m}(-2i)^{p-n}$ with $p=a+b+2m$ the total degree of our form.
\end{lemma}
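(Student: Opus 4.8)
The plan is to compute the star by passing to the real orthonormal basis $\{x_\mu,y_\mu\}$, where $*$ is given directly by the sign-of-permutation rule, and then translating back into the $z_\mu,\bar z_\mu$ notation. The structural fact that makes this tractable is that $F_\RR$ splits as an orthogonal direct sum $\bigoplus_{\mu=1}^n \langle x_\mu,y_\mu\rangle$ of two-dimensional blocks with $\on{vol}=\bigwedge_\mu(x_\mu\wedge y_\mu)$, so $*$ is multiplicative over the blocks up to a sign: if $\alpha_\mu\in\wedge\langle x_\mu,y_\mu\rangle$ has degree $p_\mu$, then $*\big(\bigwedge_\mu\alpha_\mu\big)=(-1)^{\sum_{\mu<\nu}(2-p_\mu)p_\nu}\bigwedge_\mu(*_\mu\alpha_\mu)$, where $*_\mu$ is the Hodge star of the single block.

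First I would record the four per-block identities, using $z_\mu\wedge\bar z_\mu=-2i\,x_\mu\wedge y_\mu$ and $x_\mu\wedge y_\mu=\tfrac i2\,z_\mu\wedge\bar z_\mu$: namely $*_\mu z_\mu=-i z_\mu$, $*_\mu\bar z_\mu= i\bar z_\mu$, $*_\mu(z_\mu\wedge\bar z_\mu)=-2i$, and $*_\mu 1=\tfrac i2 z_\mu\wedge\bar z_\mu$. In the normal form $z_A\wedge\bar z_B\wedge w_M$ every index $\mu$ falls into exactly one of four types — an $A$-block $z_\mu$ (degree $1$), a $B$-block $\bar z_\mu$ (degree $1$), an $M$-block $z_\mu\wedge\bar z_\mu$ (degree $2$), or an empty complement block $1$ (degree $0$). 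Applying $*_\mu$ keeps the $A$- and $B$-blocks as $z_\mu,\bar z_\mu$, collapses each $M$-block to a scalar, and promotes each empty block to $z_\mu\wedge\bar z_\mu$; this already explains why the output is supported on $z_A\wedge\bar z_B\wedge w_{N-(A\cup B\cup M)}$, with $M$ and its complement exchanged. Collecting the scalars from the four types gives $(-i)^a\,i^b\,(-2i)^m\,(\tfrac i2)^{\,n-a-b-m}$, which (since the four block-counts sum to $n$ and $p=a+b+2m$) simplifies to $(-1)^{a+m}2^{\,p-n}i^{\,n}$.

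The remaining factor, and the main obstacle, is the sign. To apply the block-product formula I must permute the grouped normal form into index order, apply the orthogonal-sum sign, and permute the index-ordered result back into the grouped form $z_A\wedge\bar z_B\wedge w_{N-(A\cup B\cup M)}$. Each permutation sign is a priori position-dependent (it sees the interleaving of $A,B,M$), whereas $\gamma$ is not, so the crux is to show the dependence cancels. The key observation I would use is that both reordering signs reduce $\bmod\,2$ to the number of $A$–$B$ index inversions, since every contribution involving an even-degree $M$- or complement-block vanishes $\bmod\,2$; hence these two reorderings cancel exactly. Likewise the orthogonal-sum exponent $\sum_{\mu<\nu}(2-p_\mu)p_\nu$ reduces $\bmod\,2$ to $\sum_{\mu<\nu}p_\mu p_\nu\equiv\binom{a+b}{2}$, because only the degree-$1$ blocks survive, so the surviving sign is the position-independent $(-1)^{\binom{a+b}{2}}$. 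Multiplying this by the scalar factor $(-1)^{a+m}2^{p-n}i^n$ and verifying the congruence $\binom{a+b}{2}+m\equiv \tfrac{p(p+1)}2+p\pmod 2$ (a short computation with $p=a+b+2m$) recovers exactly $\gamma=i^{a-b}(-1)^{p(p+1)/2+m}(-2i)^{p-n}$. I would pin down the overall normalization against the one-variable cases $*z=-iz$ and $*(z\wedge\bar z)=-2i$ to be safe.
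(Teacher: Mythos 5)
Your proof is correct, but it takes a genuinely different route from the paper's. The paper works by brute force in the real orthonormal basis: it rewrites $w_M$ as $(-2i)^m\,x_{\mu_1}\wedge y_{\mu_1}\wedge\cdots\wedge x_{\mu_m}\wedge y_{\mu_m}$, expands $z_A\wedge\bar z_B$ into the $2^{a+b}$ real monomials indexed by the choice functions $(\xi,\zeta)$, notes that $*$ pairs the $(\xi,\zeta)$-term on the left with the $(1-\xi,1-\zeta)$-term on the right, and thereby reduces the lemma to a single combinatorial identity for the sign of one long permutation, which it verifies by explicitly counting moves. You never expand at all: you factor $*$ through the orthogonal block decomposition $\bigoplus_\mu\langle x_\mu,y_\mu\rangle$ via the multiplicativity of the Hodge star over orthogonal direct sums (with sign $(-1)^{\sum_{\mu<\nu}(2-p_\mu)p_\nu}$), so the computation localizes to the four one-block cases, all of which you state correctly. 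Your sign bookkeeping is also sound: degree-$2$ blocks are invisible to Koszul reordering signs, so the in- and out-reordering signs both equal the parity of the number of $A$--$B$ inversions and cancel exactly, and the orthogonal-sum exponent collapses mod $2$ to $\binom{a+b}{2}$. I checked your scalar collection $(-i)^a i^b(-2i)^m(i/2)^{n-a-b-m}=(-1)^{a+m}2^{p-n}i^n$ and your closing congruence $\binom{a+b}{2}+m\equiv \frac{p(p+1)}{2}+p \pmod 2$ (with $s=a+b$ it reduces to $\binom{s}{2}\equiv\binom{s+1}{2}+s$, an identity), and their product reproduces $\gamma$ exactly, including the powers of $i$, not just the sign. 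As for what each approach buys: the paper's argument is self-contained, needing nothing beyond the permutation-sign definition of $*$, at the price of a heavy term-by-term computation; yours is shorter, stays in complex notation throughout, explains structurally why the image is supported on $z_A\wedge\bar z_B\wedge w_{N-(A\cup B\cup M)}$ with $M$ and its complement exchanged, and concentrates all the combinatorics into one clean congruence --- but it does rely on the block-multiplicativity lemma, which the paper never states and which you should prove (a quick induction from the two-summand case $*(\alpha\wedge\beta)=(-1)^{(d_1-a)b}(*_1\alpha)\wedge(*_2\beta)$ for $\alpha\in\wedge^a V_1$, $\beta\in\wedge^b V_2$, with compatible volume forms).
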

\begin{proof}
Let's first note that
\begin{align*}
w_M &= z_{\mu_1} \wedge \bar z_{\mu_1} \wedge \cdots \wedge z_{\mu_m} \wedge \bar z_{\mu_m}
\\ &=(x_{\mu_1} + iy_{\mu_1})\wedge (x_{\mu_1} - iy_{\mu_1})\wedge \cdots \wedge (x_{\mu_m} + iy_{\mu_m})\wedge (x_{\mu_m} - iy_{\mu_m})
\\ &= (-2i)^m (x_{\mu_1} \wedge y_{\mu_1} \wedge \cdots \wedge x_{\mu_m} \wedge y_{\mu_m}
\end{align*}
with this, the power of 2 disappears completely and our claim becomes equivalent to:
\begin{align}
\label{hodge1}
& *(z_A \wedge \bar z_B \wedge (x_{\mu_1} \wedge y_{\mu_1} \wedge \cdots \wedge x_{\mu_m} \wedge y_{\mu_m}) )
\\ = & (-2i)^{(n-a-b-m)-m}i^{a-b}(-1)^{p(p+1)/2+m}(-2i)^{p-n} (z_A \wedge \bar z_B \wedge (x_{\nu_1} \wedge y_{\nu_1} \wedge \cdots \wedge x_{\nu_m'} \wedge y_{\nu_m'}) ) \nonumber
\\ = & i^{a-b}(-1)^{\frac12 p(p+1)+m} (z_A \wedge \bar z_B \wedge (x_{\nu_1} \wedge y_{\nu_1} \wedge \cdots \wedge x_{\nu_m'} \wedge y_{\nu_m'}) ) \nonumber
\end{align}
where $\nu$ is complementary index to $\mu$ with $m'=n-a-b$.
Given that $z_A\wedge \bar z_B$ expands into a long list of $(a+b+m)$-forms again, it will be convenient to treat each summand separately. To write this down with generality, let's introduce a index notation to control $x$ and $y$: $x_{i,0}=x_i, x_{i,1}=y_i$. Also, let $\xi: \{1, \cdots , a\} \rightarrow \{0,1\}, \zeta: \{1, \cdots, b\} \rightarrow \{0,1\}$. Then
\begin{align*}
z_A\wedge \bar z_B &= \sum_{\xi,\zeta} (i^{\#\xi^{-1}(1)} x_{1,\xi(1)} \wedge \cdots \wedge x_{a,\xi(a)}) \wedge ((-i)^{\#\zeta^{-1}(1)} x_{1,\zeta(1)} \wedge \cdots \wedge x_{b,\zeta(b)})
\\&=\sum_{\xi,\zeta} i^{\#\xi^{-1}(1) -\#\zeta^{-1}(1)} (x_{1,\xi(1)} \wedge \cdots \wedge x_{a,\xi(a)}) \wedge (x_{1,\zeta(1)} \wedge \cdots \wedge x_{b,\zeta(b)})
\end{align*}
Thus, in equation~(\ref{hodge1}), the dual for a term corresponding to $(\xi,\zeta)$ from expansion of left hand side would be that corresponding to $(1-\xi,1-\zeta)$ on the right hand side. The terms therefore correspond bijectively with each choice of $(\xi,\zeta)$, and all we need to prove is that their signs match up. Since applying Hodge dual tags a signature, the only thing left to prove is the following purely combinatorial identity:
\begin{align}
sign(\Xi) &= i^{-\# \xi^{-1}(1) + \# \zeta^{-1}(1)}i^{\# \xi^{-1}(0) - \# \zeta^{-1}(0)}i^{a-b}(-1)^{\frac12 p(p+1)+m} \nonumber
\\ &= i^{-(a-b)+2(\#\xi^{-1}(0) - \#\zeta^{-1}(0))+(a-b)+p(p+1)+2m} \nonumber
\\ &= (-1)^{\#\xi^{-1}(0) + \#\zeta^{-1}(0) + \frac12 p(p+1)+m} \nonumber
\\ &= (-1)^{\#\xi^{-1}(0) + \#\zeta^{-1}(0) + \frac12 (a+b+2m)(a+b+2m+1)+m} \nonumber
\\ &= (-1)^{\#\xi^{-1}(0) + \#\zeta^{-1}(0) + \frac12 (a+b)(a+b+1)+m(2a+2b+1)+2m^2+m} \nonumber
\\ &= (-1)^{\#\xi^{-1}(0) + \#\zeta^{-1}(0) + \frac12 (a+b)(a+b+1)} \label{hodge2}
\end{align}
where $\Xi$ is the long multi-index 
\begin{align*}
\Xi =(x_{1,\xi(1)} \cdots x_{a,\xi(a)}, x_{1,\zeta(1)}, \cdots, x_{b,\zeta(b)}, x_{\mu_1}, y_{\mu_1}, \cdots, x_{\mu_m}, y_{\mu_m}, C)
\end{align*}
with $C$ being the complement to the preceding multi-index.

The signage is the signage of permutation that takes $(x_1, y_1, \cdots, x_n, y_n)$ to $\Xi$. Since $sign(x_{\sigma_1}, y_{\sigma_1}, \cdots, x_{\sigma_n}, y_{\sigma_n})=1$ for $\sigma \in S_n$, $sign(\Xi)$ is equal to the signage of permutation that takes
\begin{align*}
(x_{\alpha_1}, \cdots y_{\alpha_a}, x_{\beta_1}, \cdots y_{\beta_b}, x_{\mu_1} \cdots y_{\mu_m}, x_{\nu_1} \cdots y_{\nu_{m'}}x_{\nu_1} \cdots y_{\nu_{m'}})
\end{align*}
to $\Xi$. Now it remains to move some indices to the left. 
\begin{align*}
&\text{Moving }x_{1,\xi(1)}, \cdots, x_{a,\xi(a)} = (0+\xi(1))+(1+\xi(2))+\cdots + ((a-1)+\xi(a)) \text{ moves}
\\ &\text{Moving }x_{1,\zeta(1)}, \cdots, x_{b,\zeta(b)} = (a+0+\zeta(1))+\cdots + (a+ (b-1)+\zeta(b)) \text{ moves}
\\ &\text{Moving }x_{\mu_1}, \cdots, y_{\mu_m} = 2m(a+b) \text{ moves}
\end{align*}
Thus,
\begin{align*}
\text{Total} &= 2m(a+b)+ab+\sum_t \xi(t)+\zeta(t) + \frac12 a(a-1) + \frac 12 b(b-1)
\\ & \equiv 0 + \frac12 a(a-1) + \frac 12 b(b-1) + \# \xi^{-1} (1) + \# \zeta^{-1} (1)
\\ & \equiv ab + \frac12 a(a+1) + \frac 12 b(b+1) + \# \xi^{-1} (0) + \# \zeta^{-1} (0)
\\ & \equiv \frac12 (a+b)(a+b+1) + \# \xi^{-1} (0) + \# \zeta^{-1} (0)
\end{align*}
as desired earlier in equation~(\ref{hodge2})
\end{proof}
In this statement, $\gamma$ has all the $i$'s and $2$'s because $w_M = \bigwedge (x_{\mu_j} + i y_{\mu_j})\wedge (x_{\mu_j} - i y_{\mu_j}) = (-2i)^m \bigwedge x_{\mu_j} \wedge y_{\mu_j}$ and the $i$'s coming from the expansion of $z_A, \bar z_B$. With these $2$ and $i$ taken care of, the identity becomes a purely combinatorial one.

Based on this Hodge dual calculation, we can carry out explicit calculation for $L^*=w*L*$:
\begin{proposition}
\begin{align*}
L(z_A\wedge\bar z_B\wedge w_M) &= \frac i2 \sum_{\mu \notin M} z_A \wedge \bar z_B \wedge w_{M\cup\{\mu\}}
\\L^*(z_A\wedge\bar z_B\wedge w_M) & = \frac 2i \sum_{\mu \in M} z_A \wedge \bar z_B \wedge w_{M-\{\mu\}}
\end{align*}
\end{proposition}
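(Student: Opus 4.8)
The plan is to prove the two formulas separately, computing $L$ by hand and reducing $L^*$ to the relation $L^* = w*L*$ established earlier. For $L$, recall that in the orthonormal basis the fundamental form is $\Omega = \tfrac{i}{2}\sum_\mu z_\mu\wedge\bar z_\mu$, so $L(\omega)=\tfrac{i}{2}\sum_\mu (z_\mu\wedge\bar z_\mu)\wedge\omega$. The key observation is that each $z_\mu\wedge\bar z_\mu$ is a $2$-form and hence central in the exterior algebra: it commutes without sign past every homogeneous factor, and the factors of $w_M$ may be permuted among themselves freely. Applying this to $\omega=z_A\wedge\bar z_B\wedge w_M$, the term for index $\mu$ vanishes exactly when $z_\mu$ or $\bar z_\mu$ already occurs, i.e.\ when $\mu\in A\cup B\cup M$, and otherwise equals $z_A\wedge\bar z_B\wedge w_{M\cup\{\mu\}}$ with no extra sign. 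Since $z_A\wedge\bar z_B\wedge w_{M\cup\{\mu\}}$ is itself zero for $\mu\in A\cup B$ (a repeated generator), the surviving sum may be written over all $\mu\notin M$, giving the first identity.

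For $L^*$ I would evaluate $w*L*$ on $z_A\wedge\bar z_B\wedge w_M$ in four stages, writing $N=\{1,\dots,n\}$ and $M'=N\sm(A\cup B\cup M)$ so that $N=A\sqcup B\sqcup M\sqcup M'$. First, the Hodge-dual lemma gives $*(z_A\wedge\bar z_B\wedge w_M)=\gamma\,(z_A\wedge\bar z_B\wedge w_{M'})$ with $\gamma=i^{a-b}(-1)^{p(p+1)/2+m}(-2i)^{p-n}$ and $p=a+b+2m$. Second, applying the $L$-formula just proved, only indices $\mu\notin M'$ occur, and among these the nonzero terms are exactly those with $\mu\in M$; this yields $\gamma\tfrac{i}{2}\sum_{\mu\in M}z_A\wedge\bar z_B\wedge w_{M'\cup\{\mu\}}$. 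Third, applying $*$ again via the lemma—now with $m'+1$ diagonal factors, total degree $p''=a+b+2(m'+1)$, and complementary index $M\sm\{\mu\}$—multiplies each term by a constant $\gamma''$ independent of $\mu$. Fourth, $w$ multiplies each resulting form, of degree $a+b+2(m-1)$, by $(-1)^{a+b+2(m-1)}=(-1)^{a+b}$.

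It then remains to verify that the accumulated scalar $\tfrac{i}{2}(-1)^{a+b}\gamma\gamma''$ equals $\tfrac{2}{i}=-2i$, and this sign bookkeeping is the one genuinely computational step and the main obstacle. The simplifications I expect to use are $i^{2(a-b)}=(-1)^{a-b}$; the fact that the exponents of $(-2i)$ add to $(p-n)+(p''-n)=2$ because $p+p''=2n+2$, so $(-2i)^2=-4$; and $m+m'=n-a-b$. The only subtle part is the parity of $\tfrac12 p(p+1)+\tfrac12 p''(p''+1)$, which I would evaluate from $p+p''=2(n+1)$ through the identity $\tfrac12 p(p+1)+\tfrac12 q(q+1)=(n+1)(2n+3)-pq$ with $q=p''$, reducing modulo $2$ to $n+1+a+b$. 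Combining all of these, the total exponent of $-1$ reduces modulo $2$ to $2n+1$, which is odd; hence the net sign flips $2i$ to $-2i$, establishing the second identity.
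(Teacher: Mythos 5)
Your proposal is correct and takes essentially the same approach as the paper: the paper likewise computes $L^*$ as $w*L*$, applying the Hodge-dual lemma before and after wedging with $\Omega$ (so that $\gamma(a,b,m)$ and $\gamma(a,b,m'+1)$ appear) and finishing with the same kind of mod-2 exponent bookkeeping, which your identity $\frac{1}{2}p(p+1)+\frac{1}{2}p''(p''+1)=(n+1)(2n+3)-pp''$ reproduces equivalently. Your direct verification of the $L$ formula is a harmless addition that the paper leaves implicit.
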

\begin{proof}
\begin{align*}
L^*(z_A\wedge\bar z_B\wedge w_M) &= w*L*(z_A\wedge\bar z_B\wedge w_M)
\\ &=\frac i2 (-1)^p \gamma(a,b,m) *\left( (\sum_\mu z_\mu \wedge \bar z_\mu)\wedge (z_A \wedge \bar z_B \wedge w_{M'}) \right)
\\ &=\frac i2 (-1)^p \gamma(a,b,m) \sum_{\mu \in M} * (z_A \wedge \bar z_B \wedge w_{\mu \vee M'} )
\\ &=\frac i2 (-1)^p \gamma(a,b,m) \gamma(a,b,m'+1) \sum_{\mu \in M} z_A \wedge \bar z_B \wedge w_{M-\{\mu\}}
\\ &=\frac i2 (-1)^{p+(a-b)+\binom p2 + \binom{p'+2}2 + m+ (m'+1) }(-2i)^{p-n + (p'+2) -n} \sum_{\mu \in M} z_A \wedge \bar z_B \wedge w_{M-\{\mu\}}
\\ &= (-2i) \sum_{\mu \in M} z_A \wedge \bar z_B \wedge w_{M-\{\mu\}}
\end{align*}
the last equality is true since:
\begin{align*}
(-2i)^{p-n+p'+2-n} &= (-2i)^{a+b+2m-2n + 2 + 2n - 2m - a-b}
\\ &=(-2i)^2
\\ &=-4
\end{align*}
and
\begin{align*}
& p+(a-b)+\binom p2 + \binom{p'+2}2 + m+ (m'+1)
\\ \equiv & p+(a-b) +\binom p2 + \binom{p'}2 + m+ m'
\\ =& (a-b) + \frac12 p(p+1) + \frac12 p'(p'-1) + m + m'
\\ \equiv & (a-b) + \frac12 (a+b+2m)(a+b+1+2m) 
\\ & + \frac12 (2n-2m-a-b)(2n-2m-a-b-1) + m + n-a-b-m
\end{align*}
\begin{align*}
\equiv & (a-b) + \binom{a+b+1}2 + 2m^2 + m(2a+2b+1) + 2(n-m)^2 
\\ & + \binom{a+b+1}2 + (n-m)(-2a-2b-1) + n-a-b
\\ \equiv & n + n(2a+2b+1)
\\ \equiv & 0
\end{align*}
where the first equality is true since $\binom{x}2 \equiv \binom{x+2} 2 - 1 (\text{mod }2)$.
\end{proof}

From this, we can prove the following commutation relation:
\begin{proposition}
\begin{align*}
[L^*,L] = \sum_{p=0}^{2n} (n-p)\Pi_p
\end{align*}
\end{proposition}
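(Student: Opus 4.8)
The plan is to verify the identity on each element of the basis $\{z_A\wedge\bar z_B\wedge w_M\}$ of $\wedge F$ (the passage between this basis and the $z_I\wedge\bar z_K$ being recorded in the preceding lemma), using the explicit formulas for $L$ and $L^*$ established in the previous proposition. Fix such a basis element $\omega = z_A\wedge\bar z_B\wedge w_M$ with $|A|=a$, $|B|=b$, $|M|=m$, so that its total degree is $p=a+b+2m$ and $\Pi_p\omega=\omega$ while $\Pi_{p'}\omega=0$ for $p'\ne p$. It then suffices to show $[L^*,L]\omega=(n-p)\omega$.

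First I would compute $L^*L\omega$. Applying $L$ — and noting that the summand attached to an index $\mu\in A\cup B$ vanishes, so the formal sum over $\mu\notin M$ is effectively a sum over $\mu\notin A\cup B\cup M$ — and then applying $L^*$, the prefactors $\tfrac i2$ and $\tfrac 2i$ cancel and one obtains a double sum over $\mu\notin A\cup B\cup M$ and $\nu\in M\cup\{\mu\}$. Separating the diagonal term $\nu=\mu$ (which returns $w_M$, hence $\omega$) from the rest yields
\[
L^*L\,\omega = (n-a-b-m)\,\omega + \sum_{\mu\notin A\cup B\cup M}\ \sum_{\nu\in M} z_A\wedge\bar z_B\wedge w_{(M-\{\nu\})\cup\{\mu\}},
\]
where the coefficient $n-a-b-m$ is exactly the number of admissible indices $\mu$.

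Symmetrically, I would compute $LL^*\omega$: applying $L^*$ first and then $L$, the prefactors again cancel, and separating the diagonal term $\mu=\nu$ (which restores $w_M$) gives
\[
LL^*\,\omega = m\,\omega + \sum_{\nu\in M}\ \sum_{\mu\notin A\cup B\cup M} z_A\wedge\bar z_B\wedge w_{(M-\{\nu\})\cup\{\mu\}}.
\]
The crucial observation is that the two off-diagonal double sums are literally the same expression (using $\mu\notin M$ to rewrite $(M\cup\{\mu\})-\{\nu\}$ as $(M-\{\nu\})\cup\{\mu\}$), so they cancel in the commutator, leaving $[L^*,L]\omega=(n-a-b-m)\omega-m\,\omega=(n-a-b-2m)\omega=(n-p)\omega$. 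Since this holds on every homogeneous basis element, summing over $p$ gives $[L^*,L]=\sum_{p=0}^{2n}(n-p)\Pi_p$.

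The main obstacle is purely the bookkeeping of the index sets: correctly recognizing that the effective range of the $L$-sum excludes $A\cup B$, cleanly isolating the diagonal contributions ($\nu=\mu$) that produce the scalar coefficients from the off-diagonal ones, and checking that the off-diagonal sums arising in $L^*L$ and in $LL^*$ agree term-by-term so that they cancel. Once the two diagonal counts $n-a-b-m$ and $m$ are secured, identifying their difference with $n-p$ through $p=a+b+2m$ is immediate.
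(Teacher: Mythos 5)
Your proposal is correct and follows essentially the same route as the paper: evaluate $L^*L$ and $LL^*$ on the basis elements $z_A\wedge\bar z_B\wedge w_M$ via the explicit formulas from the preceding proposition, extract the diagonal counts $n-a-b-m$ and $m$, and observe that the off-diagonal double sums coincide and cancel in the commutator, yielding $(n-p)$ on each element of degree $p=a+b+2m$. Your treatment is in fact slightly more careful than the paper's, since you make explicit both why the $L$-sum effectively excludes $A\cup B$ and why the two off-diagonal sums agree term by term, points the paper handles only with an informal counting remark.
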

\begin{proof}
The computation goes:
\begin{align*}
L^*L (z_A\wedge \bar z_B \wedge w_M) &=z_A \wedge \bar z_B \wedge \left(\sum_{\nu \notin M, \mu \in M} w_{M-\{\mu \}\cup \{\nu\}} + (n-a-b-m) w_M \right)
\\ LL^*(z_A\wedge \bar z_B \wedge w_M) &=z_A \wedge \bar z_B \wedge \left(\sum_{\nu \notin M, \mu \in M} w_{M-\{\mu \}\cup \{\nu\}} + m w_M \right)
\end{align*}
Here, $L^*L$ has the $(n-a-b-m)$ since $L^*$ is ``erasing'' from variables added by $L$, with $L$ being able to place indices at $(n-a-b-m)$ places. $LL^*$ has the $m$ since $L$ is filling variables erased by $L^*$, with $L^*$ erasing indices at $m$ places. Subtracting, we obtain
\begin{align*}
[L^*,L](z_A\wedge \bar z_B \wedge w_M) &= z_A\wedge \bar z_B \wedge (n-a-b-2m)w_M
\\ &= (n-p)(z_A\wedge \bar z_B \wedge w_M)
\end{align*}
and thus the proof is complete.
\end{proof}

Let $X$ be a compact complex manifold. Then we may define an inner product on $\mc E^*(X)$ by setting for $\xi \in \mc E^p(X), \eta \in \mc E^q(X)$
\begin{align*}
    (\xi, \eta) = \begin{cases} \int_X \xi \wedge * \bar \eta & \text{ when $p=q$} \\ 0 & \text{ when $p \neq q$} \end{cases}
\end{align*}
This inner product is called the Hodge inner product, and induces the following orthogonal decomposition:
\begin{align*}
    \mc E^r(X) \cong \bigoplus_{p+q=r} \mc E^{p,q}(X)
\end{align*}

Let's compute the adjoint of exterior derivative $d$ with respect to the Hodge inner product.
\begin{proposition}
    $d^* = (-1)^{n(m+1)+1}*d*$
\end{proposition}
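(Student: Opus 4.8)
The plan is to verify the adjoint relation $(\d\alpha,\beta) = (\alpha,\d^*\beta)$ directly, using Stokes' theorem to integrate by parts and then the defining property of the Hodge star to recognize the resulting integrand as an inner product. Since the Hodge inner product pairs forms of distinct total degree to zero, it suffices to take $\beta \in \mc E^m(X)$ and $\alpha \in \mc E^{m-1}(X)$: for $\alpha$ of any other degree both $(\d\alpha,\beta)$ and $(\alpha,\d^*\beta)$ vanish automatically, the latter because $\d^*\beta$ has degree $m-1$. Throughout, $n$ denotes the real dimension of $X$, and the computation is cleanest performed for a general compact oriented Riemannian $n$-manifold (for a complex manifold $n$ is even, so the sign reduces to $-1$).

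First I would record the two algebraic facts about $*$ that drive the argument: the defining identity (extended $\CC$-linearly) $\alpha \wedge *\bar\beta = \langle \alpha,\beta\rangle\,\on{vol}$, already noted in the text, and the involution rule $** = (-1)^{k(n-k)}$ on $k$-forms, which follows immediately from the action of $*$ on an orthonormal basis. I would also use that $*$ and $\d$ have real coefficients, hence commute with complex conjugation: $\overline{*\omega} = *\bar\omega$ and $\overline{\d\omega} = \d\bar\omega$.

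The analytic step is integration by parts. Expanding $\d(\alpha \wedge *\bar\beta) = \d\alpha \wedge *\bar\beta + (-1)^{m-1}\alpha \wedge \d(*\bar\beta)$ and integrating over the closed manifold $X$, Stokes' theorem annihilates the total derivative, giving
\begin{align*}
(\d\alpha,\beta) = \int_X \d\alpha \wedge *\bar\beta = (-1)^m \int_X \alpha \wedge \d * \bar\beta.
\end{align*}
To read off $\d^*$, I would rewrite the right-hand integrand in the form $\alpha \wedge *\,\overline{(\d^*\beta)}$; matching integrands for all $\alpha$ forces $*\,\overline{\d^*\beta} = (-1)^m \d*\bar\beta$. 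Applying $*$ once more and using the involution identity on the $(m-1)$-form $\overline{\d^*\beta}$ inverts the star, and conjugating back (legitimate since $*,\d$ are real) yields
\begin{align*}
\d^*\beta = (-1)^{\,m + (m-1)(n-m+1)}\, *\,\d\,*\,\beta.
\end{align*}

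The only genuine work, and the step most prone to error, is confirming that this exponent collapses correctly modulo $2$. Expanding $(m-1)(n-m+1)$ and discarding even contributions (using $m^2 \equiv m$ and $2m \equiv 0$), the exponent $m + (m-1)(n-m+1)$ reduces to $(m-1)n + 1$; and since $m-1 \equiv m+1 \pmod 2$, this is $n(m+1)+1$, exactly as asserted. I expect no conceptual obstacle beyond this sign bookkeeping — all the analytic content sits in Stokes' theorem applied on the boundaryless compact $X$, and everything else is the two Hodge-star identities.
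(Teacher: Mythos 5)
Your proof is correct. There is nothing in the paper to compare it against: this proposition is stated without proof (the text immediately applies it to compute $\d^*(f\,\d x_J)$ in orthonormal coordinates), so your argument supplies a derivation the paper omits. The route is the standard one and every step checks out: restricting to $\beta \in \mc E^m(X)$, $\alpha \in \mc E^{m-1}(X)$ is legitimate because $*\d*\beta$ has degree $m-1$; Stokes' theorem on the closed manifold gives $(\d\alpha,\beta) = (-1)^m \int_X \alpha \wedge \d * \bar\beta$, the sign coming from $\deg\alpha = m-1$; and reading off $\d^*\beta$ by ``matching integrands'' is justified by positive-definiteness of the Hodge pairing (if $\int_X \alpha \wedge *\bar\gamma = 0$ for every $(m-1)$-form $\alpha$, take $\alpha = \gamma$ to conclude $\gamma = 0$), which is also what makes the adjoint unique, so the formula you obtain really is \emph{the} $\d^*$. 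The sign bookkeeping, which you rightly flag as the only delicate part, is correct:
\begin{align*}
m + (m-1)(n-m+1) = (m-1)n + 3m - m^2 - 1 \equiv (m-1)n + 1 \equiv n(m+1)+1 \pmod 2,
\end{align*}
using $m^2 \equiv m$, $2m \equiv 0$, and $m-1 \equiv m+1 \pmod 2$. Your choice to work on a general compact oriented Riemannian $n$-manifold, with the complex case as the even-$n$ specialization where $\d^* = -*\d*$, matches how the paper actually uses the formula in the two propositions that follow, both of which are computations in orthonormal coordinates on a Riemannian manifold.
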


\begin{proposition}
    For differential form $\varphi=fdx_J$ ($J$ is multi-index $J=(j_1, \cdots, j_m)$) and orthonormal coordinates $x=(x_1, \cdots x_n)$, 
    \begin{align*}
        d^*(f dx_J) = \sum_{p} (-1)^{p}\frac{\partial f}{\partial x_{j_p}}dx_{J\backslash j_p}
    \end{align*}
\end{proposition}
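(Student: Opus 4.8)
The plan is to apply the previous proposition, $d^{*} = (-1)^{n(m+1)+1}\, {*}\,d\,{*}$, directly to $\varphi = f\,dx_{J}$ and reduce the whole statement to a single permutation-sign identity. Write $J = (j_{1}<\cdots<j_{m})$, let $J^{c}=\{1,\dots,n\}\setminus J$ be the complementary increasing multi-index, and let $\epsilon_{J}=\on{sign}(J,J^{c})$ be the sign of the permutation carrying $(1,\dots,n)$ to $(j_{1},\dots,j_{m},J^{c})$. Since the coordinates are orthonormal, ${*}(dx_{J})=\epsilon_{J}\,dx_{J^{c}}$, so the first step gives ${*}\varphi = \epsilon_{J} f\,dx_{J^{c}}$. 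Applying $d$ and discarding the terms with $dx_{k}\wedge dx_{J^{c}}=0$ (those with $k\in J^{c}$), only the derivatives in the directions $x_{j_{p}}$, $j_{p}\in J$, survive:
\[
 d({*}\varphi)=\epsilon_{J}\sum_{p=1}^{m}\frac{\partial f}{\partial x_{j_{p}}}\,dx_{j_{p}}\wedge dx_{J^{c}}.
\]

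Next I would rewrite each $dx_{j_{p}}\wedge dx_{J^{c}}$ in increasing order: setting $K_{p}=\{j_{p}\}\cup J^{c}$ (sorted) and letting $r_{p}$ be the number of elements of $J^{c}$ smaller than $j_{p}$, one has $dx_{j_{p}}\wedge dx_{J^{c}}=(-1)^{r_{p}}dx_{K_{p}}$. The complement of $K_{p}$ is exactly $\tilde J_{p}:=J\setminus j_{p}$, so applying $*$ once more yields ${*}(dx_{K_{p}})=\on{sign}(K_{p},\tilde J_{p})\,dx_{\tilde J_{p}}$. Collecting all the signs leaves
\[
 d^{*}\varphi=(-1)^{n(m+1)+1}\,\epsilon_{J}\sum_{p=1}^{m}(-1)^{r_{p}}\on{sign}(K_{p},\tilde J_{p})\,\frac{\partial f}{\partial x_{j_{p}}}\,dx_{J\setminus j_{p}},
\]
so the entire problem reduces to showing that for each $p$ the accumulated sign is precisely $(-1)^{p}$.

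The hard part is exactly this sign bookkeeping, and I would settle it by evaluating one auxiliary permutation two ways. Consider the tuple $T=(j_{p},\,J^{c},\,\tilde J_{p})$, with $J^{c}$ and $\tilde J_{p}$ each listed increasingly, as a permutation of $(1,\dots,n)$. Sorting the front block $(j_{p},J^{c})$ into $K_{p}$ costs $(-1)^{r_{p}}$, giving $\on{sign}(T)=(-1)^{r_{p}}\on{sign}(K_{p},\tilde J_{p})$; on the other hand, sliding the block $\tilde J_{p}$ (length $m-1$) past $J^{c}$ (length $n-m$) costs $(-1)^{(m-1)(n-m)}$ and then merging $j_{p}$ into its place costs $(-1)^{p-1}$ (there are $p-1$ indices of $J$ below $j_{p}$), giving $\on{sign}(T)=(-1)^{(m-1)(n-m)+p-1}\epsilon_{J}$. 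Equating these and using $\epsilon_{J}^{2}=1$ gives $\epsilon_{J}(-1)^{r_{p}}\on{sign}(K_{p},\tilde J_{p})=(-1)^{(m-1)(n-m)+p-1}$; multiplying by the prefactor $(-1)^{n(m+1)+1}$ produces total exponent $n(m+1)+(m-1)(n-m)+p$, and since $n(m+1)+(m-1)(n-m)=2mn-m^{2}+m$ is even (using $m^{2}\equiv m \bmod 2$), this collapses to $(-1)^{p}$, as required. I would also note that the same formula drops out instantly from the contraction identity $d^{*}=-\sum_{k}\iota_{\partial/\partial x_{k}}\circ\partial/\partial x_{k}$ on flat space, but since only the $*d*$ formula is available in the text, the permutation-sign route above is the natural one to record.
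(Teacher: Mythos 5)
Your proposal is correct and follows essentially the same route as the paper: both apply the formula $d^* = (-1)^{n(m+1)+1}*d*$ term by term, note that only derivatives in the $J$-directions survive after the first $*d$, and reduce everything to a permutation-sign identity. Your double-counting of the auxiliary permutation $(j_p, J^c, \tilde J_p)$ is a cleaner justification of the sign bookkeeping that the paper asserts more tersely (as $\on{sgn}(j_p \vee \bar J, J\backslash j_p) = (-1)^{n-m+p+1}\on{sgn}(\bar J, J)$ together with $\on{sgn}(J,\bar J)\on{sgn}(\bar J, J)=(-1)^{(n-m)m}$), but the underlying computation is the same.
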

\begin{proof}
Step-by-step, 
    \begin{align*}
    *(f dx_J) &= \text{sgn}(J, \bar J)f dx_{\bar J} \\
    d*(fdx_J) &= \text{sgn}(J,\bar J) \sum_p \frac{\partial f}{\partial x_{j_p}}(dx_{j_p} \wedge dx_J) \\
    *d*(fdx_J) &= \text{sgn}(J,\bar J) \sum_p \text{sgn}(j_p \vee \bar J,J\backslash j_p) \frac{\partial f}{\partial x_{j_p}}(dx_{J\backslash j_p}) \\
    &= \sum_p \text{sgn}(J,\bar J) (-1)^{n-m+p+1}\text{sgn}(\bar J,J) \frac{\partial f}{\partial x_{j_p}}(dx_{J\backslash j_p}) \\
    &= \sum_p (-1)^{n-m+p+1}(-1)^{(n-m)m} \frac{\partial f}{\partial x_{j_p}}(dx_{J\backslash j_p}) \\
    d^*(fdx_J) &= \sum_p (-1)^{n(m+1)+1+n-m+p+1+(n-m)m} \frac{\partial f}{\partial x_{j_p}}(dx_{J\backslash j_p}) \\
    &= \sum_p (-1)^p \frac{\partial f}{\partial x_{j_p}}(dx_{J\backslash j_p})
    \end{align*}
\end{proof}

We can now give a computation for Laplacian $\Delta = dd^*+d^*d$.
\begin{proposition}
    In orthonormal coordinates $x=(x_1, \cdots x_n)$ on a Riemannian manifold, the Laplacian $\Delta = dd^*+d^*d$ has expression
    \begin{align*}
        \Delta = \sum_k \frac{\partial^2}{\partial x_k^2}
    \end{align*}
\end{proposition}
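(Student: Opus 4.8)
The plan is to compute $\Delta = dd^* + d^*d$ by a direct local calculation. Since $d$ and $d^*$ are linear and local operators, it suffices to evaluate $\Delta$ on a single monomial $\varphi = f\, dx_J$ for an increasing multi-index $J = (j_1, \dots, j_m)$; the general case follows by linearity. The only inputs I need are the standard local formula $d(f\, dx_J) = \sum_k \frac{\partial f}{\partial x_k}\, dx_k \wedge dx_J$, in which every term with $k \in J$ vanishes, together with the expression for $d^*$ just established, $d^*(f\, dx_J) = \sum_p (-1)^p \frac{\partial f}{\partial x_{j_p}}\, dx_{J \setminus j_p}$.

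First I would expand $d^* d\varphi$. Applying $d$ gives $\sum_{k \notin J} \frac{\partial f}{\partial x_k}\, dx_k \wedge dx_J$, and applying $d^*$ to this produces a double sum over the differentiation direction $k \notin J$ and over the index deleted from the enlarged multi-index $\{k\} \cup J$. Next I would expand $dd^*\varphi$: here $d^*$ first deletes some $j_p \in J$, and the following $d$ reintroduces a differentiation direction $l$. Each of the two resulting expressions naturally splits into \emph{diagonal} terms, in which the inserted index equals the deleted one, and \emph{off-diagonal} (cross) terms, in which they differ.

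The heart of the proof is that the off-diagonal contributions of $d^*d\varphi$ and $dd^*\varphi$ cancel in pairs. A cross term of $d^*d$ has the form $\pm\,\frac{\partial^2 f}{\partial x_k\,\partial x_{j_p}}\, dx_{\{k\}\cup(J\setminus j_p)}$, obtained by inserting $k \notin J$ and then deleting $j_p \neq k$; the matching cross term of $dd^*$ involves the same mixed partial (by symmetry of second derivatives) and the same underlying basis form, but is obtained in the opposite order---first deleting $j_p$, then inserting $k$. I expect the main obstacle to be the sign bookkeeping: one must track the insertion sign of $k$ into $J$ against its insertion sign into $J \setminus j_p$, and the deletion sign of $j_p$ from $J$ against that from $\{k\}\cup J$, and verify that reversing the order of a single insertion and a single deletion flips the overall parity. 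Once this parity check is carried out, the two cross terms are negatives of one another and cancel.

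Finally I would collect the diagonal terms. The directions $k \notin J$ each contribute a multiple of $\frac{\partial^2 f}{\partial x_k^2}\, dx_J$ via $d^*d$ (insert $k$, then immediately delete it), while the directions $j_p \in J$ each contribute a multiple of $\frac{\partial^2 f}{\partial x_{j_p}^2}\, dx_J$ via $dd^*$ (delete $j_p$, then reinsert it). A short sign check shows both families carry a common coefficient, so that every index in $\{1,\dots,n\}$---lying in exactly one of the two families---contributes the same way; the diagonal terms therefore assemble into $\big(\sum_k \frac{\partial^2 f}{\partial x_k^2}\big)\, dx_J$. Up to the overall sign convention fixed for $d^*$, this is precisely $\Delta\varphi = \sum_k \frac{\partial^2}{\partial x_k^2}\,\varphi$, as claimed.
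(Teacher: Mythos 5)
Your proposal is correct and follows essentially the same route as the paper's proof: evaluate $\Delta$ on a single monomial $f\,dx_J$, expand $dd^*$ and $d^*d$ via the local formulas, cancel the off-diagonal terms in pairs using symmetry of mixed partials together with a parity flip between insertion and deletion order, and assemble the diagonal terms. Your hedge ``up to the overall sign convention fixed for $d^*$'' is in fact warranted: carrying out the sign check with the stated formula $d^*(f\,dx_J)=\sum_p(-1)^p\partial_{j_p}f\,dx_{J\setminus j_p}$, both diagonal families acquire a common coefficient $-1$ (since $dx_{j_p}\wedge dx_{J\setminus j_p}=(-1)^{p-1}dx_J$), so the honest conclusion is $\Delta=-\sum_k \partial^2/\partial x_k^2$, consistent with the Hodge Laplacian being a non-negative operator --- a sign the paper's own computation silently drops.
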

\begin{proof}
Suppose $\varphi = f dx_J$ where $J=(j_1, \cdots, j_m)$ a multi-index. Let's compute $dd^*\varphi$ and $d^*d\varphi$ separately.
\begin{align*}
    dd^*(fdx_J) &= d(\sum_{p} (-1)^{p}\frac{\partial f}{\partial x_{j_p}}dx_{J\backslash j_p})
    \\ &= \sum_p (-1)^p \left(\frac{\partial^2 f}{\partial x_{j_p}^2} dx_{j_p} \wedge dx_{J\backslash j_p} + \sum_{	k\notin J} \frac{\partial^2 f}{\partial x_{k} \partial x_{j_p}} dx_{k} \wedge dx_{J\backslash j_p} \right)
    \\ &= \left(\sum_p \frac{\partial^2 f}{\partial x_{j_p}^2} dx_J\right) + \left( \sum_p \sum_{k\notin J} (-1)^p \frac{\partial^2 f}{\partial x_{k} \partial x_{j_p}} dx_{k} \wedge dx_{J\backslash j_p} \right) \\
    d^*d(fdx_J) &= d^*(\sum_{k\notin J} \frac{\partial f}{\partial x_k} dx_k \wedge dx_J)
    \\ &= \left(\sum_{k\notin J} \frac{\partial^2 f}{\partial x_k^2}\right) + \left(\sum_p \sum_{k\notin J} (-1)^{p+1}\frac{\partial^2 f}{\partial x_{j_p} \partial x_k}dx_k \wedge dx_{J\backslash j_p} \right) \\
    \Delta (fdx_J) &= \sum_{k} \frac{\partial^2 f}{\partial x_k^2} dx_J
\end{align*}
\end{proof}

The following is an explicit calculation of $d^*$:
\begin{proposition}
    \begin{align*}
        d^* (f dz_A \wedge d\bar z_B \wedge w_M) &= 2(-1)^{m(p+1)+(a+b)} \left( \sum_{\mu \in M} (\frac{\partial f}{\partial z_\mu} + \frac{\partial f}{\partial \bar z_\mu}) \wedge (z_A \wedge \bar z_B \wedge w_{M-\{\mu\}} )\right)
        \\&= i \cdot df \wedge L^*(f dz_A\wedge d\bar z_B \wedge w_M)
    \end{align*}
and thus
    \begin{align*}
        d^* = (-1)^{1+m(p+1)}*d* = i \cdot df \wedge L^*
    \end{align*}
\end{proposition}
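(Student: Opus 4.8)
The plan is to reduce everything to the identity $d^*=(-1)^{n(m+1)+1}*d*$ proved just above and then feed in the two explicit computations already at hand: the Hodge-dual formula $*(z_A\wedge\bar z_B\wedge w_M)=\gamma\,(z_A\wedge\bar z_B\wedge w_{N\setminus(A\cup B\cup M)})$ with $\gamma=i^{a-b}(-1)^{p(p+1)/2+m}(-2i)^{p-n}$, and the formula $L^*(z_A\wedge\bar z_B\wedge w_M)=\frac2i\sum_{\mu\in M}z_A\wedge\bar z_B\wedge w_{M\setminus\{\mu\}}$. The structural point that makes this tractable is that $z_A\wedge\bar z_B\wedge w_M$ has constant coefficients in the chosen coordinates, so the exterior derivative can only act on $f$; this turns the whole problem into algebraic Hodge-dual manipulation, exactly in the spirit of the preceding $L^*=w*L*$ computation. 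One could instead expand $z_A,\bar z_B,w_M$ into the real coordinates $x_\mu,y_\mu$ and invoke the already-proved real-coordinate formula $d^*(f\,dx_J)=\sum_p(-1)^p\partial_{x_{j_p}}f\,dx_{J\setminus j_p}$, but that route is combinatorially heavier because of the $z=x+iy$ expansions, so I prefer the $*d*$ route, which reuses $\gamma$ verbatim.

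First I would apply $*$ to $f\,z_A\wedge\bar z_B\wedge w_M$, obtaining $\gamma f\,z_A\wedge\bar z_B\wedge w_{M'}$ with $M'=N\setminus(A\cup B\cup M)$. Next I would apply $d$; since $z_A\wedge\bar z_B\wedge w_{M'}$ is constant-coefficient, $d(\gamma f\,\omega')=\gamma\,df\wedge\omega'$, and expanding $df=\sum_\nu(\partial_{z_\nu}f\,dz_\nu+\partial_{\bar z_\nu}f\,d\bar z_\nu)$ I would record which one-forms survive the wedge with $\omega'$: the term $dz_\nu$ (resp. $d\bar z_\nu$) survives exactly when $z_\nu$ (resp. $\bar z_\nu$) is absent from $\omega'$, and the clean contributions are those with $\nu\in M$, which reinstate one partner of a would-be $w_\nu$-pair. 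Finally I would apply $*$ a second time to each surviving monomial via the same lemma, landing back in the span of the forms $z_A\wedge\bar z_B\wedge w_{M\setminus\{\mu\}}$ carried along by the appropriate component of $df$.

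The bulk of the work, and the main obstacle, is the sign bookkeeping across the two Hodge stars. Each application of $*$ contributes a factor $\gamma$ evaluated at the current multi-index type, returning a monomial to the canonical $z_A\wedge\bar z_B\wedge w_\bullet$ order costs a permutation signature, and every $w$-pair carries a factor $(-2i)$ coming from $w_M=(-2i)^m\bigwedge_\mu x_\mu\wedge y_\mu$. The claim to verify is that the product $\gamma(a,b,m)\,\gamma(\text{intermediate})$, together with these permutation signs and powers of $(-2i)$, collapses — modulo $2$ in the exponent of $-1$, and exactly in the powers of $2$ and $i$ — to the single constant $2(-1)^{m(p+1)+(a+b)}$. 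This is precisely the phenomenon already seen in the $L^*$ calculation, where the $(-2i)$ powers reduced to $-4$ and the sign exponent vanished mod $2$, so I would model this step line-by-line on that argument.

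Once the coefficient is pinned down, the operator identity is read off by comparison. Matching the collected sum $\sum_{\mu\in M}(\partial_{z_\mu}f+\partial_{\bar z_\mu}f)\wedge(z_A\wedge\bar z_B\wedge w_{M\setminus\{\mu\}})$ against $L^*(z_A\wedge\bar z_B\wedge w_M)=\frac2i\sum_{\mu\in M}z_A\wedge\bar z_B\wedge w_{M\setminus\{\mu\}}$ exhibits $d^*$ as $i$ times $df$ paired against $L^*$, the product $i\cdot\frac2i=2$ supplying the leading factor $2$; combined with the sign from $d^*=\pm*d*$ this yields both displayed equalities. As a consistency check I would confirm the degrees: $L^*$ lowers degree by two while the reattached $df$ raises it by one, for a net drop of one, as required of $d^*$.
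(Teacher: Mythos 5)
The paper states this proposition with no proof at all (it is immediately followed by the $\mathfrak{sl}_2$ subsection), so there is no in-paper argument to compare against; judged on its own merits, your outline has a genuine gap, and it sits exactly at the step you describe as keeping the ``clean contributions.'' After the first star you have $\omega'=z_A\wedge\bar z_B\wedge w_{M'}$ with $M'=N\setminus(A\cup B\cup M)$, and in the expansion of $df\wedge\omega'$ the surviving terms are $dz_\nu$ with $\nu\in B\cup M$ and $d\bar z_\nu$ with $\nu\in A\cup M$. You keep the $\nu\in M$ terms and never say what happens to the $\nu\in A\cup B$ ones; they do not cancel, and no sign bookkeeping can remove them, because they are honest contributions to $d^*$ (contraction against an index lying in $A$ or $B$). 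Concretely, in $\CC^1$ take $\varphi=f\,dz_1$, so $a=1$, $b=m=0$: both right-hand sides of the proposition are empty sums (and $L^*\varphi=0$), hence zero, yet $d^*\varphi=-2\,\partial f/\partial\bar z_1\neq 0$, as you can verify with the real-coordinate formula $d^*(f\,dx_J)=\sum_p(-1)^p\,\partial_{x_{j_p}}f\,dx_{J\setminus j_p}$ that you yourself cite as the alternative route. So an honest execution of your plan produces whole families of terms the stated formula lacks; what you would actually discover is that the proposition as printed is garbled (read literally, its first right-hand side has degree $p-2$ rather than $p-1$; ``$d^*=i\cdot df\wedge L^*$'' is not a well-formed operator identity since it refers to the argument $f$; and on a manifold of even real dimension the earlier proposition forces $d^*=-*d*$, conflicting with the prefactor $(-1)^{1+m(p+1)}$). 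Your closing ``degree consistency check'' only inspects the second displayed line and so misses all of this.

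There is also a secondary deferral: the entire quantitative content of the claim --- the coefficient $2(-1)^{m(p+1)+(a+b)}$ --- is handled by ``model the bookkeeping line-by-line on the $L^*=w*L*$ computation,'' which is a plan, not a proof; nothing in the proposal actually pins down a single sign. If you want a workable route, bypass the double star entirely and compute with contractions: in flat orthonormal coordinates $d^*=-2\sum_\mu\left[\iota(\partial/\partial z_\mu)\,\partial/\partial\bar z_\mu+\iota(\partial/\partial\bar z_\mu)\,\partial/\partial z_\mu\right]$, and applying this to $f\,z_A\wedge\bar z_B\wedge w_M$ displays all three families of terms (indices in $A$, in $B$, in $M$) at once, with their signs, and tells you what a corrected statement of the proposition must be.
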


\subsubsection{Representation Theory of $\mathfrak{sl}_2\mathbb C$}

Let $B=\sum_{r=0}^{2n} (n-p)\Pi_r$. Then, from above, the following relations hold:
\begin{align}
[B,L^*]=2L^*, [B,L]=-2L, [L^*,L]=B
\end{align}
This is actually completely analogous to:
\begin{align}
[H,X]=2X, [H,Y]=-2Y, [X,Y]=H
\end{align}
where
\begin{align}
X=
\begin{bmatrix}
0&1\\0&0
\end{bmatrix}
, Y=
\begin{bmatrix}
0&0\\1&0
\end{bmatrix}
, H=
\begin{bmatrix}
1&0\\0&-1
\end{bmatrix}
\end{align}
Moreover, $H,X,Y$ are generators of the Lie algebra $\mathfrak{sl}_2\mathbb C$ (matrices with trace 0). This means that we have a Lie algebra representation of $\mathfrak{sl}_2\mathbb C$ on $\wedge F$:
\begin{align}
\rho &: \mathfrak{sl}_2\mathbb C \rightarrow End(\wedge F)
\\ & X \mapsto L^*, Y \mapsto L, H \mapsto B
\end{align}
Thus, studying representation theory of $\mathfrak{sl}_2\mathbb C$ will help us study the operators $L,L^*$.

The standard picture is to think of a ``Ladder" with $L^*$ raising by a rung and $L$ dropping by a rung.  This gives the unique picture of what an irreducible $\mf{sl}(2,\CC)$-representation looks like.  Then we can decompose any representation (in our case, that on exterior Hermitian algebra $\wedge T^*X$) into irreducible representations. This is a result that relies on considerations of integrations on Lie groups, which is quite interesting considering that $\mf{sl}(2,\CC)$ admits a purely algebraic description independent of any manifold formalism.

\subsubsection{The Hodge decomposition for K\"ahler manifolds}

Here we discuss a special well-behaved class of complex manifolds called K\"ahler manifolds, and prove the Hodge decomposition theorem which hold for K\"ahler manifolds.

\begin{definition}
    A Hermitian metric $h$ on complex manifold $X$ is said to be \emph{K\"ahler} if the fundamental form associated to it is a closed form, i.e. $\d \Omega = 0$.
\end{definition}

\begin{proposition}
    A Hermitian metric $h$ on a complex manifold $X$ is K\"ahler iff any of the following holds:
    \begin{enumerate}
        \item The fundamental form induced by $h$ is closed: $d\Omega=0$.
        \item $[d,L]=0$ where $L:\omega \mapsto \Omega \wedge \omega$.
        \item The metric admits a local normal coordinate (is orthonormal and metric coefficients' first derivatives vanish).
        \item The metric coefficients satisfy the following:
        \begin{align*}
        \frac{\partial h_{\mu\nu}}{\partial z_\lambda} &= \frac{\partial h_{\lambda\nu}}{\partial z_\mu}
        \\ \frac{\partial h_{\mu\nu}}{\partial z_\lambda} &= \frac{\partial h_{\mu\lambda}}{\partial z_\nu}
        \end{align*}
    \end{enumerate}
\end{proposition}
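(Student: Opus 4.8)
The plan is to prove the implications $(1)\Leftrightarrow(2)$, $(1)\Leftrightarrow(4)$, $(3)\Rightarrow(1)$, and $(4)\Rightarrow(3)$; together these close the loop and give all four equivalences. The first is purely formal: since $\Omega$ is a $2$-form, for any form $\omega$ the graded Leibniz rule gives
$$[d,L]\omega = d(\Omega\wedge\omega) - \Omega\wedge d\omega = (d\Omega)\wedge\omega,$$
so taking $\omega=1$ shows $[d,L]=0\Rightarrow d\Omega=0$, while $d\Omega=0$ clearly forces $[d,L]=0$.

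For $(1)\Leftrightarrow(4)$ I would work in a local holomorphic chart, writing $\Omega=\frac{i}{2}\sum_{\mu,\nu}h_{\mu\nu}\,\d z_\mu\wedge\d\bar z_\nu$ and splitting $d\Omega=\dol\Omega+\bar\dol\Omega$ by bidegree. The two pieces have types $(2,1)$ and $(1,2)$, so $d\Omega=0$ iff both vanish; moreover $\Omega$ is real, so $\bar\dol\Omega=\overline{\dol\Omega}$ and it suffices to treat $\dol\Omega$. Collecting the coefficient of $\d z_\lambda\wedge\d z_\mu\wedge\d\bar z_\nu$ and antisymmetrizing in $\lambda,\mu$ shows that $\dol\Omega=0$ is equivalent to $\partial h_{\mu\nu}/\partial z_\lambda=\partial h_{\lambda\nu}/\partial z_\mu$, the first relation of $(4)$; the second relation is its image under conjugation together with the Hermitian symmetry $h_{\mu\nu}=\overline{h_{\nu\mu}}$.

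The implication $(3)\Rightarrow(1)$ is immediate from this computation: in a normal coordinate centered at a point $p$ one has $h_{\mu\nu}(p)=\delta_{\mu\nu}$ and all first-order derivatives of the $h_{\mu\nu}$ vanish at $p$, so $d\Omega(p)=0$; as such a coordinate exists at every point, $d\Omega\equiv 0$.

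The genuine work is in $(4)\Rightarrow(3)$, the construction of normal coordinates, which I expect to be the main obstacle. Fix $p$ and, after a linear change, arrange $h_{\mu\nu}(p)=\delta_{\mu\nu}$. I would then look for a holomorphic coordinate change $w_\mu=z_\mu+\frac{1}{2}\sum_{\nu,\lambda}c^{\mu}_{\nu\lambda}z_\nu z_\lambda+O(z^3)$ with $c^{\mu}_{\nu\lambda}$ symmetric in $\nu,\lambda$, and invert it to get $\partial^2 z_\mu/\partial w_\rho\partial w_\tau|_p=-c^{\mu}_{\rho\tau}$. Differentiating the transformation law $\tilde h_{\rho\sigma}=\sum_{\mu,\nu}h_{\mu\nu}\,\frac{\partial z_\mu}{\partial w_\rho}\,\overline{\frac{\partial z_\nu}{\partial w_\sigma}}$ at $p$ (the antiholomorphic terms drop out because the change is holomorphic) yields
$$\frac{\partial \tilde h_{\rho\sigma}}{\partial w_\tau}\bigg|_p=\frac{\partial h_{\rho\sigma}}{\partial z_\tau}\bigg|_p-c^{\sigma}_{\rho\tau}.$$
To annihilate the first derivatives one is forced to set $c^{\sigma}_{\rho\tau}=\partial h_{\rho\sigma}/\partial z_\tau|_p$, and this is consistent with the required symmetry of $c^{\sigma}_{\rho\tau}$ in $\rho,\tau$ \emph{precisely} when $\partial h_{\rho\sigma}/\partial z_\tau=\partial h_{\tau\sigma}/\partial z_\rho$, i.e.\ exactly condition $(4)$. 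The antiholomorphic first derivatives then vanish automatically by conjugation, producing a normal coordinate at $p$. The delicate point to get right is this matching of the Kähler identity with the integrability (symmetry) condition on the quadratic correction, rather than the otherwise routine Taylor bookkeeping.
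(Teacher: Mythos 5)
Your proposal is correct, and it takes a genuinely different --- and substantially more complete --- route than the paper. The paper's own proof only establishes one fragment: assuming $L$ descends to the de Rham cohomology ring (a consequence of condition (2)), it derives $\d\Omega \wedge \d\eta = 0$ for all $\eta$ and then forces $\d\Omega = 0$ pointwise by plugging in bump-function multiples of coordinate differentials; conditions (3) and (4) are never addressed there, and the localization argument is phrased for compact $M$. You instead close the full cycle of equivalences. Your $(1)\Leftrightarrow(2)$ via the graded Leibniz rule is cleaner and stronger than the paper's cohomological argument: since $\Omega$ has even degree, $[\d,L]\omega = \d\Omega\wedge\omega$ with no sign subtleties, so the commutator identity on forms is equivalent to closedness outright, with no need to pass through exactness-preservation or bump functions (and no compactness hypothesis). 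Your $(1)\Leftrightarrow(4)$ by bidegree splitting into $(2,1)$ and $(1,2)$ parts, using reality of $\Omega$ to reduce to $\dol\Omega$, is the standard computation, and your remark that the paper's second relation in (4) is the conjugate of the first (and as printed should carry $\bar z$-derivatives, else it is redundant by Hermitian symmetry) is the right reading of that condition. The real content is your $(4)\Rightarrow(3)$: the quadratic holomorphic coordinate change $w_\mu = z_\mu + \tfrac12\sum c^\mu_{\nu\lambda}z_\nu z_\lambda$, where killing the first derivatives forces $c^\sigma_{\rho\tau} = \partial h_{\rho\sigma}/\partial z_\tau|_p$ and the required symmetry of $c^\sigma_{\rho\tau}$ in $\rho,\tau$ is exactly condition (4). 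I verified the Taylor bookkeeping (the antiholomorphic terms do drop out, and the inverse map contributes $-c^\sigma_{\rho\tau}$ with your normalization); this is the standard construction of K\"ahler normal coordinates, it is correct as you set it up, and it supplies precisely the implication the paper omits entirely.
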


\begin{proof}
    Suppose $L$ is well-defined on de Rham cohomology ring. Then every exact form, wedged with $\Omega$, must be exact too:
    \begin{align*}
        \forall \eta, \exists \varphi: \Omega \wedge \text d \eta = \text d \varphi
    \end{align*}
    Taking $\text d$ of both sides and using $\Omega \wedge \text d \eta = \text d (\Omega \wedge \eta) - (\text d \Omega \wedge \eta)$, we obtain:
    \begin{align*}
        \forall \eta, \exists \varphi: \text d \Omega \wedge \text d \eta = 0
    \end{align*}
    We are to establish $\text d \Omega = 0$. If we were working over $\mathbb C^n$, then we can simply plug in $\eta = \text dz_j,\text d\bar z_j$ for different $j$'s and we immediately obtain that all coefficients of $d\Omega$ are zero. For general compact Hermitian manifold $M$, we can use bump functions to show that the form is locally 0 and thus globally 0. Take an open cover of coordinate patches of open balls $\{B_j\}$. Take balls $\{B_j'\}$ of radius half of those initially chosen. Construct smooth bump functions $\phi_j:M \rightarrow \RR$ such that $\phi_j$ vanishes outside $B_j$ and is identically $1$ at $B_j'$. Then plug in $\eta = \phi_k \text dz_j, \phi_k \text d\bar z_j$ for varying $j,k$ to obtain that local expression of $d\Omega$ must be identically 0 at any patch.
\end{proof}

With K\"ahler assumption, we have $[\d, L]$, and its adjoint statement is $[\d^*, L^*]=0$ ($\forall \xi, \eta, ([\d^*,L^*] \xi, \eta) = (\xi, [L, \d]\eta) = 0 \implies [d^*,L^*]=0$).

Define twisted conjugate of an operator $P$: $P_c := J^{-1} P J$. We then see that 
\begin{align*}
    d_c =& J^{-1} \d J = wJ \d J \\
    \d_c \varphi =& w J \d J \varphi \\
    =& (-1)J(\dol \varphi + \bar\dol \varphi) \\
    =& (-1)(i \dol \varphi - i \bar\dol \varphi) \\
    =& -i(\dol - \bar\dol) \varphi
\end{align*}
and similarly $d^*_c = -i(\dol^* - \bar\dol^*)$ (these can also be taken as definitions of the twisted conjugates).

The following nontrivial identities are true for compact K\"ahler manifolds, and can be derived using the representation theory of $\mf{sl}(2,\CC)$, using exponential map.
\begin{proposition}
    On a compact K\"ahler manifold $X$,
    \begin{align*}
        [L, d^*] = d_c, [L^*, d] = - d_c^*
    \end{align*}
\end{proposition}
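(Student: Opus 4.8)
The plan is to prove the single identity $[L^*, d] = -d_c^*$ and then obtain $[L, d^*] = d_c$ for free by taking Hodge adjoints. Since $L^*$ is adjoint to $L$, $d^*$ to $d$, and $d_c^*$ to $d_c$, and since $[A,B]^* = -[A^*,B^*]$, the relation $[L^*,d] = -d_c^*$ transposes to $-[L, d^*] = -d_c$, that is $[L, d^*] = d_c$. So the whole proposition reduces to understanding one commutator.

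First I would record the two structural inputs coming from the K\"ahler hypothesis. Writing $d = \dol + \bar\dol$ and using that $\Omega$ has type $(1,1)$, the closedness $d\Omega = 0$ splits by bidegree into $\dol\Omega = \bar\dol\Omega = 0$; hence $[L,\dol] = (\dol\Omega)\wedge = 0$ and likewise $[L,\bar\dol] = 0$, so that $[L, d] = 0$ and $[L, d_c] = 0$ (the first is exactly the characterization of K\"ahler metrics established earlier). Second, a degree count against $B = \sum_r (n-r)\Pi_r$ gives $[B, d] = -d$ and $[B, d_c] = -d_c$, since $d$ and $d_c$ raise total degree by one. These two facts say precisely that, in the adjoint $\mf{sl}(2,\CC)$-representation on $\End(\wedge^\bullet T^*X)$ induced by $X \mapsto \ad_{L^*},\, Y \mapsto \ad_L,\, H \mapsto \ad_B$, both $d$ and $d_c$ are lowest-weight vectors of weight $-1$ annihilated by $\ad_L$. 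Their weight-$(+1)$ partners are exactly $\ad_{L^*}(d) = [L^*,d]$ and $\ad_{L^*}(d_c) = [L^*,d_c]$, which already explains why these commutators are again first-order operators of the same type as $d^*$ and $d_c^*$; the remaining task is to identify them.

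For the identification I would use the fact, flagged in the text, that the $\mf{sl}(2,\CC)$-module structure on each fibre $\wedge^\bullet T^*_xX$ exponentiates to an $SL_2(\CC)$-action: the operator implementing the Weyl reflection, namely $\exp(L^*)\exp(-L)\exp(L^*)$, conjugates $L$ into $-L^*$, and up to the phase and sign operators $J$ and $w$ already introduced it agrees with the Hodge star $*$. Conjugating $[L,d]=0$ and $[L,d_c]=0$ by this element and feeding in the explicit formula $d^* = \pm * d *$ (and its analogue for $d_c^*$) derived above converts them into the desired $[L^*,d] = -d_c^*$ and $[L^*,d_c] = d^*$. Concretely, and this is how I would actually pin down the constants and signs, it suffices to check $[L^*,d] = -d_c^*$ pointwise; at any point I would pass to K\"ahler normal coordinates (available by the equivalent characterization $h_{\mu\nu}(x_0) = \delta_{\mu\nu}$ with vanishing first derivatives). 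Every term in $[L^*, d] + d_c^*$ distinguishing the curved metric from the flat one is proportional to a first derivative of $h$, hence vanishes at $x_0$, so the identity collapses to the same identity for the standard flat metric on $\CC^n$, where it is a finite computation using $\dol = \sum dz_j \wedge \dol/\dol z_j$, $\bar\dol = \sum d\bar z_j \wedge \dol/\dol\bar z_j$ and the explicit actions of $L$ and $L^*$ on $z_A \wedge \bar z_B \wedge w_M$ computed earlier.

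I expect the main obstacle to be this identification step rather than the $\mf{sl}(2,\CC)$ weight bookkeeping, which is automatic. In the representation-theoretic route the difficulty is tracking the exact scalar and sign by which the exponentiated Weyl element differs from $*$ on each $\wedge^{p,q}$, and reconciling this with the normalizations of $\Omega$ and of $d_c = -i(\dol - \bar\dol)$. In the normal-coordinate route the corresponding difficulty is verifying carefully that all discrepancy terms between the curved and flat operators are genuinely first-order in $h$, most delicately for $\dol^*$ and $\bar\dol^*$, whose metric dependence enters through the Hodge star and could a priori contribute undifferentiated metric terms; confirming that these enter only algebraically (so that only the $dh$ terms survive differentiation and then vanish at $x_0$) is the crux of making the reduction rigorous.
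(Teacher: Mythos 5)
Your outline is correct, and it actually supplies more than the paper does: the paper states this proposition with no proof at all, only the remark that it ``can be derived using the representation theory of $\mf{sl}(2,\CC)$, using exponential map.'' Your preliminary reductions are all valid: $[A,B]^* = -[A^*,B^*]$ does convert $[L^*,d] = -d_c^*$ into $[L,d^*] = d_c$ (and note $J$ is unitary, so the adjoint of $d_c = J^{-1}dJ$ equals $J^{-1}d^*J$, making the two possible readings of $d_c^*$ agree); $[L,\dol] = [L,\bar\dol] = 0$ follows from $\d\Omega = 0$ by splitting into types; and $[B,d] = -d$ is the degree count. Of your two routes for the identification step, the exponentiated-Weyl-element argument is the one the paper's remark gestures at (Weil's proof), and its cost is exactly what you flag: fixing the scalar by which $\exp(L^*)\exp(-L)\exp(L^*)$ differs from $*$, up to $w$ and $J$, on each $\wedge^{p,q}$. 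The normal-coordinate route is the Griffiths--Harris argument and is better matched to this paper's toolkit, since the flat-space verification can be done with the explicit formulas for $L$, $L^*$ and $d^*$ on monomials $z_A \wedge \bar z_B \wedge w_M$ already derived in the text; the reduction itself is sound for the reason you give, namely that both $[L^*,d]$ and $d_c^*$ are first-order operators whose coefficients depend pointwise on the $1$-jet of the metric, and a K\"ahler metric agrees with the flat metric to first order in normal coordinates (the paper's equivalent characterization 3 of the K\"ahler condition). One dividend of your adjoint bookkeeping: it forces $d_c^* = i(\dol^* - \bar\dol^*)$, the genuine adjoint of $d_c = -i(\dol - \bar\dol)$, which is the sign consistent with the corollary identities $[L^*,\dol] = i\bar\dol^*$ and $[L^*,\bar\dol] = -i\dol^*$ stated right after the proposition; the paper's parenthetical line asserting $d_c^* = -i(\dol^* - \bar\dol^*)$ is a sign slip that your computation would detect and correct.
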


The following are corollaries of the identity:
\begin{corollary}
    On a compact K\"ahler manifold $X$,
    \begin{align*}
        & [L, \d_c] = [L^*, \d_c^*] = 0 \\
        & [L, \d_c^*] = -d, [L^*, \d_c] = \d^* \\
        & [L,\dol] = [L, \bar\dol] = [L^*, \dol^*] = [L^*, \bar\dol^*] = 0\\
        & [L, \dol^*] = i\bar\dol, [L,\bar\dol^*] = -i\dol \\
        & [L^*, \dol] = i\bar\dol^*, [L^*, \bar\dol] = -i\dol^* \\
        & \d^* \d_c = -\d_c \d^* = \d^* L \d^* = -\d_c L^* \d_c \\
        & \d \d_c^* = - \d_c^* \d = \d_c^* L \d_c^*  = -\d L^* \d \\
        & \dol \bar\dol^* = -\bar\dol^* \dol = -i \bar\dol^* L \bar\dol^* = -i\dol L^* \dol \\
        & \bar\dol \dol^* = - \dol^* \bar\dol = i\dol^* L \dol^* = i \bar\dol L^* \bar\dol
    \end{align*}
\end{corollary}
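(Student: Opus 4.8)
The plan is to derive the whole list from just three inputs: the K\"ahler identity $[L,\d]=0$, the two bracket relations $[L,\d^*]=\d_c$ and $[L^*,\d]=-\d_c^*$ of the preceding proposition, and the nilpotency facts $\d^2=(\d^*)^2=\dol^2=\bar\dol^2=\d_c^2=(\d_c^*)^2=0$ (the last two because $\d_c=-i(\dol-\bar\dol)$ and $\dol\bar\dol+\bar\dol\dol=0$). Two general principles do all the work: separating an operator identity into its bidegree-homogeneous pieces, and taking Hodge adjoints of a bracket via $[A,B]^*=-[A^*,B^*]$.

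First I would prove the line $[L,\dol]=[L,\bar\dol]=[L^*,\dol^*]=[L^*,\bar\dol^*]=0$. Writing $\d=\dol+\bar\dol$ in $[L,\d]=0$ gives $[L,\dol]+[L,\bar\dol]=0$; since $L$ raises bidegree by $(1,1)$, the summand $[L,\dol]$ shifts bidegree by $(2,1)$ and $[L,\bar\dol]$ by $(1,2)$, so these homogeneous operators of distinct bidegree must vanish separately, giving $[L,\dol]=[L,\bar\dol]=0$, and adjoints give $[L^*,\dol^*]=[L^*,\bar\dol^*]=0$. Next, expanding $[L,\d^*]=\d_c=-i\dol+i\bar\dol$ and noting that $[L,\dol^*]$ and $[L,\bar\dol^*]$ carry the bidegrees $(0,1)$ and $(1,0)$ of $\bar\dol$ and $\dol$ respectively, I read off $[L,\dol^*]=i\bar\dol$ and $[L,\bar\dol^*]=-i\dol$; their adjoints yield $[L^*,\dol]=i\bar\dol^*$ and $[L^*,\bar\dol]=-i\dol^*$ (consistently, these also fall out of the bidegree decomposition of $[L^*,\d]=-\d_c^*$). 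The first two lines are then immediate linear combinations: since $\d_c=-i(\dol-\bar\dol)$ and $\d_c^*=i(\dol^*-\bar\dol^*)$, I distribute each bracket over these combinations and substitute, obtaining $[L,\d_c]=[L^*,\d_c^*]=0$, $[L,\d_c^*]=-\d$, and $[L^*,\d_c]=\d^*$.

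The last four lines are purely formal consequences of the commutators together with nilpotency. For $\d^*\d_c=-\d_c\d^*=\d^* L\d^*=-\d_c L^*\d_c$, I substitute $\d_c=[L,\d^*]=L\d^*-\d^* L$ into $\d^*\d_c$ and $\d_c\d^*$; the terms containing $(\d^*)^2$ vanish, leaving $\d^*\d_c=\d^* L\d^*=-\d_c\d^*$, and for the final equality I instead write $\d^*=[L^*,\d_c]=L^*\d_c-\d_c L^*$ and use $\d_c^2=0$ to get $\d^*\d_c=-\d_c L^*\d_c$. The three remaining lines follow by the identical pattern: for $\dol\bar\dol^*$ I use $\dol=i(L\bar\dol^*-\bar\dol^* L)$ with $(\bar\dol^*)^2=0$ and $\bar\dol^*=-i(L^*\dol-\dol L^*)$ with $\dol^2=0$, and analogously for $\d\d_c^*$ and $\bar\dol\dol^*$, always substituting the appropriate bracket expression for one factor and killing the square of the remaining operator.

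The routine but error-prone part, and the only real obstacle, is sign bookkeeping: fixing $\d_c^*=i(\dol^*-\bar\dol^*)$ (not its negative), tracking the $\pm i$ factors when inverting each commutator to solve for $\dol,\bar\dol,\dol^*,\bar\dol^*$, and applying $[A,B]^*=-[A^*,B^*]$ consistently. I would also flag that the bidegree-separation step relies on the Hodge decomposition $\mc E^r=\bigoplus_{p+q=r}\mc E^{p,q}$ being a direct sum, so that operators of distinct bidegree are linearly independent and their components can be equated term by term.
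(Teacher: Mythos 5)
Your proof is correct and is essentially the argument the paper intends: the paper states these identities without proof, as ``corollaries'' of the proposition $[L,\d^*]=\d_c$, $[L^*,\d]=-\d_c^*$, and your derivation --- bidegree decomposition of $[L,\d]=0$ and of the two bracket identities, Hodge adjoints via $[A,B]^*=-[A^*,B^*]$, and then substituting a bracket expression for one factor and killing the square of the other ($\d^2=(\d^*)^2=\dol^2=\bar\dol^2=\d_c^2=(\d_c^*)^2=0$) --- is exactly the standard way to obtain every line. One remark in your favor: you are right to insist on $\d_c^*=i(\dol^*-\bar\dol^*)$; the paper's earlier line ``$\d_c^*=-i(\dol^*-\bar\dol^*)$'' is a sign typo, since the Hodge adjoint of $\d_c=-i(\dol-\bar\dol)$ (equivalently the twisted conjugate $J^{-1}\d^*J$) is $i(\dol^*-\bar\dol^*)$, and only with this sign do the stated identities $[L,\d_c^*]=-\d$ and $[L^*,\d_c]=\d^*$ come out as written in the corollary.
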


\begin{proposition}
    $\Delta_c = \Delta$
\end{proposition}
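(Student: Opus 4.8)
The plan is to unwind the definition $\Delta_c = J^{-1}\Delta J$ and reduce the claim to the vanishing of the ``mixed'' cross terms of the Laplacian, which is exactly the content of the K\"ahler identities recorded in the preceding corollary. Since $J = \sum_{p,q} i^{p-q}\Pi_{p,q}$ acts on $\mc E^{p,q}(X)$ by the scalar $i^{p-q}$, conjugation by $J$ fixes any operator that preserves the bidegree decomposition: if $A(\mc E^{p,q})\subseteq\mc E^{p,q}$ then on $\mc E^{p,q}$ we get $J^{-1}AJ = i^{-(p-q)}\,A\,i^{p-q} = A$. Hence $\Delta_c = \Delta$ is equivalent to $[\Delta,J]=0$, and it suffices to prove that $\Delta$ preserves bidegree, i.e.\ maps $\mc E^{p,q}(X)$ into itself.

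First I would expand $\Delta = dd^* + d^*d$ using $d = \dol + \bar\dol$ and $d^* = \dol^* + \bar\dol^*$, and group the eight resulting terms according to how they shift type:
\begin{align*}
\Delta = \square_\dol + \square_{\bar\dol} + \big(\dol\bar\dol^* + \bar\dol^*\dol\big) + \big(\bar\dol\dol^* + \dol^*\bar\dol\big),
\end{align*}
where $\square_\dol = \dol\dol^* + \dol^*\dol$ and $\square_{\bar\dol} = \bar\dol\bar\dol^* + \bar\dol^*\bar\dol$. The two pure pieces $\square_\dol,\square_{\bar\dol}$ preserve bidegree (each summand raises one index and immediately lowers it, never touching the other), whereas the two mixed pieces strictly change type: $\dol\bar\dol^*$ and $\bar\dol^*\dol$ both send $\mc E^{p,q}$ to $\mc E^{p+1,q-1}$, while $\bar\dol\dol^*$ and $\dol^*\bar\dol$ send it to $\mc E^{p-1,q+1}$.

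The key step is to kill the mixed pieces. Here I invoke the K\"ahler identities already proven in the corollary, namely $\dol\bar\dol^* = -\bar\dol^*\dol$ and $\bar\dol\dol^* = -\dol^*\bar\dol$, which say precisely that $\dol\bar\dol^* + \bar\dol^*\dol = 0$ and $\bar\dol\dol^* + \dol^*\bar\dol = 0$. Thus $\Delta = \square_\dol + \square_{\bar\dol}$ preserves bidegree, so $[\Delta,J]=0$ and $\Delta_c = J^{-1}\Delta J = \Delta$. (Equivalently, one can compute $\Delta_c = d_cd_c^* + d_c^*d_c$ directly from $d_c = -i(\dol-\bar\dol)$ and its adjoint $d_c^* = i(\dol^*-\bar\dol^*)$; this reproduces $\square_\dol + \square_{\bar\dol}$ but with the mixed terms carrying the opposite sign, so $\Delta - \Delta_c$ equals twice the sum of the mixed terms and hence vanishes.)

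The main obstacle is not this final assembly, which is pure bookkeeping, but the K\"ahler identities it rests on: the mixed terms are genuinely nonzero on a general Hermitian manifold, and their vanishing is the sole reason the K\"ahler hypothesis is needed. That substantive input is supplied upstream (via $[L,d^*]=d_c$, $[L^*,d]=-d_c^*$ and the $\mf{sl}(2,\CC)$ machinery of the previous corollary), so the only thing to watch here is sign discipline — in particular getting the adjoint $d_c^*$ right, so that in the direct route the cross terms appear with matching signs and the cancellation goes through rather than producing a spurious $-\Delta$.
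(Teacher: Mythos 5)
Your proof is correct, but it takes a genuinely different route from the paper's. The paper substitutes the K\"ahler identity $d^* = [L^*, d_c]$ into $\Delta = dd^* + d^*d$, conjugates the four resulting terms by $J$ one at a time (using $J^{-1}dJ = d_c$, $J^{-1}d_cJ = -d$ and $[L^*,J]=0$), and then recovers $\Delta$ via the anticommutation $\{d,d_c\}=0$; it never leaves the ``real'' operators $d$, $d_c$, $L^*$. You instead use a different part of the same corollary --- the anticommutators $\dol\bar\dol^* = -\bar\dol^*\dol$ and $\bar\dol\dol^* = -\dol^*\bar\dol$ --- to kill the mixed-type terms of $\Delta$, concluding that $\Delta = \square_\dol + \square_{\bar\dol}$ preserves each $\mc E^{p,q}$ and therefore commutes with $J$, which makes the conjugation trivial. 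Your route buys more: bidegree preservation of $\Delta$ is exactly what the Hodge decomposition argument needs later (the paper only extracts it after the next proposition, $\Delta = 2\square = 2\bar\square$), and your identity $\Delta = \square_\dol + \square_{\bar\dol}$ is already most of the way to that proposition; the paper's route, in exchange, needs no type bookkeeping and stays inside the $d$, $d_c$, $L^*$ calculus of the $\mf{sl}_2$ formalism. One further point in your favor: your sign $d_c^* = i(\dol^* - \bar\dol^*)$ is the one forced by the paper's own corollary (combining $[L^*,d] = -d_c^*$ with $[L^*,\dol] = i\bar\dol^*$ and $[L^*,\bar\dol] = -i\dol^*$), so the paper's displayed formula $d_c^* = -i(\dol^* - \bar\dol^*)$ carries a sign slip; your warning about sign discipline is well placed, since with that wrong sign your parenthetical direct computation would indeed return $-\Delta$ instead of $\Delta$.
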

\begin{proof}
    From $d^* = [L^*, d_c]$, $[L,J]=[L^*,J]=0$, this follows easily. Firstly, write:
    \begin{align*}
        \Delta 	& = dd^* + d^* d
        \\ 			& = d[L^*,d_c] + [L^*,d_c]d
        \\			& = dL^* d_c - d d_c L^* + L^* d_c d - d_c L^* d
    \end{align*}
    and now, applying $J^{-1}$ on the left and $J$ on the right to each summand,
    \begin{align*}
        J^{-1} (dL^* d_c) J &= J^{-1} (d J J^{-1} L^* (-J d J^{-1})) J
        \\					&= - (J^{-1} d J) (J^{-1} L^* J) d
        \\					&= - d_c (J^{-1} J L^*) d
        \\					&= - d_c L^* d
    \end{align*}
    and working similarly for other summands, we obtain:
    \begin{align*}
    \Delta_c &= J^{-1} \Delta J
        \\			&= -d_c L^* d + d_c d L^* - L^* d d_c + d L^* d_c
        \\			&= dL^* d_c + d_c d L^* - L^* d d_c - d_c L^* d
        \\			&= dL^* d_c - d d_c L^* + L^* d_c d - d_c L^* d
        \\			&= \Delta
    \end{align*}
    where in last equality we used $\{d,d_c\}=0$, which can be proven via direct computation on a $(p,q)$-form just like the previous proof of $2\dol = -i(\dol - \bar \dol)$.
\end{proof}

\begin{proposition}
    $\Delta = 2 \square = 2\bar\square$
\end{proposition}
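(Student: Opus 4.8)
The plan is to reduce the identity to two separate facts: that the full Laplacian splits cleanly as $\Delta = \square + \bar\square$ with the mixed terms cancelling, and that the two half-Laplacians coincide, $\square = \bar\square$. Here $\square := \dol\dol^* + \dol^*\dol$ and $\bar\square := \bar\dol\bar\dol^* + \bar\dol^*\bar\dol$, and I will use $\d = \dol + \bar\dol$ and $\d^* = \dol^* + \bar\dol^*$. Both facts follow from the commutation relations collected in the corollary above, together with the purely formal identities $\dol^2 = \bar\dol^2 = 0$ and $\dol\bar\dol = -\bar\dol\dol$, all of which come from splitting $\d^2 = 0$ by bidegree. No further analysis is needed; compactness and the K\"ahler hypothesis enter only through those already-established algebraic identities.

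First I would expand
\begin{align*}
\Delta = \d\d^* + \d^*\d = (\dol + \bar\dol)(\dol^* + \bar\dol^*) + (\dol^* + \bar\dol^*)(\dol + \bar\dol)
\end{align*}
and sort the eight resulting summands by bidegree. The four ``pure'' terms assemble precisely into $\square + \bar\square$, while the four ``mixed'' terms group as $(\dol\bar\dol^* + \bar\dol^*\dol) + (\bar\dol\dol^* + \dol^*\bar\dol)$. The corollary furnishes exactly the anticommutation relations $\dol\bar\dol^* = -\bar\dol^*\dol$ and $\bar\dol\dol^* = -\dol^*\bar\dol$, so each mixed pair vanishes and $\Delta = \square + \bar\square$. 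This step is routine once the right identities are quoted.

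The substance is the identity $\square = \bar\square$. Here I would use the K\"ahler commutators from the corollary in the form $\dol^* = i[L^*, \bar\dol]$ and $\bar\dol^* = -i[L^*, \dol]$ (equivalently $[L^*,\bar\dol] = -i\dol^*$ and $[L^*,\dol] = i\bar\dol^*$). Substituting the first expression into $\square$ and the second into $\bar\square$ turns both operators into $L^*$-dressed combinations of $\dol$ and $\bar\dol$. Forming the difference $\square - \bar\square$ and collecting terms, I expect the summands to cancel in pairs: the two terms of shape $\dol L^*\bar\dol$ cancel directly against each other, the two of shape $\bar\dol L^*\dol$ likewise, and the remaining four collapse using $\dol\bar\dol + \bar\dol\dol = 0$ applied on either side of $L^*$, giving $-(\dol\bar\dol + \bar\dol\dol)L^* + L^*(\dol\bar\dol + \bar\dol\dol) = 0$. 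Hence $\square - \bar\square = 0$.

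Combining the two facts yields $\Delta = \square + \bar\square = 2\square = 2\bar\square$. The main obstacle is entirely one of bookkeeping in the third step: tracking the factors of $i$ produced by the commutators and respecting the order of the noncommuting operators $L^*, \dol, \bar\dol$ when the brackets are expanded, then recognizing that every surviving term carries a factor $\dol\bar\dol + \bar\dol\dol$ which annihilates. Once the expansion is organized so that these cancellations are visible, the proof is purely algebraic and the K\"ahler identities do all the real work.
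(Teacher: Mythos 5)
Your proof is correct, but it takes a different route from the paper's. The paper works with the real operators $\d$ and $\d_c$: it writes $2\dol = \d + i\d_c$, $2\dol^* = \d^* - i\d_c^*$, expands $4\square = (\d+i\d_c)(\d^*-i\d_c^*) + (\d^*-i\d_c^*)(\d+i\d_c)$, kills the cross terms with the anticommutation relations $\{\d_c,\d^*\} = \{\d,\d_c^*\} = 0$, and then invokes the separately proven proposition $\Delta_c = \Delta$ to conclude $4\square = \Delta + \Delta_c = 2\Delta$. You instead stay entirely in the bidegree picture: you first split $\Delta = \square + \bar\square$ using the mixed anticommutators $\dol\bar\dol^* = -\bar\dol^*\dol$ and $\bar\dol\dol^* = -\dol^*\bar\dol$, and then prove $\square = \bar\square$ by substituting $\dol^* = i[L^*,\bar\dol]$ and $\bar\dol^* = -i[L^*,\dol]$ and cancelling (I checked the expansion; every surviving term indeed carries a factor $\dol\bar\dol + \bar\dol\dol = 0$). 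All the identities you quote do appear in the paper's corollary, so the proof is complete as stated. What your route buys: it bypasses the twisted-conjugate operator $J$, the $\Delta_c = \Delta$ proposition, and the $\d_c$ formalism altogether, it isolates the intermediate fact $\Delta = \square + \bar\square$ (useful in its own right), and it delivers both equalities $\Delta = 2\square$ and $\Delta = 2\bar\square$ at once, whereas the paper's computation directly gives only $\Delta = 2\square$ and leaves the $\bar\square$ case to symmetry. What the paper's route buys: once $\Delta_c = \Delta$ is in hand, the computation is a four-line expansion, and the real-operator formulation ($\d$, $\d_c$) is the one that generalizes to statements about real cohomology classes.
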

\begin{proof}
    \begin{align*}
        4\square &= 4(\dol \dol^* + \dol^* \dol)
        \\ &= (d+id_c)(d^* - id_c^*) + (d^* - id_c^* ) (d+id_c)
        \\ &= (dd^* + d^*d) + (d_cd_c^* + d_c^* d_c) +i (d_c d^* + d^* d_c) - i (dd^*_c + d^*_c d)
        \\ &= \Delta + \Delta_c + i\{d_c, d^*\} - i \{ d, d^*_c \}
        \\ &= \Delta + \Delta_c + 0 + 0
        \\ &= 2\Delta
    \end{align*}
\end{proof}

\begin{theorem}[Hodge Decomposition]
    For a compact K\"ahler manifold,
    \begin{align*}
        H^r(X,\CC) \cong \bigoplus_{p+q=r} H^{p,q}(X)
    \end{align*}
\end{theorem}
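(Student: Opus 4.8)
The plan is to compare both sides of the isomorphism with spaces of harmonic forms and to use the identity $\Delta = 2\bar\square$ as the bridge between them. On the left, the Hodge theorem already established gives $H^r(X,\CC) \cong \mathcal H^r(X)$, where $\mathcal H^r(X) = \ker\big(\Delta : \mc E^r(X) \to \mc E^r(X)\big)$ and $\Delta = dd^* + d^*d$. On the right, I would first invoke the Dolbeault theorem to identify $H^{p,q}(X)$ with the $\bar\dol$-cohomology $H^q(\mc E^{p,*}(X))$, and then apply the abstract Hodge theorem for elliptic operators to $\bar\square = \bar\dol\,\bar\dol^* + \bar\dol^*\,\bar\dol$. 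Since $\bar\square$ is elliptic (its top-order symbol agrees with that of $\Delta$ up to the scalar $2$, which is the content of $\Delta = 2\bar\square$), the finiteness, regularity, and existence theorems apply verbatim, producing a Hodge decomposition for $\bar\dol$ and hence an isomorphism $H^{p,q}(X) \cong \mathcal H^{p,q}_{\bar\square}(X)$ with the space of $\bar\square$-harmonic $(p,q)$-forms.

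The decisive input is the just-proven identity $\Delta = 2\bar\square$, valid because $X$ is K\"ahler. It forces the two notions of harmonicity to coincide: a form is $\Delta$-harmonic if and only if it is $\bar\square$-harmonic. Moreover $\bar\square$ preserves bidegree, since $\bar\dol : \mc E^{p,q} \to \mc E^{p,q+1}$ and its adjoint $\bar\dol^* : \mc E^{p,q} \to \mc E^{p,q-1}$ both fix $p$. Consequently $\Delta = 2\bar\square$ preserves bidegree as well, so for a harmonic $r$-form $\varphi = \sum_{p+q=r}\varphi^{p,q}$ the vanishing $\Delta\varphi=0$ is equivalent to $\Delta\varphi^{p,q}=0$ for every $(p,q)$. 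This yields the orthogonal decomposition
\begin{align*}
\mathcal H^r(X) = \bigoplus_{p+q=r} \mathcal H^{p,q}_{\bar\square}(X),
\end{align*}
and stringing the three isomorphisms together gives $H^r(X,\CC) \cong \bigoplus_{p+q=r} H^{p,q}(X)$.

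To see this agrees with the stated map $[\varphi] \mapsto ([\varphi^{r,0}],\dots,[\varphi^{0,r}])$, I would take $\varphi$ to be the \emph{unique} $\Delta$-harmonic representative of the given de Rham class, as furnished by the Hodge theorem. Bidegree preservation of $\Delta$ then ensures each component $\varphi^{p,q}$ is itself $\Delta$-harmonic, hence $\bar\square$-harmonic, hence in particular $\bar\dol$-closed, so it represents a well-defined Dolbeault class $[\varphi^{p,q}] \in H^{p,q}(X)$. Independence of the chosen representative is automatic from uniqueness of the harmonic representative, and injectivity and surjectivity of the assembled map follow from the orthogonal direct-sum decomposition above together with the $\bar\dol$-Hodge isomorphism.

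The genuine difficulty is entirely concentrated in the identity $\Delta = 2\bar\square$, which in turn rests on the K\"ahler identities such as $[L,\dol^*] = i\bar\dol$ and ultimately on the $\mf{sl}_2\CC$ representation theory organizing $L$, $L^*$, and $B$; all of this is available from the preceding sections. Granting it, the remaining obstacle is purely one of bookkeeping: checking that the elliptic Hodge machinery transfers to $\bar\square$ and that bidegree decomposition is compatible with harmonicity. The one subtlety worth flagging is that for a \emph{general} (non-harmonic) closed form the components $\varphi^{p,q}$ need not be $\bar\dol$-closed, so one must pass to the harmonic representative before extracting components. This is precisely where the K\"ahler hypothesis is used: without $\Delta = 2\bar\square$ the $\Delta$-harmonic forms would fail to respect the bidegree grading, and the decomposition would break down.
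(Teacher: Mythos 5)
Your proposal is correct and follows essentially the same route as the paper: both reduce the de Rham side to $\Delta$-harmonic forms via the Hodge theorem, reduce the Dolbeault side to $\bar\square$-harmonic forms via the elliptic machinery, and use the K\"ahler identity $\Delta = 2\bar\square$ together with bidegree preservation of $\bar\square$ to split $\mathcal H^r(X)$ into $\bigoplus_{p+q=r}\mathcal H^{p,q}(X)$. Your treatment is in fact slightly more careful than the paper's, since you make explicit the identification $H^{p,q}(X)\cong\mathcal H^{p,q}_{\bar\square}(X)$ and flag that one must pass to the harmonic representative before taking $(p,q)$-components.
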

\begin{proof}
    Noting that $\Delta = 2\bar \square$ and the harmonic theory established earlier, the following diagram will give the proof:
    \[\begin{tikzcd}
    H^r(X,\CC) \ar[r,dashed] \ar[d] & \bigoplus_{p+q=r} H^{p,q}(X) \ar[d] \\
    \mc H^r(X) \ar[r] & \mc \bigoplus_{p+q=r} \mc{H}^{p,q}(X)
    \end{tikzcd}\]
    The map $\mathcal H^r(X) \cong \bigoplus_{p+q=r} \mathcal H^{p,q}(X)$ is given by bidegree splitting of a $r$-form into $(p,q)$-forms: for $\varphi \in \mathcal H^r(X)$, 
    \begin{align*}
        \varphi = \varphi^{0,r}+\varphi^{1,r-1}+\cdots + \varphi^{r-1,1}+\varphi^{0,r}
    \end{align*}
    where $\varphi^{p,q}\in \mathcal H^{p,q}(x)$ give a unique decomposition. Then, since $\Delta \varphi = 0$ by assumption, 
    \begin{align*}
        \Delta(\varphi) &= \Delta (\sum \varphi^{p,q}) 
        \\ &= 2\bar\square(\sum \varphi^{p,q})
        \\ &=\sum \bar\square(\varphi^{p,q})
        \\ &=0
    \end{align*}
    Since $\bar\square$ preserves bidegree, each of the summand $\bar\square(\varphi^{p,q})=0$ too, thus giving a well-defined map $\mathcal H^{r}(X) \rightarrow \bigoplus_{p+q=r} \mathcal H^{p,q}(X)$. 
    
    This map is injective since if bidegree-components of two different $r$-forms agree, then they are the same. This map is surjective since each $\bar\square$-closed $(p,q)$-form is also a $\Delta$-closed $(p+q)=r$-form.
\end{proof}
    
Every smooth complex projective variety admits a K\"ahler metric inherited from the Fubini-Study metric of projective space, and thus they each satisfy Hodge decomposition. Meanwhile, an analogue of Hodge decomposition would not work in general for singular varieties. An example is variety $V(x^3+y^3=xyz)$, for which $\dim H_1(V(x^3+y^3=xyz))=1$ and thus there can't be an analogue of Hodge decomposition. Since Dolbeault groups were defined using differential forms, we attempt to formulate the decomposition immediately. However, a variant called $L^2$ cohomology does the work in some cases.

Here, we derive some interesting corollaries of the decomposition theorem.

\medskip
\textbf{Cohomology Groups of Projective Space: $H^q(\PP^n,\Omega^p)=H^{p,q}(\PP^n)=\mathbb Z^{\delta_{p,q}}$}

In view of Dolbeault theorem, we see that for compact K\"ahler manifolds, sheaf cohomology of holomorphic forms is controlled by topological conditions (de Rham cohomology). For spaces with simple cohomology, such as $\PP^n$, this immediately gives computation of their $H^q(X,\Omega^p)$'s:
\begin{align*}
H^{p,q}(\PP^n) \cong H^q(\PP^n,\Omega^p)=\begin{cases}\ZZ & \text{if }p=q \\ 0 & \text{if }p\neq q \end{cases}
\end{align*}

\medskip
\textbf{Mittag-Leffler for $\PP^n$}

With the above knowledge of $H^{0,1}(\PP^n)\cong H^1(\PP^n,\mathcal O)=0$, we see that Mittag-Leffler problem for projective spaces is immediately solved. Thus, we can find always find a meromorphic function with prescribed principal parts on $\PP^n$.

\medskip
\textbf{Line Bundles of $\PP^n$ are completely determined by Chern class}

Furthermore, $H^1(\PP^n,\mathcal O)=H^2(\PP^n,\mathcal O)=0$ gives exact sequence
\begin{align*}
\textcolor{gray}{H^1(\PP^n,\mathcal O) =} 0\rightarrow H^1(\PP^n,\mathcal O^\times) \rightarrow H^2(\PP^n, \ZZ)\rightarrow 0 \textcolor{gray}{= H^2(\PP^n,\mathcal O)}
\end{align*}
coming from exponential sheaf sequence. Thus $H^1(\PP^n,\mathcal O^\times) \cong H^2(\PP^n,\ZZ) \cong \ZZ$ and line bundles of $\PP^n$ (parametrized by $H^1(\PP^n,\mathcal O^\times)$) correspond to a single invariant, which turns out to be Chern class because of Bockstein morphism in the sequence.

\medskip
\textbf{Cohomology Groups of Compact Stein Manifolds: $H^q(X,\Omega^p) = H^{p,q}(X) = \ZZ^{\delta_{q,0}}$}

We have
\begin{align*}
& \Omega^r(X) \cong H^{r,0}(X) \cong H^r(X,\CC) \\
& H^q(X,\Omega^p) \cong H^{p,q}(X) = 0 \text{ whenever $q>0$}
\end{align*}
for compact Stein manifolds $X$. We obtain this by applying Hodge decomposition (Stein manifolds are K\"ahler as submanifolds of $\CC^n$) and then applying Cartan's Theorem B:
\begin{align*}
& H^q(X,\mathcal F)=0 \text{ for coherent sheaf $\mathcal F$ and $q>0$} \\
\implies & H^q(X,\Omega^p)=0 \text{ for $q>0$}
\end{align*}

\medskip
\textbf{Equivalence of $d,\dol,\bar\dol-$closedness and exactness}

Interpreting the isomorphism literally, the Hodge decomposition is equivalent to the following two statements:
\begin{itemize}
\item A differential form is $d$-exact $\iff$ $\dol$-exact $\iff$ it is $\bar\dol$-exact.
\item A differential form is $d$-closed $\iff$ $\dol$-closed $\iff$ $\bar\dol$-closed.
\end{itemize}
The first statement is equivalent to injectivity of the decomposition, and half of the well-definedness. The second statement is equivalent to surjectivity and other half of the well-definedness. The first statement actually follows from a lemma called $\dol\bar\dol$-lemma, and it is a direct corollary of theory of elliptic operators. The second statement needs more machinery than this. The standard proof establishes $\Delta = 2\bar\square$ using K\"ahler identities, which implies the second statement. Note that $\Delta \varphi = 0 \iff d\varphi=d^*\varphi=0 \iff d\varphi=-*d*\varphi=0 \iff d\varphi = d(*\varphi)=0$ and similarly $\bar\square \varphi = 0 \iff \bar\dol \varphi = \bar\dol(*\varphi)=0$.

\section{Kodaira Embedding Theorem}

The Kodaira embedding theorem states that a complex manifold is a projective variety iff it has a positive line bundle. A map is constructed using global sections of this line bundle, and the fact that the map is indeed an embedding (well-defined, injective and of full rank) is shown using vanishing of certain first cohomology groups. Positivity of the line bundles implies vanishing of the first cohomology groups. This strategy, however, doesn't work directly and a workaround is accomplished using blow-up of the underlying manifold at (one or two) points. 

We introduce the proof in four subsections. In the first subsection we discuss positive line bundles. In the second subsection we prove the Kodaira-Nakano vanishing theorem, which shows vanishing of cohomology to be used later. In the third subsection, we introduce blow-up and its properties; a positive line bundle must be modified appropriately in a blow-up to still be positive. In the fourth subsection we complete the proof of the Kodaira embedding theorem.

\subsection{Positive Line Bundles}

\begin{definition}
    A $(1,1)$-form is said to be \emph{positive} if it can be written in the form
    $$i \sum_{\mu, \nu=1}^n h_{\mu \nu} \d z_\mu \wedge \d \bar{z}_\nu$$
    where the matrix $(h_{\mu \nu})$ is Hermitian and positive definite.
    
    A holomorphic line bundle $L \rightarrow X$ is said to be \emph{positive} if its (first) Chern class has a representative that is a real positive differential form. In other words, the curvature form is cohomologous to a 2-form with positive definite matrix coefficients.
\end{definition}
The reformulation of the definition comes from the fact that $c_1 = [\frac i{2\pi} \Theta]$.

\begin{lemma}\label{kahler from positive}
    If $E \rightarrow X$ is a positive line bundle over a compact complex manifold, then $X$ is a K\"ahler manifold.
\end{lemma}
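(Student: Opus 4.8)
The plan is to extract from the positivity hypothesis a closed, real, positive $(1,1)$-form and then recognize it directly as the fundamental form of a K\"ahler metric. By the definition of a positive line bundle, the first Chern class $c_1(E) = [\frac{i}{2\pi}\Theta]$ admits a representative $\omega$ that is a real positive $(1,1)$-form; since $\omega$ lies in a de Rham cohomology class it is in particular closed. Locally I would write $\omega = i\sum_{\mu,\nu} h_{\mu\nu}\,\d z_\mu \wedge \d\bar z_\nu$ with $(h_{\mu\nu})$ smooth, Hermitian, and positive definite.

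First I would check that the pointwise positive-definiteness is a coordinate-independent notion, so that $\omega$ genuinely prescribes a fiberwise Hermitian inner product. Under a holomorphic change of coordinates with Jacobian $A$, the coefficient matrix transforms as $(h_{\mu\nu}) \mapsto A^*(h_{\mu\nu})A$ (the same change-of-frame behaviour recorded earlier for Hermitian metrics), and since $A$ is invertible this preserves Hermitian positive-definiteness. Hence the local matrices glue to a globally defined Hermitian metric $h = \sum_{\mu,\nu} h_{\mu\nu}\,\d z_\mu \otimes \d\bar z_\nu$ on the holomorphic tangent bundle, exactly the passage from a positive $(1,1)$-form to a metric used in the Fubini--Study example.

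Next I would compute the fundamental form $\Omega$ of this metric. By the defining relation between a metric and its fundamental form, $\Omega = \frac{i}{2}\sum_{\mu,\nu} h_{\mu\nu}\,\d z_\mu \wedge \d\bar z_\nu = \tfrac12\,\omega$, so $\Omega$ is a constant multiple of $\omega$ and therefore closed. By the definition of a K\"ahler metric (closed fundamental form), $h$ is K\"ahler and $X$ is a K\"ahler manifold; note that compactness is not actually needed here, as the construction is local gluing followed by one global closedness statement.

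The only genuine content, and the step I expect to require the most care, is the verification that positivity of the $(1,1)$-form is preserved under holomorphic coordinate changes via the $A^* h A$ transformation, since this is precisely what guarantees that the locally defined coefficient matrices assemble into a single globally defined, everywhere positive-definite Hermitian metric. Once that is in hand, the identification of $\omega$ with the fundamental form is a bookkeeping matter of tracking the factors of $i$ and $\tfrac12$.
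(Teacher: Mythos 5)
Your proof is correct and takes essentially the same approach as the paper: extract the closed, real, positive $(1,1)$-form representing $c_1(E)$, observe that its coefficient matrix defines a Hermitian metric whose fundamental form is (a constant multiple of) that form, and conclude the metric is K\"ahler by the closedness definition. The coordinate-invariance check via the $A^* h A$ transformation and the remark that compactness is not actually used are details the paper's one-line proof leaves implicit, but they fill in rather than alter its argument.
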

\begin{proof}
    Let $i\varphi$ be the positive differential form in the Chern class of the positive line bundle; coefficient matrix of $\varphi$ is positive definite and Hermitian. Then positivity and closedness of $\varphi$ gives $X$ a K\"ahler metric with fundamental form equal to $i \varphi$. 
\end{proof}

When a line bundle is positive, we know that a metric gives curvature $\Theta$ such that $i\Theta$ is cohomologous to some $i\omega$ where $\omega$ is positive. The following proposition states that we can just directly assume that $i \Theta$ is positive, for some metric.

\begin{proposition}\label{positive bundle criterion}
    A holomorphic line bundle $E$ over a compact complex manifold $X$ is positive iff there is a metric that gives curvature $\Theta$ such that $i\Theta$ is positive.
\end{proposition}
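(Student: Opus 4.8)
The plan is to prove the two implications separately; the reverse implication is immediate, while the forward implication rests on the $\dol\bar\dol$-lemma on a compact K\"ahler manifold. Throughout I will use that the curvature of the canonical connection of a Hermitian holomorphic line bundle is locally $\Theta = \bar\dol\dol\log h$, that this is a real $(1,1)$-form in the sense that $i\Theta$ is real (since $\log h$ is real and $\overline{\bar\dol\dol\log h} = \dol\bar\dol\log h = -\bar\dol\dol\log h$), and that $c_1(E) = [\tfrac{i}{2\pi}\Theta]$ is independent of the chosen metric.

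For the ``if'' direction I would argue directly: if some metric gives curvature $\Theta$ with $i\Theta$ positive, then $\tfrac{1}{2\pi}\,i\Theta$ is again a positive $(1,1)$-form, because scaling a positive-definite Hermitian coefficient matrix by the positive constant $\tfrac{1}{2\pi}$ leaves it positive definite. Since this form represents $c_1(E)$, the class $c_1(E)$ has a positive representative, which is exactly the definition of $E$ being positive.

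For the ``only if'' direction I would assume $E$ positive, so that $c_1(E)$ contains a positive real $(1,1)$-form $\omega$. First I would invoke Lemma~\ref{kahler from positive} to conclude that $X$ is compact K\"ahler, which is what makes the $\dol\bar\dol$-lemma available. Next, fixing any Hermitian metric $h_0$ with curvature $\Theta_0$, the forms $\tfrac{i}{2\pi}\Theta_0$ and $\omega$ both represent $c_1(E)$, so their difference $\alpha := \omega - \tfrac{i}{2\pi}\Theta_0$ is $d$-exact. This $\alpha$ is a real $(1,1)$-form, and being $d$-exact it is $d$-closed, hence $\dol$- and $\bar\dol$-closed by the bidegree argument ($d\alpha = \dol\alpha + \bar\dol\alpha$ splits into a $(2,1)$- and a $(1,2)$-part, each of which must vanish). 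The $\dol\bar\dol$-lemma then supplies a smooth real function $\phi$ with $\alpha = \tfrac{i}{2\pi}\bar\dol\dol\phi$. Finally I would set $h := e^{\phi} h_0$; its curvature is $\Theta = \bar\dol\dol\log h = \Theta_0 + \bar\dol\dol\phi$, whence $\tfrac{i}{2\pi}\Theta = \omega$ and so $i\Theta = 2\pi\omega$ is positive.

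The main obstacle is the careful use of the $\dol\bar\dol$-lemma together with its bookkeeping: I must confirm that the difference of Chern representatives is genuinely a global, $d$-exact, real $(1,1)$-form to which the lemma applies, that the potential $\phi$ may be taken real (which follows from the reality of $\alpha$, replacing $\phi$ by its real part or averaging with $\bar\phi$), and that the signs and the factor $\tfrac{i}{2\pi}$ line up correctly using $\bar\dol\dol = -\dol\bar\dol$. Apart from this identity-chasing, the one genuine input is the K\"ahler hypothesis, which Lemma~\ref{kahler from positive} provides precisely so that the $\dol\bar\dol$-lemma can be applied.
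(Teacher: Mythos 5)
Your proof is correct, and its skeleton is the same as the paper's: fix a reference metric $h_0$ with curvature $\Theta_0$, write the positive representative as $\Theta_0 + (\text{exact term})$, produce a potential function whose $\bar\dol\dol$ equals that exact term, and rescale the metric by the exponential of the potential. The one real difference is where the potential comes from. You invoke the global $\dol\bar\dol$-lemma on the compact K\"ahler manifold (obtained, as in the paper, from Lemma~\ref{kahler from positive}) as a black box; the paper instead \emph{constructs} the potential explicitly as $2iL^*G\,\d\eta$ and verifies $\d\eta = \bar\dol\dol(2iL^*G\,\d\eta)$ by a chain of Hodge-theoretic identities ($\eta = \mc H\eta + \Delta G\eta$, $\Delta = 2\bar\square$, $[\bar\dol,L^*]$-type K\"ahler identities, and the commutations $[\d,G]=[\dol,G]=0$). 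In other words, the paper proves precisely the instance of the $\dol\bar\dol$-lemma it needs, inline, while you cite the lemma wholesale. Your route is cleaner and more modular, but note that this paper never states or proves the $\dol\bar\dol$-lemma as a standalone result (it is only mentioned in passing as a corollary of elliptic theory), so within this document your citation is a genuine dependency that the paper's inline computation is designed to discharge; if you wanted your argument to be self-contained here, you would need to reproduce essentially the paper's Green's-operator computation to justify the lemma. Your bookkeeping (bidegree argument for $\dol$- and $\bar\dol$-closedness, reality of the potential, the harmless factor $\tfrac{1}{2\pi}$) is all sound.
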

\begin{proof}
    When there is such a metric, the line bundle is positive by definition.
    
    Conversely suppose there is a metric $h$ such that its curvature $\Theta_0 = \bar \dol \dol \log h$ is cohomologous to another (1,1)-form $\Theta$:
    $$\Theta = \Theta_0 + \d \eta$$
    Positivity of $\Theta$ makes $X$ a K\"ahler manifold; if $\Theta = \sum_{\mu, \nu} \varphi_{\mu \nu} \d z_\mu \wedge \d \bar z_\nu$, the matrix $(\varphi_{\mu \nu})$ is Hermitian and $\Theta$ is closed, so that there is an associated K\"ahler metric $2\sum \varphi_{\mu \nu} \d z_\mu \otimes \d \bar z_\nu$ on compact complex manifold $X$.
    
    We claim that $\d \eta = \bar \dol \dol (2i L^* G \d \eta)$, which will imply that, for a new metric $h=e^{2i L^* G \d \eta} \cdot h_0$ its curvature is given by 
    $$\bar{\dol} \dol \log h = \bar{\dol}\dol \log h_0 + \bar\dol \dol (2i L^* G \d \eta) = \Theta_0 + \d \eta$$
    Now we prove the claim using K\"ahler identities:
    \begin{align*}
        & \eta = \mc{H} \eta + \Delta G \eta \\
        \implies & \d \eta = \d \mc{H} \eta + \d \Delta G \eta = \Delta G \d \eta \text{ (since $\d \mc{H} = 0, [\d ,\Delta] = [\d, G] = 0$)} \\
        \implies & \d \eta = 2 \bar \dol \bar{\dol}^* G \d \eta + 2  \bar{\dol}^* G \bar{\dol} \d \eta \text{ (since $\Delta = 2 \bar \square$ and(?) $[\bar\dol, G] = 0 $)}\\
        \implies & \d \eta = 2 \bar \dol \bar{\dol}^* G \d \eta \text{ (since $\bar \dol \d \eta = \bar \dol (\varphi - \varphi_0) = 0$ by $\dol \bar \dol$-Poincare lemma)} \\
        \implies & \d \eta = 2 \bar \dol (\frac1i(L^* \dol - \dol L^*)) G \d \eta \text{ (since $[L^*, \dol] = i \bar \dol^*$)}\\
        \implies & \d \eta = 2 i \bar \dol \dol L^* G \d \eta \text{ (since $ [\dol, G]=0, \dol \d \eta = 0$)}
    \end{align*}
\end{proof}

\subsection{The Kodaira-Nakano Vanishing Theorem}

\begin{theorem}[Kodaira-Nakano]
    Suppose $X$ is a compact complex manifold.
    \begin{enumerate}
        \item For a holomorphic line bundle $E \rightarrow X$ such that $E \otimes K_X^*$ is positive,
        $$q>0 \implies H^q(X,\mc O(E)) = 0$$
        \item For a negative line bundle $E \rightarrow X$,
        $$p+q < n \implies H^q(X, \Omega^p(E)) = 0 $$
    \end{enumerate}
\end{theorem}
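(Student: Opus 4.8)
The plan is to prove both vanishing statements simultaneously by using the Kodaira identity relating the two Laplacians on forms twisted by a line bundle, together with the Hodge theorem for the $\bar\partial$-Laplacian $\bar\square$ acting on $E$-valued forms. By the Dolbeault isomorphism established earlier (Theorem for $H^q(X,\Omega^p(E))$), we have $H^q(X,\Omega^p(E)) \cong H^q(\mc E^{p,*}(X,E))$, and by applying the harmonic theory to the elliptic operator $\bar\square = \bar\dol\bar\dol^* + \bar\dol^*\bar\dol$ acting on $\mc E^{p,q}(X,E)$, each such cohomology class is represented by a unique $\bar\square$-harmonic $E$-valued $(p,q)$-form. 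Thus it suffices to show that there are no nonzero $\bar\square$-harmonic forms in the relevant bidegrees.

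\textbf{Key steps.} First I would set up the canonical connection $D = D' + D''$ on $E$ with $D'' = \bar\dol$, and form the two Laplacians $\square = D'D'^* + D'^*D'$ and $\bar\square = \bar\dol\bar\dol^* + \bar\dol^*\bar\dol$ on $\mc E^{p,q}(X,E)$. The crucial input is the Bochner-Kodaira-Nakano identity
\begin{align*}
\bar\square = \square + [i\Theta(E), \Lambda]
\end{align*}
where $\Theta(E)$ is the curvature of $E$, $\Lambda = L^*$ is the adjoint of the Lefschetz operator, and the commutator term is an algebraic (zeroth-order) operator acting fiberwise. This identity follows by twisting the Kähler identities $[L^*,\dol] = i\bar\dol^*$ from the Corollary in the Hodge decomposition section with the line-bundle connection and tracking the extra curvature contribution arising from $D'^* $ versus $\bar\dol^*$. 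Next I would take a harmonic form $\psi$ (so $\bar\square\psi = 0$), pair the identity against $\psi$ to get $0 = (\square\psi,\psi) + ([i\Theta,\Lambda]\psi,\psi) = \|D'\psi\|^2 + \|D'^*\psi\|^2 + ([i\Theta,\Lambda]\psi,\psi)$, and compute the sign of the zeroth-order term $([i\Theta,\Lambda]\psi,\psi)$ using positivity or negativity of the curvature.

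\textbf{The two cases.} For statement (1), I twist so that the relevant bundle is $E \otimes K_X^*$ being positive, which by Proposition~\ref{positive bundle criterion} means one can choose a metric with $i\Theta(E\otimes K_X^*)$ positive; rewriting the identity in terms of $E$ and the canonical bundle and working in bidegree $(n,q)$ (using $H^q(X,\mc O(E)) = H^q(X,\Omega^n(E\otimes K_X^*))$ via $\mc O(E) \cong \Omega^n(E \otimes K_X^*)$) forces the curvature term to be strictly positive when $q>0$, yielding $\psi=0$. For statement (2), negativity of $E$ gives $i\Theta(E)$ negative definite, and a standard eigenvalue estimate for $[i\Theta,\Lambda]$ on $(p,q)$-forms shows the zeroth-order term has a definite sign (of the appropriate direction) precisely when $p+q < n$, again forcing all three nonnegative pieces to vanish and hence $\psi = 0$.

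\textbf{Main obstacle.} The hard part will be establishing the Bochner-Kodaira-Nakano identity $\bar\square = \square + [i\Theta,\Lambda]$ with the correct sign and the correct curvature term, since this requires carefully commuting the connection operators past $L$ and $L^*$ in the $E$-twisted setting and verifying that all the first-order "cross terms" cancel, leaving only the algebraic curvature commutator. Closely tied to this is the second obstacle: computing the eigenvalues of the Hermitian operator $[i\Theta,\Lambda]$ acting on $\wedge^{p,q}T^*X \otimes E$ and checking that its signature is definite exactly in the stated ranges ($q>0$ for the positive case after the $K_X$-twist, and $p+q<n$ for the negative case). This eigenvalue computation, which can be diagonalized at a point by choosing coordinates that simultaneously normalize the Kähler metric and the curvature form, is where the numerical conditions on $p,q$ enter, and getting the inequalities to point the right way is the delicate bookkeeping step.
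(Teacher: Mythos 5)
Your strategy --- Dolbeault plus harmonic theory for $\bar\square$, a Bochner--Kodaira--Nakano identity, and a sign analysis of the curvature commutator --- is the same family of argument as the paper's; the paper simply derives the needed inequality inline for harmonic forms (from the K\"ahler identity $[\bar\dol,L^*]=i\dol^*$ together with $\Theta=D'D''+D''D'$) instead of first establishing the full operator identity $\bar\square=\square+[i\Theta,L^*]$. Your route to statement (1) is sound and genuinely different from the paper's: working in bidegree $(n,q)$ via $\mc O(E)\cong\Omega^n(E\otimes K_X^*)$, the pointwise eigenvalues of $[i\Theta,L^*]$ on $(n,q)$-forms are $\sum_{j\in B}\lambda_j$ with $|B|=q$, which is positive for $q>0$ whenever the curvature eigenvalues $\lambda_j$ are positive, for \emph{any} K\"ahler metric; the paper instead proves (2) first and deduces (1) by Serre duality.

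The genuine gap is in your statement (2). For an arbitrary K\"ahler metric, diagonalizing the curvature against the metric at a point gives $i\Theta=\sum_j\lambda_j\, i\,\d z_j\wedge\d\bar z_j$ in a unitary coframe with the $\lambda_j<0$ unrelated to one another, and on $u=\d z_A\wedge\d\bar z_B\otimes e$ one computes
\begin{align*}
[i\Theta,L^*]\,u=\Bigl(\sum_{j\in A}\lambda_j+\sum_{j\in B}\lambda_j-\sum_{j=1}^n\lambda_j\Bigr)u .
\end{align*}
This is \emph{not} of definite sign on all of $\wedge^{p,q}$ when $p+q<n$: for $n=3$, $A=B=\{1\}$, $\lambda_1=-10$, $\lambda_2=\lambda_3=-1$ the eigenvalue is $\lambda_1-\lambda_2-\lambda_3=-8<0$, while for $A=\{1\}$, $B=\{2\}$ it is $-\lambda_3=1>0$. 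So the inequality $([i\Theta,L^*]\psi,\psi)\le 0$ coming from harmonicity does not force $\psi=0$, and the ``delicate bookkeeping'' you defer cannot be completed with a general metric: pointwise simultaneous normalization is not enough, because the failure occurs already at a single point. The missing idea --- exactly what the paper uses --- is to choose the K\"ahler metric \emph{globally} to be a multiple of the curvature form itself: since $E$ is negative, $-\frac i2\Theta$ is a closed positive $(1,1)$-form, hence a K\"ahler form, and with $L=-\frac i2\Theta\wedge$ the curvature commutator becomes the Lefschetz commutator, which by the $\mf{sl}_2$ relation $[L^*,L]=\sum_r(n-r)\Pi_r$ acts as the scalar $\pm(n-p-q)$ on $(p,q)$-forms; all eigenvalues then coincide and have a definite sign precisely when $p+q\neq n$. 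With this one modification (and keeping your bidegree-$(n,q)$ argument, or Serre duality, for statement (1)), your proof closes.
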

\begin{proof}
    We firstly note that we may apply Lemma~\ref{kahler from positive} in to assume $X$ to be K\"ahler (for negative $E$, $E^*$ is positive).
    
    We first claim that the following holds for each $\xi \in \mc H^{p,q}(X,E)$:
    \begin{align*}
    & i(\Theta L^* \xi, \xi ) \le 0 \\
    & i(L^* \Theta \xi, \xi) \ge 0
    \end{align*}
    \begin{align*}
        & 0 \le (\dol^* \xi, \dol^* \xi) \\
        =& \frac1i ([\bar \dol, L^*] \xi, \dol^* \xi)) \text{ ($[\bar \dol, L^*] = i \dol^*$ is a K\"ahler identity)} \\
        =& -i (\bar\dol L^* \xi, \dol^* \xi)  \text{ ($\bar \dol \xi = 0$ since $\xi$ is harmonic)}\\
        =& -i (L^* \xi, (\bar\dol^* \dol^* + \dol^* \bar \dol^*)\xi) \text{  ($\bar \dol^* \xi = 0$ since $\xi$ is harmonic)} \\
        =& -i ((D' \bar \dol + \bar \dol D')L^* \xi, \xi) \text{ ($(D')^* = \partial^*$)} \\
        =& -i ((\d + \theta)^2 L^* \xi, \xi) \text{ ($D=D'+D''$ and $D''=\bar \dol$)} \\
        =& -i (\Theta L^* \xi, \xi)
    \end{align*}
    similarly, we establish the second inequality.
    
    We prove the second statement. As $-\frac i2 \Theta$ is a fundamental form for a K\"ahler metric, we may let $L = -\frac i2 \Theta \wedge$. Suppose $\xi \in \mc H^{p,q}(X, E)$. Then
    \begin{align*}
        & 0 \le i (\Theta L^* \xi - L^* \Theta \xi, \xi) = ([L,L^*]\xi, \xi) = -(n-p-q)(\xi, \xi) \\
        \implies & 0 \ge (\xi, \xi) \text{ when $p+q < n$} \\
        \implies & H^q(X, \Omega^p(E)) \cong H^{p,q}(X, E) = 0 \text{ when $p+q < n$}
    \end{align*}
    
    The first statement follows by Serre duality.
\end{proof}

Note that an equivalent formulation of Kodaira-Nakano vanishing theorem is that for every positive line bundle $E$ over a compact complex $n$-manifold,
\begin{align*}
& q > 0 \implies H^q(X, \mc O(E \otimes \Omega^n))=0 \\
& p+q < n \implies H^q(X, \mc O(E^* \otimes \Omega^p))=0
\end{align*}

\subsection{Blow-Up on Points}

We will introduce blow-up of a complex manifold; it `enlarges' a point into a hypersurface (divisor) and allows us to work with divisors instead of a point.

Firstly blow-up of a complex polydisk $U \subseteq \CC^n$ centered at origin is defined as
\begin{align*}
& \tilde U \subset U \times \PP^{n-1} \\
& \tilde U := \{ ((z_1, \cdots z_n), [t_1, \cdots t_n]) | \forall \alpha, \beta, z_\alpha t_\beta = z_\beta t_\alpha \}
\end{align*}
where $[t_1, \cdots t_n]$ is homogeneous coordinate for $\PP^{n-1}$. 
Observe that due to the equations, whenever $(z_1, \cdots z_n) \neq 0, [z_1, \cdots z_n] = [t_1, \cdots t_n]$, i.e. the $U$-coordinate is paired with the line pointing at that coordinate. Thus points away from origin map bijectively to $\tilde U$. Meanwhile at origin $(z_1, \cdots z_n)=0$, any $t$-value is allowed. 

Blow-up is a submanifold of $U \times \PP^{n-1}$ with coordinates defined by 
\begin{align*}
\varphi_\alpha: & \tilde U_\alpha \rightarrow \CC^n \text{ where $\tilde U_\alpha = \{(z,t) \in \tilde U | t_\alpha \neq 0 \}$} \\
& ((z_1, \cdots z_n), [t_1, \cdots t_n]) \mapsto (z_\alpha, \frac{t_1}{t_\alpha}, \cdots \widehat{\frac{t_\alpha}{t_\alpha}}, \cdots \frac{t_n}{t_\alpha})
\end{align*}

Now blow-up of a complex manifold $X$ at point $p$, denoted $\tilde X_p$, is defined by firstly finding a coordinate neighborhood $V$ of $p$, homeomorphic to a polydisk $U \subseteq \CC^n$, and gluing $\tilde U$ with $X$.

Denoting $\pi_p: \tilde X_p \rightarrow X$, note that $S_p :=\pi^{-1}(p)$ is a hypersurface (divisor) in $\tilde X$ locally defined by $z_\alpha=0$ in each $\tilde U_\alpha$. Thus it defines a line bundle whose transition function is given by quotients of the local defining functions.

Also, we may blow up on finitely many points by performing blow-up multiple times; a blow-up on a point is a local operation, so this doesn't depend on the order. Denote the blow-up on two distinct points $p,q$ by $\tilde X_{p,q}$ and denote by $\pi_{p,q}: \tilde X_{p,q} \rightarrow X$ the projection.

We prove some key results about line bundles on blow-up.

\begin{proposition}
    \begin{align*}
    [S_p]_{\tilde U_p} = \sigma^* H^*
    \end{align*}
    where $\sigma : \tilde U \rightarrow \PP^{n-1}$ is given by projection to second coordinate.
\end{proposition}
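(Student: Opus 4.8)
The plan is to identify both line bundles by their transition functions relative to the standard chart cover $\{\tilde U_\alpha\}$ of $\tilde U$ and to verify that the two resulting cocycles agree. Since a holomorphic line bundle is classified up to isomorphism by its class in $H^1(\tilde U,\CO^\times)$ (the earlier classification of line bundles), and such a class is represented by a cocycle of transition functions, it suffices to compute both cocycles on this common cover and compare them term by term.

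First I would pin down the local defining functions of the exceptional divisor. As already observed, $S_p \cap \tilde U_\alpha$ is cut out by $z_\alpha = 0$, where $z_\alpha$ is the first coordinate of the chart $\varphi_\alpha$; thus $f_\alpha := z_\alpha$ is a holomorphic defining function vanishing to order one along $S_p$. By the construction of the line bundle associated to a divisor (transition functions are quotients of local defining functions, as used in Proposition~\ref{bundleglobalmero}), the transition functions of $[S_p]$ on $\tilde U_\alpha \cap \tilde U_\beta$ are $g_{\alpha\beta} = f_\alpha/f_\beta = z_\alpha/z_\beta$. Next I would rewrite these in terms of the $\PP^{n-1}$-coordinates using the defining equations $z_\alpha t_\beta = z_\beta t_\alpha$ of the blow-up: on the overlap both $t_\alpha$ and $t_\beta$ are nonzero, so $t_\alpha/t_\beta$ is holomorphic and nonvanishing there, and on the dense open locus where $z \neq 0$ the blow-up relations give $z_\alpha/z_\beta = t_\alpha/t_\beta$. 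Hence $g_{\alpha\beta} = t_\alpha/t_\beta$ as holomorphic functions on the whole overlap.

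It then remains to match this against $\sigma^* H^*$. From the computation of the tautological (universal) bundle above, $H^* = [H]^*$ has transition functions $t_\alpha/t_\beta$ with respect to the chart cover $\{t_\alpha \neq 0\}$ of $\PP^{n-1}$. Since $\sigma$ carries $\tilde U_\alpha = \{t_\alpha \neq 0\}$ into the chart $\{t_\alpha \neq 0\}$, the pullback $\sigma^* H^*$ has transition functions $\sigma^*(t_\alpha/t_\beta) = t_\alpha/t_\beta$ on $\tilde U_\alpha \cap \tilde U_\beta$. The two cocycles coincide, yielding the claimed isomorphism $[S_p]_{\tilde U_p} \cong \sigma^* H^*$.

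The one genuinely delicate point is the identity $z_\alpha/z_\beta = t_\alpha/t_\beta$: a priori $z_\alpha/z_\beta$ is only defined away from $S_p$, whereas a transition function must be holomorphic and nonvanishing across the entire overlap, including along $S_p$. The resolution is that $t_\alpha/t_\beta$ furnishes the correct holomorphic extension. Concretely, in the chart $\varphi_\alpha$ one has $z_\beta = (t_\beta/t_\alpha)\,z_\alpha$ (a consequence of $z_\beta t_\alpha = z_\alpha t_\beta$, with $t_\beta/t_\alpha$ a genuine coordinate function on $\tilde U_\alpha$), so that the cancellation $z_\alpha/z_\beta = t_\alpha/t_\beta$ is manifestly holomorphic and invertible throughout the overlap. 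Everything else is routine cocycle bookkeeping together with the check that $\sigma$ respects the chosen chart labels.
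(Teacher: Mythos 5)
Your proposal is correct and follows essentially the same route as the paper: compute the transition functions of $[S_p]$ from the local defining functions $z_\alpha$, identify $z_\alpha/z_\beta$ with $t_\alpha/t_\beta$ via the blow-up relations, and recognize the latter as the transition cocycle of $\sigma^* H^*$. You additionally make explicit the point the paper leaves implicit --- that $t_\alpha/t_\beta$ is the holomorphic, nonvanishing extension of $z_\alpha/z_\beta$ across $S_p$ --- which is a welcome bit of extra care but not a different argument.
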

\begin{proof}
    Local defining functions of $S_p$ at $\tilde U$ are $z_\alpha = 0$, so that its transition function is given by 
    $$g_{\alpha \beta} = \frac{z_\alpha}{z_\beta}$$
    which is reciprocal of the transition function for hyperplane bundle.
\end{proof}

\begin{proposition}
    \begin{align*}
        K_{\tilde X_p} = \pi_p^*K_X \otimes [S_p]^{n-1}
    \end{align*}
\end{proposition}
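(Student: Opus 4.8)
The plan is to reduce the statement to a purely local computation near the exceptional divisor and to read off the twist from how the pullback of a top-degree holomorphic form degenerates along $S_p$. Away from $p$ the projection $\pi_p$ is a biholomorphism, so there $K_{\tilde X_p}\cong \pi_p^*K_X$ canonically while $[S_p]$ is trivial (its defining section $z_\alpha$ is non-vanishing); hence the isomorphism is automatic off $S_p$ and all the content sits in the blow-up charts $\tilde U_\alpha$. Recalling from the Canonical Bundle example that $K=\Omega^n$ has transition functions given by Jacobian determinants, the natural tool is the pullback of $n$-forms, which furnishes an $\CO_{\tilde X_p}$-linear bundle morphism $\Psi:\pi_p^*K_X\to K_{\tilde X_p}$; I will show $\Psi$ is injective and identifies $\pi_p^*K_X$ with exactly those sections of $K_{\tilde X_p}$ that vanish to order $n-1$ along $S_p$.

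Concretely, in the chart $\tilde U_\alpha$ introduce coordinates $w:=z_\alpha$ and $u_\beta:=t_\beta/t_\alpha$ ($\beta\ne\alpha$), so that the relations $z_\beta t_\alpha=z_\alpha t_\beta$ give $\pi_p$ the form $z_\alpha=w$, $z_\beta=wu_\beta$. Differentiating, $\d z_\alpha=\d w$ and $\d z_\beta=u_\beta\,\d w+w\,\d u_\beta$, and since every $u_\beta\,\d w$ term is killed against the leading $\d w$ one computes
\begin{align*}
\pi_p^*(\d z_1\wedge\cdots\wedge \d z_n)=w^{\,n-1}\;\d w\wedge\!\!\bigwedge_{\beta\ne\alpha}\!\! \d u_\beta .
\end{align*}
Here $\d w\wedge\bigwedge_{\beta\ne\alpha}\d u_\beta$ is a local frame of $K_{\tilde X_p}$, because $(w,u_\beta)_{\beta\ne\alpha}$ are holomorphic coordinates on $\tilde U_\alpha$.

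Since $w=z_\alpha$ is precisely the local defining function of $S_p$, this says that $\Psi$ sends a local frame of $\pi_p^*K_X$ to $z_\alpha^{\,n-1}$ times a local frame of $K_{\tilde X_p}$; that is, its image is exactly the subsheaf of sections vanishing to order $n-1$ along $S_p$. By the order-$(n-1)$ analogue of Proposition~\ref{restrictioncohomology}, this subsheaf is $K_{\tilde X_p}\otimes[-S_p]^{\,n-1}$, whence $\pi_p^*K_X\cong K_{\tilde X_p}\otimes[-S_p]^{\,n-1}$ and therefore $K_{\tilde X_p}\cong\pi_p^*K_X\otimes[S_p]^{\,n-1}$. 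The computation itself is routine; the only points needing care are tracking the direction of the twist (so as not to invert $[S_p]^{n-1}$) and checking that the local frames glue—both handled by the observation that $\Psi$ is already an isomorphism off $S_p$ and that the vanishing order $n-1$ is coordinate-independent. The exponent $n-1$ ultimately records the $(n-1)$-dimensionality of the fibre $\PP^{n-1}$ contracted by $\pi_p$.
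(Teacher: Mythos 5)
Your proof is correct, and it globalizes the local computation in a genuinely different way from the paper. The core calculation is the same identity read in opposite directions: you compute $\pi_p^*(\d z_1\wedge\cdots\wedge \d z_n)=z_\alpha^{\,n-1}\,\d w\wedge\bigwedge_{\beta\ne\alpha}\d u_\beta$, while the paper writes the chart frame as $f_\alpha = z_\alpha^{1-n}\,\d z_1\wedge\cdots\wedge \d z_n$. But from there the routes diverge. The paper stays in its transition-function formalism: it reads off $g_{\alpha\beta}=(z_\beta/z_\alpha)^{1-n}$ on the blow-up charts, recognizes this as the transition data of $[S_p]^{n-1}$ over $\tilde U$, and then glues with the identification $K_{\tilde X_p}\cong\pi_p^*K_X$ away from the exceptional set, using triviality of $K_X|_U$ and of $[S_p]$ off $\tilde U$. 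You instead package everything into the single global $\CO$-linear morphism $\Psi:\pi_p^*K_X\to K_{\tilde X_p}$ given by pullback of $n$-forms, show its zero divisor is exactly $(n-1)S_p$, and invoke the divisor--line bundle dictionary; equivalently, $\Psi$ is a global holomorphic section of $(\pi_p^*K_X)^*\otimes K_{\tilde X_p}$ with divisor $(n-1)S_p$, so Proposition~\ref{bundleglobalmero} gives $(\pi_p^*K_X)^*\otimes K_{\tilde X_p}\cong[S_p]^{n-1}$ directly, which is perhaps a cleaner citation than the order-$(n-1)$ analogue of Proposition~\ref{restrictioncohomology}. What each buys: your argument avoids the transition-function bookkeeping and the final gluing step, which the paper asserts rather quickly, and it makes coordinate-independence automatic since $\Psi$ is defined globally; the paper's argument is more self-contained within its own cocycle formalism and exhibits the isomorphism $K_{\tilde X_p}|_{\tilde U}\cong[S_p]^{n-1}|_{\tilde U}$ explicitly. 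Two harmless points to tidy: your displayed pullback formula holds up to the sign $(-1)^{\alpha-1}$ from moving $\d z_\alpha$ to the front (a unit, so it affects nothing--the paper absorbs it into its frame), and ``$\Psi$ is injective'' should be understood at the level of sheaves of sections, since as a bundle map $\Psi$ is zero on fibers over $S_p$.
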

\begin{proof}
    We first examine $K_{\tilde X_p}$ near $p$ at $\tilde U_p$. Using local holomorhpic coordinates $(z_\alpha, \frac{t_1}{t_\alpha}, \cdots \widehat{\frac{t_\alpha}{t_\alpha}}, \cdots \frac{t_n}{t_\alpha})$ at $\tilde U_\alpha$, we get a frame for the canonical bundle:
    \begin{align*}
    f_1 &= (-1)^\alpha \d z_\alpha \wedge \d (\frac{t_1}{t_\alpha}) \wedge \cdots \widehat{\frac{t_\alpha}{t_\alpha}} \cdots \wedge \d (\frac{t_n}{t_\alpha}) \\
    &= (-1)^\alpha \d z_\alpha \wedge \bigwedge_{\beta \neq \alpha} \left( \frac{-z_\beta}{z_\alpha^2} \d z_\alpha + \frac{1}{z_\alpha} \d z_\beta \right) \\
    &= z_\alpha^{1-n} \d z_1 \wedge \cdots \wedge \d z_n
    \end{align*}
    Thus transition function is given by
    \begin{align*}
        g_{\alpha \beta} = \frac{f_\beta}{f_\alpha} = \left(\frac{z_\beta}{z_\alpha}\right)^{1-n}
    \end{align*}
    which implies
    $$K_{\tilde X_p}|_{\tilde U_p} = [S_p]^{n-1}|_{\tilde U_p}$$
    Meanwhile $\pi_p|_{\tilde X_p \backslash \tilde U_p}$ is isomorphism, so that 
    $$K_{\tilde X_p}|_{\tilde X_p \backslash \tilde U_p} \cong K_X $$
    
    As $K_X|_U$ and $[S_p]|_{\tilde X_p \backslash \tilde U_p}$ are trivial, we see that
    $$K_{\tilde X_p} \cong \pi_p^* K_X \otimes [S_p]^{n-1}$$
    as desired.
\end{proof}

\begin{proposition}\label{blowup positive}
    Given a positive line bundle $E$ over a complex manifold $X$, there is a positive integer $\mu_0$ such that $\forall \mu \ge \mu_0$ and for all distinct $p, q \in X$,
    \begin{align*}
        & \pi_p^*E^\mu \otimes [S_p]^* \otimes K_{\tilde X_p}^* \\
        & \pi_p^*E^\mu \otimes ([S_p]^*)^2 \otimes K_{\tilde X_p}^* \\
        & \pi_{p,q}^*E^\mu \otimes [S_{p,q}]^* \otimes K_{\tilde X_{p,q}}^*
    \end{align*}
\end{proposition}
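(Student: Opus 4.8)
The claim is that each of the three displayed line bundles is positive for all $\mu \ge \mu_0$. The plan is to translate positivity into curvature positivity and then reduce everything to one compactness argument. By Proposition~\ref{positive bundle criterion} a holomorphic line bundle is positive exactly when it admits a metric whose curvature $\Theta$ satisfies $i\Theta > 0$, and by Lemma~\ref{linebundle curvature addition} the curvature of a tensor product of line bundles is the sum of the curvatures. So I would first fix, once and for all, a metric on $E$ with $i\Theta_E > 0$; then, choosing arbitrary metrics on $[S_p]$, $[S_{p,q}]$ and the canonical bundles, each of the three bundles has curvature of the shape
\begin{align*}
\mu\,\pi^*\Theta_E + \Theta_{[S]^{*}}\ (\text{or } 2\,\Theta_{[S]^*}) + \Theta_{K^*}.
\end{align*}
Since $\pi^*\Theta_E \ge 0$, positivity is monotone in $\mu$, so it suffices to exhibit a single threshold $\mu_0$ past which this $(1,1)$-form is positive definite.

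\textbf{Local analysis near the exceptional divisor.} The geometric input is the behaviour along $S_p$. Because $\pi_p$ contracts $S_p$ to the point $p$, the pullback $i\pi_p^*\Theta_E$ is positive \emph{semi}definite, with kernel at a point $x\in S_p$ equal to $\ker(d\pi_p)_x = T_xS_p$; away from $S_p$, where $\pi_p$ is a biholomorphism, $i\pi_p^*\Theta_E$ is strictly positive. The complementary positivity is supplied by $[S_p]^*$: since $[S_p]$ restricts on $\tilde U$ to $\sigma^*H^*$, the bundle $[S_p]^*$ restricts on $S_p\cong\PP^{n-1}$ to the hyperplane bundle $H$, which is positive, so I can choose its metric (for instance by pulling back a Fubini--Study metric along $\sigma$ near $S_p$) so that $i\Theta_{[S_p]^*}$ is positive definite on each $T_xS_p$. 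The linear-algebra fact I then invoke is: if $A\ge 0$ is Hermitian with kernel $K$ and $B$ is Hermitian with $B|_K>0$, then $\mu A + B >0$ for all large $\mu$, proved by a one-line compactness argument on the unit sphere. Applying this with $A = i\pi_p^*\Theta_E$ and $B = i\Theta_{[S_p]^*}+i\Theta_{K^*}$ yields positivity on a neighborhood of $S_p$, while on the complementary compact region $i\pi_p^*\Theta_E$ is strictly positive and dominates the bounded remaining terms once $\mu$ is large. The case with $([S_p]^*)^2$ is identical, being only more positive along $S_p$.

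\textbf{Uniformity.} The genuinely delicate point is that $\mu_0$ must be uniform over all $p$ and all distinct pairs $(p,q)$, whereas the argument above produces a threshold depending a priori on the chosen center(s). For the two one-point bundles I would remove this dependence by running the estimate not on each $\tilde X_p$ in isolation but on a single compact total space: the blow-up $\mathcal W$ of $X\times X$ along the diagonal, whose projection to the second factor has fiber $\tilde X_p$ over $p$ and whose exceptional divisor restricts to $S_p$ on each fiber. Compactness of $\mathcal W$ then gives one $\mu_0$ valid for every $p$. For the two-point bundle, the two exceptional divisors $S_p$ and $S_q$ are disjoint, so near each one the analysis reduces verbatim to the one-point case, and over the compact set of pairs whose points stay a fixed distance apart the same family trick gives a uniform bound.

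\textbf{Main obstacle.} The hard part is the collision limit $p\to q$: the neighborhood of $S_p$ on which $[S_q]^*$ is negligible shrinks, the naive estimates degenerate, and the two-point family fails to extend smoothly across the diagonal. I expect to resolve this exactly as in the Hilbert-scheme picture — the clean compact parameter space for ``two points of $X$'' is $\mathrm{Hilb}^2(X)$, whose interior is the space of distinct pairs (bundle (3)) and whose boundary divisor is ``a point together with a tangent direction,'' which is precisely the data encoded by the squared bundle (2). Running the curvature argument on the compact universal family over $\mathrm{Hilb}^2(X)$ should produce a single $\mu_0$ simultaneously controlling bundles (2) and (3), with the diagonal collision absorbed into the boundary rather than creating a divergence. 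Setting up this compact family and verifying that the estimates extend across its boundary is where essentially all the work lies; the pointwise positivity of the middle paragraph is routine once the family is in place.
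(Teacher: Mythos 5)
Your pointwise strategy near the exceptional divisor --- write the curvature as $\mu A + B$ with $A = i\pi_p^*\Theta_E \ge 0$ degenerating exactly on $T_xS_p = \ker(d\pi_p)_x$, and supply positivity on those directions from $[S_p]^*$ --- is the same as the paper's, but as written it has a genuine gap at the step where you invoke your linear-algebra lemma. The lemma requires $B|_{T_xS_p} > 0$ with $B = i\Theta_{[S_p]^*} + i\Theta_{K_{\tilde X_p}^*}$, and you have only arranged positivity of the first summand, having declared the metric on the canonical bundle to be \emph{arbitrary}. With an arbitrary metric, $i\Theta_{K_{\tilde X_p}^*}$ restricted to $T_xS_p$ is an uncontrolled Hermitian form on the compact set $S_p$ and can swamp the fixed Fubini--Study term; no choice of $\mu$ repairs this, precisely because $A$ vanishes identically on these directions (and rescaling a metric by a constant does not change its curvature). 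The missing ingredient is the blow-up canonical bundle formula $K_{\tilde X_p} \cong \pi_p^*K_X \otimes [S_p]^{n-1}$, which the paper proves immediately before this proposition and uses exactly here: giving $K_{\tilde X_p}^*$ the induced metric yields $i\Theta_{K_{\tilde X_p}^*} = i\pi_p^*\Theta_{K_X^*} + (n-1)\,i\Theta_{[S_p]^*}$, whose pullback term vanishes on $T_xS_p$, so that $B|_{T_xS_p} = n\,i\Theta_{[S_p]^*}|_{T_xS_p} > 0$ and your lemma applies. Some identification of $K_{\tilde X_p}$ near $S_p$ (this formula, or adjunction giving $K_{\tilde X_p}^*|_{S_p} \cong H^{n-1}$) is unavoidable; ``arbitrary metrics'' cannot work.

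The second gap is uniformity: your $\on{Hilb}^2(X)$ proposal for the two-point bundle is a plan, not a proof (``should produce a single $\mu_0$ \dots where essentially all the work lies''), so statement (3) is left open --- and the obstacle you fear largely dissolves once the metrics are chosen correctly. Build the metric on $[S_p]^*$ as the paper does: Fubini--Study via $\sigma_p$ near $S_p$, glued by a bump function to the flat metric coming from the canonical trivialization away from $S_p$. Then away from $S_p$ its curvature is the pullback $\pi^*\psi_p$ of a form on $X$ bounded by a constant $C$ depending only on the bump profile over a fixed finite atlas, \emph{independent of where $q$ is}; and being a pullback it vanishes identically on $T_xS_q = \ker(d\pi_{p,q})_x$, so it never competes with the Fubini--Study positivity along the exceptional directions of $S_q$, however close $q$ is to $p$. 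On the complementary directions $d\pi_{p,q}$ is injective, so $\mu\,i\pi^*\Theta_E$ dominates all the uniformly bounded negative pullback terms once $\mu > (2nC + C')/c$, where $c$ is a positive lower bound for $i\Theta_E$ and $C'$ bounds the curvature of a fixed metric on $K_X$; all constants are uniform in $(p,q)$ by compactness of $X$. This is why the paper can dispatch uniformity with a continuity-in-the-center plus compactness remark (terse as it is), and why no compactified parameter space of pairs is needed; your blow-up of $X \times X$ along the diagonal is a fine packaging for the one-point bundles, but the Hilbert-scheme machinery is solving a problem that the right metric choice never creates.
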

\begin{proof}
    For convenience, we drop the $p$ subscript here when the point at which blow-up is done is clear.
    
    We will control curvature using the fact that $[S]^*$ is locally a pullback of universal bundle and that away from $S$, $\tilde X$ is the same as $X$ so that we may use positivity of $E$. 
    
    We first endow a metric to $[S]$ so that its curvature is $\Theta_{\sigma^* H}$ near $p$ and zero everywhere else. A metric on hypersection bundle of $\PP^{n-1}$ is given by 
    $$\Theta_H = \bar \dol \dol \log \frac{|t_\alpha|^2}{|t_1|^2 + \cdots + |t_n|^2}$$
    and we can pull this back through $\sigma : U \times \PP^{n-1} \rightarrow \PP^{n-1}$ to give a metric $h_1$ on $[S]^*|_{\tilde U}$. Now we extend this metric to all of $[S]^*$. Let $U' \subseteq U$ be such that $\bar U' \subset U$. Find a smooth bump function $\rho \ge 0$ such that $\rho=1$ on $U'$ and $\rho = 0$ outside $U$. Using triviality of $[S]^*|_{\tilde X \backslash U'}$ (because $S \subset U'$) give $[S^*]$ a constant metric $h_2$. We get a metric everywhere on $[S]^*$ by setting
    $$h = \rho h_1 + (1-\rho) h_2$$
    
    Consider any Hermitian metric on $K_{X}$. Then by Lemma~\ref{linebundle curvature addition},
    \begin{align*}
    \Theta_{\pi^* E^\mu \otimes [S]^* \otimes K_{\tilde X}^*} =& \mu \Theta_{\pi^* E} + \Theta_{[S]^*} + \Theta_{\pi^* K_X} + (n-1) \Theta_{[S]^*} \\
    =& \mu \Theta_{\pi^* E} + n \Theta_{[S]^*} + \Theta_{\pi^* K_X}
    \end{align*}
    First note that by positivity of $\Theta_{\pi^* E}$ away from $S$ and compactness of $X$, we can find $\mu_2$ such that when $\mu \ge \mu_1$,
    \begin{align*}
    & \mu \Theta_{\pi^* E} + \Theta_{\pi^* K_X} > 0 \text{ away from $S$} \\
    & \mu \Theta_{\pi^* E} + \Theta_{\pi^* K_X} \ge 0 \text{ everywhere}
    \end{align*}
    where we find $\mu_1$ locally using matrix expressions of $\Theta$ and take maximum over a finite subcover, and $\ge 0$ is shown by continuity. As $\Theta_{[S]^*}=0$ at $\tilde X \backslash \tilde U$, it the result is essentially proven for $\tilde X \backslash \tilde U$ and we only need to look at $\tilde U$. 
    
    At $\tilde U' = \tilde U \cap (U' \times \PP^{n-1})$, both of $\Theta_{\pi^*E}$ and $\Theta_{L^*} \cong \Theta_{\sigma^* H}$ are positive and thus any positive linear combination of them is positive too. At $\tilde U \backslash \tilde U'$, we may use matrix expression to find $\mu_2$ so that when $\mu \ge \mu_1$, $\mu \Theta_{\pi^* E} + \Theta_{L^*} > 0$ there. Now taking $\mu_0 = \mu_1 + n \mu_2$, we obtain $\mu$ so that
    $$\Theta_{\pi^* E^\mu \otimes [S]^* \otimes K_{\tilde X}^*} = \mu \Theta_{\pi^* E} + n \Theta_{[S]^*} + \Theta_{\pi^* K_X} = \begin{cases} \mu \Theta_{\pi^* E} + \Theta_{\pi^* K_X} & \text{ in $\tilde X \backslash \tilde U$} \\ \mu \Theta_{\pi^* E} + \Theta_{\sigma^* H} & \text{ in $\tilde U'$} \\ \mu \Theta_{\pi^* E} + \Theta_{[S]^*} & \text{ in $\tilde U \backslash \tilde U'$} \end{cases}$$
    which are all positive as desired.
    
    We may find $\mu$ works locally for $p$, by seeing that positivity is retained if we choose $q$ near $p$ by continuity, and using blow-up expression centered at $p$. By compactness, we can find $\mu$ that works for all $p$.
    
    Similarly we can prove the second and third statements and take the maximum of all obtained $\mu$.
\end{proof}
Note that we used Proposition~\ref{positive bundle criterion} in assuming that $E$ has a metric that induces a positive form $i\Theta_E$.

\subsection{The Kodaira Embedding Theorem}

Now we state and prove the Kodaira embedding theorem.

\begin{theorem}[Kodaira]
    A compact complex manifold $X$ is a projective variety if there is a positive line bundle on $X$.
\end{theorem}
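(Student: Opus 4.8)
The plan is to produce, for a large enough tensor power $E^\mu$, an embedding $\iota_{E^\mu}: X \to \PP^N$ by choosing a basis $s_0, \ldots, s_N$ of $H^0(X, \CO(E^\mu))$ and sending $x \mapsto [s_0(x) : \cdots : s_N(x)]$. For this to be a well-defined embedding of the compact manifold $X$, I must verify three things: (i) $E^\mu$ is base-point free, so the map is defined everywhere; (ii) the sections separate points, so $\iota_{E^\mu}$ is injective; and (iii) the sections separate tangent vectors, so $\iota_{E^\mu}$ is an immersion. Each condition will be reduced to a surjectivity statement on global sections, which in turn follows from the vanishing of a suitable first cohomology group furnished by the Kodaira--Nakano theorem applied to the positive bundles of Proposition~\ref{blowup positive}.

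First I would fix $\mu \ge \mu_0$ from Proposition~\ref{blowup positive}, using Proposition~\ref{positive bundle criterion} to realize $E$ with a metric of positive curvature, so that the three bundles listed there are positive for all distinct $p,q$. Since each has the shape $F \otimes K^*$ with $F$ the relevant twist of $\pi^* E^\mu$, the first part of the Kodaira--Nakano theorem yields
\begin{align*}
H^1(\tilde X_p, \CO(\pi_p^* E^\mu \otimes [S_p]^*)) &= 0, \\
H^1(\tilde X_p, \CO(\pi_p^* E^\mu \otimes ([S_p]^*)^2)) &= 0, \\
H^1(\tilde X_{p,q}, \CO(\pi_{p,q}^* E^\mu \otimes [S_{p,q}]^*)) &= 0.
\end{align*}

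The common engine in all three cases is the restriction sequence of Proposition~\ref{restrictioncohomology} on a blow-up, together with the identification $H^0(\tilde X, \pi^* E^\mu \otimes [-kS]) = H^0(X, \CO(E^\mu) \otimes \mathcal{I}^k)$ coming from $\pi_* \CO = \CO$, and the facts that $\pi^* E^\mu$ restricts trivially to each exceptional divisor $S \cong \PP^{n-1}$ while $[S]^*|_S \cong \sigma^* H \cong \CO_{\PP^{n-1}}(1)$ by the computation $[S_p]_{\tilde U} = \sigma^* H^*$. For base-point freeness I apply the sequence on $\tilde X_p$ with $V = S_p$ and bundle $\pi_p^* E^\mu$; the first vanishing makes the evaluation $H^0(X, \CO(E^\mu)) \to E^\mu_p$ surjective. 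For the immersion property I apply it with bundle $\pi_p^* E^\mu \otimes [-S_p]$ and $V = S_p$; since $[-S_p]|_{S_p} \cong \CO_{\PP^{n-1}}(1)$ and $H^0(\PP^{n-1}, \CO(1)) \cong T_p^* X$, the second vanishing makes the leading-term map $H^0(X, \CO(E^\mu) \otimes \mathcal{I}_p) \to T_p^* X \otimes E^\mu_p$ surjective, which is exactly the statement that $d\iota_{E^\mu}$ is injective at $p$. For injectivity of $\iota_{E^\mu}$ I apply the sequence on $\tilde X_{p,q}$ with $V = S_p \sqcup S_q$, so that global sections on the two exceptional divisors form $E^\mu_p \oplus E^\mu_q$, and the third vanishing makes $H^0(X, \CO(E^\mu)) \to E^\mu_p \oplus E^\mu_q$ surjective, producing a section that separates $p$ from $q$.

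Having shown $\iota_{E^\mu}$ is a well-defined injective immersion, I conclude it is an embedding, since an injective immersion of a compact manifold is a homeomorphism onto its image, and by Chow's theorem a compact complex submanifold of $\PP^N$ is a smooth projective variety. The main obstacle I anticipate is the bookkeeping at the exceptional divisors: correctly identifying the restrictions $\pi^* E^\mu|_S$ as trivial and $[S]^*|_S \cong \CO_{\PP^{n-1}}(1)$, and then matching the boundary map of the restriction sequence with the geometric operations of evaluation and differentiation, so that the abstract vanishing of $H^1$ translates precisely into base-point freeness, point separation, and the immersion criterion. The uniformity of a single $\mu$ over all pairs $p,q$ is already guaranteed by the compactness argument built into Proposition~\ref{blowup positive}.
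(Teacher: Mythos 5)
Your proposal is correct and follows essentially the same route as the paper's proof: reduce the embedding conditions to surjectivity statements on global sections, transfer them to the blow-ups via the Hartogs/pushforward identification, and kill the obstructing $H^1$ groups by applying Kodaira--Nakano to the three positive bundles of Proposition~\ref{blowup positive}. The only difference is organizational --- you split base-point freeness, point separation, and tangent separation into three restriction sequences on the blow-up, whereas the paper packages the first together with the other two via the quotient sheaves $\mc O/\mf m_{xy}$ and $\mc O/\mf m_x^2$ on $X$ --- which is a bookkeeping variant, not a different argument.
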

\begin{proof}
    Let $E \rightarrow X$ be the given line bundle and let $F = E^\mu$ where $\mu$ is as in Proposition~\ref{blowup positive}. Let $\varphi = \{\varphi_1, \cdots \varphi_m\}$ be a basis for $\mc O(F,X)$ and define the following map:
    \begin{align*}
        \Phi_\varphi: & X \rightarrow \PP^m \\
        & x \mapsto [\varphi_1(f)(x): \cdots \varphi_m(f)(x) ]
    \end{align*}
    where $f$ is any frame near $x$. Note that
    $$\varphi_i(f A)(x) : \varphi_j(f A)(x) = A^{-1}(x) \varphi_i(f)(x) : A^{-1}(x) \varphi_j(f)(x) = \varphi_i(f)(x) : \varphi_j(f)(x)$$
    This almost shows well-definedness of $\Phi_\varphi$, but we also need to show that $\varphi_j$ don't vanish identically at any point. This means that the map $\forall x, \mc O(F, X) \rightarrow F_x$ is surjective; a single nonzero value mapping to $F_x$ shows that basis elements don't vanish identically. We will show that $\Phi_\varphi$ is an embedding using similar surjectivity criteria.
    
    Let $\mf m_x$ be the sheaf of germs of holomorphic functions vanishing at $x$ and similarly let $\mf{m}_{xy}$ specify vanishing at both $x,y$, with $x=y$ indicating second order vanishing. Consider the following sequence:
    \begin{align*}
        0 \rightarrow \mf m_{xy} \rightarrow \mc O \rightarrow \mc O/\mf m_{xy} \rightarrow 0
    \end{align*}
    By tensoring with locally free sheaf $\mc O(F)$, we get another exact sequence
    \begin{align*}
        0 \rightarrow \mf m_{xy} \otimes_{\mc O} \mc O(F) \rightarrow \mc O(F) \rightarrow \mc O/\mf m_{xy} \otimes_{\mc O} \mc O(F) \rightarrow 0
    \end{align*}
    As $\mc O / \mf m_{xy}$ encodes local data at $x,y$, we have
    \begin{align*}
        (\mc O/\mf m_{xy} \otimes_{\mc O} \mc O(F))(U) = \begin{cases} F_x \oplus F_y & \text{ if $x,y \in U$} \\ F_x & \text{ if $x \in U$} \\ F_y & \text{ if $y \in U$} \\ 0 & \text{ otherwise} \end{cases}
    \end{align*}
    and
    \begin{align*}
        (\mc O/\mf m_{xx} \otimes_{\mc O} \mc O(F))(U) = \begin{cases} \mc O_x / \mf m_x^2 \otimes_\CC F_x & \text{ if $x \in U$} \\ 0 & \text{ otherwise} \end{cases}
    \end{align*}
    We claim that surjectivity of $\mc O(X,F) \rightarrow ((\mc O/\mf m_{xy}) \otimes_{\mc O} \mc O(F))(X)$ implies well-definedeness, injectivity, and the rank being full. If $\mc O(X,F) \rightarrow F_x \oplus F_y$ is surjective for any distinct $x,y$, then we first get well-definedness of the map as remarked above. Also there exist $\varphi_1', \varphi_2'$ that map to $(1,0), (0,1)$ at $x,y$; by completing basis $\varphi' = \{ \varphi_j\}$ in $\mc O(X,F)$, we obtain $\Phi_{\varphi'}$ such that $\Phi_{\varphi'}(x) \neq \Phi_{\varphi'}(y)$ and by taking linear transformation in $\PP^{n-1}$ to the given basis $\varphi$, we see that $\Phi_{\varphi}(x) \neq \Phi_{\varphi'}(y)$ too. Also, in the case of $x=y$, surjectivity implies that we can find global sections $\xi_0, \xi_1, \cdots \xi_n \in \mc O(X,F)$ such that
    \begin{align*}
        & \xi_0(x) \neq 0 \\
        & \xi_j(x) = 0, \d \xi_j(x) = \d z_j \text{ for $j=1, \cdots n$}
    \end{align*}
    As $\xi_j$ are linearly independent (look at value at $x$ and cotangent space), we may complete them to a basis $\{\xi_0, \cdots \xi_{m-1} \}$ of $\mc O(X,F)$. Then Jacobian of $\Phi_\varphi$ evaluates to the coefficient of
    $$\d (\frac{\xi_1(f)}{\xi_0(f)}) \wedge \cdots \wedge \d (\frac{\xi_n(f)}{\xi_0(f)}) = \bigwedge_{j=1}^n \frac{\d \xi_j(f) \xi_0(f) }{\xi_0(f)^2} = \frac1{\xi_0(f)(x)^n} \d z_1 \wedge \cdots \d z_n$$
    which is nonzero. This gives the maximal rank.
    
    It remains to prove the two surjectivity relations. They follow directly if we could directly show that both the following cohomology are zero:
    $$H^1(X, \mc O(F) \otimes \mf m_x^2), H^1(X, \mc O(F) \otimes \mf m_{xy})$$
    This is a possible approach, but requires some work. Instead we will make use of the blow-up construction done before and approach the problem roundabout.
    
    Let the blow-up of $X$ at $x$ be $\pi: \tilde X \rightarrow X$ with $\pi^{-1}(p) = S$. Let $\mc I_S$ be the sheaf of holomorphic functions on $X$ that vanish along $S$, and let $\mc I_S^2$ be the sheaf of second-order vanishing at $S$. Denote $\tilde{\mc O} = \mc O_{\tilde X}$ and $\tilde{F} = \pi^* F$. Then we have the following exact sequence of sheaves:
    $$0 \rightarrow \mc I_S^2 \rightarrow \tilde{\mc O} \rightarrow \tilde{\mc O}/\mc I_S^2 \rightarrow 0$$
    and by tensoring the locally free sheaf $\mc O(\tilde F)$, we get another exact sequence of sheaves:
    $$0 \rightarrow \mc O(\tilde F) \otimes \mc I_S^2 \rightarrow \mc O(\tilde F) \rightarrow \mc O(\tilde F) \otimes \tilde{\mc O}/\mc I_S^2 \rightarrow 0$$
    Pullback by $\pi: \tilde X \rightarrow X$ induces a commutative diagram:
    \[\begin{tikzcd}
    0 \arrow[r] & \mc O(\tilde F) \otimes \mc I_S^2 \arrow[r] & \mc O(\tilde F) \arrow[r] & \mc O(\tilde F) \otimes \tilde{\mc O}/\mc I_S^2 \arrow[r] & 0 \\
    0 \arrow[r] & \mc O(F) \otimes \mf m_x^2 \arrow[r] \arrow[u,"\pi_1^*"] & \mc O(F) \arrow[r] \arrow[u,"\pi^*"] & \mc O(F) \otimes \mc O/\mf m_x^2 \arrow[r] \arrow[u,"\pi_2^*"] & 0
    \end{tikzcd}\]
    $\pi_1^*$ is well-defined since if a section of $\mc O(F)$ vanishes to second order at $x$, its pullback, a section of $\mc O(\tilde F)$, vanishes to second order at $S$ and thus is an element of $\mc O(\tilde F) \otimes \mc I_S^2$. This also gives well-definedenss of $\pi_2^*$ and we can see that $\pi_2^*$ is injective. 
    
    We claim that the global section maps $\pi^*(X), \pi_1^*(X)$ are isomorphisms. The inverse to $\pi^*(X)$ is constructed by pullback through $(\pi|_{\tilde X \backslash S})^{-1}$ (which is an isomorphism); this way we first obtain a holomorphic section of $F$ defined away from $x$, and by Hartogs' theorem, it extends to all of $X$. As vanishing order at $x$ transfers to vanishing order at $S$, we see that $\pi_1^*(X)$ is also isomorphism.
    
    Now if we show that $H^1(\tilde X, \tilde O(\tilde{F}))=0$, then $\mc O(X,\tilde F) \rightarrow (\mc O(\tilde F) \otimes \tilde{\mc O} / \mc I_S^2)(X)$ is surjective, and by $\pi_1^*, \pi^*$ being isomorphisms we see that $\mc O(X,F) \rightarrow (\mc O(F) \otimes \mc O/\mf m_x^2)(X)$ is also isomorphism, and the surjectivity relation will be shown.
    
    It can be shown that $\mc I_S^2 \cong \mc O(([S]^*)^2)$. Then $\mc O(\tilde F) \otimes ([S]^*)^2 \otimes K_{\tilde X}^*$ is a positive line bundle by Proposition~\ref{blowup positive}. By Kodaira-Nakano vanishing theorem (a), $H^1(\tilde X, \mc O(\tilde F)\otimes \mc O(([S]^*)^2)) = 0$ and we're done.
    
    We also have $H^1(\tilde X, \mc O(\tilde F) \otimes \mc O([S_{pq}]^*))=0$, and similarly we get the injectivity relation.
\end{proof}


\end{document}